\documentclass[12pt]{amsart}

\usepackage{amssymb}
\usepackage{amsmath, graphicx, rotating}
\usepackage{color}
\usepackage{soul}
\usepackage[dvipsnames]{xcolor}

\usepackage[T1]{fontenc}
\usepackage{lmodern}
\usepackage[english]{babel}

\usepackage{ upgreek }
\usepackage{stmaryrd}
\SetSymbolFont{stmry}{bold}{U}{stmry}{m}{n}
\usepackage{amsthm}
\usepackage{float}

\usepackage{ bbm }
\usepackage{ stmaryrd }
\usepackage{ mathrsfs }
\usepackage{ frcursive }
\usepackage{ comment }

\usepackage{pgf, tikz}
\usetikzlibrary{shapes}
\usepackage{varioref}
\usepackage{enumitem}
\usepackage{todonotes}

\setcounter{MaxMatrixCols}{10}
\definecolor{rouge}{rgb}{0.7,0.00,0.00}
\definecolor{vert}{rgb}{0.00,0.5,0.00}
\definecolor{bleu}{rgb}{0.00,0.00,0.8}
\usepackage[margin=1in]{geometry}
\newtheorem{theorem}{Theorem}[section]
\newtheorem*{theorem*}{Theorem}
\newtheorem{lemma}[theorem]{Lemma}

\newtheorem{corollary}[theorem]{Corollary}
\newtheorem{proposition}[theorem]{Proposition}
\newtheorem{hypothesis}{Hypothesis M\kern-0.1mm}
\labelformat{hypothesis}{\textbf{M\kern-0.1mm#1}}

\theoremstyle{definition}

\usepackage[colorlinks=true, linkcolor=blue, citecolor=blue]{hyperref}

\numberwithin{equation}{section}

\newcommand*{\abs}[1]{\left\lvert#1\right\rvert}
\newcommand*{\norm}[1]{\left\lVert#1\right\rVert}

\newcommand*{\pent}[1]{\left\lfloor#1\right\rfloor}
\newcommand*{\sachant}[2]{\left.#1 \,\middle|\,#2\right.}

\def\bb#1{\mathbb{#1}}
\def\bs#1{\boldsymbol{#1}}
\def\bf#1{\mathbf{#1}}
\def\scr#1{\mathscr{#1}}
\def\bbm#1{\mathbbm{#1}}
\def\geq{\geqslant}
\def\leq{\leqslant}
\def\hat{\widehat}

\newcommand\ee{\varepsilon}

\DeclareMathOperator{\LL}{L}
\DeclareMathOperator{\dd}{d\!}
\DeclareMathOperator{\e}{e}

\DeclareMathOperator{\supp}{supp}

\begin{document}
\title[Conditioned local limit theorems]{Conditioned local limit theorems for random walks defined on finite Markov chains}

\author{Ion Grama}
\curraddr[Grama, I.]{ Universit\'{e} de Bretagne-Sud, LMBA UMR CNRS 6205,
Vannes, France}
\email{ion.grama@univ-ubs.fr}

\author{Ronan Lauvergnat}
\curraddr[Lauvergnat, R.]{Universit\'{e} de Bretagne-Sud, LMBA UMR CNRS 6205,
Vannes, France}
\email{ronan.lauvergnat@univ-ubs.fr}

\author{\'Emile Le Page}
\curraddr[Le Page, \'E.]{Universit\'{e} de Bretagne-Sud, LMBA UMR CNRS 6205,
Vannes, France}
\email{emile.le-page@univ-ubs.fr}

\date{\today}
\subjclass[2000]{ Primary 60J10, 60F05. }
\keywords{Markov chain, Exit time, Conditioned local limit theorem, Duality}

\begin{abstract}
Let $(X_n)_{n\geq 0}$ be a Markov chain
with values in a finite state space $\bb X$ starting at $X_0=x \in \bb X$ and let $f$ be a real function defined on $\bb X$.
Set $S_n=\sum_{k=1}^{n} f(X_k)$, $n\geq 1$. For any $y \in \bb R$ denote by $\tau_y$  the first time when $y+S_n$ becomes non-positive.
We study the asymptotic behaviour of the probability 
$\bb P_x \left( y+S_{n} \in [z,z+a] \,,\, \tau_y > n \right)$
as $n\to+\infty.$
We first establish for this probability a conditional version of the local limit theorem of Stone.
Then we find for it an asymptotic equivalent of order $n^{3/2}$ and give a generalization which is useful in applications.
We also describe the asymptotic behaviour of the probability $\bb P_x \left( \tau_y = n \right)$ as  $n\to+\infty$.
\end{abstract}

\maketitle

\section{Introduction}

Assume that on the probability space $(\Omega, \scr{F}, \bb P)$  we are given a sequence of real valued  
random variables  $( X_n )_{n\geq 1}$. Consider the random walk $S_n=\sum_{k=1}^{n}X_k$, $n\geq 1.$ 
Suppose first that $( X_n )_{n\geq 1}$ are independent identically 
distributed of zero mean and finite variance.
For any $y>0$ denote by $\tau_y$  the first time when $y+S_n$ becomes non-positive.
The study of the asymptotic behaviour of the probability $\mathbb P(\tau_y>n)$ and of the law of $y+S_n$ 
conditioned to stay positive (i.e.\ given the event  $\{\tau_y>n\}$) has been initiated by Spitzer  \cite{spitzer_principles_1964} and developed subsequently by 
Iglehart \cite{iglehart_functional_1974},   
Bolthausen \cite{bolthausen_functional_1976},
Doney \cite{doney_asymptotic_1989}, 
Bertoin and Doney \cite{bertoin_conditioning_1994},
Borovkov \cite{borovkov_asymptotic_2004,borovkov_asymptotic_2004-1}, to cite only a few. 
Important progress has been achieved by employing a new approach based on 
the existence of the harmonic function in Varopoulos \cite{varopoulos_potential_1999}, \cite{varopoulos_potential_2000}, 
Eichelbacher and K\"onig \cite{eichelsbacher_ordered_2008} and recently by   
Denisov and Wachtel \cite{denisov_conditional_2010, denisov_exit_2015, denisov_random_2015}.   
In this line Grama, Le Page and Peign\'e \cite{grama_conditioned_2016} and the authors in \cite{GLLP_affine_2016}, \cite{grama_limit_2016-1} have studied sums of functions defined on Markov chains under spectral gap assumptions.
The goal of the present paper is to complete these investigations  by establishing local limit theorems for random walks defined on finite Markov chains and conditioned to stay positive.  

Local limit theorems for the sum of independent random variables without conditioning have attracted much attention,  
since the pioneering work of Gnedenko \cite{gnedenko_local_1948} and Stone \cite{stone_local_1965}.
The first local limit theorem for a random walk conditioned to stay positive has been established in Iglehart \cite{iglehart_random_1974}
in the context of walks with negative drift $\mathbb E X_1 < 0$.
Caravenna \cite{caravenna_local_2005} 
studied conditioned local limit theorems for random variables in the domain of attraction of the normal law and 
Vatutin and Wachtel \cite{vatutin_local_2008}  for random variables $X_k$ in the domain of attraction of the stable law. 
Denisov and Wachtel \cite{denisov_random_2015} obtained a local limit theorem for random walks in $\mathbb Z^d$
conditioned to stay in a cone based on the harmonic function approach.
 
The ordinary and conditioned local limit theorems in the case of Markov chains 
are less studied in the literature.
Le Page \cite{page_theoremes_1982} stated a local limit theorem for products of random matrices and  
Guivarch and Hardy \cite{guivarch_theoremes_1988} have considered a local limit theorem for sums $S_n=\sum_{k=1}^{n} f(X_k)$, where $( X_n )_{n\geq 0}$ is a Markov chain 
under spectral gap assumptions and $f$ a real function defined on the state space of the chain.
In the conditional case we are aware only of the results of Presman \cite{presman_1967} and \cite{presman_1969} who has considered the case of finite Markov chains in a more general setting but which, because of rather stringent assumptions, does not cover the results of this paper. We note also the work of Le Page and Peign\'e \cite{le_page_local_1997} who have proved 
a conditioned local limit theorem for the stochastic recursion.

Let us briefly review  main results of the paper concerning conditioned local limit behaviour of the walk
$S_n=\sum_{k=1}^{n} f(X_k)$ defined on a finite Markov chain $( X_n )_{n\geq 0}$.
From more general statement of Theorem  \ref{GNLLT},
under the conditions that the underlying Markov chain is irreducible and aperiodic and that $( S_n )_{n\geq 0}$ is centred and non-lattice, for fixed $x\in \mathbb X$ and $y\in \mathbb R$, it follows that, uniformly in $z \geq 0,$
\begin{equation}
\label{introrez001}
\lim_{n\to \infty} 
\left(n \bb P_x \left( y+S_{n} \in [z,z+a] \,,\, \tau_y > n \right) - \frac{2a V(x,y)}{\sqrt{2\pi} \sigma^2} \varphi_+ \left( \frac{z}{\sqrt{n}\sigma} \right) \right) =0,
\end{equation}
where 
$\varphi_+(t) = t\e^{-\frac{t^2}{2}} \bbm 1_{\{t\geq 0\}}$ is the Rayleigh  density.
The relation \eqref{introrez001}
is an extension of the classical local limit theorem by Stone \cite{stone_local_1965}
to the case of Markov chains.
We refer to Caravenna \cite{caravenna_local_2005} and Vatutin and Wachtel \cite{vatutin_local_2008}, where the corresponding result
has been obtained for independent random variables in the domain of attraction of the normal law.

We note that while \eqref{introrez001} is consistent for large $z$, it is not informative for $z$ in a compact set.
A meaningful local limit behaviour for fixed values of $z$ 
can be obtained from our Theorem \ref{LLTC}. 
Under the same assumptions, for any fixed $x\in \mathbb X$, $y\in \mathbb R$ and  $z \geq 0,$
\begin{align}
\label{introrez002}
	\lim_{n\to +\infty} n^{3/2} &\bb P_x \left( y+S_{n} \in [z,z+a] \,,\, \tau_y > n \right) 
	 = \frac{2V(x,y)}{\sqrt{2\pi}\sigma^3} \int_z^{z+a} \int_{\bb X}  V^*\left( x', z' \right) \bs \nu (\dd x') \dd z'.
\end{align}
For sums of independent random variables similar limit behaviour was found in Vatutin and Wachtel \cite{vatutin_local_2008}. 
It should be noted that \eqref{introrez001} and \eqref{introrez002} complement each other: the main term in \eqref{introrez001}  is meaningful for large $z$ such that $z \sim n^{1/2} $ as $n\to\infty$,
while \eqref{introrez002} holds for $z$ in compact sets.

We also state extensions of \eqref{introrez001} and \eqref{introrez002} to the joint law of $X_n$ and $y+S_n$. 
These extensions are useful in applications,  in particular,  for determining 
the exact asymptotic behaviour of the survival time for branching processes in a Markovian environment.
They also allow us to infer the local limit behaviour of the exit time $\tau_y$ (see Theorem \ref{CAPEBIS}):
under the assumptions mentioned before, 
for any $x\in \bb X$ and $y \in \bb R$,
\[
\lim_{n\to+\infty} n^{3/2}\bb P_x \left( \tau_y = n \right) = \frac{2V(x,y)}{\sqrt{2\pi} \sigma^3} \int_0^{+\infty} \bb E_{\bs \nu}^* \left( V^*(X_1^*,z) \,;\, S_1^* \geq z \right) \dd z.
\]

The approach employed in this paper is different from that  in \cite{presman_1967}, \cite{presman_1969} and \cite{le_page_local_1997} which all
are based on Wiener-Hopf arguments. Our technique is close to that in Denisov and Wachtel \cite{denisov_random_2015},
however, in order to make it work for a random walk $S_n=\sum_{k=1}^{n} f(X_k)$ defined on a Markov chain $( X_n )_{n\geq 0}$, we have to overcome some essential difficulties. 
One of them is related to the problem of the reversibility of the Markov walk $(S_n )_{n\geq 0}$.
Let us explain this point in more details.
When $( X_n )_{n\geq 1}$ are $\mathbb Z$-valued independent identically distributed random variables,
let $( S_n^* )_{n\geq 1}$ be the reverse walk given by $S_n^* = \sum_{k=1}^n X_k^*$, 
where $( X_n^* )_{n\geq 1}$ is a sequence of independent identically distributed random variables of the same law as $-X_1$. 
Denote by $\tau^*_z$ the first time when $( z+S_k^* )_{k\geq 0}$ becomes non-positive.
Then,  due to exchangeability of the random variables $( X_n )_{n\geq 1}$, we have
\begin{equation}
\label{exceang001}
\mathbb P(y+S_n=z, \tau_y>n) = \mathbb P(z+S^*_n=y, \tau^*_z >n).
\end{equation}
This relation does not hold any more for the walk  $S_n=\sum_{k=1}^{n} f(X_k)$, 
where $( X_n )_{n\geq 0}$ is a Markov chain. 
Even though $( X_n )_{n\geq 0}$ takes values on a finite state space $\mathbb X$ and there exists a dual chain 
$( X^*_n )_{n\geq 0},$ 
the main difficulty is that the function $f:\mathbb X \mapsto \mathbb R$ can be arbitrary and therefore
the Markov walk $(S_n )_{n\geq 0}$ is not necessarily lattice valued.  
In this case the Markov chain formed by the couple
$( X_n, y+S_n )_{n\geq 0}$ cannot be reversed directly as in \eqref{exceang001}. 
We cope with this by altering the arrival interval $[z,z+h]$ in the following two-sided bound
\begin{align}
	\sum_{x^* \in \bb X} \bb E_{x^*}^* &\left( \psi_{x}^*(X_n^*) \bbm 1_{\left\{ z+S_n^* \in [y-h,y],\, \tau_{z}^* > n \right\}} \right) \bs \nu(x^*) \nonumber \\
	\label{dific001}
	&\qquad\qquad \leq \mathbb P_x (y+S_n\in [z,z+h], \tau_y > n) \\
	&\qquad\qquad \qquad\qquad\qquad\qquad\leq \sum_{x^* \in \bb X} \bb E_{x^*}^* \left( \psi_{x}^*(X_n^*) \bbm 1_{\left\{ z+h+S_n^* \in [y,y+h],\, \tau_{z+h}^* > n \right\}} \right) \bs \nu(x^*), \nonumber
\end{align}
where $\bs \nu$ is the invariant probability of the Markov chain $( X_n )_{n\geq 1}$,  $\psi_{x}^*: \mathbb X \mapsto \bb R_+$ 
is a function  such that $\bs \nu \left( \psi_{x}^* \right) = 1$ (see \eqref{hirondelle} for a precise definition) and 
 $S_n^* = -\sum_{k=1}^n f \left( X_k^* \right)$, $\forall n \geq 1.$ Following this idea, for a fixed $a >0$ we split the interval $[z,z+a]$ into $p$ subintervals of length $h=a/p$ and
we determine the exact 
upper and lower bounds for the corresponding expectations in \eqref{dific001}.
We then patch up the obtained bounds
to obtain a precise asymptotic as $n \to +\infty$ for the probabilities $\mathbb P_x (y+S_n\in [z,z+a], \tau_y > n)$ for a fixed $a>0$ 
and let then $p$ go to $+\infty$. 
This resumes very succinctly how we suggest generalizing \eqref{exceang001} to the non-lattice case. 
Together with some further developments in Sections \ref{heron} and \ref{giraffe}, this allows us to establish Theorems  \ref{GNLLT} and \ref{LLTC}.

The outline of the paper is as follows: 
\begin{itemize}
\item Section \ref{sec not res}: We give the necessary notations and formulate the main results.
\item Section \ref{dual chain}: Introduce the dual Markov chain and state some of its properties.
\item Section \ref{OPPET}: Introduce and study the perturbed transition operator.
\item Section \ref{SecLLTWC}: We prove a non-asymptotic local limit theorem for sums defined on Markov chains.
\item Section \ref{auxbounds}: We collect some auxiliary bounds.
\item Sections \ref{heron}, \ref{giraffe} and \ref{2theor} : Proofs of Theorems \ref{GNLLT}, \ref{LLTC} and \ref{CAPE}, \ref{CAPEBIS}, respectively.
\item Section \ref{appendix}. We state auxiliary assertions which are necessary for the proofs.
\end{itemize}

Let us end this section by fixing some notations. The symbol $c$ will denote a positive constant depending on the all previously introduced constants. Sometimes, to stress the dependence of the constants on some parameters 
 $\alpha,\beta,\dots$ we shall use the notations $ c_{\alpha}, c_{\alpha,\beta},\dots$. All these constants are likely to change their values every occurrence. The indicator of an event $A$ is denoted by $\mathbbm 1_A$. For any bounded measurable function $f$ on $\bb X$, random variable $X$ in $\bb X$ and event $A$, the integral $\int_{\bb X}  f(x) \bb P (X \in \dd x, A)$ means the expectation $\bb E\left( f(X); A\right)=\bb E \left(f(X) \mathbbm 1_A\right)$.

\section{Notations and results} \label{sec not res}

Let $( X_n )_{n\geq 0}$ be a homogeneous Markov chain on the probability 
space $(\Omega, \scr{F}, \bb P)$ with values in the finite state space $\bb X$. 
Denote by $\scr C$ the set of complex functions defined on $\bb X$ endowed with the norm $\norm{\cdot}_{\infty}$: $\norm{g}_{\infty} = \sup_{x\in \bb X} \abs{g(x)}$, for any $g\in \scr C$. Let $\bf P$ be the transition kernel of the Markov chain
$( X_n )_{n\geq 0}$ to which we associate the following transition operator: for any $x\in \bb X$ and $g \in \scr C$,
\[
\bf P g(x) = \sum_{x'\in \bb X} g(x') \bf P(x,x').
\]
For any $x\in \bb X$, denote by $\bb P_x$ and $\bb E_x$ the probability, respectively the expectation, generated by the finite dimensional distributions of the Markov chain $( X_n )_{n\geq 0}$ starting at $X_0 = x$.
We assume that the Markov chain is irreducible and aperiodic, which is equivalent to the following hypothesis.

\begin{hypothesis}
\label{primitive}
The matrix $\bf P$ is primitive: there exits $k_0 \geq 1$ such that for any $x \in \bb X$ and any non-negative and non identically zero function $g \in \scr C$,
\[
\bf P^{k_0}g(x) > 0.
\]
\end{hypothesis}

Let $f$ be a real valued function defined on $\bb X$ and let $(S_n)_{n\geq 0}$ be the process defined by
\[
S_0 = 0 \qquad \text{and} \qquad S_n = f \left( X_1 \right) + \dots + f \left( X_n \right), \quad \forall n \geq 1.
\]
For any starting point $y \in \bb R$ we consider the Markov walk $(y+S_n)_{n\geq 0}$ and we denote by $\tau_y$ the first time when the Markov walk becomes non-positive:
\[
\tau_y := \inf \left\{ k \geq 1, \; y+S_k \leq 0 \right\}.
\]

Under \ref{primitive}, by the Perron-Frobenius theorem, there is a unique positive invariant probability $\bs \nu$ on $\bb X$ satisfying the following property: there exist $c_1>0$ and $c_2>0$ such that for any function $g \in \scr C$ and $n \geq 1$,
\begin{equation}
	\label{decexpNLLT}
	\sup_{x\in \bb X} \abs{\bb E_x \left( g\left( X_n \right) \right) - \bs \nu(g)} = \norm{\bf P^ng - \bs \nu(g)}_{\infty} \leq \norm{g}_{\infty} c_1\e^{-c_2n},
\end{equation}
where $\bs \nu(g) = \sum_{x\in \bb X} g(x) \bs \nu(x)$. 

The following two hypotheses ensure that the Markov walk has no-drift and is non-lattice, respectively.
\begin{hypothesis}
\label{nucentre}
The function $f$ is centred:
\[
\bs \nu \left( f \right) = 0.
\]
\end{hypothesis}

\begin{hypothesis}
\label{nondegenere}
For any $(\theta,a) \in \bb R^2$, there exists a sequence $x_0, \dots, x_n$ in $\bb X$ such that
\[
\bf P(x_0,x_1) \cdots \bf P(x_{n-1},x_n) \bf P(x_n,x_0) > 0
\]
and
\[
f(x_0) + \cdots + f(x_n) - (n+1)\theta \notin a\bb Z.
\]
\end{hypothesis}

Under Hypothesis \ref{primitive}, it is shown in Section \ref{OPPET} that Hypothesis \ref{nondegenere} is equivalent to the condition that the perturbed operator $\bf P_t $ has a spectral radius less than $1$ for $ t\neq 0$; for more details we refer to Section \ref{OPPET}.
Furthermore, in the Appendix (see Lemma \ref{couette}, Section \ref{appendix}), we show that Hypotheses \ref{primitive}-\ref{nondegenere} imply that the following number $\sigma^2$, which is the limit of $\bb E_x ( S_n^2 )/n$ as $n \to +\infty$ for any $x \in \bb X$, is not zero:
\begin{equation}
\label{mu-sigma001LLT}
\sigma^2 := \bs \nu(f^2) + 2\sum_{n=1}^{+\infty} \bs \nu \left( f \bf P^n f \right) > 0.
\end{equation}

Under spectral gap assumptions, the asymptotic behaviour of the survival probability $\bb P_x \left( \tau_y > n \right)$ and of the conditional law of the Markov walk $\frac{y+S_n}{\sqrt{n}}$ given the event $\{ \tau_y > n \}$ have been studied in \cite{grama_limit_2016-1}. It is easy to see that under \ref{primitive}, \ref{nucentre} and \eqref{mu-sigma001LLT} the conditions of \cite{grama_limit_2016-1} are satisfied (see Section \ref{appendix}). 
We summarize the main results of \cite{grama_limit_2016-1} in the following propositions.

\begin{proposition}[Preliminary results, part I] Assume Hypotheses \ref{primitive}-\ref{nondegenere}.
\label{RESUI}
There exists a non-degenerate non-negative function $V$ on $\bb X \times \bb R$ such that
\begin{enumerate}[ref=\arabic*, leftmargin=*, label=\arabic*.]
	\item \label{RESUI001} For any $(x,y) \in \bb X \times \bb R$ and $n \geq 1$,
	\[
	\bb E_x \left( V \left( X_n, y+S_n \right) \,;\, \tau_y > n \right) = V(x,y).
	\]
	\item \label{RESUI002} For any $x \in \bb X$, the function $V(x,\cdot)$ is non-decreasing and for any $(x,y) \in \bb X \times \bb R$,
	\[
	V(x,y) \leq c \left( 1+\max(y,0) \right).
	\]
	\item \label{RESUI003} For any $x \in \bb X$, $y \in \bb R$ and $\delta \in (0,1)$,
	\[
	\left( 1- \delta \right)\max(y,0) - c_{\delta} \leq V(x,y) \leq \left(1+\delta \right)\max(y,0) + c_{\delta}.
	\]
\end{enumerate}
\end{proposition}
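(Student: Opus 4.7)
The plan is to treat Proposition~\ref{RESUI} essentially as a restatement of results established in~\cite{grama_limit_2016-1}, so the work splits into (a) checking the assumptions of that paper from Hypotheses~\ref{primitive}-\ref{nondegenere}, and (b) recovering the three listed properties. Under \ref{primitive}, Perron--Frobenius delivers the exponential mixing \eqref{decexpNLLT}, which plays the role of the spectral-gap hypothesis. Under \ref{nucentre} the walk has zero drift, and Lemma~\ref{couette} (applied under \ref{nondegenere}) yields $\sigma^2>0$, which ensures $\tau_y<\infty$ $\bb P_x$-a.s.\ and that the overshoot $y+S_{\tau_y}$ has finite expectation.

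Starting from these assumptions, I would imitate the martingale construction of Denisov--Wachtel adapted to the Markov setting. First produce a bounded corrector $u\in\scr C$ solving the Poisson equation $(I-\bf P)u=f$: it exists as the uniformly convergent series $u(x)=\sum_{k\geq 0}\bf P^k f(x)$, the convergence being geometric by \eqref{decexpNLLT} since $\bs\nu(f)=0$. This turns
\[
M_n \;:=\; y+S_n+u(X_n)-u(x)
\]
into a $\bb P_x$-martingale with $\bb E_x(M_n)=y$.

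Next I would define
\[
V(x,y)\;:=\;-\,\bb E_x\!\bigl(y+S_{\tau_y}+u(X_{\tau_y})-u(x)\bigr),
\]
after verifying uniform integrability of the stopped martingale $M_{n\wedge\tau_y}$ (this is the step I expect to be the main technical obstacle, as a Wald-type identity for Markov walks requires a careful control of the overshoot in the regime $y$ small, which in~\cite{grama_limit_2016-1} is handled through a truncation procedure). Once this is in place, item~\ref{RESUI001} is a direct application of the Markov property to $\tau_y\wedge n$ combined with optional stopping. For item~\ref{RESUI002}, monotonicity of $V(x,\cdot)$ follows from the coupling $\tau_{y_1}\leq\tau_{y_2}$ for $y_1\leq y_2$ and translation-invariance of $S_n$; the upper bound $V(x,y)\leq c(1+y^+)$ comes from $\norm{u}_\infty<\infty$ plus the fact that $-\bb E_x(y+S_{\tau_y})$ is at most of order $y^++1$.

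For item~\ref{RESUI003} I would rewrite
\[
V(x,y)\;=\;y+u(x)-\bb E_x\!\bigl(u(X_{\tau_y})\bigr)-\bb E_x\!\bigl(y+S_{\tau_y}\bigr)
\]
and show that the last two terms are $o(y)$ as $y\to+\infty$. Boundedness of $u$ handles the first; for the overshoot term I would combine the zero drift \ref{nucentre}, the positive variance \eqref{mu-sigma001LLT}, and a CLT-type comparison arguing that a trajectory starting at large $y$ needs time of order $y^2/\sigma^2$ to cross zero, forcing $\bb E_x|y+S_{\tau_y}|=o(y)$. This yields $V(x,y)=y+o(y)$ uniformly in $x$, from which the two-sided bound $(1-\delta)y^+-c_\delta\leq V(x,y)\leq(1+\delta)y^+ + c_\delta$ is immediate for every $\delta\in(0,1)$ (after separately noting that $V\geq 0$, which comes directly from $y+S_{\tau_y}\leq 0$ and the boundedness of $u$ absorbed in $c_\delta$). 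As indicated, the single nontrivial obstacle is the overshoot control justifying the definition of $V$ and the asymptotic $\bb E_x(y+S_{\tau_y})=o(y)$; everything else is a standard packaging of the martingale-plus-coupling machinery once the corrector $u$ is in hand.
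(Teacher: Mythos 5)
Your top-level plan is exactly what the paper does: Proposition~\ref{RESUI} is not reproved but quoted from~\cite{grama_limit_2016-1}, and the only work is to verify Hypotheses M1--M5 of that paper, which is done in the Appendix --- M1--M3 follow from Perron--Frobenius under~\ref{primitive} with $\scr C$ as the Banach space, M4 is trivially satisfied, and M5 reduces to $\bs\nu(f)=0$ (Hypothesis~\ref{nucentre}) together with $\sigma^2>0$, which is Lemma~\ref{couette} deduced from~\ref{nondegenere}. Two slips in your additional sketch of the underlying martingale machinery are worth flagging, since they would matter if you actually had to reprove the cited result. First, with $u=\sum_{k\geq 0}\bf P^k f$ solving $u-\bf P u=f$, the process $M_n=y+S_n+u(X_n)-u(x)$ is \emph{not} a martingale: one computes $\bb E_x(M_{n+1}-M_n\mid \scr F_n)=\bf P f(X_n)-f(X_n)$, which is nonzero unless $f$ is constant; the correct corrector is $\theta:=\bf P u=\sum_{k\geq 1}\bf P^k f$, for which $y+S_n+\theta(X_n)-\theta(x)$ is a genuine martingale. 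Second, asking for uniform integrability of $M_{n\wedge\tau_y}$ is counterproductive: if it held, optional stopping would force $\bb E_x(M_{\tau_y})=y$ and your first formula would give $V\equiv -y$ (or $V\equiv 0$ with the extra $y$ you reinstate later when you write $V=y+u(x)-\bb E_x(u(X_{\tau_y}))-\bb E_x(y+S_{\tau_y})$, a formula inconsistent with your initial definition). The harmonic function is precisely the defect in the Wald identity, $V(x,y)=\lim_{n}\bb E_x(M_n;\tau_y>n)=y-\bb E_x(M_{\tau_y})$, and the whole point is that the stopped martingale is \emph{not} UI; what one actually shows is that $M_{\tau_y}$ is integrable and the limit exists. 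Finally, since $f$ is bounded on the finite space $\bb X$, the overshoot satisfies $|y+S_{\tau_y}|\leq \norm{f}_\infty$ outright, so the CLT comparison you invoke to obtain $\bb E_x|y+S_{\tau_y}|=o(y)$ is superfluous here. None of this affects the citation-based proof the paper uses, but the sketch as written would not go through.
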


Since the function $V$ satisfies the point \ref{RESUI001}, it is said to be harmonic.

\begin{proposition}[Preliminary results, part II] Assume Hypotheses \ref{primitive}-\ref{nondegenere}.
\begin{enumerate}[ref=\arabic*, leftmargin=*, label=\arabic*.]
\label{RESUII}
	\item \label{RESUII001} For any $(x,y) \in \bb X \times \bb R$,
	\[
	\lim_{n\to +\infty} \sqrt{n}\bb P_x \left( \tau_y > n \right) = \frac{2V(x,y)}{\sqrt{2\pi} \sigma},
	\]
	where $\sigma$ is defined by \eqref{mu-sigma001LLT}.
	\item \label{RESUII002} For any $(x,y) \in \bb X \times \bb R$ and $n\geq 1$,
	\[
	\bb P_x \left( \tau_y > n \right) \leq c\frac{ 1 + \max(y,0) }{\sqrt{n}}.
	\]
\end{enumerate}
\end{proposition}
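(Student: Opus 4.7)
The plan is to follow the Denisov--Wachtel strategy, adapted to the Markov chain setting through the harmonic function $V$ from Proposition~\ref{RESUI}. The key identity is the harmonicity relation
\[
V(x,y) = \bb E_x\bigl(V(X_n, y+S_n)\,;\, \tau_y > n\bigr)
\]
of item~\ref{RESUI001}, together with the asymptotic linearity $V(x,y) = \max(y,0) + O(1)$ from item~\ref{RESUI003}; these replace the martingale property of $y+S_n$ available in the i.i.d.\ case and encode the dependence on the initial state $x$.

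For Part~\ref{RESUII002} (the uniform upper bound), I would split $\{\tau_y > n\}$ at the threshold $\{y+S_n \geq \delta\sqrt{n}\}$. On this first event, item~\ref{RESUI003} gives $V(X_n,y+S_n) \geq (1-\delta)\delta\sqrt{n} - c_\delta$, so the harmonic identity and Markov's inequality yield a bound of order $V(x,y)/\sqrt{n}$. On the complementary event $\{\tau_y > n,\; 0 < y+S_n \leq \delta\sqrt{n}\}$, I would apply the Markov property at $\lfloor n/2\rfloor$, using the Gaussian CLT derived from \eqref{decexpNLLT} and \eqref{mu-sigma001LLT} for the second half and an iterative bootstrap for the first half, producing a term of order $\delta\, V(x,y)/\sqrt{n}$. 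Summing over the two events and invoking $V(x,y) \leq c(1+\max(y,0))$ from item~\ref{RESUI002} closes Part~\ref{RESUII002}.

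For Part~\ref{RESUII001} (the asymptotic equivalent), I would decompose at an intermediate time $m = m(n)$ with $m \to \infty$ and $m/n \to 0$. The Markov property gives
\[
\sqrt{n}\,\bb P_x(\tau_y > n) = \bb E_x\bigl(\sqrt{n}\,\bb P_{X_m}(\tau_{y+S_m} > n-m)\,;\, \tau_y > m\bigr).
\]
A strong coupling of the post-$m$ partial sums with a Gaussian random walk (available under the spectral gap \eqref{decexpNLLT}) together with the classical reflection-principle formula $\bb P(T_{y'} > t) \sim 2y'/\sqrt{2\pi\sigma^2 t}$ for Brownian motion gives an asymptotic of the inner probability which, by item~\ref{RESUI003}, is $\sim 2V(X_m, y+S_m)/\sqrt{2\pi\sigma^2 n}$. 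Feeding this back through the harmonic identity yields the announced limit $2V(x,y)/(\sqrt{2\pi}\,\sigma)$.

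The hard step is the uniform justification of the exchange of limit and expectation: the a priori control from Part~\ref{RESUII002} is exactly of the order of the target, so one cannot afford an $O(1)$ loss and must instead establish a uniform rate in the inner asymptotic. A truncation $\{y+S_m \leq \sqrt{m}\}$, controlled via item~\ref{RESUI003} combined with Part~\ref{RESUII002}, together with a precise coupling error estimate for the random walk near the absorbing barrier $0$, forms the technical heart of the argument; it is here that the harmonic function $V$ absorbs the initial-state dependence and converts the naive factor $y$ into $V(x,y)$ in the limit.
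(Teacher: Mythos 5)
The paper does not prove Proposition~\ref{RESUII} here: both parts are imported from the companion work \cite{grama_limit_2016-1}, and the only in-text contribution is the verification in Section~\ref{appendix} that Hypotheses~\ref{primitive}--\ref{nondegenere} imply the hypotheses M1--M5 of that reference (the substantive step being Lemma~\ref{couette}, which gives $\sigma^2>0$). Your proposal is therefore a genuinely different kind of argument: a self-contained Denisov--Wachtel-style proof, and as a roadmap it is sound and roughly the strategy of \cite{grama_limit_2016-1}. The split at $\{y+S_n\geq\delta\sqrt{n}\}$ combined with the harmonicity identity does control that event at the right order $V(x,y)/\sqrt{n}$, and a Markov decomposition at $\pent{n/2}$ with a bootstrap absorbs the complement; for the asymptotic part, decomposing at a scale $m$, replacing the post-$m$ tail by its Brownian asymptotic (Proposition~\ref{majdeA_kLLT} supplies exactly the needed strong coupling), and passing $\bb E_x\!\left[(y+S_m)^+;\tau_y>m\right]\to V(x,y)$ as $m\to\infty$ is the right route. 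You correctly locate the two hard points: the bootstrap for Part~\ref{RESUII002} (left at outline level, but that is where the work resides) and the uniform-integrability interchange for Part~\ref{RESUII001}, where item~\ref{RESUI003} only identifies $V$ with the identity up to a $1\pm\delta$ factor on large arguments, so the small-$(y+S_m)$ contribution must be shown negligible separately (via Part~\ref{RESUII002}) before letting $m\to\infty$. In short, you give the correct roadmap for a from-scratch proof, while the paper gives a pointer to where that proof actually lives.
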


Define the support of $V$ by
\begin{equation}
	\label{loup}
	\supp (V) := \{ (x,y) \in \bb X \times \bb R : V(x,y)> 0 \}.
\end{equation}
Note that from property \ref{RESUI003} of Proposition \ref{RESUI}, for any fixed $x\in \bb X$, the function $y \mapsto V(x,y)$ is positive for large $y$. 
For further details on the properties of $\supp (V)$ we refer to \cite{grama_limit_2016-1}.

\begin{proposition}[Preliminary results, part III] Assume Hypotheses \ref{primitive}-\ref{nondegenere}.
\label{RESUIII}
\begin{enumerate}[ref=\arabic*, leftmargin=*, label=\arabic*.]
	\item \label{RESUIII001} For any $(x,y) \in \supp(V)$ and $t\geq 0$,
	\[
	\bb P_x \left( \sachant{\frac{y+S_n}{\sigma \sqrt{n}} \leq t }{\tau_y >n} \right) \underset{n\to+\infty}{\longrightarrow} \mathbf \Phi^+(t),
	\]
	where $\bf \Phi^+(t) = 1-\e^{-\frac{t^2}{2}}$ is the Rayleigh distribution function.
	\item \label{RESUIII002} There exists $\ee_0 >0$ such that, for any $\ee \in (0,\ee_0)$, $n\geq 1$, $t_0 > 0$, $t\in[0,t_0]$ and $(x,y) \in \bb X \times \bb R$,
	\[
	\abs{ \bb P_x \left( y+S_n \leq t \sqrt{n} \sigma \,,\, \tau_y > n \right) - \frac{2V(x,y)}{\sqrt{2\pi n}\sigma} \bf \Phi^+(t) } \leq c_{\ee,t_0} \frac{\left( 1+\max(y,0)^2 \right)}{n^{1/2+\ee}}.
	\]
\end{enumerate}
\end{proposition}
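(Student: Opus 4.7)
The plan is to prove part \ref{RESUIII002} first (the non-asymptotic local limit theorem with rate) and then deduce part \ref{RESUIII001} as a corollary: dividing the estimate in part \ref{RESUIII002} by $\bb P_x(\tau_y > n)$ and using the asymptotic in Proposition \ref{RESUII}, part \ref{RESUII001}, the ratio converges to $\bf \Phi^+(t)$, which gives the Rayleigh limit law on $\supp(V)$ (since the hypothesis $(x,y)\in \supp(V)$ guarantees $V(x,y)>0$, so the normalization is non-trivial).

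For part \ref{RESUIII002}, I would follow the Denisov--Wachtel two-scale decomposition, combined with the harmonic function $V$ provided by Proposition \ref{RESUI}. Fix a small $\alpha>0$ and split at $k_n = n - \lfloor n^{1-\alpha}\rfloor$; by the Markov property,
\[
\bb P_x\bigl(y+S_n \leq t\sigma\sqrt n,\, \tau_y > n\bigr) = \int \bb P_{x'}\bigl(z + S'_{n-k_n} \leq t\sigma\sqrt n,\, \tau'_z > n-k_n\bigr)\, \bb P_x\bigl(X_{k_n}\in dx',\, y+S_{k_n}\in dz,\, \tau_y > k_n\bigr).
\]
On the short block of length $n-k_n$, I would use the survival asymptotic from Proposition \ref{RESUII}, part \ref{RESUII001}, in the quantitative form $\bb P_{x'}(\tau'_z > n-k_n) = \frac{2V(x',z)}{\sqrt{2\pi(n-k_n)}\sigma}(1+o(1))$, together with a standard Gaussian approximation for $S'_{n-k_n}/\sigma\sqrt{n-k_n}$ (available under \ref{primitive}--\ref{nondegenere} via spectral-gap arguments and the perturbed operator $\bf P_t$ of Section \ref{OPPET}); since $n-k_n = o(n)$, the increment $S'_{n-k_n}$ is negligible compared to $t\sigma\sqrt n$, so the conditional distribution of $z+S'_{n-k_n}$ concentrates near $z$, and the upper bound $t\sigma\sqrt n$ effectively becomes an indicator $\mathbbm 1_{\{z \leq t\sigma\sqrt n\}}$. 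On the long block, I would apply a uniform Gaussian approximation to the joint law of $(X_{k_n}, y+S_{k_n})$ on the event $\{\tau_y > k_n\}$, multiplied by the density of $y+S_{k_n}$ on scale $\sqrt n$. The factor $V(x',z)$ from the short-block asymptotic is then absorbed by the harmonic identity of Proposition \ref{RESUI}, part \ref{RESUI001}, $\bb E_x(V(X_{k_n}, y+S_{k_n});\tau_y>k_n) = V(x,y)$, producing the factor $V(x,y)$ in the main term. The Rayleigh distribution $\bf \Phi^+(t)$ emerges from integrating the Gaussian density weighted by the linear factor $V(x',z)\sim z$ coming from part \ref{RESUI003} of Proposition \ref{RESUI}, restricted to $[0,t\sigma\sqrt n]$.

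The main obstacle is controlling error terms uniformly in $y$ and $t\in[0,t_0]$ so as to obtain the polynomial rate $n^{-1/2-\ee}$. Three regions require care: (i) $z$ close to $0$ where $V(x',z)$ may degenerate and the survival asymptotic is not yet uniform — handled by bounding the contribution crudely using $\bb P_x(\tau_y>k_n)\leq c(1+y_+)/\sqrt{k_n}$ from Proposition \ref{RESUII}, part \ref{RESUII002}, and the bound $V(x',z)\leq c(1+z_+)$ from Proposition \ref{RESUI}, part \ref{RESUI002}; (ii) $z$ of order $\sqrt n$ or larger where one needs a Berry--Esseen-type estimate (stronger than the CLT) for the Markov walk — this forces the exponent $1/2+\ee$ rather than $1/2$ in the rate, and is where the $(1+y_+^2)$ prefactor appears after integrating over dyadic ranges of $z$; (iii) the interplay between the two approximations, which requires choosing $\alpha$ optimally so that the errors from the short block ($\sim (n-k_n)^{-1/2-\ee'}$ relative error) and from the long block balance. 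The remaining work is routine but lengthy bookkeeping, most of which is encapsulated in the results of \cite{grama_limit_2016-1} whose applicability under Hypotheses \ref{primitive}--\ref{nondegenere} is verified in the Appendix.
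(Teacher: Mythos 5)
The paper does not actually prove this proposition: it is quoted from \cite{grama_limit_2016-1} (see the sentence preceding Proposition \ref{RESUI} and Subsection \ref{AppA}, where the only original content is the verification, via Lemmas \ref{sabot}--\ref{couette}, that Hypotheses \ref{primitive}--\ref{nondegenere} imply the hypotheses M1--M5 of that reference, the crux being $\sigma^2>0$). So the ``proof'' expected here is precisely that hypothesis check, which is independent of the Denisov--Wachtel machinery you sketch; you mention this route only in a closing aside.

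Read as a direct argument, your sketch has a genuine gap. You place the short block of length $n-k_n=\lfloor n^{1-\alpha}\rfloor$ at the \emph{end} and there invoke $\bb P_{x'}(\tau'_z>n-k_n)\sim 2V(x',z)/\sqrt{2\pi(n-k_n)}\,\sigma$; but this asymptotic needs $z=o(\sqrt{n-k_n})$, whereas the mass of $z=y+S_{k_n}$ on $\{\tau_y>k_n\}$ sits at $z\asymp\sqrt n\gg\sqrt{n-k_n}$, where $\bb P_{x'}(\tau'_z>n-k_n)\approx 1$. Even where the asymptotic is valid, absorbing $V(x',z)$ through the harmonic identity would produce the normalization $1/\sqrt{n-k_n}$ rather than the required $1/\sqrt n$. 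With a trailing short block one merely reduces the problem at horizon $n$ to the same problem at horizon $k_n$, which is not a reduction. The workable split is the opposite one: a short block of length $m$ at the start to extract $V(x,y)$ via $\bb E_x[V(X_m,y+S_m);\tau_y>m]=V(x,y)$ together with point \ref{RESUI003} of Proposition \ref{RESUI}, and a long block of length $n-m\sim n$ afterwards; and the ingredient that replaces your ``uniform Gaussian approximation on $\{\tau_y>k_n\}$'' --- which as stated is either circular (it is the conditioned CLT being proved) or ignores the conditioning --- is a KMT-type strong coupling with a Brownian motion of exactly the sort stated in Proposition \ref{majdeA_kLLT}, which transfers the conditioned probability to a Brownian-meander computation with error $c_\varepsilon n^{-\varepsilon}$. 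It is that coupling, not a Berry--Esseen bound, that forces the exponent $1/2+\varepsilon$. Your derivation of part \ref{RESUIII001} from part \ref{RESUIII002} by dividing by the survival probability from Proposition \ref{RESUII} point \ref{RESUII001} and using $V(x,y)>0$ on $\supp(V)$ is correct.
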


In the point \ref{RESUII001} of Proposition \ref{RESUII}  and the point \ref{RESUIII002} of Proposition \ref{RESUIII}, the function $V$ can be zero, so that for all pairs $(x,y)$ satisfying $V(x,y)=0$ it holds 
\[
\lim_{n\to +\infty} \sqrt{n} \bb P_x \left(\tau_y > n \right) = 0
\]
and
\[
\lim_{n\to +\infty} \sqrt{n} \bb P_x \left( y+S_n \leq t \sqrt{n} \sigma \,,\, \tau_y > n \right) = 0.
\]

Now we proceed to formulate the main results of the paper. 
Our first result is an extension of Gnedenko-Stone local limit theorem originally stated for sums of independent random variables.
The following theorem generalizes it to the case of sums of random variables defined on Markov chains conditioned to stay positive. 

\begin{theorem} \label{GNLLT}
Assume Hypotheses \ref{primitive}-\ref{nondegenere}.
Let $a>0$ be a positive real. Then there exists $\ee_0 \in (0,1/4)$ such that for any $\ee \in (0,\ee_0)$,  non-negative function $\psi \in \scr C$, $y \in \bb R$ and
$ n \geq 3\ee^{-3}$, we have
\begin{align*}
	&\sup_{x\in \bb X,\, z \geq 0} n\abs{ \bb E_x \left( \psi \left( X_{n} \right) \,;\, y+S_{n} \in [z,z+a] \,,\, \tau_y > n \right) - \frac{2a \bs \nu \left( \psi \right) V(x,y)}{\sqrt{2\pi}  \sigma^2 n} \varphi_+ \left( \frac{z}{\sqrt{n}\sigma} \right)} \\
	&\hspace{3cm} \leq c \left(1 + \max(y,0) \right) \norm{\psi}_{\infty} \left( \sqrt{\ee} + \frac{c_{\ee}\left( 1+\max(y,0) \right)}{n^{\ee}} \right),
\end{align*}
where $\varphi_+(t) = t\e^{-\frac{t^2}{2}} \bbm 1_{\{t\geq 0\}}$ is the Rayleigh  density and the constants $c$ and $c_{\ee}$ may depend on $a$.
\end{theorem}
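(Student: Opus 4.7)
The plan is to follow the duality-plus-subdivision scheme described at the end of the introduction. Fix $\ee \in (0, 1/4)$ and a positive integer $p$ to be optimised against $\ee$ at the end of the proof; set $h = a/p$ and partition $[z, z+a]$ into $p$ subintervals $[z_j, z_{j+1}]$ with $z_j = z + jh$. For each subinterval the two-sided bound \eqref{dific001} sandwiches $\bb E_x(\psi(X_n); y+S_n \in [z_j, z_{j+1}], \tau_y > n)$ between two sums of dual expectations of the form $\sum_{x^*} \bs\nu(x^*) \bb E_{x^*}^*(\psi_x^*(X_n^*); z' + S_n^* \in [y-h, y], \tau_{z'}^* > n)$ with $z' = z_j$ or $z_{j+1}$. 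The presence of the test function $\psi$ on $X_n$ is handled at the outset: by modifying $\psi_x^*$ appropriately and using the geometric mixing estimate \eqref{decexpNLLT} in the Markov chain piece, the factor $\psi(X_n)$ may be replaced by $\bs\nu(\psi)$ up to an error that decays exponentially in $n$, which is absorbed into the $c_\ee/n^\ee$ remainder.

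The next and main step is to obtain a sharp asymptotic for each dual interval probability. The direct application of the conditional CDF asymptotic of Proposition \ref{RESUIII}.\ref{RESUIII002} to the dual walk, followed by subtracting two cumulative probabilities to form the interval probability, yields a main term $\frac{2V^*(x^*, z')}{\sqrt{2\pi n}\sigma}\bigl[\bf\Phi^+(y/(\sqrt{n}\sigma)) - \bf\Phi^+((y-h)/(\sqrt{n}\sigma))\bigr]$ together with a remainder of order $(1 + (z')^2)/n^{1/2+\ee}$. For $z' \sim \sqrt{n}$ or $y$ bounded this remainder dominates the main term, so the naive subtraction fails. The genuine density-level estimate required is established by splitting the time horizon $[0,n]$ at an intermediate time $m = n - \lfloor n^{1-\ee}\rfloor$: Proposition \ref{RESUIII}.\ref{RESUIII002} is applied over the initial $m$ steps (where its main term is dominant), while the unconditional non-asymptotic local limit theorem of Section \ref{SecLLTWC} is applied over the terminal $n-m$ steps (during which the positivity condition is inactive with high probability, by the auxiliary bounds of Section \ref{auxbounds}). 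The Markov property then combines the two phases into an estimate of the above form with a refined remainder of order $h(1 + (z')^2)/n^{3/2+\ee}$, genuinely smaller by a factor of $h/n$ than the naive CDF error.

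Summing against $\bs\nu(x^*) \psi_x^*(x^*)$ and invoking the harmonic-function duality between original and dual walks from Section \ref{dual chain} (specifically, the identity that converts $\sum_{x^*} \bs\nu(x^*) \psi_x^*(x^*) V^*(x^*, s)$ into the original harmonic quantity $V(x,y)$ in an appropriate limiting sense, together with the normalisation $\bs\nu(\psi_x^*) = 1$), the coefficient assembles into the expected factor $\frac{2V(x,y)}{\sqrt{2\pi n}\sigma}$ anticipated from Proposition \ref{RESUII}.\ref{RESUII001}. Summing over $j = 0, \dots, p-1$ yields a Riemann approximation of $\int_0^a \varphi_+((z+s)/(\sqrt{n}\sigma)) \dd s/(\sqrt{n}\sigma)$, close to $a\varphi_+(z/(\sqrt{n}\sigma))/(\sqrt{n}\sigma)$ up to $O(a^2/(p\sqrt{n}))$, and multiplying by $n$ gives the announced main term. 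The regime $z \geq \sqrt{n}/\ee$ is handled separately via direct Gaussian tail bounds on $\bb P_x(y+S_n \geq z)$, since both this probability and $\varphi_+(z/(\sqrt{n}\sigma))$ are exponentially small there. The main obstacle is the density-level refinement described in the previous paragraph -- the technical heart of the paper, occupying the bulk of Sections \ref{SecLLTWC}--\ref{heron}; optimising $p \sim 1/\ee$ then balances the Riemann-sum error $\sqrt{\ee}$ against the density-level error $c_\ee(1+y)/n^\ee$ to produce the stated uniform bound.
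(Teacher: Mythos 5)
Your plan reverses the logical order of the paper and, more importantly, would not give the uniformity in $z$ that Theorem~\ref{GNLLT} claims. The duality envelope \eqref{dific001} plus the subdivision of $[z,z+a]$ into $p$ cells and passage $p\to\infty$ is precisely how the paper proves Theorem~\ref{LLTC} in Section~\ref{giraffe}; there, Theorem~\ref{GNLLT} applied to the \emph{dual} chain supplies the interval--probability asymptotics inside each cell (Lemma~\ref{caribou}). So Theorem~\ref{GNLLT} has to come first, and its proof (Section~\ref{heron}) contains no subdivision of $[z,z+a]$ at all.

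The underlying reason duality cannot produce Theorem~\ref{GNLLT} is the following. After reversal, the walk that must stay positive starts at $z$ and aims at $y$. The Stone--type main term on the dual side therefore carries the factor $V^*(x^*,z)$ (growing like $z$ for large $z$) in place of $\varphi_+\!\left(z/(\sqrt n\sigma)\right)$ (which decays). The two agree only in the regime $z\ll\sqrt n$ — the regime of Theorem~\ref{LLTC} — while Theorem~\ref{GNLLT} is stated uniformly in $z\geq 0$ and is designed precisely to be informative when $z\sim\sqrt n$. Both inputs you intend to use on the dual side deteriorate there: the harmonic estimate $\sqrt n\,\bb P^*_{x^*}(\tau_z^*>n)\to 2V^*(x^*,z)/(\sqrt{2\pi}\sigma)$ is not uniform in $z$, and the error term in Proposition~\ref{RESUIII}.\ref{RESUIII002} scales like $(1+\max(z,0))^2$. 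Indeed, with your stated per-cell remainder $h(1+(z')^2)/n^{3/2+\ee}$, summing over $p$ cells and multiplying by $n$ yields $a(1+z^2)n^{-1/2-\ee}$, which diverges like $n^{1/2-\ee}$ once $z\sim\sqrt n$. There is also a circularity signal: the ``density-level refinement'' you posit on the dual side is nothing other than a Stone-type local theorem for the dual chain, i.e.\ Theorem~\ref{GNLLT} itself applied to $(X^*_n)$; you cannot derive it from the integral theorem of Proposition~\ref{RESUIII} by a naive CDF subtraction, which is exactly the difficulty the paper's direct proof is built to overcome.

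You have, however, correctly identified the genuine technical core, namely a time split with the unconditional local limit theorem of Section~\ref{SecLLTWC} on the terminal block and the conditional integral theorem (Proposition~\ref{RESUIII}) on the initial block. The paper carries this out \emph{directly on the original chain}, not through duality. Concretely it writes $E_0=E_1-E_2$ according to whether $\tau_y>n_1$ or $n_1<\tau_y\leq n$, with $n_1=n-\lfloor\ee^3 n\rfloor$ (note: $\lfloor\ee^3 n\rfloor$ \emph{proportional to} $n$, not $\lfloor n^{1-\ee}\rfloor$; the proportionality with small constant $\ee^3$ is what makes the $\sqrt\ee$ appear after the error analysis of Lemmas~\ref{DOSP}--\ref{dragon}). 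For $E_1$ the Markov property at time $n_1$, Corollary~\ref{papillon} on the last $\lfloor\ee^3 n\rfloor$ steps, and Proposition~\ref{RESUIII} on the first $n_1$ steps yield the main term; $E_2$ is shown negligible by splitting it further (Lemmas~\ref{chevre001}--\ref{chevre003}), using the strong approximation of Proposition~\ref{majdeA_kLLT} and, in a single auxiliary step, a small dose of duality at the trajectory level. To repair your proof you must drop the top-level duality envelope and subdivision, and instead execute that direct two-phase Markov split on $(X_n)$ itself.
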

Note that Theorem \ref{GNLLT} is meaningful only for large values of $z$ such that $z \sim n^{1/2} $ as $n\to\infty$.
Indeed, the remainder term is of order $n^{-1-\varepsilon},$ 
with some small $\varepsilon >0,$  
while for a fixed $z$ the leading term is of order $n^{-3/2}$. 
When $z = cn^{1/2}$ the leading term becomes of order $n^{-1}$ while the remainder is still $o(n^{-1})$. To deal with the case of $z$ in compact sets a more refined result will be given below.
We will deduce it from Theorem \ref{GNLLT}, however for the proof we need the concept of duality.

Let us introduce the dual Markov chain and the corresponding associated  Markov walk. 
Since $\bs \nu$ is positive on $\bb X$, the following dual Markov kernel $\bf P^*$ is well defined:
\begin{equation}
\label{statue}
\bf P^* \left( x,x^* \right) = \frac{\bs \nu \left( x^* \right)}{\bs \nu (x)} \bf P \left( x^*,x \right), \quad \forall (x,x^*) \in \bb X^2.
\end{equation}
It is easy to see that $\bs \nu$ is also $\bf P^*$-invariant.
The dual of $( X_n)_{n\geq 0}$ is the Markov chain $\left( X_n^* \right)_{n\geq 0}$ with values in $\bb X$ and transition probability $\bf P^*$. 
Without loss of generality we can consider that the dual Markov chain $\left( X_n^* \right)_{n\geq 0}$ is
defined on an extension of the probability space $(\Omega, \scr F, \bb P)$
and that it is independent of the Markov chain $( X_n)_{n\geq 0}$.
We define the associated dual Markov walk by
\begin{equation}
\label{bataille}
S_0^* = 0 \qquad \text{and} \qquad S_n^* = \sum_{k=1}^n -f \left( X_k^* \right), \quad \forall n \geq 1.
\end{equation}
For any $z\in \bb R$, define also the exit time
\begin{equation}
\label{bataillebis}
\tau_z^* := \inf \left\{ k \geq 1 : z+S_k^* \leq 0 \right\}.
\end{equation}
For any $\in \bb X$, denote by $\bb P_x^*$ and $\bb E_x^*$ the probability, respectively the expectation, generated by the finite dimensional distributions of the Markov chain $( X_n^* )_{n\geq 0}$ starting at $X_0^* = x$. 
It is shown in Section \ref{dual chain} that the dual Markov chain  $\left( X_n^* \right)_{n\geq 0}$
satisfies Hypotheses \ref{primitive}-\ref{nondegenere} 
as do the original chain $\left( X_n \right)_{n\geq 0}$. 
Thus, Propositions \ref{RESUI}-\ref{RESUIII} hold also for $\left( X_n^* \right)_{n\geq 0}$ with $V,$ $\tau,$ $(S_n)_{n\geq 0}$ and $\bb P_x$
replaced by $V^*,$ $\tau^*,$ $(S_n^*)_{n\geq 0}$ and $\bb P_x^*$.
Note also that both chains have the same invariant probability $\bs \nu$. Denote by $\bb E_{\bs \nu}$, $\bb E_{\bs \nu}^*$ the expectations generated by the finite dimensional distributions of the Markov chains $( X_n )_{n\geq 0}$ and $( X_n^* )_{n\geq 0}$
in the stationary regime. 

Our second result is a conditional version of the local limit theorem for fixed $x,y$ and $z$. 
\begin{theorem} \label{LLTC}
Assume Hypotheses \ref{primitive}-\ref{nondegenere}.
\begin{enumerate}[ref=\arabic*, leftmargin=*, label=\arabic*.]
\item \label{LLTC001} For any non-negative function $\psi \in \scr C$, $a>0$, $x\in \bb X$, $y \in \bb R$ and $z \geq 0$,
\begin{align*}
	\lim_{n\to +\infty} n^{3/2} &\bb E_x \left( \psi \left( X_{n} \right) \,;\, y+S_{n} \in [z,z+a] \,,\, \tau_y > n \right) \\
	&\qquad = \frac{2V(x,y)}{\sqrt{2\pi}\sigma^3} \int_z^{z+a} \bb E_{\bs \nu}^* \left( \psi \left( X_1^* \right) V^*\left( X_1^*, z'+S_1^* \right) \,;\, \tau_{z'}^* > 1 \right) \dd z'.
\end{align*}
\item \label{LLTC002} Moreover, there exists $c > 0$ such that for any $a>0$, non-negative function $\psi \in \scr C$, $y \in \bb R$, $z \geq 0$ and $n \geq 1$,
\[
\sup_{x\in \bb X} \bb E_x \left( \psi \left( X_{n} \right) \,;\, y+S_{n} \in [z,z+a] \,,\, \tau_y > n \right) \leq \frac{c \norm{\psi}_{\infty}}{n^{3/2}} \left( 1+a^3 \right)\left( 1+z \right)\left( 1+\max(y,0) \right).
\]
\end{enumerate}
\end{theorem}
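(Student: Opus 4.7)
The strategy is to reduce Theorem \ref{LLTC} to the Stone-type statement of Theorem \ref{GNLLT} via duality. The point is that $\bb P_x(y+S_n\in[z,z+a],\tau_y>n)$ localizes the walk's endpoint in a bounded window near $z$ while keeping the starting position $y$ fixed. Under duality, these roles are swapped: the dual walk starts at $z$ and is asked to land in a bounded window near the fixed level $y$. For the dual chain this is precisely the regime covered by Theorem \ref{GNLLT}, and expanding the Rayleigh factor $\varphi_+$ near $0$ produces an additional $n^{-1/2}$ which, combined with the $n^{-1}$ factor of Theorem \ref{GNLLT}, yields the expected $n^{-3/2}$ order.

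Concretely, I would split $[z,z+a]$ into $p$ equal sub-intervals $[z_j,z_{j+1}]$ of length $h=a/p$ and apply on each piece the two-sided duality bound illustrated by \eqref{dific001}, generalized to include the observable $\psi$. This sandwiches the $\psi$-weighted probability on $[z_j,z_{j+1}]$ between $\sum_{x^*}\bs\nu(x^*)\bb E_{x^*}^*(\Psi(X_n^*);z_j+S_n^*\in[y-h,y],\tau_{z_j}^*>n)$ and its analogue at $z_{j+1}$, where $\Psi$ is the function built from $\psi$ by the duality construction. Since the dual chain $(X_n^*)_{n\geq 0}$ satisfies Hypotheses \ref{primitive}--\ref{nondegenere} (Section \ref{dual chain}), Theorem \ref{GNLLT} applies to these dual expectations, and expanding $\varphi_+((y-h)/(\sigma\sqrt n))=(y-h)_+/(\sigma\sqrt n)+o(n^{-1/2})$ for fixed $h$ produces an asymptotic of order $n^{-3/2}$. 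Summing over $j$ and first letting $n\to\infty$ (with $p$ fixed) and then $p\to\infty$ turns the Riemann sum in $j$ into the integral $\int_z^{z+a}\cdots\,\dd z'$ appearing in the statement.

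To recover the precise integrand $\bb E_{\bs\nu}^*(\psi(X_1^*)V^*(X_1^*,z'+S_1^*);\tau_{z'}^*>1)$ together with the prefactor $V(x,y)$, one conditions on the first step of the dual walk (equivalently, on the last step of the original walk) before applying Theorem \ref{GNLLT}. Writing $z_j+S_n^*=(z_j+S_1^*)+(S_n^*-S_1^*)$ and using the Markov property at time $1$ reduces the dual expectation to an $(n-1)$-step dual walk starting at $X_1^*$ from level $z_j+S_1^*$, to which Theorem \ref{GNLLT} supplies $V^*(X_1^*,z_j+S_1^*)$ as coefficient; averaging the first step against $\bs\nu\bf P^*$ and using $\bs\nu(x_0^*)\bf P^*(x_0^*,x_1^*)=\bs\nu(x_1^*)\bf P(x_1^*,x_0^*)$ brings out the $\psi(X_1^*)$ factor from the $\Psi$-dual of $\psi$, while the corresponding initial-step treatment on the original side contributes $V(x,y)$ via point \ref{RESUI001} of Proposition \ref{RESUI}. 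The non-asymptotic upper bound of part 2 is obtained by the same scheme, but one replaces the asymptotic in Theorem \ref{GNLLT} with its quantitative version and uses $V^*(x^*,z')\leq c(1+z')$ from point \ref{RESUI002} of Proposition \ref{RESUI}; the cubic dependence on $a$ comes from summing crude estimates over $p\sim\max(1,a)$ sub-intervals combined with at most quadratic growth of ancillary integrals.

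The main obstacle will be to secure uniformity in the three parameters $p$, $j$ (equivalently $z_j\in[z,z+a]$) and $n$. On the one hand, the remainder in Theorem \ref{GNLLT} must be uniform in the varying starting level $z_j$ of the dual walk; on the other hand, the slack of order $h$ in each duality bound \eqref{dific001} must aggregate over $j$ to an error that vanishes as $p\to\infty$. The extraction of the $V(x,y)$ prefactor from the endpoint boundary terms, so that no spurious residual factor like $y_+$ survives in the final limit, is the most delicate combinatorial point.
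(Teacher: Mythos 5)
Your overall blueprint---sub-divide $[z,z+a]$ into $p$ pieces, dualize, apply the Stone-type Theorem~\ref{GNLLT} to the dual walk, and let $n\to\infty$ then $p\to\infty$---does match the flavour of the paper's argument, but there is a genuine and substantial gap in how you propose to recover the prefactor $V(x,y)$.

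You dualize the \emph{entire} $n$-step trajectory, so that the dual walk runs for $\sim n$ steps starting near level $z$ and is asked to land in a window of width $h$ near $y$. Applying Theorem~\ref{GNLLT} to that dual walk (even after peeling off the first dual step, which correctly produces the integrand $\bb E_{\bs\nu}^*\bigl(\psi(X_1^*)V^*(X_1^*,z'+S_1^*);\tau_{z'}^*>1\bigr)$) yields a Rayleigh factor $\varphi_+\!\bigl(y/(\sqrt{n}\sigma)\bigr)$ at the \emph{target}. Expanding this for fixed $y$ as $n\to\infty$ gives $\varphi_+\!\bigl(y/(\sqrt{n}\sigma)\bigr)\sim y/(\sqrt{n}\sigma)$, i.e.\ the leading constant you obtain is proportional to $y$, \emph{not} to $V(x,y)$. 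By point~\ref{RESUI003} of Proposition~\ref{RESUI}, $V(x,y)=\max(y,0)+O(1)$, and the $O(1)$ discrepancy does not vanish for fixed $y$, so this would be the wrong constant. A single ``initial-step treatment on the original side'' via point~\ref{RESUI001} cannot fix this: the one-step harmonicity $\bb E_x(V(X_1,y+S_1);\tau_y>1)=V(x,y)$ is only useful if you \emph{already} know the $(n-1)$-step asymptotic involves $V(X_1,y+S_1)$, which is circular. Theorem~\ref{GNLLT} simply does not produce two harmonic-function factors in one application; it gives one harmonic factor at the starting point and a Rayleigh density at the target.

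The paper resolves this with a two-scale time split $n=n_1+n_2$ with $n_2=\lfloor\ee^3 n\rfloor$ and $n_1=n-n_2$ (see the section proving Theorem~\ref{LLTC}). The Markov property is applied at time $n_1$: on the \emph{tail} $n_2$ steps one dualizes and invokes Theorem~\ref{GNLLT} for the reversed chain (Lemma~\ref{caribou}), which produces the integrand with $V^*$ \emph{and} a factor $\varphi_+\bigl((y+S_{n_1})/(\sqrt{n_2-1}\sigma)\bigr)$ carrying the endpoint of the first stage; on the first $n_1$ steps, still on the original side and still conditioned on $\{\tau_y>n_1\}$, one evaluates the expectation $\bb E_x\bigl(\varphi_+((y+S_{n_1})/(\sqrt{n_2-1}\sigma));\tau_y>n_1\bigr)$ via Lemma~\ref{cameleonbis} (which rests on Lemma~\ref{couleuvre} and the integral theorem in Proposition~\ref{RESUIII}); it is precisely this expectation that converges, after $n\to\infty$ and then $\ee\to0$, to $V(x,y)\cdot(n_2-1)/(\sigma n^{3/2})$. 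Without this two-scale split, the harmonic factor $V(x,y)$ simply cannot appear. Your sketch is therefore missing the central idea; what you describe would give an asymptotic with $\max(y,0)$ in place of $V(x,y)$, which disagrees with the theorem for all but asymptotically large $y$.
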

In the particular case when $\psi=1$, the previous theorem rewrites as follows:

\begin{corollary} \label{COROL}
Assume Hypotheses \ref{primitive}-\ref{nondegenere}.
\begin{enumerate}[ref=\arabic*, leftmargin=*, label=\arabic*.]
\item \label{COROL001} For any $a>0$, $x\in \bb X$, $y \in \bb R$ and $z \geq 0$,
\begin{align*}
	\lim_{n\to +\infty} n^{3/2} &\bb P_x \left( y+S_{n} \in [z,z+a] \,,\, \tau_y > n \right) \\
	&\qquad = \frac{2V(x,y)}{\sqrt{2\pi}\sigma^3} \int_z^{z+a} \int_{\bb X}  V^*\left( x', z' \right) \bs \nu (\dd x') \dd z'.
\end{align*}
\item \label{COROL002}Moreover, there exists $c > 0$ such that for any $a>0$, $y \in \bb R$, $z \geq 0$ and $n \geq 1$,
\[
\sup_{x\in \bb X} \bb P_x \left( y+S_{n} \in [z,z+a] \,,\, \tau_y > n \right) \leq \frac{c}{n^{3/2}} \left( 1+a^3 \right) \left( 1+z \right)\left( 1+\max(y,0) \right).
\]
\end{enumerate}
\end{corollary}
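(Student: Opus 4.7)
The corollary is essentially an immediate specialization of Theorem~\ref{LLTC} to the constant function $\psi \equiv 1$. The only nontrivial step is to rewrite the integrand appearing in the limit (\ref{LLTC001}) into the form $\int_{\bb X} V^*(x',z')\,\bs\nu(dx')$, and for this the harmonicity of $V^*$ is exactly what we need.

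\textbf{Part \ref{COROL001}.} I would begin by applying Theorem~\ref{LLTC001} with $\psi \equiv 1$, which immediately gives
\[
\lim_{n\to+\infty} n^{3/2}\, \bb P_x\left( y+S_n \in [z,z+a]\,,\,\tau_y > n \right) = \frac{2 V(x,y)}{\sqrt{2\pi}\,\sigma^3} \int_z^{z+a} \bb E_{\bs\nu}^*\!\left( V^*(X_1^*, z'+S_1^*)\,;\,\tau_{z'}^* > 1 \right) dz'.
\]
It remains to simplify the inner expectation. Since the dual chain $(X_n^*)_{n\geq 0}$ satisfies Hypotheses~\ref{primitive}--\ref{nondegenere}, Proposition~\ref{RESUI}.\ref{RESUI001} applied to the dual chain (with $n=1$) states that for every $x^*\in\bb X$ and $z'\geq 0$,
\[
\bb E_{x^*}^*\!\left( V^*(X_1^*, z'+S_1^*)\,;\,\tau_{z'}^* > 1 \right) = V^*(x^*, z').
\]
Integrating both sides against the invariant probability $\bs\nu$ (which is also invariant for the dual chain),
\[
\bb E_{\bs\nu}^*\!\left( V^*(X_1^*, z'+S_1^*)\,;\,\tau_{z'}^* > 1 \right) = \int_{\bb X} V^*(x',z')\, \bs\nu(dx'),
\]
and substituting this identity back yields the claimed limit.

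\textbf{Part \ref{COROL002}.} This follows directly from Theorem~\ref{LLTC002} by taking $\psi\equiv 1$, so that $\norm{\psi}_\infty=1$, and the stated bound
\[
\sup_{x\in\bb X} \bb P_x\left( y+S_n \in [z,z+a]\,,\,\tau_y > n \right) \leq \frac{c}{n^{3/2}}\left(1+a^3\right)\left(1+z\right)\left(1+\max(y,0)\right)
\]
is obtained at once.

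There is no real obstacle here: both parts are formal consequences of Theorem~\ref{LLTC}, the only small observation being that the harmonicity relation for the dual chain collapses the one-step expectation in part \ref{COROL001} to the pointwise value of $V^*(x',z')$, after which integration against $\bs\nu$ produces the advertised expression.
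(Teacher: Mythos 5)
Your proof is correct and takes essentially the same route as the paper, which states the corollary as an immediate rewriting of Theorem~\ref{LLTC} with $\psi\equiv 1$ without spelling out the details. You have correctly identified and supplied the one step that the paper leaves implicit: the harmonicity identity $\bb E_{x^*}^*\bigl( V^*(X_1^*,z'+S_1^*)\,;\,\tau_{z'}^*>1 \bigr)=V^*(x^*,z')$ from Proposition~\ref{RESUI}.\ref{RESUI001} for the dual chain, followed by integration against $\bs\nu$, which collapses the one-step expectation into $\int_{\bb X} V^*(x',z')\,\bs\nu(\dd x')$.
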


Note that the assertion \ref{LLTC001} of Theorem \ref{LLTC} and assertion \ref{COROL001} of Corollary \ref{COROL} 
hold for fixed $a>0$, $x\in \bb X$, $y \in \bb R$ and $z \geq 0$ and that these results 
are no longer true when $z$ is not in a compact set, for instance when $z \sim n^{1/2}$.

The following result extends Theorem \ref{LLTC} to some functionals of the trajectories of the chain $( X_n )_{n\geq 0}$.
For any $(x,x^*) \in \bb X^2$, the probability generated by the finite dimensional distributions of the two dimensional Markov chain $( X_n, X_n^*)_{n\geq 0}$ starting at $(X_0,X_0^*) = (x,x^*)$ is given by $\bb P_{x,x^*}=\bb P_{x} \times \bb P_{x^*}^*$.
Let $\bb E_{x,x^*}$ be the corresponding expectation. For any $l \geq 1$, denote by $\scr C^+ ( \bb X^l \times \bb R_+ )$ the set of non-negative functions $g$: $\bb X^l \times \bb R_+ \to \bb R_+$ 
satisfying the following properties:
\begin{itemize}
\item for any $(x_1,\dots,x_l) \in \bb X^l$, the function $z \mapsto g(x_1,\dots,x_l,z)$ is continuous,
\item there exists $\ee > 0$ such that $\max_{x_1,\dots x_l \in \bb X} \sup_{z \geq 0} g(x_1,\dots,x_l,z) (1+z)^{2+\ee} < +\infty$.
\end{itemize}

\begin{theorem}
\label{CAPE}
Assume Hypotheses \ref{primitive}-\ref{nondegenere}. For any $x \in \bb X$, $y \in \bb R$, $l \geq 1$, $m \geq 1$ and $g \in \scr C^+ \left( \bb X^{l+m} \times \bb R_+ \right)$,
\begin{align*}
&\lim_{n\to +\infty} n^{3/2} \bb E_x \left( g \left(X_1, \dots, X_l, X_{n-m+1}, \dots, X_n, y+S_n \right) \,;\, \tau_y > n \right) \\
&\qquad = \frac{2}{\sqrt{2\pi}\sigma^3} \int_0^{+\infty} \sum_{x^* \in \bb X} \bb E_{x,x^*} \left( g \left( X_1, \dots, X_l,X_m^{*},\dots,X_1^{*},z \right) \right. \\
&\hspace{4cm} \left. \times V \left( X_l, y+S_l \right) V^* \left( X_m^*, z+S_m^* \right) \,;\, \tau_y > l \,,\, \tau_z^* > m \right) \bs \nu(x^*) \dd z.
\end{align*}
\end{theorem}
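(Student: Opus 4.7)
The plan is a three-block decomposition of $I_n := \bb E_x(g(X_1,\ldots,X_l,X_{n-m+1},\ldots,X_n,y+S_n);\tau_y>n)$ that handles the beginning by the forward Markov property, the middle by a bivariate form of Theorem \ref{LLTC}, and the end by the duality relation \eqref{statue}. For $\xi=(x_1,\ldots,x_l)\in\bb X^l$ introduce the end-piece
\[ H_\xi(x'',y''):=\bb E_{x''}(g(\xi,X_1,\ldots,X_m,y''+S_m);\tau_{y''}>m), \]
which inherits continuity in $y''$ from $g$ and, since $|S_m|\leq m\|f\|_\infty$, the decay $H_\xi(x'',y'')\leq c(1+y'')^{-2-\varepsilon}$. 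Applying the Markov property at times $l$ and $n-m$ rewrites $I_n=\bb E_x[\bbm 1_{\{\tau_y>l\}}\Phi_n(X_l,y+S_l;X_1,\ldots,X_l)]$ with
\[ \Phi_n(x',y';\xi):=\bb E_{x'}(\bbm 1_{\{\tau_{y'}>n-l-m\}}H_\xi(X_{n-l-m},y'+S_{n-l-m})). \]

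The core step is to extend Theorem \ref{LLTC} to bivariate continuous test functions: for any non-negative $h:\bb X\times\bb R_+\to\bb R_+$ continuous in $z$ with $h(x',z)\leq c(1+z)^{-2-\varepsilon}$,
\[ \lim_N N^{3/2}\bb E_{x'}(h(X_N,y'+S_N);\tau_{y'}>N) = \frac{2V(x',y')}{\sqrt{2\pi}\sigma^3}\int_0^\infty \bb E_{\bs\nu}^*(h(X_1^*,z)V^*(X_1^*,z+S_1^*);\tau_z^*>1)\dd z. \]
I would derive this by approximating $h(x',\cdot)$ on a compact interval $[0,A]$ by step functions, applying Theorem \ref{LLTC}(1) on each slice, and controlling the mesh and tail errors uniformly in $N$ by Theorem \ref{LLTC}(2); its $(1+z)(1+\max(y',0))/N^{3/2}$ envelope combines with the $(1+z)^{-2-\varepsilon}$ decay of $h$ into a summable dominating function, so both $A\to\infty$ and the mesh $\to 0$ are justified.

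Applied with $h = H_{(X_1,\ldots,X_l)}$, followed by dominated convergence in the outer $\bb E_x$ (using the same envelope and the boundedness of $y+S_l$ on $\{\tau_y>l\}$), this yields
\[ \lim_n n^{3/2}I_n = \frac{2}{\sqrt{2\pi}\sigma^3}\bb E_x\!\left[\bbm 1_{\{\tau_y>l\}}V(X_l,y+S_l)\!\int_0^\infty\!\bb E_{\bs\nu}^*(H_{(X_1,\ldots,X_l)}(X_1^*,z)V^*(X_1^*,z+S_1^*);\tau_z^*>1)\dd z\right]. \]
The final step folds the $1$-step dual from the LLT together with the $m$-step forward chain inside $H_\xi$ into the $m$-step dual chain of the theorem. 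Iterating the duality identity $\bs\nu(x)\bf P(x,x')=\bs\nu(x')\bf P^*(x',x)$ from \eqref{statue} along the $m$ forward transitions inside $H_\xi$ telescopes them into a time-reversed $m$-step dual chain, while the invariance $\sum_{x_0^*}\bs\nu(x_0^*)\bf P^*(x_0^*,x_1^*)=\bs\nu(x_1^*)$ stitches the $1$-step dual from the LLT into the initial step of the full $m$-step dual chain $(X_0^* = x^*,X_1^*,\ldots,X_m^*)$ with $X_0^*\sim\bs\nu$. A change of variables in the $z$-integral combined with the dual harmonicity $\bb E_{x^*}^*(V^*(X_1^*,w+S_1^*);\tau_w^*>1)=V^*(x^*,w)$ (Proposition \ref{RESUI}(1) for the dual chain) reconciles the walk-shift and produces the harmonic factor $V^*(X_m^*,z+S_m^*)$; factoring under the product law $\bb P_{x,x^*}=\bb P_x\otimes\bb P_{x^*}^*$ recovers exactly the right-hand side of the theorem. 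The main obstacle is twofold: securing the bivariate continuous LLT with enough uniformity to justify the two nested applications of dominated convergence while preserving the $n^{3/2}$ normalisation, and carefully executing the combinatorial bookkeeping of the time-reversal and change of variables that reconciles the forward $m$-step end block with the $m$-step dual chain in the theorem statement.
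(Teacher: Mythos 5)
Your plan — cut the trajectory at $l$ and $n-m$, package the final $m$ steps into a state-space function $H_\xi(x'',y'')$, feed it into an extension of Theorem \ref{LLTC} to joint test functions $h(x',z)$, then fold the forward $m$-block into the dual chain via the iterated duality identity and the harmonicity of $V^*$ — is a genuinely different decomposition from the paper's, and the final folding step you sketch is in fact correct (the $1$-step dual from the LLT does absorb into the $m$-step dual chain by $\bs \nu$-invariance, and the harmonicity $\bb E_{x^*}^*(V^*(X_1^*,w+S_1^*);\tau_w^*>1)=V^*(x^*,w)$ together with the change of variable $z'=z+f(X_1^*)+\cdots+f(X_m^*)$ does reconcile the shift). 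However, there is a genuine gap at the core step: the claim that $H_\xi(x'',\cdot)$ \emph{inherits continuity from $g$} is false. Writing $H_\xi(x'',y'')$ as a finite sum of terms $g(\xi,x_1,\dots,x_m,y''+f(x_1)+\cdots+f(x_m))\,\bbm 1_{\{\forall i\leq m,\,y''+f(x_1)+\cdots+f(x_i)>0\}}\prod\bf P$, each summand is the product of a continuous function of $y''$ and an indicator $\bbm 1_{\{y''>c(x_1,\dots,x_i)\}}$, which has a jump. Hence $H_\xi(x'',\cdot)$ has jump discontinuities at every point $-\min_{i< m}(f(x_1)+\cdots+f(x_i))$ of each admissible trajectory, and is only left-continuous. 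Your proposed bivariate LLT is stated and (would be) proved for $h$ that is \emph{continuous} in $z$; applied to $H_\xi$ it does not follow as written, and the first display after ``Applied with $h=H_{(X_1,\dots,X_l)}$'' is unjustified.

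This gap is in principle repairable — the discontinuity set of $H_\xi(x'',\cdot)$ is finite and the limiting measure in $z$ is absolutely continuous, so one could sandwich $H_\xi$ between continuous envelopes that agree with it off a small neighbourhood of the jump points and let the neighbourhood shrink — but this is precisely the kind of two-sided approximation the paper builds in at the right place instead. The paper avoids $H_\xi$ altogether: in Lemma \ref{pluie} the whole tail trajectory is dualized at once (via Lemma \ref{duality}), so the end block becomes the \emph{initial} $m$ steps of the dual chain, and the exit-time indicator becomes $\bbm 1_{\{\tau_{z_{k}}^*>n-l\}}$ or $\bbm 1_{\{\tau_{z_{k+1}}^*>n-l\}}$ (the enclosures \eqref{campagnol}, \eqref{mulot}) rather than a discontinuous pre-processed weight. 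That places the discontinuity on the $z$-variable, where the already-established Lemma \ref{animal} (the dual-side analogue of your ``bivariate LLT,'' proved for the indicator via interval-splitting in Theorem \ref{LLTC} and then extended to continuous $g$ in Lemma \ref{sombrero}) applies directly. In short: your route is viable with an added sandwich argument to handle the discontinuity of $H_\xi$, but as stated the continuity claim fails; the paper's duality-first decomposition is what lets one sidestep that extra approximation step.
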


As a consequence of Theorem \ref{CAPE}
we deduce the following asymptotic behaviour of the probability 
of the event $\left\{ \tau_y=n \right\}$ as $n\to +\infty$. 

\begin{theorem}
\label{CAPEBIS}
Assume Hypotheses \ref{primitive}-\ref{nondegenere}. For any $x\in \bb X$ and $y \in \bb R$,
\[
\lim_{n\to+\infty} n^{3/2}\bb P_x \left( \tau_y = n \right) = \frac{2V(x,y)}{\sqrt{2\pi} \sigma^3} \int_0^{+\infty} \bb E_{\bs \nu}^* \left( V^*(X_1^*,z) \,;\, S_1^* \geq z \right) \dd z.
\]
\end{theorem}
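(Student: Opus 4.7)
The plan is to decompose $\{\tau_y = n\}$ by conditioning on the last state $X_n$, then reduce each piece to Theorem \ref{LLTC}, and finally use the duality (\ref{statue}) together with the harmonicity of $V^*$ to match the claimed formula.

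First, I would observe that $\tau_y = n$ is equivalent to $\tau_y > n-1$ together with $y + S_{n-1} + f(X_n) \leq 0$; since $\tau_y > n-1$ forces $y + S_{n-1} > 0$, the contribution of states $x_n$ with $f(x_n) \geq 0$ vanishes. Decomposing over the value of $X_n$ and applying the Markov property at time $n-1$ gives
\[
\bb P_x(\tau_y = n) = \sum_{x_n :\, f(x_n) < 0} \bb E_x\left[ \bf P(X_{n-1}, x_n)\, \bbm 1_{y + S_{n-1} \in [0, -f(x_n)]}\, ;\, \tau_y > n - 1 \right],
\]
where including the endpoint $y + S_{n-1} = 0$ is harmless since it is excluded by $\tau_y > n-1$. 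Each term is then precisely of the form treated in point \ref{LLTC001} of Theorem \ref{LLTC} (with $\psi(\cdot) = \bf P(\cdot, x_n)$, $z = 0$, $a = -f(x_n) > 0$, at time horizon $n-1$, which is asymptotically $n$). Since $\bb X$ is finite, exchanging sum and limit yields
\[
\lim_{n \to \infty} n^{3/2} \bb P_x(\tau_y = n) = \frac{2 V(x,y)}{\sqrt{2\pi}\, \sigma^3} \sum_{x_n :\, f(x_n) < 0} \int_0^{-f(x_n)} \bb E^*_{\bs \nu}\left[ \bf P(X_1^*, x_n)\, V^*(X_1^*, z + S_1^*)\, ;\, \tau^*_z > 1 \right] \dd z.
\]

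Next, I would unfold the dual integrand using the duality relation $\bf P(x^*, x_n) = \bs \nu(x_n)\, \bf P^*(x_n, x^*) / \bs \nu(x^*)$ from (\ref{statue}). Substituting this and performing the sum over $X_0^*$ via the $\bf P^*$-invariance of $\bs \nu$ collapses the expectation to
\[
\bb E^*_{\bs \nu}\left[ \bf P(X_1^*, x_n)\, V^*(X_1^*, z + S_1^*)\,;\, \tau^*_z > 1 \right] = \bs \nu(x_n)\, \bb E^*_{x_n}\left[ V^*(X_1^*, z + S_1^*)\,;\, \tau^*_z > 1 \right].
\]
The right-hand expectation equals $V^*(x_n, z)$ by the harmonicity of $V^*$ (point \ref{RESUI001} of Proposition \ref{RESUI}, applied to the dual chain at $n = 1$). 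Plugging back produces
\[
\lim_{n \to \infty} n^{3/2} \bb P_x(\tau_y = n) = \frac{2 V(x,y)}{\sqrt{2\pi}\, \sigma^3} \sum_{x_n :\, f(x_n) < 0} \bs \nu(x_n) \int_0^{-f(x_n)} V^*(x_n, z)\, \dd z,
\]
which agrees with the claimed formula: since $S_1^* = -f(X_1^*)$ and the marginal of $X_1^*$ under $\bb P^*_{\bs \nu}$ equals $\bs \nu$, one has $\int_0^{\infty} \bb E^*_{\bs \nu}(V^*(X_1^*, z)\,;\, S_1^* \geq z)\, \dd z = \sum_{x^*: f(x^*) < 0} \bs \nu(x^*) \int_0^{-f(x^*)} V^*(x^*, z)\, \dd z$.

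The only real subtlety I anticipate is the passage from the half-open event $\{y + S_{n-1} \in (0, -f(x_n)]\}$ that naturally arises from the decomposition to the closed interval required by Theorem \ref{LLTC}; but this is immediate since $y + S_{n-1} > 0$ strictly is enforced on $\{\tau_y > n-1\}$. Everything else is algebraic bookkeeping using (\ref{statue}) and the harmonicity identity for $V^*$ at a single step.
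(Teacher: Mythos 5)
Your proof is correct, and it follows a route that is genuinely different from (and arguably cleaner than) the paper's. The paper deduces Theorem \ref{CAPEBIS} from Theorem \ref{CAPE}: it writes $\bb P_x(\tau_y = n+1) = \bb E_x(g(X_n, y+S_n)\,;\,\tau_y > n)$ with $g(x',y') = \bb P_{x'}(y'+f(X_1)\leq 0)\,\bbm 1_{\{y'\in[0,\|f\|_\infty]\}}$, observes that $g(x',\cdot)$ is a step function and therefore fails the continuity hypothesis of $\scr C^+$, and works around this by sandwiching $g$ between continuous functions $\varphi_\ee \leq g \leq \psi_\ee$ that agree with $g$ outside a small set $N$, then letting $\ee\to 0$. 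Your approach instead splits the decomposition over the value of the discrete variable $X_n$ \emph{before} passing to the limit, which turns the single discontinuous function of $y+S_{n-1}$ into a finite sum of indicator functions of \emph{fixed} closed intervals $[0,-f(x_n)]$ multiplied by legitimate weights $\psi(\cdot)=\bf P(\cdot,x_n)\in\scr C$; each term then falls squarely under Theorem \ref{LLTC}, part 1, with no continuity issue to repair. What your route buys is the avoidance of the $\ee$-approximation argument entirely; what the paper's route buys is that it comes for free once Theorem \ref{CAPE} is available, and Theorem \ref{CAPE} is of independent interest. The final bookkeeping — applying the duality identity $\bf P(x^*,x_n)=\bs\nu(x_n)\bf P^*(x_n,x^*)/\bs\nu(x^*)$, using $\bf P^*$-invariance to reduce $\bb E^*_{\bs\nu}[\cdots]$ to $\bs\nu(x_n)\bb E^*_{x_n}[\cdots]$, and then collapsing via the one-step harmonicity of $V^*$ to obtain $\bs\nu(x_n)V^*(x_n,z)$ — is carried out correctly and matches the claimed formula after rewriting $\{S_1^*\geq z\}=\{f(X_1^*)\leq -z\}$ and using that $X_1^*$ has law $\bs\nu$ under $\bb P^*_{\bs\nu}$. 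The minor points you yourself flag (closed vs.\ half-open interval at the left endpoint, which is annulled by $\tau_y>n-1$; and the time shift $n-1\to n$) are indeed harmless.
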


\section{Properties of the dual Markov chain}
\label{dual chain}

In this section we establish some properties of the dual Markov chain and of the corresponding Markov walk.

\begin{lemma}
Suppose that the operator $\bf P$ satisfies Hypotheses \ref{primitive}-\ref{nondegenere}. Then the dual operator $\bf P^*$ satisfies also \ref{primitive}-\ref{nondegenere}.
\end{lemma}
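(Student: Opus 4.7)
The plan is to verify Hypotheses \ref{primitive}, \ref{nucentre} and \ref{nondegenere} for $\bf P^*$ in turn, using repeatedly the key identity
\[
(\bf P^*)^n(x,x') = \frac{\bs \nu(x')}{\bs \nu(x)}\bf P^n(x',x),\qquad n\geq 1,\ (x,x')\in\bb X^2,
\]
which follows from \eqref{statue} by a straightforward induction in which the intermediate $\bs \nu$-factors telescope. Note that the function associated to the dual walk is $-f$ (see \eqref{bataille}), so throughout the verification of \ref{nucentre} and \ref{nondegenere} for $\bf P^*$ the relevant function is $-f$ and not $f$.

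For \ref{primitive}, note first that \ref{primitive} for $\bf P$ is equivalent to $\bf P^{k_0}(x',x)>0$ for all $x,x'\in\bb X$ (take $g=\bbm 1_{\{x'\}}$ in the definition). Since $\bs \nu$ is strictly positive on the finite set $\bb X$, the identity above immediately gives $(\bf P^*)^{k_0}(x,x')>0$ for all $x,x'\in\bb X$, whence \ref{primitive} for $\bf P^*$. For \ref{nucentre}, $\bs \nu$ is $\bf P^*$-invariant (a direct consequence of \eqref{statue}), and $\bs \nu(-f)=-\bs \nu(f)=0$.

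The main point is \ref{nondegenere}. I want to exhibit, for each $(\theta,a)\in\bb R^2$, a cycle $(y_0,\dots,y_n)$ in $\bf X$ with
\[
\bf P^*(y_0,y_1)\cdots\bf P^*(y_{n-1},y_n)\bf P^*(y_n,y_0)>0 \qquad\text{and}\qquad -f(y_0)-\cdots-f(y_n)-(n+1)\theta\notin a\bb Z.
\]
I would apply \ref{nondegenere} for $\bf P$ to the pair $(-\theta,a)$ to obtain $(x_0,\dots,x_n)$ with $\bf P(x_0,x_1)\cdots\bf P(x_n,x_0)>0$ and $f(x_0)+\cdots+f(x_n)+(n+1)\theta\notin a\bb Z$, and then \emph{reverse} the cycle, setting $y_0=x_0$ and $y_i=x_{n+1-i}$ for $i=1,\dots,n$. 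A direct computation using \eqref{statue} shows that the intermediate $\bs \nu$-factors telescope around the loop, yielding
\[
\bf P^*(y_0,y_1)\cdots\bf P^*(y_n,y_0) = \bf P(x_0,x_1)\cdots\bf P(x_n,x_0) > 0.
\]
Moreover the sum $-f(y_0)-\cdots-f(y_n) = -\bigl(f(x_0)+\cdots+f(x_n)\bigr)$, so that $-f(y_0)-\cdots-f(y_n)-(n+1)\theta = -\bigl(f(x_0)+\cdots+f(x_n)+(n+1)\theta\bigr)\notin a\bb Z$ (since $a\bb Z$ is stable under negation). This establishes \ref{nondegenere} for $\bf P^*$.

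The only step requiring any care is the cycle reversal in the verification of \ref{nondegenere}; the rest is essentially bookkeeping using the duality formula \eqref{statue}.
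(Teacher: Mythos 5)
Your proposal is correct and follows essentially the same approach as the paper: establish $\bs\nu$ is $\bf P^*$-invariant, use the iterated duality identity $(\bf P^*)^n(x,x') = \bf P^n(x',x)\,\bs\nu(x')/\bs\nu(x)$, and transfer the hypotheses. The paper in fact just asserts that this identity gives \ref{primitive} and \ref{nondegenere} for $\bf P^*$; your cycle-reversal argument with the sign flip $\theta\mapsto-\theta$ is the detail the paper leaves to the reader, and you have it right.
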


\begin{proof}
By the definition of $\bf P^*,$ for any $x^* \in \bb X$,
\[
\sum_{x \in \bb X} \bs \nu (x) \bf P^* \left( x,x^* \right) = \sum_{x \in \bb X} \bf P \left( x^*,x \right) \bs \nu \left( x^* \right) = \bs \nu (x^*),
\]
which proves that $\bs \nu$ is also $\bf P^*$-invariant. 
Thus Hypothesis \ref{nucentre}, $\bs \nu(f) = \bs \nu(-f)=0$, is satisfied for both chains.
Moreover, it is easy to see that for any $n \geq 1$, $(x,x^*) \in \bb X^2$, 
\[
\left(\bf  P^* \right)^n (x,x^*) = \bf P^n (x^*,x) \frac{\bs \nu(x^*)}{\bs \nu(x)}.
\]
This shows that $\bf P^*$ satisfies \ref{primitive} and \ref{nondegenere}. 
\end{proof}

Note that the operator $\bf P^*$ is the adjoint operator of $\bf P$ in the space $\LL^2 \left( \bs \nu \right) :$ for any functions $g$ and $h$ on $\bb X,$
\[
\bs \nu \left( g \left(\bf P^*\right)^n h \right) = \bs \nu \left( h \bf P^n g \right).
\]
In particular for any $n\geq 1$, $\bs \nu \left( f \left(\bf P^*\right)^n f \right) = \bs \nu \left( f \bf P^n f \right)$ and we note that
\[
\sigma^2 = \bs \nu \left((-f)^2 \right) + \sum_{n} \bs \nu \left((-f) \left( \bf P^* \right)^n (-f) \right).
\]

The following assertion plays a key role in the proofs. 
\begin{lemma}[Duality]
\label{duality}
For any probability measure $\mathfrak{m}$ on $\bb X$, any $n\geq 1$ and any function $F$ from $\bb X^n$ to $\bb R$,
\[
\bb E_{\mathfrak{m}} \left( F \left( X_1, \dots, X_{n-1}, X_n \right) \right) = \bb E_{\bs \nu}^* \left( F \left( X_n^*, X_{n-1}^*, \dots, X_1^* \right) \frac{\mathfrak{m} \left( X_{n+1}^* \right)}{\bs \nu \left( X_{n+1}^* \right)} \right).
\]
\end{lemma}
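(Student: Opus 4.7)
The plan is to establish the identity by unfolding both sides as explicit finite sums over paths in $\bb X$ and then identifying them through the defining relation $\bf P^*(x,x^*) = \frac{\bs \nu(x^*)}{\bs \nu(x)} \bf P(x^*,x)$. Since $\bb X$ is finite, no integrability issues arise and the argument reduces to an algebraic manipulation.

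More precisely, I would first expand the right-hand side. By conditioning on $X_0^*$, which has law $\bs \nu$ under $\bb P_{\bs \nu}^*$,
\begin{align*}
\bb E_{\bs \nu}^* \bigl( F(X_n^*,\dots,X_1^*) \tfrac{\mathfrak m(X_{n+1}^*)}{\bs \nu(X_{n+1}^*)} \bigr)
= \sum_{x_0^*,\dots,x_{n+1}^*} \bs \nu(x_0^*) \prod_{k=0}^n \bf P^*(x_k^*,x_{k+1}^*)\, F(x_n^*,\dots,x_1^*) \tfrac{\mathfrak m(x_{n+1}^*)}{\bs \nu(x_{n+1}^*)}.
\end{align*}
Substituting the definition of $\bf P^*$ in each factor causes the $\bs \nu$ terms to telescope:
\[
\bs \nu(x_0^*) \prod_{k=0}^n \bf P^*(x_k^*,x_{k+1}^*) = \bs \nu(x_0^*) \prod_{k=0}^n \frac{\bs \nu(x_{k+1}^*)}{\bs \nu(x_k^*)} \bf P(x_{k+1}^*,x_k^*) = \bs \nu(x_{n+1}^*) \prod_{k=0}^n \bf P(x_{k+1}^*,x_k^*).
\]
The factor $\bs \nu(x_{n+1}^*)$ then cancels the one in the denominator, leaving
\[
\sum_{x_0^*,\dots,x_{n+1}^*} \mathfrak m(x_{n+1}^*) \Bigl(\prod_{k=0}^n \bf P(x_{k+1}^*,x_k^*) \Bigr) F(x_n^*,\dots,x_1^*).
\]

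Next I would reverse the path indexation by setting $y_k := x_{n+1-k}^*$ for $k=0,\dots,n+1$; this is a bijective relabeling. It sends $\mathfrak m(x_{n+1}^*)$ to $\mathfrak m(y_0)$, $F(x_n^*,\dots,x_1^*)$ to $F(y_1,\dots,y_n)$, and the product of transitions to $\prod_{j=0}^n \bf P(y_j,y_{j+1})$. Using $\sum_{y_{n+1}} \bf P(y_n,y_{n+1}) = 1$, the dummy index $y_{n+1}$ drops out, and what remains is the path-sum expansion of
\[
\sum_{y_0,\dots,y_n} \mathfrak m(y_0) \prod_{j=0}^{n-1} \bf P(y_j,y_{j+1}) F(y_1,\dots,y_n) = \bb E_{\mathfrak m}\bigl(F(X_1,\dots,X_n)\bigr),
\]
which is the left-hand side.

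There is really no obstacle here beyond bookkeeping: the only subtle point is handling the \emph{extra} variable $X_{n+1}^*$ that appears on the right but not on the left, and realizing that the weight $\mathfrak m(X_{n+1}^*)/\bs \nu(X_{n+1}^*)$ is precisely what converts, after the $\bs \nu$-telescoping, the $\bs \nu$-initial distribution of the dual chain into the $\mathfrak m$-initial distribution of the original chain, with the auxiliary summation over $y_{n+1}$ disappearing by stochasticity of $\bf P$. In short, the duality formula is a direct consequence of the reversal identity $\bs \nu(x)\bf P^*(x,x^*) = \bs \nu(x^*)\bf P(x^*,x)$ iterated along a path.
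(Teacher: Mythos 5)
Your proof is correct and is essentially the paper's argument run in the opposite direction: you expand the dual-side expectation as a path sum, apply the defining relation $\bs\nu(x)\,\bf P^*(x,x^*)=\bs\nu(x^*)\,\bf P(x^*,x)$ link by link so that the $\bs\nu$ factors telescope, reverse the labeling, and sum out the dangling endpoint by stochasticity, whereas the paper expands the $\mathfrak m$-side over a length-$(n+1)$ path, rewrites each $\bf P$ factor as a $\bf P^*$ factor, and reads off the result directly. The two are the same one-line algebraic identity iterated along a path; your write-up simply makes the relabeling and the elimination of the auxiliary $(n+1)$-th variable explicit where the paper leaves them implicit in ``and the result of the lemma follows.''
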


\begin{proof}
We write
\begin{align*}
	\bb E_{\mathfrak{m}} &\left( F \left( X_1, \dots, X_{n-1}, X_n \right) \right) \\
	&= \sum_{x_0,x_1, \dots, x_{n-1}, x_n, x_{n+1} \in \bb X} F \left( x_1, \dots, x_{n-1}, x_n \right) \mathfrak{m}(x_0) \\
	&\hspace{3cm} \bb P_{x_0} \left( X_1 = x_1, X_2 = x_2, \dots, X_{n-1} = x_{n-1}, X_n = x_n, X_{n+1} = x_{n+1} \right). 
\end{align*}
By the definition of $\bf P^*$, we have
\begin{align*}
	&\bb P_{x_0} \left( X_1 = x_1, X_2 = x_2, \dots, X_{n-1} = x_{n-1}, X_n = x_n, X_{n+1} = x_{n+1} \right) \\
	&= \bf P(x_0,x_1) \bf P(x_1,x_2) \dots \bf P(x_{n-1},x_n) \bf P(x_n,x_{n+1}) \\
	&= \bf P^*(x_1,x_0) \frac{\bs \nu (x_1)}{\bs \nu (x_0)} \bf P^*(x_2,x_1) \frac{\bs \nu (x_2)}{\bs \nu (x_1)} \dots \bf P^*(x_n,x_{n-1}) \frac{\bs \nu (x_n)}{\bs \nu (x_{n-1})}\bf P^*(x_{n+1},x_n)\frac{\bs \nu (x_{n+1})}{\bs \nu (x_n)} \\
	&= \frac{\bs \nu (x_{n+1})}{\bs \nu (x_0)} \bb P^*_{x_{n+1}} \left( X_1^* = x_n, X_2^* = x_{n-1}, \dots, X_n^* = x_1, X_{n+1}^* = x_0 \right)
\end{align*}
and the result of the lemma follows.
\end{proof}

\section{The perturbed operator}
\label{OPPET}

For any $t \in \bb R$, denote by $\bf P_t$ the perturbed transition operator defined by
\[
\bf P_tg(x) = \bf P \left( \e^{\bf itf} g \right)(x) = \bb E_x \left( \e^{\bf itf(X_1)} g(X_1) \right), \quad \text{for any } g \in \scr C,\; x \in \bb X,
\]
where $\bf i$ is the complex $\bf i^2 = -1$.
Let also $r_t$ be the spectral radius of $\bf P_t$. Note that for any $g \in \scr C$, $\norm{\bf P_t g}_{\infty} \leq \norm{\e^{\bf itf} g}_{\infty} = \norm{g}_{\infty}$ and so
\begin{equation}
\label{semaphore}
r_t \leq 1.
\end{equation}

We introduce the two following definitions:
\begin{itemize}
\item A sequence $x_0, x_1, \dots, x_n \in \bb X$, is a \textit{path} (between $x_0$ and $x_n$) if 
\[
\bf P(x_0,x_1) \cdots \bf P(x_{n-1},x_n) > 0.
\]
\item A sequence $x_0, x_1, \dots, x_n \in \bb X$, is an \textit{orbit} if $x_0, x_1, \dots, x_n, x_0$ is a path.
\end{itemize}
Note that under Hypothesis \ref{primitive}, for any $x_0, x\in \bb X$ it is always possible to connect $x_0$ and $x$ by a path $x_0, x_1, \dots, x_n, x$ in $\bb X$.

\begin{lemma}
Assume Hypothesis \ref{primitive}.
\label{lapin}
The following statements are equivalent:
\begin{enumerate}[ref=\arabic*, leftmargin=*, label=\arabic*.]
\item \label{lapin001} There exists $(\theta,a) \in \bb R^2$ such that for any orbit $x_0, \dots, x_n$ in $\bb X$, we have 
\[
f(x_0) + \cdots + f(x_n) - (n+1)\theta \in a\bb Z.
\]
\item \label{lapin002} There exist $t\in \bb R^*$, $h\in \scr C\setminus \{0\}$ and $\theta \in \bb R$ such that for any $(x,x') \in \bb X^2$,
\[
h(x')\e^{\bf itf(x')}\bf P(x,x') = h(x) \e^{\bf it\theta} \bf P(x,x').
\]
\item \label{lapin003} There exists $t \in \bb R^*$ such that
\[
r_t = 1.
\]
\end{enumerate}
\end{lemma}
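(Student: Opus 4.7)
The plan is to establish the cycle \ref{lapin001} $\Rightarrow$ \ref{lapin002} $\Rightarrow$ \ref{lapin003} $\Rightarrow$ \ref{lapin001}. The implication \ref{lapin002} $\Rightarrow$ \ref{lapin003} is immediate: summing the identity in \ref{lapin002} over $x'\in\bb X$ gives $\bf P_t h = e^{\bf it\theta} h$, so $h$ is an eigenfunction of $\bf P_t$ with eigenvalue of modulus one; combined with the universal bound $r_t\leq 1$ from \eqref{semaphore}, this forces $r_t=1$.

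For \ref{lapin001} $\Rightarrow$ \ref{lapin002}, I would set $t=2\pi/a$ when $a\neq 0$ (any $t\neq 0$ works when $a=0$), fix a base point $x_0\in \bb X$, and use Hypothesis \ref{primitive} to connect $x_0$ to every $x\in \bb X$ by some path $P=(x_0,v_1,\dots,v_k=x)$, along which I would define
\[
h(x) := \exp\!\left(\bf i t\left(k\theta-\sum_{j=1}^{k}f(v_j)\right)\right).
\]
Well-definedness is the only real check: given two paths $P,P'$ from $x_0$ to $x$, I would invoke primitivity once more to produce a path $Q$ from $x$ back to $x_0$; then $P\cup Q$ and $P'\cup Q$ are both orbits, and subtracting the congruence of \ref{lapin001} for these two orbits yields exactly the identity needed for $h(x)$ to be independent of the chosen path. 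Extending a path ending at $x$ by a single edge to any $x'$ with $\bf P(x,x')>0$ then immediately gives $h(x')e^{\bf it f(x')}=h(x)e^{\bf it\theta}$, which is \ref{lapin002}.

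The crux is \ref{lapin003} $\Rightarrow$ \ref{lapin001}. Since $\bf P_t$ acts on the finite-dimensional space $\scr C$, the condition $r_t=1$ means some eigenvalue $\lambda=e^{\bf i\alpha}$ of modulus one is realized, with a nonzero eigenfunction $h$. The pointwise bound $|\bf P_t h|\leq \bf P|h|$ gives $|h|\leq \bf P|h|$; integrating against the $\bf P$-invariant probability $\bs \nu$ forces equality $|h|=\bf P|h|$, and iterating together with the convergence \eqref{decexpNLLT} shows $|h|\equiv \bs \nu(|h|)$, a positive constant which I would normalize to one. The saturated triangle inequality $\left|\sum_{x'} e^{\bf it f(x')}h(x')\bf P(x,x')\right|=\sum_{x'}\bf P(x,x')$ then forces every nonzero summand to be a positive multiple of $\lambda h(x)$, yielding
\[
h(x')\,e^{\bf it f(x')}=\lambda\, h(x)\quad\text{whenever}\quad \bf P(x,x')>0.
\]
Setting $\theta:=\alpha/t$ and multiplying this relation along the edges of an arbitrary orbit $x_0,\dots,x_n$ (using it once more for the closing edge from $x_n$ to $x_0$), the $h$-factors cancel because $|h|\equiv 1$, leaving $e^{\bf it(f(x_0)+\cdots+f(x_n))}=e^{\bf it(n+1)\theta}$; hence $f(x_0)+\cdots+f(x_n)-(n+1)\theta\in (2\pi/t)\bb Z$, and I would take $a:=2\pi/t$.

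The main obstacle is the Perron--Frobenius step inside \ref{lapin003} $\Rightarrow$ \ref{lapin001}: proving that $|h|$ is constant and then pinning the triangle inequality down to an equality along every edge of positive $\bf P$-weight. Everything else reduces to bookkeeping with the multiplicative cocycle $(x,x')\mapsto h(x')e^{\bf it f(x')}/h(x)$ along paths and orbits.
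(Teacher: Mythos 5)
Your proof is correct and follows essentially the same route as the paper's: the paper establishes the four implications $\ref{lapin001}\Rightarrow\ref{lapin002}$, $\ref{lapin002}\Rightarrow\ref{lapin001}$, $\ref{lapin002}\Rightarrow\ref{lapin003}$, $\ref{lapin003}\Rightarrow\ref{lapin002}$, and your three-step cycle merely fuses the last two of these (Perron--Frobenius constancy of $|h|$, then rigidity in the triangle inequality, then the multiplicative cocycle along an orbit) into a single implication $\ref{lapin003}\Rightarrow\ref{lapin001}$. The only cosmetic difference is how you argue $|h|$ is constant---you first derive $\mathbf{P}|h|=|h|$ from $\bs\nu$-invariance and then invoke \eqref{decexpNLLT}, whereas the paper uses the maximizer $x_0$ of $|h|$ and the limit $|h|\leq\bs\nu(|h|)$ directly---but both are standard and correct.
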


\begin{proof}
\textit{The point \ref{lapin001} implies the point \ref{lapin002}.} Suppose that the point \ref{lapin001} holds. Fix $x_0 \in \bb X$ and set $h(x_0) = 1$. For any $x \in \bb X$, define $h(x)$ in the following way: for any path $x_0, \dots, x_n, x$ in $\bb X$ we set
\[
h(x) = \e^{\bf it\theta(n+1)} \e^{-\bf i t \left( f(x_1) + \dots + f(x_n) + f(x) \right)},
\]
where $t = \frac{2\pi}{a}$. Note that if $a=0$, then the point \ref{lapin001} holds also for $a=1$ and so, without lost of generality, we assume that $a\neq 0$. We first verify that $h$ is well defined on $\bb X$. Recall that under Hypothesis \ref{primitive}, for any $x\in \bb X$ it is always possible to connect $x_0$ and $x$ by a path. We have to check that the value of $h(x)$ does not depend on the choice of the path. Let $p,q \geq 1$ and $x_0,x_1, \dots, x_p, x$ in $\bb X$ and $x_0,y_1, \dots, y_q, x$ in $\bb X$ be two paths between $x_0$ and $x$. We complete these paths to orbits as follows. Under Hypothesis \ref{primitive}, there exist $n \geq 1$ and $z_1, \dots, z_n$ in $\bb X$ such that
\[
\bf P(x,z_1) \cdots \bf P(z_n,x_0) > 0,
\]
i.e.\ the sequence $x, z_1, \dots, z_n, x_0$ is a path. So, the sequences $x_0,x_1,\dots,x_p,x,z_1,\dots,z_n$ and $x_0,y_1,\dots,y_q,x,z_1,\dots,z_n$ are orbits. By the point \ref{lapin001}, there exist $l_1,l_2 \in \bb Z$ such that
\begin{align*}
f(x_1) + \dots + f(x_p) + f(x) &= al_1 - \left( f(z_1) + \dots + f(z_n) + f(x_0) \right) + (p+n+2)\theta \\
&= al_1 - al_2 + \left( f(y_1) + \dots + f(y_q) + f(x) \right) \\
&\hspace{5cm}- (q+n+2)\theta + (p+n+2)\theta.
\end{align*}
Therefore,
\[
\e^{\bf it\theta(p+1)} \e^{-\bf i t \left( f(x_1) + \dots + f(x_p) + f(x) \right)} = \e^{-\bf i t \left( al_1 - al_2 \right)} \e^{\bf it\theta(q+1)} \e^{-\bf i t \left( f(y_1) + \dots + f(y_q) + f(x) \right)}
\]
and since $ta=2\pi$ it proves that $h$ is well defined. Now let $(x,x') \in \bb X^2$ be such that $\bf P(x,x') > 0$. There exists a path $x_0, x_1, \dots, x_n, x$ between $x_0$ and $x$ and so
\[
h(x) = \e^{\bf it\theta(n+1)} \e^{-\bf i t \left( f(x_1) + \dots + f(x_n) + f(x) \right)}.
\]
Since $x_0,x_1, \dots, x_n,x,x'$ is a path between $x_0$ and $x'$, we have also
\[
h(x') = \e^{\bf it\theta(n+2)} \e^{-\bf i t \left( f(x_1) + \dots + f(x_n) + f(x)+f(x') \right)} = h(x) \e^{\bf it\theta} \e^{-\bf i t f(x')}.
\]
Note that since the modulus of $h$ is $1$, this function belongs to $\scr C \setminus\{0\}$.

\textit{The point \ref{lapin002} implies the point \ref{lapin001}.} Suppose that the point \ref{lapin002} holds and let $x_0, \dots, x_n$ be an orbit. Using the point \ref{lapin002} repeatedly, we have
\[
h(x_0) = h(x_n) \e^{\bf it\theta} \e^{-\bf i t f(x_0)} = \dots  = h(x_0) \e^{\bf it\theta(n+1)} \e^{-\bf i t \left( f(x_0)+\dots+f(x_n) \right)}.
\]
Since $h$ is a non-identically zero function with a constant modulus, necessarily, $h$ is never equal to $0$ and so $f(x_0)+\dots+f(x_n) -(n+1)\theta \in \frac{2\pi}{t} \bb Z$.

\textit{The point \ref{lapin002} implies the point \ref{lapin003}.} Suppose that the point \ref{lapin002} holds. Summing on $x'$ we have, for any $x \in \bb X$,
\[
\bf P \left( h \e^{itf} \right)(x) = \bf P_t h(x) = h(x)\e^{\bf i t \theta}.
\]
Therefore $h$ is an eigenvector of $\bf P_t$ associated to the eigenvalue $\e^{\bf i t \theta}$ which implies that $r_t \geq \abs{\e^{\bf i t \theta}} = 1$ and by \eqref{semaphore}, $r_t = 1$.

\textit{The point \ref{lapin003} implies the point \ref{lapin002}.} Suppose that the point \ref{lapin003} holds. 
There exist $h \in \scr C \setminus \{0\}$ and $\theta \in \bb R$ such that $\bf P_t h = h\e^{\bf i t \theta}$. 
Without loss of generality, we suppose that $\norm{h}_{\infty} = 1$. Since $\bf P_t^n h = h\e^{\bf i t n \theta}$ for any $n \geq 1$, by \eqref{decexpNLLT}, for any $x \in \bb X$, we have 
\begin{equation}
\abs{h(x)} = \abs{\bf P_t^n h(x)} \leq \bf P^n \abs{h}(x) \underset{n\to+\infty}{\longrightarrow} \bs \nu \left( \abs{h} \right).
\label{hhbound001}
\end{equation}
From \eqref{hhbound001}, letting $x_0 \in \bb X$ be such that $\abs{h(x_0)} = \norm{h}_{\infty} = 1$, 
it is easy to see that 
\[
\abs{h(x_0)} \leq \sum_{x \in \bb X} \abs{h(x)} \bs \nu (x) \leq \abs{h(x_0)}.
\] 
From this it follows that the modulus of $h$ is constant on $\bb X$: $\abs{h(x)} = \abs{h(x_0)} = 1$ for any $x \in \bb X$. 
Consequently, there exists $\alpha$: $\bb X \to \bb R$ such that for any $x \in \bb X$,
\begin{equation}
\label{saule}
h(x) = \e^{\bf i \alpha(x)}.
\end{equation}
With \eqref{saule} the equation $\bf P_t h = h\e^{\bf i t \theta}$ can be rewritten as
\[
\forall x \in \bb X, \qquad \sum_{x'\in \bb X} \e^{\bf i \alpha(x')} \e^{\bf i t f(x')} \bf P(x,x') = \e^{\bf i \alpha(x)} \e^{\bf i t \theta}.
\]
Since $\e^{\bf i \alpha(x)} \e^{\bf i t \theta} \in \left\{ z \in \bb C : \abs{z}=1 \right\}$ and $\e^{\bf i \alpha(x')} \e^{\bf i f(x')} \in \left\{ z \in \bb C : \abs{z}=1 \right\}$, for any $x' \in \bb X$, the previous equation holds only if $h(x')\e^{\bf i t f(x')} = \e^{\bf i \alpha(x')} \e^{\bf i t f(x')} = \e^{\bf i \alpha(x)} \e^{\bf i t \theta} = h(x) \e^{\bf i t \theta}$ for any $x' \in \bb X$ such that $\bf P(x,x') > 0$.
\end{proof}

Define the operator norm $\norm{\cdot}_{\scr C \to \scr C}$  on $\scr C$ as follows: for any operator $R$: $\scr C \to \scr C$, set
\[
\norm{R}_{\scr C \to \scr C} := \sup_{g \in \scr C \setminus \{0\}} \frac{\norm{R(g)}_{\infty}}{\norm{g}_{\infty}}.
\]

\begin{lemma}
\label{NOLA}
Assume Hypotheses \ref{primitive} and \ref{nondegenere}.
For any compact set $K$ included in $\bb R^*$ there exist constants $c_K > 0$ and $c_K' >0$ such that for any $n \geq 1$,
\[
\sup_{t\in K} \norm{\bf P_t^n}_{\scr C \to \scr C} \leq c_K \e^{-c_K'n}.
\]
\end{lemma}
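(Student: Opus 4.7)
My plan combines Lemma \ref{lapin} with a standard resolvent argument, leveraging the fact that $\bb X$ is finite so $\scr C$ is finite-dimensional. First I would obtain pointwise spectral bounds from Lemma \ref{lapin}: under Hypothesis \ref{nondegenere}, condition \ref{lapin001} of that lemma fails for every $(\theta,a) \in \bb R^2$, so by the equivalence with \ref{lapin003} one has $r_t < 1$ for every $t \in \bb R^*$.

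Next I would promote this pointwise bound to a uniform bound on $K$. The matrix entries of $\bf P_t$ depend continuously (even analytically) on $t$, so $t \mapsto \bf P_t$ is continuous into the finite-dimensional operator algebra on $\scr C$. Since the roots of a monic polynomial depend continuously on its coefficients, $t \mapsto r_t$ is continuous on $\bb R$. Compactness of $K$ then gives that $\rho := \sup_{t \in K} r_t$ is attained, hence $\rho < 1$.

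Finally I would derive the exponential bound through a Cauchy-type integral. Pick any $\rho' \in (\rho,1)$. On the compact set $\{ z \in \bb C : \abs{z} = \rho' \} \times K$ the resolvent $(z - \bf P_t)^{-1}$ is well-defined (since $r_t \leq \rho < \rho'$) and continuous in $(z,t)$, so
\[
M_K := \sup_{\abs{z} = \rho',\; t \in K} \norm{(z - \bf P_t)^{-1}}_{\scr C \to \scr C} < +\infty.
\]
The holomorphic functional calculus then yields $\bf P_t^n = \frac{1}{2\pi \bf i} \oint_{\abs{z} = \rho'} z^n (z - \bf P_t)^{-1} \dd z$, whence $\norm{\bf P_t^n}_{\scr C \to \scr C} \leq \rho' M_K (\rho')^n$, giving the lemma with $c_K = \rho' M_K$ and $c_K' = -\log \rho' > 0$.

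The main obstacle is passing from the pointwise bounds $r_t < 1$ to a uniform exponential decay rate; this is handled by the continuity of the spectral radius together with the compactness of $K$, both of which rely crucially on $\scr C$ being finite-dimensional. If one instead tried to proceed trajectory by trajectory (fixing $t \in K$ and using Gelfand's formula $\norm{\bf P_t^n}^{1/n} \to r_t$), one would still need a covering argument and iteration of the submultiplicativity of $\norm{\cdot}_{\scr C \to \scr C}$ to extract the uniform constants, which is less transparent than the resolvent route above.
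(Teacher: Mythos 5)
Your proof is correct, and it takes a genuinely different route from the paper's. The paper argues via Gelfand's formula: it notes that $r_t = \inf_n \norm{\bf P_t^n}_{\scr C\to\scr C}^{1/n}$ is an infimum of continuous functions, hence upper semi-continuous, so $\rho := \sup_{t\in K} r_t$ is attained and $<1$; it then asserts a uniform threshold $n_0$ beyond which $\norm{\bf P_t^n}^{1/n}\leq \rho+\ee<1$ for all $t\in K$ and absorbs the finitely many smaller $n$ into the constant $c_K$. You instead use the finite-dimensionality of $\scr C$ to get full continuity of $t\mapsto r_t$, then pass to a resolvent contour integral on $\abs{z}=\rho'$, obtaining the geometric bound directly from compactness of $\{\abs{z}=\rho'\}\times K$ and the ML estimate. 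The two arguments buy slightly different things: the paper's approach needs only upper semi-continuity and so would in principle survive in an infinite-dimensional Banach-algebra setting (where $r_t$ need not be continuous), whereas your resolvent route sidesteps the question of whether the Gelfand limits converge uniformly on $K$ --- a point the paper glosses over --- by replacing it with the manifestly uniform bound $M_K<+\infty$ coming from joint continuity of the resolvent on a compact set. Since $\bb X$ is finite here, both approaches are legitimate, and yours is arguably tighter on the uniformity step.
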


\begin{proof}
By Lemma \ref{lapin}, under Hypotheses \ref{primitive} and \ref{nondegenere}, we have $r_t \neq 1$ for any $t\neq 0$ and  hence, using \eqref{semaphore},
\[
r_t < 1, \qquad \forall t \in \bb R^*.
\]
It is well known that
\[
r_t = \lim_{n\to+\infty} \norm{\bf P_t^n}_{\scr C \to \scr C}^{1/n}.
\]
Since $t \mapsto \bf P_t$ is continuous, the function $t \mapsto r_t$ is the infimum of the sequence of upper semi-continuous functions $t \mapsto \norm{\bf P_t^n}_{\scr C \to \scr C}^{1/n}$ and therefore is itself upper semi-continuous. In particular, for any compact set $K$ included in $\bb R^*$, there exists $t_0 \in K$ such that 
\[
\sup_{t\in K} r_t = r_{t_0} < 1.
\]
We deduce that for $\ee = (1- \sup_{t\in K} r_t)/2 >0$ there exists $n_0 \geq 1$ such that for any $n \geq n_0$,
\[
\norm{\bf P_t^n}_{\scr C \to \scr C}^{1/n} \leq \sup_{t\in K} r_t + \ee < 1.
\]
Choosing $c_{K'} = -\ln \left( \sup_{t\in K} r_t + \ee \right)$ and $c_K = \max_{n\leq n_0} \norm{\bf P_t^n}_{\scr C \to \scr C} \e^{c_{K'}n} +1$, the lemma is proved.
\end{proof}

In the proofs we make use of the following assertion which is a consequence of the perturbation theory of linear operator (see for example \cite{kato_perturbation_1976}). The point \ref{SPPT005} is proved in Lemma 2 of Guivarc'h and Hardy \cite{guivarch_theoremes_1988}.

\begin{proposition}
\label{SPPT}
Assume Hypotheses \ref{primitive} and \ref{nucentre}.
There exist a real $\ee_0>0$ and operator valued functions $\Pi_t$ and $Q_t$ acting from $[-\ee_0,\ee_0]$ to the set of operators onto $\scr C $ such that
\begin{enumerate}[ref=\arabic*, leftmargin=*, label= \arabic*.]
	\item \label{SPPT001} the maps $t \mapsto \Pi_t$, $t \mapsto Q_t$ and $t \mapsto \lambda_t$ are analytic at $0$,
	\item \label{SPPT002} the operator $\bf P_t$ has the following decomposition,
	\[
	\bf P_t = \lambda_t \Pi_t +Q_t, \qquad \forall t \in [-\ee_0,\ee_0],
	\]
	\item \label{SPPT003} for any $t\in [-\ee_0,\ee_0]$, $\Pi_t$ is a one-dimensional projector and $\Pi_t Q_t = Q_t \Pi_t = 0$,
	\item \label{SPPT004} there exist $c_1>0$ and $c_2>0$ such that, for any $n\in \bb N^*$,
	\[
	\sup_{t\in [-\ee_0,\ee_0]} \norm{Q_t^n}_{\scr C \to \scr C} \leq c_1 \e^{-c_2 n},
	\]
	\item \label{SPPT005} the function $\lambda_t$ has the following expansion at $0$: for any $t \in [-\ee_0,\ee_0]$,
	\[
	\abs{\lambda_t - 1 + \frac{t^2 \sigma^2}{2}} \leq c \abs{t}^3.
	\]
\end{enumerate}
\end{proposition}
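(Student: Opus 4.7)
The plan is to treat this as a standard analytic perturbation of the spectrum around the simple dominant eigenvalue $1$ of $\mathbf P$. I would proceed in four stages.

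First, I would establish the decomposition at $t=0$. Since $\mathbf P$ acts on the finite-dimensional space $\scr C$ and is primitive by Hypothesis \ref{primitive}, the Perron--Frobenius theorem gives a simple eigenvalue $\lambda_0=1$ with right eigenvector $\mathbbm 1$ and left eigenvector $\bs\nu$, and all other eigenvalues of modulus strictly less than some $\rho<1$. Setting $\Pi_0 g = \bs\nu(g)\mathbbm 1$ and $Q_0 = \mathbf P - \Pi_0$, the rank of $\Pi_0$ is one, $\Pi_0 Q_0 = Q_0\Pi_0 = 0$, and the spectral radius of $Q_0$ equals $\rho<1$.

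Second, I would invoke Kato's analytic perturbation theory. Since $t\mapsto \mathbf P_t$ is entire in $t$ (its matrix entries are $\e^{\bf i t f(x')}\mathbf P(x,x')$), the isolated simple eigenvalue $1$ persists as an analytic simple eigenvalue $\lambda_t$ for $|t|\leq \ee_0$ with $\ee_0$ small enough. Concretely, I would fix a circle $\gamma$ around $1$ of radius $(1-\rho)/2$ that contains no other eigenvalue of $\mathbf P_0$, and define
\[
\Pi_t = \frac{1}{2\pi \bf i}\oint_{\gamma} (z - \mathbf P_t)^{-1}\,\dd z,\qquad \lambda_t = \frac{1}{2\pi \bf i}\oint_{\gamma} z\,\mathrm{tr}(z-\mathbf P_t)^{-1}\dd z,
\]
the trace being taken on the one-dimensional range. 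By continuity of the resolvent in $(t,z)$ and shrinking $\ee_0$, these integrals are well defined and analytic in $t$, yielding \ref{SPPT001}. The projector $\Pi_t$ is one-dimensional (perturbation of a rank-one projector has the same rank for $t$ small), giving \ref{SPPT003}, and setting $Q_t := \mathbf P_t - \lambda_t\Pi_t$ produces the decomposition \ref{SPPT002} together with the orthogonality $\Pi_t Q_t = Q_t\Pi_t = 0$.

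Third, for the exponential bound \ref{SPPT004}, I would use the complementary contour: on $|z| = r$ with $(1+\rho)/2 < r < 1$, the resolvent $(z-\mathbf P_0)^{-1}$ is well defined; by a Neumann series argument, for $t$ sufficiently small the resolvent $(z-\mathbf P_t)^{-1}$ remains uniformly bounded on this circle. Then
\[
Q_t^n = \frac{1}{2\pi \bf i}\oint_{|z|=r} z^n (z-\mathbf P_t)^{-1}(I-\Pi_t)\,\dd z,
\]
so $\|Q_t^n\|_{\scr C\to\scr C}\leq C r^n$ uniformly in $t\in[-\ee_0,\ee_0]$, which is \ref{SPPT004} with $c_2 = -\log r$.

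Finally, for the Taylor expansion \ref{SPPT005}, I would pick the analytic family of right eigenvectors $h_t$ normalized by $\bs\nu(h_t)=1$ with $h_0=\mathbbm 1$, so that $\lambda_t = \bs\nu(\mathbf P_t h_t) = \bs\nu(\e^{\bf i t f}h_t)$. Differentiating $\mathbf P_t h_t = \lambda_t h_t$ once at $t=0$ and applying $\bs\nu$ yields $\lambda_0' = \bf i \bs\nu(f) = 0$ by Hypothesis \ref{nucentre}, so the Poisson equation $(I-\mathbf P)h_0' = \bf i\mathbf P f$ is solvable with $h_0' = \bf i\sum_{n\geq 1}\mathbf P^n f$ (modulo an additive constant absorbed by the normalization). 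Differentiating twice and applying $\bs\nu$ again gives
\[
\lambda_0'' = -\bs\nu(f^2) + 2\bf i\,\bs\nu(f h_0') = -\bs\nu(f^2) - 2\sum_{n\geq 1}\bs\nu(f\mathbf P^n f) = -\sigma^2,
\]
by \eqref{mu-sigma001LLT}. Analyticity of $\lambda_t$ provides the cubic remainder $|\lambda_t - 1 + t^2\sigma^2/2|\leq c|t|^3$.

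The only delicate step is ensuring the contour $\gamma$ can be chosen uniformly in $t$ small and that $\Pi_t$ indeed remains rank one; this follows from continuity of the spectrum of a finite matrix in its entries, but requires care to tie $\ee_0$ to the spectral gap $1-\rho$ of $\mathbf P$. Everything else is either Perron--Frobenius or Kato's standard holomorphic perturbation machinery applied in finite dimension; the expansion of $\lambda_t$ is a routine Poisson-equation computation once $\lambda'_0=0$ is obtained from the centering.
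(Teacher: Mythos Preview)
Your proposal is correct and follows precisely the route the paper indicates: the paper does not give a proof but simply states that the proposition is a consequence of standard analytic perturbation theory (citing Kato) together with Lemma~2 of Guivarc'h and Hardy for the expansion in point~\ref{SPPT005}. Your four stages---Perron--Frobenius for $t=0$, Riesz projections along a fixed contour for analyticity of $\lambda_t$, $\Pi_t$, $Q_t$, a complementary contour for the uniform exponential decay of $Q_t^n$, and the Poisson-equation computation for $\lambda_0''=-\sigma^2$---are exactly the standard details behind those citations, carried out correctly in the finite-dimensional setting.
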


Note that $\lambda_0=1$ and $\Pi_0(\cdot) = \Pi(\cdot) = \bs \nu (\cdot) e$, where $e$ is the unit function of $\bb X$: $e(x) = 1$, for any $x\in \bb X$. 

\begin{lemma}
\label{Cameleon}
Assume Hypotheses \ref{primitive} and \ref{nucentre}.
There exists $\ee_0 > 0$ such that for any $n \geq 1$ and  $t \in [-\ee_0 \sqrt{n}, \ee_0\sqrt{n}]$,
\[
\norm{\bf P_{\frac{t}{\sqrt{n}}}^n - \e^{-\frac{t^2 \sigma^2}{2}} \Pi}_{\scr C \to \scr C} \leq \frac{c}{\sqrt{n}}\e^{-\frac{t^2 \sigma^2}{4}} + c \e^{-c n}.
\]
\end{lemma}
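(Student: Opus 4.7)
The plan is to apply the spectral decomposition of Proposition~\ref{SPPT} at $s = t/\sqrt{n}$. For $\ee_0$ small enough, $|s| \leq \ee_0$, so we may write $\bf P_s = \lambda_s \Pi_s + Q_s$. Since $\Pi_s$ is a one-dimensional projector with $\Pi_s Q_s = Q_s \Pi_s = 0$ (Proposition~\ref{SPPT}\ref{SPPT003}), a trivial induction gives $\bf P_s^n = \lambda_s^n \Pi_s + Q_s^n$. I would then decompose
\[
\bf P_s^n - \e^{-t^2\sigma^2/2} \Pi = Q_s^n + \left(\lambda_s^n - \e^{-t^2\sigma^2/2}\right)\Pi_s + \e^{-t^2\sigma^2/2}(\Pi_s - \Pi),
\]
and bound the three pieces separately.

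The remainder $Q_s^n$ is immediate: Proposition~\ref{SPPT}\ref{SPPT004} gives $\norm{Q_s^n}_{\scr C \to \scr C} \leq c_1 \e^{-c_2 n}$, contributing the $c\e^{-cn}$ summand. For the projector correction, analyticity of $s \mapsto \Pi_s$ at $0$ (Proposition~\ref{SPPT}\ref{SPPT001}) yields $\norm{\Pi_s - \Pi}_{\scr C \to \scr C} \leq c|s| = c|t|/\sqrt{n}$; the third piece is therefore bounded by $c|t|\e^{-t^2\sigma^2/2}/\sqrt{n}$, and the polynomial factor $|t|$ is absorbed into a slightly slower Gaussian via the trivial estimate $|t|\e^{-t^2\sigma^2/4} \leq c$, producing the desired $c\e^{-t^2\sigma^2/4}/\sqrt{n}$.

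The bulk of the work concerns the eigenvalue term $\lambda_s^n - \e^{-t^2\sigma^2/2}$. Since $\lambda_0 = 1$ and $s \mapsto \lambda_s$ is analytic, for $\ee_0$ small $\lambda_s$ lies in a small complex disc around $1$, so one can write $\lambda_s = \e^{\psi(s)}$ with $\psi$ analytic and $\psi(0) = 0$. Proposition~\ref{SPPT}\ref{SPPT005} then gives $|\psi(s) + s^2\sigma^2/2| \leq c|s|^3$, whence
\[
|n\psi(s) + t^2\sigma^2/2| \leq c n|s|^3 = c|t|^3/\sqrt{n}.
\]
I would apply the mean-value inequality $|\e^a - \e^b| \leq |a-b|\sup_{z \in [a,b]} |\e^z|$ to the complex segment joining $a = n\psi(s)$ and $b = -t^2\sigma^2/2$. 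By choosing $\ee_0$ small enough that $c|t|^3/\sqrt{n} \leq t^2\sigma^2/8$ throughout the range $|t| \leq \ee_0\sqrt{n}$, the real part of $z$ on this segment is bounded above by $-t^2\sigma^2/2 + t^2\sigma^2/8 = -3t^2\sigma^2/8$, giving
\[
|\lambda_s^n - \e^{-t^2\sigma^2/2}| \leq c|t|^3 \e^{-3t^2\sigma^2/8}/\sqrt{n},
\]
and the factor $|t|^3$ is absorbed via $|t|^3\e^{-t^2\sigma^2/8} \leq c$ to leave exactly $c\e^{-t^2\sigma^2/4}/\sqrt{n}$.

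The main obstacle I anticipate is the calibration of $\ee_0$. One must absorb the polynomial prefactors $|t|$ and $|t|^3$ into faster-decaying Gaussians while still retaining the exponent $\sigma^2/4$ in the final bound. Because the allowed range $|t| \leq \ee_0\sqrt{n}$ lets the quantity $|n\psi(s) + t^2\sigma^2/2|$ grow as large as order $n$, the mean-value argument cannot rely on smallness of $|a-b|$ alone; the crucial input is that both endpoints of the segment have uniformly very negative real part, which is what ties the choice of $\ee_0$ to the target Gaussian rate.
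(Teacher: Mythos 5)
Your proof is correct, and the overall architecture (spectral decomposition $\bf P_s^n=\lambda_s^n\Pi_s+Q_s^n$, bound the projector correction by analyticity, bound $Q_s^n$ by geometric decay, treat the eigenvalue term separately) matches the paper's. The difference is in the eigenvalue estimate for $\lambda_s^n-\e^{-t^2\sigma^2/2}$. The paper works directly with $\lambda_s=1-\frac{s^2\sigma^2}{2}+s^3\alpha(s)$, raises to the $n$th power, and bounds $\bigl|(1+x)^n-1\bigr|\leq(1+|x|)^n-1$ via the binomial theorem together with $1+u\leq\e^u$ and an explicit comparison of $(1-\frac{t^2\sigma^2}{2n})^n$ with $\e^{-t^2\sigma^2/2}$; this is elementary but somewhat fiddly, requiring two separate calibrations of $\ee_0$. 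You instead take the principal logarithm $\psi(s)=\log\lambda_s$ (legitimate after shrinking $\ee_0$ so that $|\lambda_s-1|<1/2$), obtain $|n\psi(s)+t^2\sigma^2/2|\leq c|t|^3/\sqrt{n}$ from Proposition~\ref{SPPT}\ref{SPPT005}, and apply the segment form of the mean-value inequality for the complex exponential. Your route is more conceptual and makes the $\ee_0$-calibration transparent in one place: the condition $c|t|^3/\sqrt{n}\leq t^2\sigma^2/8$ (equivalently $|s|\leq\sigma^2/(8c)$) simultaneously keeps both endpoints of the segment with very negative real part, which is exactly what controls $\sup_z|\e^z|$ on the segment. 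The only minor point to spell out rigorously is the inference from $|\lambda_s-1+s^2\sigma^2/2|\leq c|s|^3$ to $|\psi(s)+s^2\sigma^2/2|\leq c|s|^3$, which holds because $\psi(s)-(\lambda_s-1)=O(|\lambda_s-1|^2)=O(|s|^4)$; this is absorbed for $|s|\leq\ee_0$. Both approaches buy the same final estimate.
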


\begin{proof}
By the points \ref{SPPT002} and \ref{SPPT003} of Proposition \ref{SPPT}, for any $t/\sqrt{n} \in [-\ee_0,\ee_0]$,
\[
\bf P_{\frac{t}{\sqrt{n}}}^n = \lambda_{\frac{t}{\sqrt{n}}}^n \Pi_{\frac{t}{\sqrt{n}}} + Q_{\frac{t}{\sqrt{n}}}^n.
\]
By the points \ref{SPPT001} and \ref{SPPT004} of Proposition \ref{SPPT}, for $n\geq 1$,
\begin{align}
	\label{semaphore001}
	\norm{\Pi_{\frac{t}{\sqrt{n}}} - \Pi}_{\scr C \to \scr C} &\leq \sup_{u\in [-\ee_0,\ee_0]} \norm{\Pi_{u}'}_{\scr C \to \scr C} \frac{\abs{t}}{\sqrt{n}} \leq c\frac{\abs{t}}{\sqrt{n}},\\
	\label{semaphore002}
	\sup_{t\in [-\ee_0,\ee_0]} \norm{Q_{\frac{t}{\sqrt{n}}}^n}_{\scr C \to \scr C} &\leq c \e^{-c n}.
\end{align}
Let $\alpha$ be the complex valued function defined on $[-\ee_0,\ee_0]$ by $\alpha(t) = \frac{1}{t^3} \left( \lambda_t - 1 + \frac{t^2 \sigma^2}{2} \right)$ for any $t \in [-\ee_0,\ee_0] \setminus \{0\}$ and $\alpha(0) = 0$. By the point \ref{SPPT005} of Proposition \ref{SPPT}, there exists $c >0$ such that
\begin{equation}
	\label{aurore}
	\forall t \in [-\ee_0,\ee_0], \qquad \abs{\alpha(t)} \leq c.
\end{equation}
With this notation, we have for any $t/\sqrt{n} \in [-\ee_0,\ee_0]$,
\begin{align}
	\abs{\lambda_{\frac{t}{\sqrt{n}}}^n - \e^{-\frac{t^2 \sigma^2}{2}}} &\leq \underbrace{\abs{\left( 1 - \frac{t^2 \sigma^2}{2n} + \frac{t^3}{n^{3/2}} \alpha\left( \frac{t}{\sqrt{n}} \right) \right)^n - \left( 1-\frac{t^2 \sigma^2}{2n} \right)^n}}_{=: I_1} \nonumber\\
	&\qquad + \underbrace{\abs{\left( 1-\frac{t^2 \sigma^2}{2n} \right)^n - \e^{-\frac{t^2 \sigma^2}{2}}}}_{=:I_2}.
	\label{citron}
\end{align}
Without loss of generality, the value of $\ee_0> 0$ can be chosen such that $\ee_0^2 \sigma^2 \leq 1$ and so for any $t/\sqrt{n} \in [-\ee_0,\ee_0]$, we have $1-\frac{t^2 \sigma^2}{2n} \geq 1/2$. Therefore,
\begin{align*}
I_1 &\leq \left( 1-\frac{t^2 \sigma^2}{2n} \right)^n \abs{\left( 1 + \frac{t^3}{n^{3/2}\left( 1-\frac{t^2 \sigma^2}{2n} \right)} \alpha\left( \frac{t}{\sqrt{n}} \right) \right)^n - 1} \\
&\leq \left( 1-\frac{t^2 \sigma^2}{2n} \right)^n \sum_{k=1}^n \begin{pmatrix} n \\ k \end{pmatrix} \abs{\frac{t^3}{n^{3/2}\left( 1-\frac{t^2 \sigma^2}{2n} \right)} \alpha\left( \frac{t}{\sqrt{n}} \right)}^k \\
&= \left( 1-\frac{t^2 \sigma^2}{2n} \right)^n \left[ \left( 1 + \frac{\abs{t}^3}{n^{3/2}\left( 1-\frac{t^2 \sigma^2}{2n} \right)} \abs{\alpha\left( \frac{t}{\sqrt{n}} \right)} \right)^n - 1 \right].
\end{align*}
Using the inequality $1+u \leq \e^{u}$ for $u \in \bb R$, the fact that $1-\frac{t^2 \sigma^2}{2n} \geq 1/2$ and the bound \eqref{aurore}, we have
\[
I_1 \leq \e^{-\frac{t^2 \sigma^2}{2}} \left( \e^{\frac{c\abs{t}^3}{\sqrt{n}}} - 1 \right).
\]
Next, using the inequality $\e^{u}-1 \leq u \e^{u}$ for $u \geq 0$ and the fact that $\abs{t}/\sqrt{n} \leq \ee_0$,
\begin{equation}
I_1 \leq \e^{-\frac{t^2 \sigma^2}{2}} \frac{c}{\sqrt{n}} \abs{t}^3 \e^{c \ee_0 t^2}.
\label{I1bound001}
\end{equation}
Again, without loss of generality, the value of $\ee_0> 0$ can be chosen such that $c \ee_0^2 \leq \sigma^2/8$ (this have no impact on \eqref{aurore} which holds for any $[-\ee_0',\ee_0'] \subseteq [-\ee_0,\ee_0]$). Thus, from \eqref{I1bound001} it follows that
\begin{equation}
	\label{citron001}
	I_1 \leq \frac{c}{\sqrt{n}} \e^{-\frac{t^2 \sigma^2}{4}}.
\end{equation}
Using the inequalities $1-u \leq \e^{-u}$ for $u \in \bb R$ and $\ln(1-u) \geq -u-u^2$ for $u \leq 1$, we have
\begin{equation}
	\label{citron002}
	I_2 = \e^{-\frac{t^2 \sigma^2}{2}} - \left( 1-\frac{t^2 \sigma^2}{2n} \right)^n \leq \e^{-\frac{t^2 \sigma^2}{2}} - \e^{-\frac{t^2 \sigma^2}{2}-\frac{t^4 \sigma^4}{4n}} \leq \frac{t^4 \sigma^4}{4n}\e^{-\frac{t^2 \sigma^2}{2}} \leq \frac{c}{\sqrt{n}}\e^{-\frac{t^2 \sigma^2}{4}}.
\end{equation}
Putting together \eqref{citron}, \eqref{citron001} and \eqref{citron002}, we obtain that, for any $t/\sqrt{n} \in [-\ee_0,\ee_0]$,
\begin{equation}
	\label{semaphore003}
\abs{\lambda_{\frac{t}{\sqrt{n}}}^n - \e^{-\frac{t^2 \sigma^2}{2}}} \leq \frac{c}{\sqrt{n}}\e^{-\frac{t^2 \sigma^2}{4}}.
\end{equation}
In the same way, one can prove that
\begin{equation}
	\label{semaphore004}
	\abs{t}\abs{\lambda_{\frac{t}{\sqrt{n}}}^n} \leq \e^{-\frac{t^2 \sigma^2}{4}}.
\end{equation}

The right hand side in the assertion of the lemma can be bounded as follows: 
\[
\norm{\bf P_{\frac{t}{\sqrt{n}}}^n - \e^{-\frac{t^2 \sigma^2}{2}} \Pi}_{\scr C \to \scr C} \leq \abs{\lambda_{\frac{t}{\sqrt{n}}}^n} \norm{\Pi_{\frac{t}{\sqrt{n}}} - \Pi}_{\scr C \to \scr C} + \abs{\lambda_{\frac{t}{\sqrt{n}}}^n - \e^{-\frac{t^2 \sigma^2}{2}}} \norm{\Pi}_{\scr C \to \scr C} + \norm{Q_{\frac{t}{\sqrt{n}}}^n}_{\scr C \to \scr C}.
\]
Using \eqref{semaphore001}, \eqref{semaphore002}, \eqref{semaphore003} and \eqref{semaphore004}, we obtain that, for any $t/ \sqrt{n} \in [\ee_0,\ee_0]$,
\[
\norm{\bf P_{\frac{t}{\sqrt{n}}}^n - \e^{-\frac{t^2 \sigma^2}{2}} \Pi}_{\scr C \to \scr C} \leq \frac{c}{\sqrt{n}} \e^{-\frac{t^2 \sigma^2}{4}} + c \e^{-c n}.
\]
\end{proof}

\section{A non asymptotic local limit theorem}
\label{SecLLTWC}

In this section we establish a local limit theorem for the Markov walk jointly with the Markov chain. 
Our result is similar to that in 
Grama and Le Page \cite{grama_bounds_2017} where the case of sums of independent random variables is considered under the Cram\'er
condition.  
We refer to Guivarc'h and Hardy \cite{guivarch_theoremes_1988} for local limit theorem 
for a Markov chain with compact state space. 
In contrast to \cite{guivarch_theoremes_1988} our local limit theorem gives a control of the remainder term.

We first establish a local limit theorem for integrable functions with Fourier transforms with compact supports.
For any integrable  function $h$: $\bb R \to \bb R$ denote by $\hat{h}$ its Fourier transform:
\[
\hat{h}(t) = \int_{\bb R} \e^{-itu} h(u) \dd u, \quad \forall t \in \bb R.
\]
When $\hat{h}$ is integrable, by the inversion formula,
\[
h(u) = \frac{1}{2\pi} \int_{\bb R} \e^{itu} \hat{h}(t) \dd t, \quad \forall u \in \bb R.
\]
For any integrable functions $h$ and $g$, let
\[
h*g(u) = \int_{\bb R} h(v)g(u-v) \dd v
\]
be the convolution of $h$ and $g$. Denote by $\varphi_{\sigma}$ the density of the centred normal law with variance $\sigma^2$:
\begin{equation}
	\label{normal}
	\varphi_\sigma(u) = \frac{1}{\sqrt{2\pi}\sigma}e^{-\frac{u^2}{2\sigma^2}}, \quad \forall u \in \bb R.
\end{equation}

\begin{lemma}
\label{Buisson1}
Assume Hypotheses \ref{primitive}-\ref{nondegenere}.
For any $A > 0$, any integrable function $h$ on $\bb R$ whose Fourier transform $\hat{h}$ has a compact support included in $[-A,A]$, any real function $\psi$ defined on $\bb X$ and any $n \geq 1$,
\begin{align*}
	\underset{y\in \bb R}{\sup} \sqrt{n} &\abs{\bb E_x \left( h\left( y+S_n \right) \psi \left( X_n \right) \right) - h*\varphi_{\sqrt{n}\sigma}(y) \bs \nu \left( \psi \right)} \\
&\hspace{3cm} \leq \norm{\psi}_{\infty} \left( \frac{c}{\sqrt{n}} \norm{h}_{\LL^1} + \norm{\hat{h}}_{\LL^1} c_{A}\e^{-c_{A}n} \right).
\end{align*}
\end{lemma}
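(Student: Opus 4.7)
The approach is the standard Fourier-analytic one, relying on the fact that $\bb E_x \left( \e^{\bf itS_n} \psi(X_n) \right) = \bf P_t^n \psi(x)$, combined with the spectral information about $\bf P_t$ collected in Lemmas \ref{NOLA} and \ref{Cameleon}.

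First, I would use the Fourier inversion formula, valid because $\hat{h}$ is supported in $[-A,A]$ and hence in particular integrable, to write
\[
h(y+S_n) = \frac{1}{2\pi} \int_{-A}^A \e^{\bf it(y+S_n)} \hat{h}(t) \dd t.
\]
Multiplying by $\psi(X_n)$, taking expectations and using Fubini, I obtain
\[
\bb E_x \left( h(y+S_n) \psi(X_n) \right) = \frac{1}{2\pi} \int_{-A}^A \e^{\bf ity} \hat{h}(t) \bf P_t^n \psi(x) \dd t.
\]
In parallel, since the Fourier transform of $\varphi_{\sqrt{n}\sigma}$ is $t \mapsto \e^{-n\sigma^2 t^2/2}$ and $\hat{h}$ is supported in $[-A,A]$, Fourier inversion applied to the convolution yields
\[
h*\varphi_{\sqrt{n}\sigma}(y) \bs\nu(\psi) = \frac{1}{2\pi} \int_{-A}^A \e^{\bf ity} \hat{h}(t) \e^{-n\sigma^2 t^2/2} \bs\nu(\psi) \dd t.
\]
Subtracting gives
\[
\bb E_x \left( h(y+S_n) \psi(X_n) \right) - h*\varphi_{\sqrt{n}\sigma}(y) \bs\nu(\psi) = \frac{1}{2\pi} \int_{-A}^A \e^{\bf ity} \hat{h}(t) \left[ \bf P_t^n \psi(x) - \e^{-n\sigma^2 t^2/2} \bs\nu(\psi) \right] \dd t,
\]
whose modulus is bounded, uniformly in $y$, by $\frac{1}{2\pi}\int_{-A}^A |\hat{h}(t)| \cdot |\bf P_t^n \psi(x) - \e^{-n\sigma^2 t^2/2} \bs\nu(\psi)| \dd t$. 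The whole problem then reduces to estimating this last integral; note that $\e^{-n\sigma^2 t^2/2}\bs\nu(\psi) = \e^{-n\sigma^2 t^2/2} (\Pi \psi)(x)$ where $\Pi$ is the projector from Proposition \ref{SPPT}.

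Next, I would split the integration domain into a small neighbourhood of zero $\{|t|\leq \ee_0\}$ and its complement $\{\ee_0 < |t| \leq A\}$, with $\ee_0$ given by Lemma \ref{Cameleon} (if $A\leq \ee_0$, only the first region contributes). On the small region, applying Lemma \ref{Cameleon} with the rescaling $t = s/\sqrt{n}$ (legal as long as $|t|\leq \ee_0$) yields
\[
\left| \bf P_t^n \psi(x) - \e^{-n\sigma^2 t^2/2}\bs\nu(\psi) \right| \leq \norm{\psi}_{\infty}\left( \frac{c}{\sqrt{n}} \e^{-n\sigma^2 t^2/4} + c \e^{-cn} \right).
\]
Bounding $|\hat{h}(t)| \leq \norm{h}_{\LL^1}$ and computing the Gaussian integral $\int_{\bb R} \e^{-n\sigma^2 t^2/4}\dd t = c/\sqrt{n}$ gives a contribution of order $\norm{\psi}_\infty \norm{h}_{\LL^1}/n$ from this region. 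On the large region, Lemma \ref{NOLA} applied to the compact set $K = \{t: \ee_0 \leq |t| \leq A\}$ yields $\norm{\bf P_t^n}_{\scr C \to \scr C}\leq c_A \e^{-c_A' n}$, and since $\e^{-n\sigma^2 t^2/2}\leq \e^{-n\sigma^2\ee_0^2/2}$ on this region, the integrand is at most $c_A \norm{\psi}_\infty \e^{-c_A' n}$ (after absorbing the two exponential rates into one constant); integrating against $|\hat{h}|$ gives a contribution of order $\norm{\psi}_\infty \norm{\hat{h}}_{\LL^1} c_A \e^{-c_A' n}$. Summing the two bounds and multiplying by $\sqrt{n}$ (while absorbing the factor $\sqrt{n}$ into the exponential decay) produces the claimed estimate.

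The plan presents no genuine obstacle, but one has to be scrupulous on two bookkeeping points: the correct matching between the rescaling in Lemma \ref{Cameleon} (where the operator is $\bf P_{t/\sqrt{n}}^n$ and the Gaussian weight is $\e^{-\sigma^2 t^2/2}$) and the Fourier variable appearing here (where the operator is $\bf P_t^n$ and the Gaussian weight is $\e^{-n\sigma^2 t^2/2}$), and the case $A\leq \ee_0$, in which the NOLA contribution is empty. With these minor precautions the argument is straightforward.
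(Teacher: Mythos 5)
Your plan is correct and coincides almost line by line with the paper's own proof: Fourier inversion to rewrite both terms as integrals against $\hat h$, Fubini, a split of the $t$-domain at $\ee_0$, Lemma~\ref{NOLA} on the compact annulus $\{\ee_0\leq|t|\leq A\}$ for the exponential tail and Lemma~\ref{Cameleon} (via the rescaling $s=t\sqrt{n}$) near the origin, with $|\hat h|\leq\norm{h}_{\LL^1}$ and the Gaussian integral yielding the $\norm{h}_{\LL^1}/\sqrt n$ term. The two bookkeeping remarks you flag (the scaling mismatch between $\bf P_{t/\sqrt n}^n$ and $\bf P_t^n$, and the degenerate case $A\leq\ee_0$) are exactly the right things to watch, and the paper handles them in the same way.
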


\begin{proof}
By the inversion formula and the Fubini theorem,
\begin{align*}
	I_0 &:= \sqrt{n}\abs{\bb E_x \left( h\left( y+S_n \right) \psi \left( X_n \right) \right) - h*\varphi_{\sqrt{n}\sigma}(y) \bs \nu \left( \psi \right)}\\
	&= \frac{\sqrt{n}}{2\pi}\abs{\bb E_x \left( \int_{\bb R} \e^{it\left( y+S_n \right)} \hat{h}(t) \dd t \psi \left( X_n \right) \right) - \int_{\bb R} \hat{h}(t) 
	\hat{\varphi}_{\sqrt{n}\sigma}(t) \e^{ity} \dd t \bs \nu \left( \psi \right)}\\
	&= 	\frac{\sqrt{n}}{2\pi}\abs{\int_{\bb R} \e^{ity} \left( \bf P_{t}^n \psi (x) - \e^{-\frac{t^2\sigma^2 n}{2}} \bs \nu \left( \psi \right) \right) \hat{h}(t) \dd t}.
\end{align*}
Since $\hat{h}( t ) = 0$ for any $t \notin [-A,A]$, we write
\begin{align}
	I_0 \leq\; &\underbrace{\frac{\sqrt{n}}{2\pi}\abs{\int_{\ee_0 \leq \abs{t}\leq A} \e^{ity} \left( \bf P_{t}^n \psi (x) - \e^{-\frac{t^2\sigma^2 n}{2}} \bs \nu \left( \psi \right) \right) \hat{h}(t) \dd t}}_{=:I_1} \nonumber\\
	&+ \underbrace{\frac{\sqrt{n}}{2\pi}\abs{\int_{\abs{t}\leq \ee_0} \e^{ity} \left( \bf P_{t}^n \psi (x) - \e^{-\frac{t^2\sigma^2 n}{2}} \bs \nu \left( \psi \right) \right) \hat{h}(t) \dd t}}_{=:I_2},
	\label{decI0}
\end{align}
where $\ee_0$ is defined by Lemma \ref{Cameleon}.

\textit{Bound of $I_1$.} By Lemma \ref{NOLA}, for any $\ee_0 \leq \abs{t} \leq A$, we have
\[
\norm{\bf P_{t}^n \psi}_{\infty} \leq \norm{\psi}_{\infty} c_{A,\ee_0} \e^{-c_{A,\ee_0}n}.
\]
Consequently,
\begin{align}
	I_1 &\leq \frac{\sqrt{n}}{2\pi} \left( \norm{\psi}_{\infty} c_{A,\ee_0} \e^{-c_{A,\ee_0}n} + \e^{-\frac{\ee_0^2 \sigma^2 n}{2}} \abs{\bs \nu(\psi)} \right) \norm{\hat{h}}_{\LL^1} \nonumber\\
	\label{MajI1LLT}
	&\leq \norm{\psi}_{\infty} \norm{\hat{h}}_{\LL^1} c_{A,\ee_0}\e^{-c_{A,\ee_0}n}.
\end{align}

\textit{Bound of $I_2$.} Substituting $s=t\sqrt{n}$, we write
\begin{align*}
	I_2 &= \frac{1}{2\pi}\abs{\int_{\abs{s}\leq \ee_0 \sqrt{n}} \e^{i\frac{sy}{\sqrt{n}}} \left( \bf P_{\frac{s}{\sqrt{n}}}^n \psi (x) - \e^{-\frac{s^2\sigma^2}{2}} \bs \nu \left( \psi \right) \right) \hat{h} \left( \frac{s}{\sqrt{n}} \right) \dd s} \\
	&\leq \frac{1}{2\pi}\int_{\abs{s}\leq \ee_0 \sqrt{n}} \abs{\bf P_{\frac{s}{\sqrt{n}}}^n \psi (x) - \e^{-\frac{s^2\sigma^2}{2}} \bs \nu \left( \psi \right)} \abs{\hat{h} \left( \frac{s}{\sqrt{n}} \right)} \dd s.
\end{align*}
By Lemma \ref{Cameleon}, for any $\abs{s}\leq \ee_0 \sqrt{n}$, we have 
\begin{align*}
	\abs{\bf P_{\frac{s}{\sqrt{n}}}^n \psi (x) - \e^{-\frac{s^2\sigma^2}{2}} \bs \nu \left( \psi \right)} &\leq \norm{\bf P_{\frac{s}{\sqrt{n}}}^n \left(\psi\right) - \e^{-\frac{s^2\sigma^2}{2}} \Pi \left( \psi \right)}_{\infty} \\
	&\leq \norm{\psi}_{\infty} \norm{\bf P_{\frac{s}{\sqrt{n}}}^n - \e^{-\frac{s^2\sigma^2}{2}} \Pi}_{\scr C \to \scr C} \\
	&\leq \norm{\psi}_{\infty} \left( \frac{c}{\sqrt{n}}\e^{-\frac{s^2 \sigma^2}{4}} + c \e^{-c n} \right).
\end{align*}
Therefore,
\begin{align}
	I_2 &\leq \norm{\psi}_{\infty} \left( \frac{c}{\sqrt{n}} \int_{\bb R} \e^{-\frac{s^2 \sigma^2}{4}} \norm{\hat{h}}_{\infty} \dd s + c \e^{-c n} \norm{\hat{h}}_{\LL^1} \right) \nonumber\\
	&\leq \norm{\psi}_{\infty} \left( \frac{c}{\sqrt{n}} \norm{h}_{\LL^1} + c \e^{-c n} \norm{\hat{h}}_{\LL^1} \right).
	\label{MajI2}
\end{align}
Putting together \eqref{decI0}, \eqref{MajI1LLT} and \eqref{MajI2}, concludes the proof.
\end{proof}

We extend the result of Lemma \ref{Buisson1} for any integrable function (with not necessarily integrable Fourier transform). As in Stone \cite{stone_local_1965}, we introduce the kernel $\kappa$ defined on $\bb R$ by
\[
\kappa (u) = \frac{1}{2\pi} \left( \frac{\sin \left( \frac{u}{2} \right)}{\frac{u}{2}} \right)^2, \quad \forall u \in \bb R^* \qquad \text{and} \qquad \kappa(0) = \frac{1}{2\pi}.
\]
The function $\kappa$ is integrable and its Fourier transform is given by
\[
\hat{\kappa}(t) = 1-\abs{t}, \quad \forall t \in [-1,1], \qquad \text{and} \qquad \hat{\kappa}(t) = 0 \quad \text{otherwise.}
\]
Note that
\[
\int_{\bb R} \kappa (u) \dd u  = \hat{\kappa}(0) = 1 = \int_{\bb R} \hat{\kappa}(t) \dd t.
\]
For any $\ee >0$, we define the function $\kappa_{\ee}$ on $\bb R$ by
\[
\kappa_{\ee} (u) = \frac{1}{\ee} \kappa \left( \frac{u}{\ee} \right).
\]
Its Fourier transform is given by $\hat{\kappa}_{\ee} (t) = \hat{\kappa}(\ee t)$. Note also that, for any $\ee > 0$, we have
\begin{equation}
	\label{phare}
	\int_{\abs{u} \geq \frac{1}{\ee}} \kappa (u) \dd u \leq \frac{1}{\pi} \int_{\frac{1}{\ee}}^{+\infty} \frac{4}{u^2} \dd u = \frac{4}{\pi} \ee.
\end{equation}

For any non-negative and locally bounded function $h$ defined on $\bb R$ and any $\ee >0$, let $\overline{h}_{\ee}$ and $\underline{h}_{\ee}$ be the "thickened" functions: for any $u \in \bb R$,
\[
\overline{h}_{\ee}(u) = \sup_{v \in [u-\ee,u+\ee]} h(v) \qquad \text{and} \qquad \underline{h}_{\ee}(u) = \inf_{v \in [u-\ee,u+\ee]} h(v).
\]
For any $\ee > 0$, denote by $\scr H_{\ee}$ the set of non-negative and locally bounded functions $h$ such that $h$, $\overline{h}_{\ee}$ and $\underline{h}_{\ee}$ are measurable from $\left( \bb R, \scr B \left( \bb R \right) \right)$ to $\left( \bb R_+, \scr B \left( \bb R_+ \right) \right)$ and Lebesgue-integrable (where $\scr B \left( \bb R \right)$, $\scr B \left( \bb R_+ \right)$ are the Borel $\sigma$-algebras).

\begin{lemma}
\label{soleil1}
For any function $h \in \scr H_{\ee}$, $\ee \in (0,1/4)$ and $u \in \bb R$,
\[
\underline{h}_{\ee}*\kappa_{\ee^2} (u) - \int_{\abs{v} \geq \ee} \underline{h}_{\ee} \left( u- v \right) \kappa_{\ee^2} (v) \dd v \leq h(u) \leq  \left( 1+4\ee \right) \overline{h}_{\ee}*\kappa_{\ee^2} (u).
\]
\end{lemma}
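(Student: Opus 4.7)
The plan is to split the convolution integral according to whether $\abs{v} \leq \ee$ or $\abs{v} > \ee$. The intuition is that the kernel $\kappa_{\ee^2}$ is concentrated on the scale $\ee^2$, which is much smaller than $\ee$ when $\ee < 1/4$; so most of the mass of $\kappa_{\ee^2}$ sits in the ``near'' window $[-\ee,\ee]$, and in that window the thickened functions $\overline{h}_{\ee}$ and $\underline{h}_{\ee}$ directly control $h$ at the center.

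First I would establish the mass estimate
\[
\int_{\abs{v}\leq \ee} \kappa_{\ee^2}(v)\dd v = \int_{\abs{w}\leq 1/\ee} \kappa(w)\dd w \geq 1-\tfrac{4\ee}{\pi},
\]
which follows from the scaling $w = v/\ee^2$ together with \eqref{phare}.

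For the upper bound, I would observe that if $\abs{v}\leq \ee$, then $u\in[u-v-\ee,\,u-v+\ee]$, so by definition $\overline{h}_{\ee}(u-v)\geq h(u)$. Since $\overline{h}_{\ee}\geq 0$, discarding the contribution from $\abs{v}>\ee$ gives
\[
\overline{h}_{\ee}*\kappa_{\ee^2}(u) \geq \int_{\abs{v}\leq\ee}\overline{h}_{\ee}(u-v)\kappa_{\ee^2}(v)\dd v \geq h(u)\left(1-\tfrac{4\ee}{\pi}\right).
\]
It then remains to verify the elementary inequality $(1+4\ee)(1-\tfrac{4\ee}{\pi})\geq 1$ for $\ee\in(0,1/4)$, which amounts to $\pi-1-4\ee\geq 0$ and holds since $(\pi-1)/4>1/4$. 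Dividing then yields $h(u)\leq (1+4\ee)\,\overline{h}_{\ee}*\kappa_{\ee^2}(u)$.

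For the lower bound, I would symmetrically note that if $\abs{v}\leq\ee$ then $\underline{h}_{\ee}(u-v)\leq h(u)$, so
\[
\int_{\abs{v}\leq\ee}\underline{h}_{\ee}(u-v)\kappa_{\ee^2}(v)\dd v \leq h(u)\int_{\abs{v}\leq\ee}\kappa_{\ee^2}(v)\dd v\leq h(u),
\]
where the last inequality uses $\int_{\bb R}\kappa_{\ee^2}=1$. Rewriting this as
\[
\underline{h}_{\ee}*\kappa_{\ee^2}(u) - \int_{\abs{v}\geq \ee} \underline{h}_{\ee}(u-v)\kappa_{\ee^2}(v)\dd v \leq h(u)
\]
gives exactly the claimed lower bound. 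No step is really an obstacle here; the only subtlety is the algebraic verification that $1/(1-4\ee/\pi)\leq 1+4\ee$ on $(0,1/4)$, which is the reason for the numerical restriction on $\ee$ in the statement.
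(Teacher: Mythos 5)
Your proof is correct and follows essentially the same route as the paper's: split the convolution integral at $\abs{v}=\ee$, use the pointwise inequalities $\underline{h}_{\ee}(u-v)\leq h(u)\leq \overline{h}_{\ee}(u-v)$ valid for $\abs{v}\leq\ee$, and control the remaining tail mass via \eqref{phare}. The only cosmetic difference is the final arithmetic: you verify $(1+4\ee)(1-\tfrac{4\ee}{\pi})\geq 1$ directly, while the paper passes through the intermediate crude bound $1/(1-\tfrac{4\ee}{\pi})\leq 1/(1-2\ee)\leq 1+4\ee$; both are valid for $\ee\in(0,1/4)$.
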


\begin{proof}
Note that for any $\abs{v} \leq \ee$ and $u\in \bb R$, we have $u\in [ u- v - \ee, u- v+\ee]$. So,
\begin{equation}
	\label{armure1}
	\underline{h}_{\ee} \left( u- v \right) \leq h (u) \leq \overline{h}_{\ee} \left( u- v \right).
\end{equation}
Using the fact that $\int_{\bb R} \kappa_{\ee^2} (u) \dd u = 1$ and \eqref{phare}, we write
\begin{align*}
	h (u) &= \int_{\abs{v} \leq \ee} h(u) \kappa_{\ee^2} (v) \dd v + h(u) \int_{\abs{v} \geq \ee}  \kappa_{\ee^2} (v) \dd v \\
	&\leq \int_{\abs{v} \leq \ee} \overline{h}_{\ee} \left( u- v \right) \kappa_{\ee^2} (v) \dd v + h(u) \frac{4}{\pi} \ee.
\end{align*}
Therefore,
\[
h(u) \left( 1- \frac{4}{\pi} \ee \right) \leq \int_{\bb R} \overline{h}_{\ee} \left( u-v \right) \kappa_{\ee^2} (v) \dd v = \overline{h}_{\ee}*\kappa_{\ee^2} (u).
\]
For any $\ee \in (0,1/4)$,
\[
h(u) \leq \frac{1}{1-2\ee} \overline{h}_{\ee}*\kappa_{\ee^2} (u) \leq \left( 1+4\ee \right) \overline{h}_{\ee}*\kappa_{\ee^2} (u).
\]
Moreover, from \eqref{armure1},
\begin{align*}
	h(u) &\geq \int_{\abs{v} \leq \ee} h(u) \kappa_{\ee^2} (v) \dd v \\
	&\geq \int_{\abs{v} \leq \ee} \underline{h}_{\ee} \left( u- v \right) \kappa_{\ee^2} (v) \dd v \\
	&= \underline{h}_{\ee}*\kappa_{\ee^2} (u) - \int_{\abs{v} \geq \ee} \underline{h}_{\ee} \left( u- v \right) \kappa_{\ee^2} (v) \dd v.
\end{align*}
\end{proof}

\begin{lemma}
\label{soleil2}
Let $\ee >0$ and $h \in \scr H_{\ee}$.
\begin{enumerate}[ref=\arabic*, leftmargin=*, label=\arabic*.]
	\item \label{soleil21} For any $y \in \bb R$ and $n\geq 1$,
	\[
	\sqrt{n} \left( \overline{h}_{\ee}*\kappa_{\ee^2} \right)*\varphi_{\sqrt{n}\sigma}(y) \leq \sqrt{n}\left( h*\varphi_{\sqrt{n}\sigma} \right)(y) + c \norm{\overline{h}_{2\ee} - h}_{\LL^1}  + c \ee \norm{h}_{\LL^1},
	\]
where $\varphi_{\sqrt{n}\sigma}(\cdot)$ is defined by \eqref{normal}.
	\item \label{soleil22} For any $y \in \bb R$ and $n\geq 1$,
	\[
	\sqrt{n}\left( \overline{h}_{\ee}*\kappa_{\ee^2} \right)*\varphi_{\sqrt{n}\sigma}(y) \leq c \norm{\overline{h}_{\ee}}_{\LL^1}.
	\]
	\item \label{soleil23} For any $y\in \bb R$ and $n\geq 1$,
	\[
	\sqrt{n} \left( \underline{h}_{\ee}*\kappa_{\ee^2} \right)*\varphi_{\sqrt{n}\sigma}(y) \geq \sqrt{n} \left(h*\varphi_{\sqrt{n}\sigma}\right)(y) - c \norm{h-\underline{h}_{2\ee}}_{\LL^1} - c \ee \norm{h}_{\LL^1}.
	\]
\end{enumerate}
\end{lemma}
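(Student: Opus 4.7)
The three inequalities all rest on the same two observations, which I would state up front: (a) for $|v| \leq \ee$ and any $u$, the interval $[u-v-\ee, u-v+\ee]$ is contained in $[u-2\ee, u+2\ee]$, so $\overline{h}_{\ee}(u-v) \leq \overline{h}_{2\ee}(u)$ and $\underline{h}_{\ee}(u-v) \geq \underline{h}_{2\ee}(u)$; (b) using \eqref{phare} and the scaling $\kappa_{\ee^2}(v) = \ee^{-2}\kappa(v/\ee^2)$, the tail bound
\[
\int_{|v|\geq \ee} \kappa_{\ee^2}(v)\dd v = \int_{|u|\geq 1/\ee} \kappa(u)\dd u \leq \frac{4}{\pi}\,\ee
\]
holds, so the mass of $\kappa_{\ee^2}$ outside $[-\ee,\ee]$ is $O(\ee)$. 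A third ingredient I would use throughout is the trivial bound $\sqrt{n}\,\norm{\varphi_{\sqrt{n}\sigma}}_\infty \leq 1/(\sqrt{2\pi}\,\sigma) \leq c$, which converts an $\LL^1$ bound on a function $g$ into the pointwise bound $\sqrt{n}\,(g*\varphi_{\sqrt{n}\sigma})(y) \leq c\norm{g}_{\LL^1}$.

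For part \ref{soleil22}, this last observation gives everything in one line: convolution with $\kappa_{\ee^2}$ is an $\LL^1$-contraction (since $\norm{\kappa_{\ee^2}}_{\LL^1}=1$), so $\norm{\overline{h}_{\ee}*\kappa_{\ee^2}}_{\LL^1} \leq \norm{\overline{h}_{\ee}}_{\LL^1}$, and then $\sqrt{n}\,(\overline{h}_{\ee}*\kappa_{\ee^2})*\varphi_{\sqrt{n}\sigma}(y) \leq c\norm{\overline{h}_{\ee}}_{\LL^1}$.

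For part \ref{soleil21}, I would split the inner convolution at $|v|=\ee$:
\[
\overline{h}_{\ee}*\kappa_{\ee^2}(u) \leq \overline{h}_{2\ee}(u)\int_{|v|\leq \ee}\kappa_{\ee^2}(v)\dd v + \int_{|v|\geq \ee}\overline{h}_{\ee}(u-v)\kappa_{\ee^2}(v)\dd v,
\]
using (a) on the first piece. Convolving with $\varphi_{\sqrt{n}\sigma}$, multiplying by $\sqrt{n}$, and applying the $\sqrt{n}\|\varphi_{\sqrt{n}\sigma}\|_\infty \leq c$ bound, the first term contributes $\sqrt{n}\,\overline{h}_{2\ee}*\varphi_{\sqrt{n}\sigma}(y) = \sqrt{n}\,h*\varphi_{\sqrt{n}\sigma}(y) + \sqrt{n}(\overline{h}_{2\ee}-h)*\varphi_{\sqrt{n}\sigma}(y) \leq \sqrt{n}\,h*\varphi_{\sqrt{n}\sigma}(y) + c\norm{\overline{h}_{2\ee}-h}_{\LL^1}$. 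For the second term, Fubini plus the contraction bound plus (b) yields $\leq c\ee\norm{\overline{h}_{\ee}}_{\LL^1} \leq c\ee(\norm{h}_{\LL^1}+\norm{\overline{h}_{2\ee}-h}_{\LL^1})$, and the second summand can be absorbed into $c\norm{\overline{h}_{2\ee}-h}_{\LL^1}$ since $\ee < 1$.

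Part \ref{soleil23} is the mirror image. Keeping only the $|v|\leq \ee$ portion and applying (a) in the opposite direction gives
\[
\underline{h}_{\ee}*\kappa_{\ee^2}(u) \geq \underline{h}_{2\ee}(u)\int_{|v|\leq \ee}\kappa_{\ee^2}(v)\dd v \geq \underline{h}_{2\ee}(u)\left(1-\tfrac{4}{\pi}\ee\right),
\]
and then writing $\underline{h}_{2\ee} = h - (h-\underline{h}_{2\ee})$ together with $\underline{h}_{2\ee}\leq h$ to bound $c\ee\underline{h}_{2\ee}$ by $c\ee h$, one obtains after convolving with $\varphi_{\sqrt{n}\sigma}$ the claimed lower bound. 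No step is really a serious obstacle; the only thing to be careful about is keeping the $\LL^1$-norms of the thickened functions controlled by $\norm{h}_{\LL^1}$ and $\norm{\overline{h}_{2\ee}-h}_{\LL^1}$ (resp.\ $\norm{h-\underline{h}_{2\ee}}_{\LL^1}$) so that the final constants match the statement.
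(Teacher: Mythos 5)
Your proof is correct and follows essentially the same route as the paper: you rely on the same three ingredients (the inclusion $[u-v-\ee,u-v+\ee]\subset[u-2\ee,u+2\ee]$ giving $\overline{h}_{\ee}(u-v)\leq\overline{h}_{2\ee}(u)$ and $\underline{h}_{\ee}(u-v)\geq\underline{h}_{2\ee}(u)$, the tail bound \eqref{phare} for $\kappa_{\ee^2}$, and $\sqrt{n}\norm{\varphi_{\sqrt{n}\sigma}}_\infty\leq c$), split the convolution with $\kappa_{\ee^2}$ at $|v|=\ee$, and absorb the thickening errors into $\norm{\overline{h}_{2\ee}-h}_{\LL^1}$ (resp.\ $\norm{h-\underline{h}_{2\ee}}_{\LL^1}$) and $\norm{h}_{\LL^1}$ exactly as the authors do.
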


\begin{proof} 
For any $\ee > 0$, $\abs{v} \leq \ee$ and $u\in \bb R$ it holds $[u-v-\ee,u-v+\ee] \subset [u-2\ee,u+2\ee]$. 
Therefore,
\begin{equation}
	\label{armure2}
	\underline{h}_{\ee}(u-v) \geq \underline{h}_{2\ee}(u) \qquad \text{and} \qquad \overline{h}_{\ee}(u-v) \leq \overline{h}_{2\ee}(u).
\end{equation}
Consequently, for any $u\in \bb R$,
\begin{align*}
	\overline{h}_{\ee}*\kappa_{\ee^2} (u) &\leq \overline{h}_{2\ee}(u) \int_{\abs{v}\leq \ee} \kappa_{\ee^2}(v) \dd v + \int_{\abs{v}\geq \ee} \overline{h}_{\ee}(u-v)\kappa_{\ee^2}(v) \dd v \\
	&\leq \overline{h}_{2\ee}(u) + \int_{\abs{v}\geq \ee} \overline{h}_{\ee}(u-v)\kappa_{\ee^2}(v) \dd v.
\end{align*}
From this, using the bound $\sqrt{n}\varphi_{\sqrt{n}\sigma}(\cdot) \leq 1/(\sqrt{2\pi}\sigma)$ and \eqref{phare}, we obtain that
\begin{align*}
	\sqrt{n} \left( \overline{h}_{\ee}*\kappa_{\ee^2} \right)*\varphi_{\sqrt{n}\sigma}(y) &\leq \sqrt{n} \left( \overline{h}_{2\ee}*\varphi_{\sqrt{n}\sigma} \right)(y) \\
	&\qquad+ \frac{1}{\sqrt{2\pi}\sigma} \int_{\bb R} \int_{\abs{v}\geq \ee} \overline{h}_{\ee}(u-v)\kappa_{\ee^2}(v) \dd v \dd u \\
	&= \sqrt{n} \left( \overline{h}_{2\ee}*\varphi_{\sqrt{n}\sigma} \right)(y) + \frac{2\sqrt{2}}{\pi^{3/2}\sigma} \ee \norm{\overline{h}_{\ee}}_{\LL^1}.
\end{align*}
Using again the bound $\sqrt{n}\varphi_{\sqrt{n}\sigma}(\cdot) \leq 1/(\sqrt{2\pi}\sigma)$, we get
\begin{align*}
	\sqrt{n} \left( \overline{h}_{\ee}*\kappa_{\ee^2} \right)*\varphi_{\sqrt{n}\sigma}(y) &\leq \sqrt{n} \left( h*\varphi_{\sqrt{n}\sigma} \right)(y) + \int_{\bb R}  \abs{\overline{h}_{2\ee}(u) - h(u)} \frac{\dd u}{\sqrt{2\pi}\sigma} + c \ee \norm{\overline{h}_{\ee}}_{\LL^1} \\
	&\leq \sqrt{n} \left( h*\varphi_{\sqrt{n}\sigma} \right)(y) + c \norm{\overline{h}_{2\ee} - h}_{\LL^1} + c \ee \norm{\overline{h}_{2\ee}}_{\LL^1} \\
	&\leq \sqrt{n} \left( h*\varphi_{\sqrt{n}\sigma} \right)(y) + \left( c+ c\ee \right) \norm{\overline{h}_{2\ee} - h}_{\LL^1} + c \ee \norm{h}_{\LL^1},
\end{align*}
which proves the claim \ref{soleil21}.

In the same way,
\[
\sqrt{n} \left( \overline{h}_{\ee}*\kappa_{\ee^2} \right)*\varphi_{\sqrt{n}\sigma}(y) \leq \frac{1}{\sqrt{2\pi}\sigma} \norm{\overline{h}_{\ee}*\kappa_{\ee^2}}_{\LL^1} = \frac{1}{\sqrt{2\pi}\sigma} \norm{\overline{h}_{\ee}}_{\LL^1},
\]
which establishes the claim \ref{soleil22}.

By \eqref{armure2} and \eqref{phare},
\[
\underline{h}_{\ee}*\kappa_{\ee^2}(u) \geq \underline{h}_{2\ee}(u)\int_{\abs{v}\leq \ee} \kappa_{\ee^2}(v) \dd v \geq \left( 1-\frac{4}{\pi} \ee \right) \underline{h}_{2\ee}(u).
\]
Integrating this inequality and using once again the bound $\sqrt{n} \varphi_{\sqrt{n}\sigma}(\cdot) \leq \frac{1}{\sqrt{2\pi}\sigma}$, we have
\begin{align*}
	\sqrt{n} \left( \underline{h}_{\ee}*\kappa_{\ee^2} \right)*\varphi_{\sqrt{n}\sigma}(y) &\geq \sqrt{n} \left( 1-\frac{4}{\pi} \ee \right) \underline{h}_{2\ee}*\varphi_{\sqrt{n}\sigma}(y) \\
	&\geq \sqrt{n} \left(\underline{h}_{2\ee}*\varphi_{\sqrt{n}\sigma}\right)(y) - \frac{4}{\pi} \ee \frac{1}{\sqrt{2\pi}\sigma} \norm{\underline{h}_{2\ee}}_{\LL^1}.
\end{align*}
Inserting $h$, we conclude that
\begin{align*}
	\sqrt{n} \left( \underline{h}_{\ee}*\kappa_{\ee^2} \right)*\varphi_{\sqrt{n}\sigma}(y) &\geq \sqrt{n} \left(h*\varphi_{\sqrt{n}\sigma}\right)(y) - \frac{1}{\sqrt{2\pi}\sigma} \norm{h-\underline{h}_{2\ee}}_{\LL^1} - c \ee \norm{\underline{h}_{2\ee}}_{\LL^1} \\
	&\geq \sqrt{n} \left(h*\varphi_{\sqrt{n}\sigma}\right)(y) - c \norm{h-\underline{h}_{2\ee}}_{\LL^1} - c \ee \norm{h}_{\LL^1}.
\end{align*}
\end{proof}

We are now equipped to prove a non-asymptotic theorem for a large class of functions $h$.
\begin{lemma}
\label{Bosquet}
Assume Hypotheses \ref{primitive}-\ref{nondegenere}.
Let $\ee \in (0,1/4)$. For any function $h \in \scr H_{\ee}$, any non-negative function $\psi \in \scr C$ and any $n \geq 1$,
\begin{align*}
	&\underset{x \in \bb X, \,y\in \bb R}{\sup} \sqrt{n} \abs{ \bb E_x \left( h\left( y+S_n \right) \psi \left( X_n \right) \right) - h*\varphi_{\sqrt{n}\sigma}(y) \bs \nu \left( \psi \right) } \\
&\qquad \leq c \norm{\psi}_{\infty} \left( \norm{h-\underline{h}_{2\ee}}_{\LL^1} + \norm{\overline{h}_{2\ee}-h}_{\LL^1} \right) + c \norm{\psi}_{\infty} \norm{\overline{h}_{2\ee}}_{\LL^1} \left( \frac{1}{\sqrt{n}} + \ee + c_{\ee}\e^{-c_{\ee}n} \right),
\end{align*}
where $\varphi_{\sqrt{n}\sigma}(\cdot)$ is defined by $\eqref{normal}$. Moreover,
\[
\underset{x \in \bb X, \,y\in \bb R}{\sup} \sqrt{n} \bb E_x \left( h\left( y+S_n \right) \psi \left( X_n \right) \right) \leq c \norm{\psi}_{\infty} \norm{\overline{h}_{2\ee}}_{\LL^1} \left( 1 + c_{\ee}\e^{-c_{\ee}n} \right).
\]
\end{lemma}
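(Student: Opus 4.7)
The idea is to sandwich $h$ between regularized approximants whose Fourier transforms are compactly supported, apply the non-asymptotic LLT of Lemma \ref{Buisson1} to each approximant, and return to the target $h*\varphi_{\sqrt{n}\sigma}$ via Lemma \ref{soleil2}. Specifically, Lemma \ref{soleil1} gives, for $h \in \scr H_{\ee}$ and $\ee \in (0,1/4)$,
\[
\underline{h}_{\ee}*\kappa_{\ee^2}(u) - \int_{|v|\geq \ee} \underline{h}_{\ee}(u-v) \kappa_{\ee^2}(v)\, dv \;\leq\; h(u) \;\leq\; (1+4\ee) \overline{h}_{\ee}*\kappa_{\ee^2}(u).
\]
The Fourier transforms of both approximants are supported in $[-1/\ee^2, 1/\ee^2]$, and their $L^1$ norms satisfy $\norm{\overline{h}_{\ee}*\kappa_{\ee^2}}_{\LL^1} \leq \norm{\overline{h}_{2\ee}}_{\LL^1}$, while $\|\widehat{\overline{h}_{\ee}*\kappa_{\ee^2}}\|_{\LL^1} \leq (2/\ee^2)\norm{\overline{h}_{2\ee}}_{\LL^1}$, the $\ee$-dependence being absorbed into $c_\ee$.

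\textbf{Upper bound and second assertion.} From the right-hand inequality above and Lemma \ref{Buisson1} applied to $\overline{h}_{\ee}*\kappa_{\ee^2}$ with $A=1/\ee^2$,
\[
\sqrt{n}\,\bb E_x(h(y+S_n)\psi(X_n)) \leq (1+4\ee)\sqrt{n}(\overline{h}_{\ee}*\kappa_{\ee^2})*\varphi_{\sqrt{n}\sigma}(y)\,\bs\nu(\psi) + c\norm{\psi}_{\infty}\norm{\overline{h}_{2\ee}}_{\LL^1}\bigl(1/\sqrt{n}+c_\ee\e^{-c_\ee n}\bigr).
\]
Applying Lemma \ref{soleil2}\ref{soleil21} to the main term and Lemma \ref{soleil2}\ref{soleil22} to the superfluous factor $4\ee\sqrt{n}(\overline{h}_{\ee}*\kappa_{\ee^2})*\varphi_{\sqrt{n}\sigma}(y)$ yields the desired upper bound on $\sqrt{n}(A-B)$, where $A=\bb E_x(h(y+S_n)\psi(X_n))$ and $B = h*\varphi_{\sqrt{n}\sigma}(y)\bs\nu(\psi)$. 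In passing, discarding $B$ and invoking only Lemma \ref{soleil2}\ref{soleil22}, the same chain delivers the second (unconditional) assertion
\[
\sqrt{n}\,\bb E_x(h(y+S_n)\psi(X_n)) \leq c\norm{\psi}_{\infty}\norm{\overline{h}_{2\ee}}_{\LL^1}(1+c_\ee\e^{-c_\ee n}).
\]

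\textbf{Lower bound.} By the left-hand inequality of Lemma \ref{soleil1},
\[
\bb E_x(h(y+S_n)\psi(X_n)) \geq \bb E_x((\underline{h}_{\ee}*\kappa_{\ee^2})(y+S_n)\psi(X_n)) - R, \qquad R := \bb E_x\!\left(\psi(X_n)\int_{|v|\geq\ee}\underline{h}_{\ee}(y+S_n-v)\kappa_{\ee^2}(v)\,dv\right).
\]
The main term is treated symmetrically via Lemma \ref{Buisson1} applied to $\underline{h}_{\ee}*\kappa_{\ee^2}$ and Lemma \ref{soleil2}\ref{soleil23}. The residue $R$ is the key technical point: using $\underline{h}_{\ee}\leq h$ and Fubini,
\[
R \leq \int_{|v|\geq \ee} \bb E_x(h(y-v+S_n)\psi(X_n))\kappa_{\ee^2}(v)\,dv,
\]
to which I apply the second assertion (shifted by $-v$) together with the tail estimate $\int_{|v|\geq\ee}\kappa_{\ee^2}(v)\,dv \leq (4/\pi)\ee$, obtained from \eqref{phare} after the change of variables $u=v/\ee^2$. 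This yields $\sqrt{n}R \leq c\ee\norm{\psi}_{\infty}\norm{\overline{h}_{2\ee}}_{\LL^1}(1+c_\ee\e^{-c_\ee n})$, of exactly the order allowed. Patching the upper and lower bounds gives the first assertion.

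\textbf{Main obstacle.} The essential difficulty is the control of $R$: the naive bound $\underline{h}_{\ee}\leq \norm{h}_{\infty}$ is unavailable since $h\in \scr H_{\ee}$ need only be locally bounded. The resolution is the one outlined above, which forces the second (unconditional) assertion to be proved first and then recycled inside $R$; the tail mass $c\ee$ of $\kappa_{\ee^2}$ outside $[-\ee,\ee]$ exactly matches the $\ee\norm{\overline{h}_{2\ee}}_{\LL^1}$ term permitted by the statement.
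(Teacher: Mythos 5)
Your proof is correct and follows the paper's proof essentially step for step: sandwich via Lemma \ref{soleil1}, apply Lemma \ref{Buisson1} to $\overline{h}_{\ee}*\kappa_{\ee^2}$ and $\underline{h}_{\ee}*\kappa_{\ee^2}$, return to $h*\varphi_{\sqrt{n}\sigma}$ via Lemma \ref{soleil2}, and bootstrap the second (unconditional) assertion into the residue term $R$ of the lower bound. The only cosmetic difference is that you bound $\underline{h}_{\ee}\leq h$ first and apply the second assertion to the translate $h(\cdot-v)$, whereas the paper applies it to $g_{\ee,v}=\underline{h}_{\ee}(\cdot-v)$ and then observes $\overline{(g_{\ee,v})}_{2\ee}\leq\overline{h}_{2\ee}(\cdot-v)$; both give $\norm{\overline{h}_{2\ee}}_{\LL^1}$.
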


\begin{proof}
We prove upper and lower bounds for $\sqrt{n}\bb E_x \left( h\left( y+S_n \right) \psi \left( X_n \right) \right)$ from which the claim wills follow.

\textit{The upper bound.} By Lemma \ref{soleil1}, we have, for any $x\in \bb X$, $n\geq 1$, $y\in \bb R$ and $\ee\in(0,1/4)$,
\[
\bb E_x \left( h\left( y+S_n \right) \psi \left( X_n \right) \right) \leq \left( 1+4\ee \right) \bb E_x \left( \overline{h}_{\ee}*\kappa_{\ee^2}\left( y+S_n \right) \psi \left( X_n \right) \right)
\]
Since $\overline{h}_{\ee}$ is integrable, the function $u\mapsto \overline{h}_{\ee}*\kappa_{\ee^2}(u)$ is integrable and its Fourier transform $u\mapsto \hat{\overline{h}}_{\ee}(u) \hat{\kappa}_{\ee^2}(u)$ has a support included in $[-1/\ee^2,1/\ee^2]$. Consequently, by Lemma \ref{Buisson1},
\begin{align*}
	I_0 &:= \sqrt{n} \bb E_x \left( h\left( y+S_n \right) \psi \left( X_n \right) \right) \\
	&\leq \sqrt{n}\left( 1+4\ee \right)\left( \overline{h}_{\ee}*\kappa_{\ee^2} \right)*\varphi_{\sqrt{n}\sigma}(y) \bs \nu \left( \psi \right) \\
	&\qquad + 2\norm{\psi}_{\infty} \left( \frac{c}{\sqrt{n}} \norm{\overline{h}_{\ee}*\kappa_{\ee^2}}_{\LL^1} + \norm{\hat{\overline{h}}_{\ee} \hat{\kappa}_{\ee^2}}_{\LL^1} c_{\ee}\e^{-c_{\ee}n} \right).
\end{align*}
Using the points \ref{soleil21} and \ref{soleil22} of Lemma \ref{soleil2} and the fact that 
$\abs{\bs \nu \left( \psi \right)} \leq \norm{\psi}_{\infty}$,  
we deduce that
\begin{align*}
	I_0 &\leq \sqrt{n} \left( h*\varphi_{\sqrt{n}\sigma} \right)(y) \bs \nu \left( \psi \right) + \norm{\psi}_{\infty} \left( c \norm{\overline{h}_{2\ee} - h}_{\LL^1}  + c \ee \norm{h}_{\LL^1} \right) + 4\ee c \norm{\overline{h}_{\ee}}_{\LL^1} \norm{\psi}_{\infty} \\
	&\qquad + 2 \norm{\psi}_{\infty} \left( \frac{c}{\sqrt{n}} \norm{\overline{h}_{\ee}*\kappa_{\ee^2}}_{\LL^1} + \norm{\hat{\overline{h}}_{\ee} \hat{\kappa}_{\ee^2}}_{\LL^1} c_{\ee}\e^{-c_{\ee}n} \right).
\end{align*}
Note that $\norm{\overline{h}_{\ee}*\kappa_{\ee^2}}_{\LL^1} = \norm{\overline{h}_{\ee}}_{\LL^1}$ and 
\[
\norm{\hat{\overline{h}}_{\ee} \hat{\kappa}_{\ee^2}}_{\LL^1} \leq \norm{\overline{h}_{\ee}}_{\LL^1} \int_{\bb R} \hat{\kappa}_{\ee^2} (t) \dd t
=\norm{\overline{h}_{\ee}}_{\LL^1} \int_{\bb R} \hat{\kappa} (\ee^2 t) \dd t = \frac{1}{\ee^2} \norm{\overline{h}_{\ee}}_{\LL^1}.
\]
Consequently,
\begin{align}
	I_0 \leq \sqrt{n} \left( h*\varphi_{\sqrt{n}\sigma} \right)(y) \bs \nu \left( \psi \right) &+ c\norm{\psi}_{\infty} \norm{\overline{h}_{2\ee} - h}_{\LL^1} \nonumber\\
	&+ c\norm{\psi}_{\infty}  \norm{\overline{h}_{\ee}}_{\LL^1} \left( \frac{1}{\sqrt{n}} + \ee + c_{\ee}\e^{-c_{\ee}n} \right).
	\label{lac001}
\end{align}
From \eqref{lac001}, taking into account that $\sqrt{n} \left( h*\varphi_{\sqrt{n}\sigma} \right)(y) \leq c\norm{h}_{\LL^1}$, we deduce, in addition, that
\begin{equation}
	\label{lac002}
	I_0 \leq c\norm{\psi}_{\infty} \norm{\overline{h}_{2\ee}}_{\LL^1} \left( 1 + c_{\ee}\e^{-c_{\ee}n} \right).
\end{equation}

\textit{The lower bound.} By Lemma \ref{soleil1}, we write that
\begin{align}
	\label{baleine1}
	I_0 &\geq \underbrace{\sqrt{n} \bb E_x \left( \underline{h}_{\ee}*\kappa_{\ee^2} \left( y+S_n \right) \psi \left( X_n \right) \right)}_{=:I_1} \nonumber\\
	&\qquad- \underbrace{\sqrt{n} \bb E_x \left( \int_{\abs{v} \geq \ee} \underline{h}_{\ee} \left( y+S_n - v \right) \kappa_{\ee^2} (v) \dd v \psi \left( X_n \right) \right)}_{=:I_2}.
\end{align}

\textit{Bound of $I_1$.}
The Fourier transform of $\underline{h}_{\ee}*\kappa_{\ee^2}$ has a compact support included in $[-1/\ee^2,1/\ee^2]$. So by Lemma \ref{Buisson1},
\[
I_1 \geq \sqrt{n} \left( \underline{h}_{\ee}*\kappa_{\ee^2} \right)*\varphi_{\sqrt{n}\sigma}(y) \bs \nu \left( \psi \right) - \norm{\psi}_{\infty} \left( \frac{c}{\sqrt{n}}\norm{\underline{h}_{\ee}*\kappa_{\ee^2}}_{\LL^1} + \norm{\hat{\underline{h}_{\ee}*\kappa_{\ee^2}}}_{\LL^1} c_{\ee}\e^{-c_{\ee}n} \right),
\]
Using the point \ref{soleil23} of Lemma \ref{soleil2} and the fact that $\abs{\bs \nu \left( \psi \right)} \leq \norm{\psi}_{\infty}$,
\begin{align*}
	I_1 &\geq \sqrt{n} \left( h*\varphi_{\sqrt{n}\sigma} \right)(y) \bs \nu \left( \psi \right) - c \norm{\psi}_{\infty} \left( \norm{h-\underline{h}_{2\ee}}_{\LL^1} + \ee \norm{h}_{\LL^1} \right) \\
	&\qquad - \norm{\psi}_{\infty} \left( \frac{c}{\sqrt{n}} \norm{\underline{h}_{\ee}*\kappa_{\ee^2}}_{\LL^1} + \norm{\hat{\underline{h}_{\ee}*\kappa_{\ee^2}}}_{\LL^1} c_{\ee}\e^{-c_{\ee}n} \right).
\end{align*}
Since $\norm{\underline{h}_{\ee}*\kappa_{\ee^2}}_{\LL^1} = \norm{\underline{h}_{\ee}}_{\LL^1} \leq \norm{h}_{\LL^1}$ and since $\norm{\hat{\underline{h}_{\ee}*\kappa_{\ee^2}}}_{\LL^1} \leq \norm{\underline{h}_{\ee}}_{\LL^1} \norm{\hat{\kappa}_{\ee^2}}_{\LL^1} = \frac{1}{\ee^2}\norm{\underline{h}_{\ee}}_{\LL^1} \leq \frac{1}{\ee^2} \norm{h}_{\LL^1}$, we deduce that
\begin{align}
	I_1 \geq \sqrt{n} \left( h*\varphi_{\sqrt{n}\sigma} \right)(y) \bs \nu \left( \psi \right) &- c \norm{\psi}_{\infty} \norm{h-\underline{h}_{2\ee}}_{\LL^1} \nonumber\\
	&- c \norm{\psi}_{\infty} \norm{h}_{\LL^1} \left( \frac{1}{\sqrt{n}} + \ee + c_{\ee}\e^{-c_{\ee}n} \right).
	\label{baleine2}
\end{align}

\textit{Bound of $I_2$.} 
With the notation $g_{\ee,v}(u) = \underline{h}_{\ee} \left( u - v \right)$, we have
\[
I_2 = \int_{\abs{v} \geq \ee} \sqrt{n} \bb E_x \left( g_{\ee,v} \left( y+S_n \right) \psi \left( X_n \right) \right) \kappa_{\ee^2} (v) \dd v.
\]
Consequently, using \eqref{lac002}, we find that
\begin{align*}
I_2 \leq c\norm{\psi}_{\infty} \left( 1 + c_{\ee}\e^{-c_{\ee}n} \right) \int_{\abs{v} \geq \ee} \norm{\overline{\left( g_{\ee,v} \right)}_{2\ee}}_{\LL^1} \kappa_{\ee^2} (v) \dd v.
\end{align*}
Note that, for any $u$ and $v \in \bb R$, 
\[
\overline{\left( g_{\ee,v} \right)}_{2\ee} (u) = \sup_{w\in[u-2\ee,u+2\ee]} \underline{h}_{\ee} \left( w - v \right) \leq \sup_{w\in[u-2\ee,u+2\ee]} h \left( w - v \right) = \overline{h}_{2\ee}(u-v).
\]
So, $\norm{\overline{\left( g_{\ee,v} \right)}_{2\ee}}_{\LL^1} \leq \norm{\overline{h}_{2\ee}}_{\LL^1}$ and
\[
I_2 \leq c\norm{\psi}_{\infty} \norm{\overline{h}_{2\ee}}_{\LL^1} \left( 1 + c_{\ee}\e^{-c_{\ee}n} \right) \int_{\abs{v} \geq \ee} \kappa_{\ee^2} (v) \dd v.
\]
By \eqref{phare},
\begin{equation}
	\label{baleine3}
	I_2 \leq c\norm{\psi}_{\infty} \norm{\overline{h}_{2\ee}}_{\LL^1} \left( \ee + c_{\ee}\e^{-c_{\ee}n} \right).
\end{equation}
Putting together \eqref{baleine1}, \eqref{baleine2} and \eqref{baleine3}, we obtain that
\begin{align}
	I_0 \geq \sqrt{n} \left( h*\varphi_{\sqrt{n}\sigma} \right)(y) \bs \nu \left( \psi \right) &- c \norm{\psi}_{\infty} \norm{h-\underline{h}_{2\ee}}_{\LL^1} \nonumber\\
	&- c \norm{\psi}_{\infty} \norm{\overline{h}_{2\ee}}_{\LL^1} \left( \frac{1}{\sqrt{n}} + \ee + c_{\ee}\e^{-c_{\ee}n} \right).
	\label{lac003}
\end{align}

Putting together the upper bound \eqref{lac001} and the lower bound \eqref{lac003}, the first inequality of the lemma follows. 
The second inequality is proved in \eqref{lac002}.
\end{proof}

We now apply Lemma \ref{Bosquet} when the function $h$ is an indicator of an interval.

\begin{corollary}
\label{papillon}
Assume Hypotheses \ref{primitive}-\ref{nondegenere}.
For any $a>0$, $\ee \in (0,1/4)$, any non-negative function $\psi \in \scr C$ and any $n\geq 1$,
\begin{align*}
	&\sup_{x\in \bb X,\,y\in \bb R,\, z \geq 0} \sqrt{n} \abs{\bb E_x \left( \psi\left( X_n \right) \,;\, y+S_n \in [z,z+a] \right) - a\varphi_{\sqrt{n}\sigma} (z-y) \bs \nu \left( \psi \right)} \\
	&\hspace{1cm}\leq c (a+\ee) \norm{\psi}_{\infty} \left( \frac{1}{\sqrt{n}} + \frac{a}{n} + \ee + c_{\ee}\e^{-c_{\ee}n} \right),
\end{align*}
where $\varphi_{\sqrt{n}\sigma}(\cdot)$ is defined by $\eqref{normal}$. In particular, there exists $c> 0$ such that for any $a >0$,
\begin{equation}
	\label{papillon001}
\sup_{x\in \bb X,\, y\in \bb R,\, z \geq 0} \sqrt{n} \bb E_x \left( \psi\left( X_n \right) \,;\, y+S_n \in [z,z+a] \right) \leq c (1+a^2) \norm{\psi}_{\infty}.
\end{equation}
\end{corollary}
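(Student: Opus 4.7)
The idea is to specialize Lemma \ref{Bosquet} to $h = \bbm 1_{[z,z+a]}$ and then replace the convolution $h * \varphi_{\sqrt{n}\sigma}$ by the target $a\varphi_{\sqrt{n}\sigma}(z-y)$ via a Taylor estimate on the Gaussian density.

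\textit{Step 1: Apply Lemma \ref{Bosquet} to an indicator.} For $h = \bbm 1_{[z,z+a]}$ and $\ee \in (0,1/4)$, the thickened functions are
\[
\overline{h}_{2\ee} = \bbm 1_{[z-2\ee,\, z+a+2\ee]}, \qquad \underline{h}_{2\ee} = \bbm 1_{[z+2\ee,\, z+a-2\ee]}\ \text{(empty if } a\le 4\ee),
\]
so that $\norm{\overline{h}_{2\ee}-h}_{\LL^1} = 4\ee$, $\norm{h-\underline{h}_{2\ee}}_{\LL^1} \leq 4\ee$, and $\norm{\overline{h}_{2\ee}}_{\LL^1} = a + 4\ee$. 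Plugging these into Lemma \ref{Bosquet} gives, uniformly in $x,y,z$,
\[
\sqrt{n}\,\abs{\bb E_x \bigl(\psi(X_n);\, y+S_n \in [z,z+a]\bigr) - h*\varphi_{\sqrt{n}\sigma}(y)\,\bs\nu(\psi)} \leq c\norm{\psi}_\infty \Bigl(\ee + (a+\ee)\bigl(\tfrac{1}{\sqrt{n}}+\ee+c_\ee \e^{-c_\ee n}\bigr)\Bigr).
\]

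\textit{Step 2: Replace $h*\varphi$ by $a\varphi(z-y)$.} Write $h*\varphi_{\sqrt{n}\sigma}(y) = \int_{0}^{a}\varphi_{\sqrt{n}\sigma}(z-y+t)\,dt$ and expand $\varphi_{\sqrt{n}\sigma}(z-y+t) - \varphi_{\sqrt{n}\sigma}(z-y)=\int_0^t \varphi_{\sqrt{n}\sigma}'(z-y+s)\,ds$. A direct computation gives $\norm{\varphi_{\sqrt{n}\sigma}'}_\infty \leq c/n$, so
\[
\bigl|h*\varphi_{\sqrt{n}\sigma}(y) - a\,\varphi_{\sqrt{n}\sigma}(z-y)\bigr| \leq \tfrac{a^2}{2}\norm{\varphi_{\sqrt{n}\sigma}'}_\infty \leq \frac{c\,a^2}{n}.
\]
Multiplying by $\sqrt{n}$, using $\bs\nu(\psi)\leq\norm{\psi}_\infty$, and bounding $ca^2/\sqrt{n}\leq c(a+\ee)\cdot a/\sqrt{n}$, this contribution is absorbed into $c(a+\ee)\norm{\psi}_\infty\bigl(\tfrac{1}{\sqrt n}+\tfrac{a}{n}\bigr)$. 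Combining with Step 1 via the triangle inequality and absorbing $c\ee \leq c(a+\ee)\cdot 1$ into the relevant terms yields the announced estimate.

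\textit{Step 3: The uniform bound \eqref{papillon001}.} By the triangle inequality,
\[
\sqrt{n}\,\bb E_x\bigl(\psi(X_n);\,y+S_n\in[z,z+a]\bigr) \leq \sqrt{n}\,a\,\varphi_{\sqrt{n}\sigma}(z-y)\,\bs\nu(\psi) + \text{(main estimate)}.
\]
Since $\sqrt{n}\,\varphi_{\sqrt{n}\sigma}(\cdot) \leq 1/(\sqrt{2\pi}\sigma)$, the first term is at most $c\,a\,\norm{\psi}_\infty$. Choosing any fixed $\ee \in (0,1/4)$ in the main estimate bounds the second term by $c(1+a)(1+a)\norm{\psi}_\infty \leq c(1+a^2)\norm{\psi}_\infty$, giving \eqref{papillon001}.

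\textit{Main obstacle.} The substantive work is already contained in Lemma \ref{Bosquet}; here the only issue is arithmetic bookkeeping: one has to verify that the Gaussian replacement error $ca^2/\sqrt{n}$ and the boundary error $c\ee$ can both be repackaged into the form $c(a+\ee)\bigl(\tfrac{1}{\sqrt{n}}+\tfrac{a}{n}+\ee+c_\ee\e^{-c_\ee n}\bigr)$, which is straightforward using $a^2 \leq a(a+\ee)$ and $\ee \leq (a+\ee)$.
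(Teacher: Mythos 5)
Your proposal is correct and follows the paper's own argument almost verbatim: you apply Lemma \ref{Bosquet} with $h=\bbm 1_{[z,z+a]}$, compute the thickened norms $\norm{\overline{h}_{2\ee}-h}_{\LL^1}=4\ee$, $\norm{h-\underline{h}_{2\ee}}_{\LL^1}\leq 4\ee$, $\norm{\overline{h}_{2\ee}}_{\LL^1}=a+4\ee$, and then replace $\bbm 1_{[z,z+a]}*\varphi_{\sqrt{n}\sigma}(y)$ by $a\varphi_{\sqrt{n}\sigma}(z-y)$ at a cost of $c a^2/n$ from the derivative bound $\normt{\varphi'_{\sqrt{n}\sigma}}_\infty \leq c/n$. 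The only cosmetic difference is that the paper invokes the Taylor--Lagrange form while you integrate the derivative directly, but the estimate is identical; Step 3's derivation of \eqref{papillon001} also matches (bound $\sqrt n\,\varphi_{\sqrt n\sigma}\leq 1/(\sqrt{2\pi}\sigma)$, then fix any admissible $\ee$).
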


\begin{proof}
Let $z \geq 0$, $a>0$, $\ee \in (0,1/4)$. For any $y \in \bb R$ set  
\[
h(y) = \bbm 1_{[z,z+a]}(y).
\]
It is clear that 
\[
\overline{h}_{\ee}(y) = \bbm 1_{[z-\ee,z+a+\ee]}(y) \qquad \text{and} \qquad \underline{h}_{\ee}(y) = \bbm 1_{[z+\ee,z+a-\ee]}(y),
\]
where by convention $\bbm 1_{[z+\ee,z+a-\ee]}(y) = 0$ when $a \leq 2\ee$. 
It is also easy to see that
\[
\norm{h-\underline{h}_{2\ee}}_{\LL^1} = \norm{\overline{h}_{2\ee}-h}_{\LL^1} = 4\ee  \qquad \text{and} \qquad \norm{\overline{h}_{2\ee}}_{\LL^1} = a+4\ee.
\] 
Taking into account these last equalities and using Lemma \ref{Bosquet}, we find that
\begin{align}
	&\abs{\bb E_x \left( \psi\left( X_n \right) \,;\, y+S_n \in [z,z+a] \right) - \bbm 1_{[z,z+a]}*\varphi_{\sqrt{n}\sigma}(y) \bs \nu \left( \psi \right)} \nonumber\\
	&\hspace{2cm} \leq c (a+\ee) \norm{\psi}_{\infty} \left( \frac{1}{\sqrt{n}} + \ee + c_{\ee}\e^{-c_{\ee}n} \right).
	\label{voiture1}
\end{align}
Moreover, the convolution $\bbm 1_{[z,z+a]}*\varphi_{\sqrt{n}\sigma}$ is equal to
\[
\bbm 1_{[z,z+a]}*\varphi_{\sqrt{n}\sigma}(y) = \int_{\bb R} \bbm 1_{\left\{ z \leq y-u \leq z+a \right\}} \frac{e^{-\frac{u^2}{2n\sigma^2}}}{\sqrt{2\pi n}\sigma} \dd u = \Phi_{\sqrt{n}\sigma}(y-z) - \Phi_{\sqrt{n}\sigma}(y-z-a),
\]
where $\Phi_{\sqrt{n}\sigma}(t) = \int_{-\infty}^{t} \frac{e^{-\frac{u^2}{2n\sigma^2}}}{\sqrt{2\pi n}\sigma} \dd u$ is the distribution function of the centred normal law of variance $n\sigma^2$. 
By the Taylor-Lagrange formula, there exists $\xi \in (y-z-a,y-z)$ such that
\[
\Phi_{\sqrt{n}\sigma}(y-z-a) = \Phi_{\sqrt{n}\sigma}(y-z) - a\varphi_{\sqrt{n}\sigma}(y-z) + \frac{a^2}{2} \varphi_{\sqrt{n}\sigma}'(\xi).
\]
Using the fact that $\sup_{u\in \bb R} \abs{u}\e^{-u^2} \leq c$,
\begin{equation}
	\label{voiture2}
	\abs{\bbm 1_{[z,z+a]}*\varphi_{\sqrt{n}\sigma}(y) - a\varphi_{\sqrt{n}\sigma}(z-y)} \leq \frac{c a^2}{n}.
\end{equation}
Putting together \eqref{voiture1} and \eqref{voiture2}, we conclude that
\begin{align*}
	&\abs{\bb E_x \left( \psi\left( X_n \right) \,;\, y+S_n \in [z,z+a] \right) - a\varphi_{\sqrt{n}\sigma}(z-y) \bs \nu \left( \psi \right)} \\
	&\hspace{2cm} \leq c (a+\ee) \norm{\psi}_{\infty} \left( \frac{1}{\sqrt{n}} + \frac{a}{n} + \ee + c_{\ee}\e^{-c_{\ee}n} \right).
\end{align*}
\end{proof}

\section{Auxiliary bounds} \label{auxbounds}

We state two bounds on the expectation $\bb E_x \left( \psi(X_n) \,;\, y+S_n \in [z,z+a] \,,\, \tau_y > n \right)$. The first one is of order $1/n$ and independent of $z$. 
Then we reverse the Markov chain to improve it to a bound of order  $1/n^{3/2}.$  
 We refer to Denisov and Wachtel \cite{denisov_random_2015} for related results in the case of lattice valued independent random variables. 

\begin{lemma}
\label{gorille001}
Assume Hypotheses \ref{primitive}-\ref{nondegenere}. There exists $c > 0$ such that for any $a >0$, non-negative function $\psi \in \scr C$, $y \in \bb R$ and $n \geq 1$
\[
\sup_{x\in \bb X,\, z \geq 0} \bb E_x \left( \psi \left( X_{n} \right) \,;\, y+S_{n} \in [z,z+a] \,,\, \tau_y > n \right) \leq \frac{c}{n} \norm{\psi}_{\infty} (1+a^2) \left(1 + \max(y,0) \right).
\]
\end{lemma}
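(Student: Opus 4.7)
The plan is to split the trajectory at the midpoint via the Markov property and combine two $1/\sqrt{n}$ bounds already at our disposal: the survival probability bound for the first half, and the non-asymptotic local limit theorem for the second half. The target rate $1/n$ and the linear dependence on $1+\max(y,0)$ then come out immediately by multiplication.

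For $n \leq 4$ the bound is trivial: we simply bound the expectation by $\norm{\psi}_{\infty}$, which is absorbed into the right-hand side by enlarging the constant $c$. So we fix $n \geq 4$ and set $m = \lfloor n/2 \rfloor$, so that $m \geq 2$ and $n-m \geq 1$. Applying the Markov property at time $m$ (and dropping the constraint $\tau_y > n$ down to $\tau_y > m$, which is licit since the integrand is non-negative), we obtain
\begin{align*}
	&\bb E_x \left( \psi(X_n) \,;\, y+S_n \in [z,z+a] \,,\, \tau_y > n \right) \\
	&\qquad \leq \bb E_x \left( \bbm 1_{\{ \tau_y > m \}} \, \bb E_{X_m} \left( \psi(X_{n-m}) \,;\, (y+S_m) + S_{n-m} \in [z,z+a] \right) \right),
\end{align*}
where under $\bb E_{X_m}$ the walk $(S_k)_{k \geq 0}$ is to be understood as an independent copy starting afresh from $X_m$, and $y+S_m$ is treated as the deterministic shift.

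Now we bound the two factors separately. The inner conditional expectation is controlled uniformly in the starting point and in the shift $y+S_m$ by the non-asymptotic local limit theorem, specifically the bound \eqref{papillon001} of Corollary \ref{papillon}, which gives
\[
\bb E_{X_m} \left( \psi(X_{n-m}) \,;\, (y+S_m) + S_{n-m} \in [z,z+a] \right) \leq \frac{c(1+a^2) \norm{\psi}_{\infty}}{\sqrt{n-m}}.
\]
Note that the ``$z \geq 0$'' restriction in the lemma is irrelevant here, since \eqref{papillon001} is valid for any real location of the interval. Plugging this in and then using the survival probability estimate from point \ref{RESUII002} of Proposition \ref{RESUII}, we get
\[
\bb E_x \left( \psi(X_n) \,;\, y+S_n \in [z,z+a] \,,\, \tau_y > n \right) \leq \frac{c(1+a^2) \norm{\psi}_{\infty}}{\sqrt{n-m}} \cdot \bb P_x(\tau_y > m) \leq \frac{c(1+a^2) \norm{\psi}_{\infty} (1+\max(y,0))}{\sqrt{m(n-m)}}.
\]
Since $m = \lfloor n/2 \rfloor$ satisfies $m(n-m) \geq n/4 \cdot n/4 = n^2/16$ for $n \geq 4$, the denominator is at least $n/4$, which yields the desired $1/n$ factor and concludes the proof after adjusting $c$.

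The argument is essentially routine once the Markov splitting is in place; the only subtle point worth flagging is that in dropping $\{\tau_y > n\}$ to $\{\tau_y > m\}$ we lose the additional positivity information on the second half, but we recover the needed $1/\sqrt{n-m}$ decay unconditionally from the local limit estimate \eqref{papillon001}, which does not require any positivity constraint on the walk between times $m$ and $n$.
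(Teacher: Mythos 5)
Your argument is correct and follows essentially the same route as the paper: split at the midpoint via the Markov property, drop the positivity constraint on the second half, bound the second half uniformly by the unconditional local limit estimate \eqref{papillon001}, and bound the first half by the survival-probability estimate of point \ref{RESUII002} of Proposition \ref{RESUII}. The only cosmetic difference is that the paper puts the unconditional local limit bound on the leg of length $k=\lfloor n/2\rfloor$ and the survival bound on the leg of length $n-k$, whereas you do the reverse; this is immaterial since both legs are comparable to $n$.
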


\begin{proof}
We split the time $n$ into two parts $k := \pent{n/2}$ and $n-k$. By the Markov property,
\begin{align*}
	E_0 &:= \bb E_x \left( \psi \left( X_n \right) \,;\, y+S_n \in [z,z+a] \,,\, \tau_y > n \right) \\
	&= \sum_{x'\in \bb X} \int_{0}^{+\infty} \bb E_{x'} \left( \psi \left( X_k \right) \,;\, y'+S_k \in [z,z+a] \,,\, \tau_{y'} > k \right) \\
	&\hspace{3cm}\times \bb P_x \left( X_{n-k} = x' \,,\, y+S_{n-k} \in \dd y' \,,\, \tau_y > n-k \right) \\
	&\leq \sum_{x'\in \bb X} \int_{0}^{+\infty} \bb E_{x'} \left( \psi \left( X_k \right) \,;\, y'+S_k \in [z,z+a] \right) \\
	&\hspace{3cm}\times \bb P_x \left( X_{n-k} = x' \,,\, y+S_{n-k} \in \dd y' \,,\, \tau_y > n-k \right).
\end{align*}
Using the uniform bound \eqref{papillon001} in Corollary \ref{papillon}, we obtain that
\[
E_0 \leq \frac{c \norm{\psi}_{\infty}}{\sqrt{k}} (1+a^2) \bb P_x \left( \tau_y > n-k \right).
\]
By the point \ref{RESUII002} of Proposition \ref{RESUII}, we get
\[
E_0 \leq \frac{c \norm{\psi}_{\infty} (1+a^2) \left( 1+\max(y,0) \right)}{\sqrt{k}\sqrt{n-k}}.
\]
Since $n-k \geq n/2$ and $k \geq n/4$ for any $n \geq 4$, the lemma is proved (the case when $n\leq 4$ is trivial).
\end{proof}

\begin{lemma}
\label{gorille002}
Assume Hypotheses \ref{primitive}-\ref{nondegenere}. There exists $c > 0$ such that for any $a >0$, non-negative function $\psi \in \scr C$, $y \in \bb R$, $z \geq 0$ and $n \geq 1$
\[
\sup_{x\in \bb X} \bb E_x \left( \psi \left( X_{n} \right) \,;\, y+S_{n} \in [z,z+a] \,,\, \tau_y > n \right) \leq \frac{c \norm{\psi}_{\infty}}{n^{3/2}} (1+a^3)\left( 1+z \right)\left( 1+\max(y,0) \right).
\]
\end{lemma}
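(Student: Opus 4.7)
The plan is to combine a splitting of the trajectory at the midpoint $k := \pent{n/2}$ with a duality argument applied to the second half. By the Markov property at time $k$,
\[
E_0 := \bb E_x \left( \psi(X_n) \,;\, y+S_n \in [z,z+a] \,,\, \tau_y > n \right) = \sum_{x' \in \bb X} \int_0^{+\infty} F(x',y') \, \bb P_x \left( X_k = x' \,,\, y+S_k \in \dd y' \,,\, \tau_y > k \right),
\]
with $F(x',y') := \bb E_{x'} \left( \psi(X_{n-k}) \,;\, y'+S_{n-k} \in [z,z+a] \,,\, \tau_{y'} > n-k \right)$. The first half will contribute the factor $(1+\max(y,0))/\sqrt{n}$ through the survival bound of Proposition \ref{RESUII}, while the second half will contribute $(1+a^3)(1+z)/n$ through a duality-plus-Lemma \ref{gorille001} estimate, and their product yields the desired rate $n^{-3/2}$.

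To bound $F(x',y')$ uniformly in $y' > 0$, I would apply Lemma \ref{duality} with initial law $\mathfrak{m} = \delta_{x'}$ to the functional carrying $\psi(x_{n-k})$ and the indicators of $\{y'+f(x_1)+\dots+f(x_{n-k}) \in [z,z+a]\}$ and $\{\min_{1\leq j \leq n-k}(y'+f(x_1)+\dots+f(x_j)) > 0\}$. The arrival condition reverses to $\{z+a+S^*_{n-k} \in [y',y'+a]\}$, while the running-positivity condition reverses to $\{S^*_m > S^*_{n-k} - y' \text{ for } m = 0,\dots,n-k-1\}$, which is contained in the dual first-passage event $\{\tau^*_{z+a} > n-k\}$ since $S^*_{n-k} - y' \geq -(z+a)$ on the arrival condition. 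Bounding the factor $\psi(X_1^*)$ produced by the reversal by $\norm{\psi}_{\infty}$ and noting that $\varphi_{x'}(u) := \bf P(x',u)/\bs \nu(u)$ is uniformly bounded on the finite state space $\bb X$, one obtains
\[
F(x',y') \leq \norm{\psi}_{\infty} \sum_{x^* \in \bb X} \bs \nu(x^*) \, \bb E^*_{x^*} \left( \varphi_{x'}(X^*_{n-k}) \,;\, z+a+S^*_{n-k} \in [y',y'+a] \,,\, \tau^*_{z+a} > n-k \right).
\]
Since the results of Section \ref{dual chain} ensure that the dual chain satisfies Hypotheses \ref{primitive}--\ref{nondegenere}, Lemma \ref{gorille001} applied to $(X_n^*)_{n\geq 0}$ with starting value $z+a \geq 0$ and arrival interval $[y',y'+a]$ (with $y' > 0$ on $\{\tau_y > k\}$) yields
\[
F(x',y') \leq \frac{c \norm{\psi}_{\infty}}{n-k}(1+a^2)(1+z+a) \leq \frac{c \norm{\psi}_{\infty}(1+a^3)(1+z)}{n-k}.
\]

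Inserting this uniform bound into the Markov-splitting identity and using the point \ref{RESUII002} of Proposition \ref{RESUII} to control $\bb P_x(\tau_y > k) \leq c(1+\max(y,0))/\sqrt{k}$, together with $k, n-k \geq n/4$ for $n \geq 4$ (the case $n \leq 4$ being trivial via Lemma \ref{gorille001}), delivers the desired inequality. The main technical point is the derivation of the duality bound on $F(x',y')$: one must carry out the pathwise reversal via Lemma \ref{duality}, verify the containment in the dual first-passage event $\{\tau^*_{z+a} > n-k\}$, and absorb the Radon--Nikodym-like factor $\bf P(x',X^*_{n-k})/\bs \nu(X^*_{n-k})$ into a uniformly bounded weight $\varphi_{x'}$.
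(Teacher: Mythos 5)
Your proof is correct and follows essentially the same route as the paper's: split the trajectory at mid-time by the Markov property, bound the survival probability over the first segment by the point \ref{RESUII002} of Proposition \ref{RESUII}, and for the second segment reverse the walk via Lemma \ref{duality}, check that the reversed exceedance event sits inside $\{\tau_{z+a}^* > \cdot\}$, absorb the Radon--Nikodym factor into a bounded weight, and then invoke Lemma \ref{gorille001} for the dual chain. The only (cosmetic) difference is that you split at time $k=\lfloor n/2\rfloor$ and dualize the second half of length $n-k$, whereas the paper splits at time $n-k$ and dualizes a block of length $k$; both choices give $k,n-k\asymp n/2$ and the same $n^{-3/2}$ rate.
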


\begin{proof}
Set again $k=\pent{n/2}$. By the Markov property
\begin{align}
	E_0 &:= \bb E_x \left( \psi \left( X_n \right) \,;\, y+S_n \in [z,z+a] \,,\, \tau_y > n \right) \nonumber\\
	\label{brebis}
	&= \sum_{x'\in \bb X} \int_{0}^{+\infty} \underbrace{\bb E_{x'} \left( \psi \left( X_k \right) \,;\, y'+S_k \in [z,z+a] \,,\, \tau_{y'} > k \right)}_{=:E_0'} \\
	&\hspace{3cm}\times \bb P_x \left( X_{n-k} = x' \,,\, y+S_{n-k} \in \dd y' \,,\, \tau_y > n-k \right). \nonumber
\end{align}
Using Lemma \ref{duality} with $\mathfrak{m} = \bs \delta_{x'}$ and
\[
	F(x_1,\dots,x_k) = \psi(x_k) \bbm 1_{\left\{ y'+f(x_1) \dots+f(x_k) \in [z,z+a] \,,\, \forall i \in \{ 1, \dots, k \},\, y'+f(x_1)+\dots+f(x_i) > 0 \right\}},
\]
we have
\begin{align*}
	E_0' &= \bb E_{\bs \nu}^* \left( \psi \left( X_1^* \right) \frac{\bbm 1_{\{x'\}}\left( X_{k+1}^* \right)}{\bs \nu \left( X_{k+1}^* \right)} \,;\, y'+f\left( X_k^* \right)+ \dots + f\left( X_1^* \right) \in [z,z+a] \,,\, \right. \\
	&\qquad\left. \phantom{\frac{\left( X_{n_2}^* \right)}{\bs \nu \left( X_{n_2}^* \right)}} \forall i \in \{ 1, \dots, k \},\, y'+f\left( X_k^* \right)+\dots+f \left(X_{k-i+1}^* \right) > 0 \right).
\end{align*}
By the Markov property,
\begin{align*}
	E_0' &= \bb E_{\bs \nu}^* \left( \psi \left( X_1^* \right) \psi_{x'}^*\left( X_k^* \right) \,;\, y'+f\left( X_k^* \right)+ \dots +f\left( X_1^* \right) \in [z,z+a] \,,\, \right. \\
	&\hspace{3cm} \left. \forall i \in \{ 1, \dots, k \},\, y'+f\left( X_k^* \right)+\dots+f \left(X_{k-i+1}^* \right) > 0 \right).
\end{align*}
where
\begin{equation}
	\label{hirondelle}
	\psi_{x'}^*(x^*) = \bb E_{x^*}^* \left( \frac{\bbm 1_{\{x'\}}\left( X_1^* \right)}{\bs \nu \left( X_1^* \right)} \right) = \frac{ \bf P^*(x^*,x')}{\bs \nu(x')} = \frac{ \bf P(x',x^*)}{\bs \nu(x^*)} \leq \frac{1}{\inf_{x\in \bb X} \bs \nu(x)}.
\end{equation}
On the event $\left\{ y'+f\left( X_k^* \right)+ \dots + f\left( X_1^* \right) \in \left[ z,z+a \right] \right\} = \left\{ z+a + S_k^* \in \left[ y',y'+a \right] \right\}$, we have
\begin{align*}
	&\left\{ \forall i \in \{ 1, \dots, k \},\, y'+f\left( X_k^* \right)+\dots+f \left( X_{k-i+1}^* \right) > 0, y' >0 \right\} \\
	&\qquad \subset \left\{ \forall i \in \{ 1, \dots, k-1 \},\, z+a-f \left( X_{k-i}^* \right)-\dots-f\left( X_1^* \right) > 0, z+a+S_k^* > 0 \right\} \\
	&\qquad = \left\{ \tau_{z+a}^* >  k \right\}.
\end{align*}
So, for any $y' > 0$,
\[
E_0' \leq c\norm{\psi}_{\infty} \bb P_{\bs \nu}^* \left( z+a+S_k^* \in [y',y'+a] \,,\, \tau_{z+a}^* > k \right).
\]
Using Lemma \ref{gorille001} we have uniformly in $y' >0$,
\begin{equation}
	\label{rossignol}
	E_0' \leq \frac{c \norm{\psi}_{\infty}}{k} (1+a^2) \left( 1+\max(z+a,0) \right) \leq \frac{c \norm{\psi}_{\infty}}{k} (1+a^3) \left( 1+z \right).
\end{equation}
Putting together \eqref{rossignol} and \eqref{brebis} and using the point \ref{RESUII002} of Proposition \ref{RESUII},
\[
	E_0 \leq \frac{c \norm{\psi}_{\infty}}{k} (1+a^3) \left( 1+z \right) \bb P_x \left( \tau_y > n-k \right) \leq \frac{c \norm{\psi}_{\infty}}{k\sqrt{n-k}} (1+a^3) \left( 1+z \right)\left( 1+\max(y,0) \right).
\]
Since $n-k \geq n/2$ and $k \geq n/4$ for any $n \geq 4$, the lemma is proved.
\end{proof}

\section{Proof of Theorem \ref{GNLLT}}
\label{heron}

The aim of this section is to bound
\begin{equation}
	\label{otarie}
	E_0 := \bb E_x \left( \psi \left( X_{n} \right) \,;\, y+S_{n} \in [z,z+a] \,,\, \tau_y > n \right)
\end{equation}
uniformly in the end point $z$. The point is to split the time $n$ into $n=n_1+n_2$, where $n_2 = \pent{\ee^3 n}$ and $n_1 = n- \pent{\ee^3 n}$, and $\ee \in (0,1)$. Using the Markov property, we shall bound the process between $n_1$ and $n$ by the local limit theorem (Corollary \ref{papillon}) and between $1$ and $n_1$ by the integral theorem (Proposition \ref{RESUIII}). Following this idea we write
\begin{align}
	E_0 &= \underbrace{\bb E_x \left( \psi \left( X_{n} \right) \,;\, y+S_{n} \in [z,z+a] \,,\, \tau_y > n_1 \right)}_{=:E_1} \nonumber\\
	&\qquad - \underbrace{\bb E_x \left( \psi \left( X_{n} \right) \,;\, y+S_{n} \in [z,z+a] \,,\, n_1 < \tau_y \leq n \right)}_{=:E_2}.
	\label{chatLLT}
\end{align}

For the ease of reading the bounds of $E_1$ and $E_2$ are given in separate sections.

\subsection{Control of \texorpdfstring{$E_1$}{}}
\label{perroquet}

\begin{lemma}
\label{DOSP}
Assume Hypotheses \ref{primitive}-\ref{nondegenere}.
For any $a>0$ and $\ee \in (0, 1/4)$ there exist $c = c_a >0$ depending only on $a$  and $c_{\ee}>0$ such that for any non-negative function $\psi \in \scr C$, any $y \in \bb R$ and $n\ \in \bb N$, such that $\ee^3 n\geq1$ we have
\begin{align*}
	\sup_{x\in \bb X,z\geq 0} &n\abs{E_1 - \frac{a}{\sqrt{n_2}\sigma} \bs \nu \left( \psi \right) \bb E_x \left( \varphi \left( \frac{y-z+S_{n_1}}{\sqrt{n_2}\sigma} \right) \,;\, \tau_y > n_1 \right)} \\
	&\hspace{4cm} \leq c \left(1 + \max(y,0) \right) \norm{\psi}_{\infty} \left( \ee + \frac{c_{\ee}}{\sqrt{n}} \right).
\end{align*}
where $E_1 = \bb E_x \left( \psi \left( X_{n} \right) \,;\, y+S_{n} \in [z,z+a] \,,\, \tau_y > n_1 \right)$, $n_2= \pent{\ee^3 n}$, $n_1 = n- \pent{\ee^3 n}$ and $\varphi(t) = \e^{-\frac{t^2}{2}}/\sqrt{2\pi}$.
\end{lemma}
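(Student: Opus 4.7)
The plan is to apply the Markov property at time $n_1$ in order to split $E_1$ into a ``long'' piece of length $n_1 \sim n$, governed by the walk conditioned to survive, and a ``short'' piece of length $n_2 = \lfloor \ee^3 n \rfloor$, to which the non-asymptotic local limit theorem of Corollary~\ref{papillon} can be applied. Explicitly, conditioning on $X_1, \dots, X_{n_1}$ gives
\begin{equation*}
E_1 = \sum_{x'\in \bb X} \int_0^{+\infty} \bb E_{x'}\bigl(\psi(X_{n_2});\, y'+S_{n_2}\in[z,z+a]\bigr)\, \bb P_x\bigl(X_{n_1}=x',\, y+S_{n_1}\in \dd y',\, \tau_y>n_1\bigr),
\end{equation*}
the integration being restricted to $y' > 0$ because of $\{\tau_y > n_1\}$. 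I would then invoke Corollary~\ref{papillon} on the inner expectation with an auxiliary parameter $\delta \in (0,1/4)$ (independent of the $\ee$ in the lemma, to be calibrated below), which replaces the inner expectation by $a\,\varphi_{\sqrt{n_2}\sigma}(z-y')\bs\nu(\psi)$ up to an error uniform in $x'$, $y'$, $z$.

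Once this substitution is plugged back in, the identity $\varphi_{\sqrt{n_2}\sigma}(u) = \frac{1}{\sqrt{n_2}\sigma}\varphi\bigl(u/(\sqrt{n_2}\sigma)\bigr)$ and the evenness of $\varphi$ produce exactly the main term announced in the lemma. The remainder is bounded above by the product of the per-term error of Corollary~\ref{papillon}, namely $\tfrac{c(a+\delta)\|\psi\|_\infty}{\sqrt{n_2}}\bigl(n_2^{-1/2} + a n_2^{-1} + \delta + c_\delta e^{-c_\delta n_2}\bigr)$, with the total mass $\bb P_x(\tau_y>n_1) \leq c(1+\max(y,0))/\sqrt{n_1}$ supplied by point~\ref{RESUII002} of Proposition~\ref{RESUII}.

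After multiplying by $n$ and using $n_1 \geq n/2$ (valid for $\ee < 1/4$) together with the consequence $n/n_2 \leq c/\ee^3$ of the assumption $\ee^3 n \geq 1$, the amplifying prefactor $n/\sqrt{n_1 n_2}$ is of order $\ee^{-3/2}$. The calibration step is then to choose $\delta$ so that $\delta/\ee^{3/2} \leq \ee$, for instance $\delta = \ee^{5/2}$. With this choice the $\delta$ piece contributes a term of order $\ee$; the $n_2^{-1/2}$ piece contributes $c_a/(\ee^3 \sqrt{n})$, which is absorbed into $c_\ee/\sqrt{n}$; the $a n_2^{-1}$ piece is dominated by $n_2^{-1/2}$ (up to a constant depending on $a$); and the exponential $c_\delta e^{-c_\delta \ee^3 n}$ decays faster than $1/\sqrt{n}$ for fixed $\ee$ and is likewise absorbed into $c_\ee/\sqrt{n}$.

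The main obstacle is precisely this calibration: the shortness of the horizon $n_2 \sim \ee^3 n$ amplifies the internal local limit error by the factor $1/\ee^{3/2}$, so one cannot simply reuse $\ee$ as the parameter in Corollary~\ref{papillon} but must insert a strictly smaller scale $\delta \sim \ee^{5/2}$. The $\delta$-dependent constants $c_\delta$ of that corollary then become the $c_\ee$-dependent constants in the final bound, while the $(a+\delta)$ prefactor is responsible for the fact that $c$ in the statement is allowed to depend on $a$ but not on $\ee$.
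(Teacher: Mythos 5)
Your proposal is correct and follows essentially the same route as the paper: decompose via the Markov property at time $n_1$, apply Corollary~\ref{papillon} at horizon $n_2$ with the inner parameter $\ee^{5/2}$ precisely so that the $\ee^{-3/2}$ amplification from $n/\sqrt{n_1 n_2}$ is beaten down to $\ee$, and close with point~\ref{RESUII002} of Proposition~\ref{RESUII}. The paper carries out exactly this calibration, so there is nothing to add.
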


\begin{proof}
By the Markov property,
\begin{align}
	E_1 &= \sum_{x' \in \bb X} \int_{0}^{+\infty} \underbrace{\bb E_{x'} \left( \psi\left( X_{n_2} \right) \,;\, y'+S_{n_2} \in [z,z+a] \right)}_{=:E_1'} \nonumber\\
	&\hspace{5cm} \times \bb P_x \left( y+S_{n_1} \in \dd y' \,,\, X_{n_1} = x' \,,\, \tau_y > n_1 \right).
	\label{canard1}
\end{align}
From now on we consider that the real $a>0$ is fixed. By Corollary \ref{papillon}, for any $\ee^{5/2} \leq \ee \in (0,1/4)$,
\begin{align*}
	\sqrt{n_2} \abs{E_1' - a\varphi_{\sqrt{n_2}\sigma} (z-y') \bs \nu \left( \psi \right)} \leq c \norm{\psi}_{\infty} \left( \frac{1}{\sqrt{n_2}} + \ee^{5/2} + c_{\ee}\e^{-c_{\ee}n_2} \right),
\end{align*}
with $c$ depending only on $a$. Consequently, using \eqref{canard1} and the fact that $n_2 = \pent{\ee^3 n} \geq c_{\ee} n$,
\begin{align*}
	&\abs{E_1 - a\bs \nu \left( \psi \right)  \bb E_x \left( \varphi_{\sqrt{n_2}\sigma} \left( y-z+S_{n_1} \right) \,;\, \tau_y > n_1 \right)} \\
	&\qquad \leq \frac{c\norm{\psi}_{\infty}}{\sqrt{n_2}} \left( \frac{c_{\ee}}{\sqrt{n}} + \ee^{5/2} + c_{\ee}\e^{-c_{\ee}n} \right) \bb P_x \left( \tau_y > n_1 \right).
\end{align*}
Therefore, by \eqref{normal} and the point \ref{RESUII002} of Proposition \ref{RESUII}, we obtain that
\begin{align*}
	&\abs{E_1 - \frac{a}{\sqrt{n_2}\sigma} \bs \nu \left( \psi \right) \bb E_x \left( \varphi \left( \frac{y-z+S_{n_1}}{\sqrt{n_2}\sigma} \right) \,;\, \tau_y > n_1 \right)} \leq c \norm{\psi}_{\infty} \frac{1 + \max(y,0)}{\sqrt{n_2}\sqrt{n_1}} \left( \frac{c_{\ee}}{\sqrt{n}} + \ee^{5/2} \right).
\end{align*}
Since $n_2 \geq \ee^3 n \left( 1 - \frac{1}{\ee^3 n} \right)$ and $n_1 \geq \frac{n}{2}$, we have
\begin{align*}
	c \norm{\psi}_{\infty} \frac{1 + \max(y,0)}{\sqrt{n_2}\sqrt{n_1}} \left( \frac{c_{\ee}}{\sqrt{n}} + \ee^{5/2} \right) &\leq c \norm{\psi}_{\infty} \frac{1 + \max(y,0)}{\ee^{3/2} n} \left( 1 + \frac{c_{\ee}}{n} \right)  \left( \frac{c_{\ee}}{\sqrt{n}} + \ee^{5/2} \right) \\
	&\leq c \norm{\psi}_{\infty} \frac{1 + \max(y,0)}{n} \left( \ee + \frac{c_{\ee}}{\sqrt{n}} \right)
\end{align*}
and the lemma follows.
\end{proof}

To find the limit behaviour of $E_1$, we will develop $\frac{1}{\sqrt{n_2}}\bb E_x \left( \varphi \left( \frac{y+S_{n_1}-z}{\sqrt{n_2}\sigma} \right) \,;\, \tau_y > n_1 \right)$. To this aim, we prove the following lemma which we will apply first with the standard normal density function $\varphi$, and later on with the Rayleigh density $\varphi_+$.
\begin{lemma}
\label{couleuvre}
Assume Hypotheses \ref{primitive}-\ref{nondegenere}.
Let $\Psi$ : $\bb R \to \bb R$ be a non-negative derivable function such that $\Psi(t) \to 0$ as $t \to +\infty$. Moreover we suppose that $\Psi'$ is a continuous function on $\bb R$ such that $\max(\abs{\Psi(t)},\abs{\Psi'(t)}) \leq c \e^{-\frac{t^2}{4}}$. There exists $\ee_0 \in (0,1/2)$ such that for any $\ee \in (0,\ee_0)$, $y \in \bb R$, $m_1 \geq 1$ and $m_2 \geq 1$, we have
\begin{align*}
	\sup_{x\in \bb X, \, z \geq 0} &\abs{\bb E_x \left( \Psi \left( \frac{y+S_{m_1}-z}{\sqrt{m_2}\sigma} \right) \,;\, \tau_y > m_1 \right) - \frac{2V(x,y)}{\sqrt{2\pi m_1}\sigma} \int_{0}^{+\infty} \Psi \left( \sqrt{\frac{m_1}{m_2}}t- \frac{z}{\sqrt{m_2}\sigma} \right) \varphi_+ (t) \dd t} \\
	&\qquad \leq c_{\ee} \frac{\left( 1+\max(y,0) \right)^2}{m_1^{\ee} \sqrt{m_2}} + c \frac{1+\max(y,0)}{\sqrt{m_1}} \left( \e^{-c\frac{m_1}{m_2} } + \ee^4 \right),
\end{align*}
where $\varphi_+(t) = t\e^{-\frac{t^2}{2}}$.
\end{lemma}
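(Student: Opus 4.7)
My plan is to reduce the estimate to the integral theorem of Proposition \ref{RESUIII} via an integration by parts, and then to handle the resulting error by truncating at a suitable level $T_0 = T_0(\ee)$; I will take $T_0 = 1/\ee$ at the end. The main difficulty will be the uniformity in $z\geq 0$.

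Set $s := z/(\sqrt{m_2}\sigma)$, $U := (y+S_{m_1})/(\sqrt{m_1}\sigma)$ and
\[
F_{x,y}(t) := \bb P_x(y+S_{m_1}\leq t\sqrt{m_1}\sigma,\,\tau_y>m_1).
\]
Using $\Psi(u) = -\int_u^\infty \Psi'(v)\,dv$ (valid since $\Psi(\infty)=0$) together with Fubini, the left-hand expectation rewrites as $-\int_{-s}^\infty \Psi'(v)\,F_{x,y}((v+s)\sqrt{m_2/m_1})\,dv$, and a parallel integration by parts in the Rayleigh integral (using $(\bf\Phi^+)' = \varphi_+$ and $\Psi(\infty) = \bf\Phi^+(0) = 0$) turns the target into $-\frac{2V(x,y)}{\sqrt{2\pi m_1}\sigma}\int_{-s}^\infty \Psi'(v)\,\bf\Phi^+((v+s)\sqrt{m_2/m_1})\,dv$. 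Subtracting, the difference to bound is
\[
\Delta = -\int_{-s}^\infty \Psi'(v)\Bigl[F_{x,y}(t_v) - \frac{2V(x,y)}{\sqrt{2\pi m_1}\sigma}\bf\Phi^+(t_v)\Bigr]\,dv, \qquad t_v := (v+s)\sqrt{m_2/m_1}.
\]

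I would then split this integral at $v_0 := T_0\sqrt{m_1/m_2} - s$. On the bulk $[-s, v_0]$, $t_v\in[0,T_0]$, so the point \ref{RESUIII002} of Proposition \ref{RESUIII} bounds the bracket by $c_{\ee,T_0}(1+\max(y,0))^2/m_1^{1/2+\ee}$; for $\int_{-s}^{v_0}|\Psi'(v)|\,dv$ I would use simultaneously $\int_\bb R |\Psi'| \leq c$ (from $|\Psi'|\leq c e^{-v^2/4}$) and the length-times-sup estimate $(v_0+s)\sup|\Psi'| \leq c T_0\sqrt{m_1/m_2}$, whose minimum is $c\min(1,\sqrt{m_1/m_2})$. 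Since $\min(1,\sqrt{m_1/m_2})/\sqrt{m_1} \leq 1/\sqrt{m_1\vee m_2} \leq 1/\sqrt{m_2}$, this yields a bulk contribution at most $c_\ee(1+\max(y,0))^2/(m_1^\ee\sqrt{m_2})$ in every regime of $m_1,m_2\geq 1$. For the tail $v>v_0$, the crude bracket bound $|\cdot| \leq c(1+\max(y,0))/\sqrt{m_1}$ from the point \ref{RESUII002} of Proposition \ref{RESUII} reduces matters to estimating $\int_{v_0}^\infty |\Psi'(v)|\,dv$.

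When $s \leq T_0\sqrt{m_1/m_2}/2$, one has $v_0 \geq T_0\sqrt{m_1/m_2}/2 \geq 0$, and the Gaussian decay of $|\Psi'|$ gives $\int_{v_0}^\infty |\Psi'| \leq c e^{-cT_0^2 m_1/m_2} \leq c e^{-cm_1/m_2}$, producing the announced second term. The remaining case $s > T_0\sqrt{m_1/m_2}/2$ is the main technical obstacle, since the tail of $|\Psi'|$ no longer decays and the IBP estimate breaks down. I would then bypass IBP altogether and bound $E(\Psi) := \bb E_x(\Psi(\sqrt{m_1/m_2}\,U - s);\tau_y > m_1)$ and its Rayleigh counterpart directly, exploiting the decay of $\Psi$: splitting according to $U \leq T_0/4$ (where the argument of $\Psi$ is $\leq -T_0\sqrt{m_1/m_2}/4$, so $|\Psi| \leq c e^{-T_0^2 m_1/(64 m_2)}$) or $U > T_0/4$ (where $|\Psi|\leq c$ and the Rayleigh-concentration encoded in the point \ref{RESUIII002} of Proposition \ref{RESUIII} controls $\bb P_x(U>T_0/4,\tau_y>m_1)$ by $c(1+\max(y,0))e^{-T_0^2/32}/\sqrt{m_1}$, up to a negligible $c_{\ee,T_0}(1+\max(y,0))^2/m_1^{1/2+\ee}$ error). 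The Rayleigh integral is handled by the analogous splitting of $\int_0^\infty\Psi(\sqrt{m_1/m_2}\,t - s)\varphi_+(t)\,dt$ at $t=T_0/4$. With $T_0 = 1/\ee$, both contributions are at most $c(1+\max(y,0))/\sqrt{m_1}\cdot\ee^4$ for $\ee$ small, which combined with the bulk and easy-tail bounds delivers the lemma.
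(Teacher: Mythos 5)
Your overall skeleton reproduces the paper's: an integration by parts to reduce the bulk of the estimate to the Berry--Esseen-type bound of point \ref{RESUIII002} of Proposition \ref{RESUIII}, a Gaussian tail bound for $\Psi'$ to close the bulk regime, and a direct estimate of both $J_0$ and the Rayleigh integral in the ``large $z$'' regime; the final parameter is set at the end to produce $\ee^4$. The only structural difference is cosmetic (you split at $z \gtrless T_0\sqrt{m_1}\sigma/2$ with $T_0 = 1/\ee$; the paper splits at $z \gtrless \sqrt{m_1}\sigma/\ee_1$ with $\ee_1 = \ee^4$).

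There is, however, a real gap in your large-$z$ case. You claim that ``Rayleigh-concentration encoded in the point \ref{RESUIII002} of Proposition \ref{RESUIII} controls $\bb P_x(U > T_0/4,\tau_y > m_1)$ by $c(1+\max(y,0))e^{-T_0^2/32}/\sqrt{m_1}$, up to a negligible $c_{\ee,T_0}(1+\max(y,0))^2/m_1^{1/2+\ee}$ error.'' Point \ref{RESUIII002} controls the quantity $\bb P_x(y+S_{m_1} \leq t\sqrt{m_1}\sigma, \tau_y > m_1) - \frac{2V(x,y)}{\sqrt{2\pi m_1}\sigma}\mathbf{\Phi}^+(t)$ for $t$ in a fixed compact; to pass from this CDF estimate to the exceedance probability $\bb P_x(U > T_0/4, \tau_y > m_1)$ you must subtract from $\bb P_x(\tau_y > m_1)$, and you would need the difference $\bb P_x(\tau_y>m_1) - \frac{2V(x,y)}{\sqrt{2\pi m_1}\sigma}$ to be small with an explicit rate. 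The preliminary results supply only the asymptotic equivalence (point \ref{RESUII001}, no rate) and the one-sided uniform bound (point \ref{RESUII002}, constant $c$ not matched to $\frac{2}{\sqrt{2\pi}\sigma}$), so $c(1+\max(y,0))/\sqrt{m_1} - \frac{2V(x,y)}{\sqrt{2\pi m_1}\sigma}\mathbf{\Phi}^+(T_0/4)$ does not become small as $T_0 \to \infty$. Hence the claimed exponential decay in $T_0$ is not available. The tool that does work is Markov's inequality combined with harmonicity of $V$: since $\bb E_x(y+S_{m_1};\tau_y>m_1) \leq 2V(x,y)+c \leq c(1+\max(y,0))$ by points \ref{RESUI001}--\ref{RESUI003} of Proposition \ref{RESUI}, one gets $\bb P_x(y+S_{m_1} > \tfrac{\sqrt{m_1}\sigma}{2\ee_1}, \tau_y>m_1) \leq \tfrac{c\ee_1(1+\max(y,0))}{\sqrt{m_1}}$. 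But this decays only linearly in the truncation threshold, so with your choice $T_0 = 1/\ee$ you would only obtain a factor $\ee$, not $\ee^4$; the correct choice is $T_0 \sim 1/\ee^4$ (i.e.\ the paper's $\ee_1 = \ee^4$). With this modified threshold your bulk and easy-tail estimates still go through (Proposition \ref{RESUIII002} is applied with a fixed $t_0$ depending on $\ee$ only, and the bulk-regime Gaussian tail of $\Psi'$ still gives $e^{-cm_1/m_2}$), so the argument closes; but as written, the exponential-in-$T_0$ tail bound on the exit-time-conditioned probability is unjustified and your scaling of $T_0$ is off.
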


\begin{proof} Let $x \in \bb X$, $y \in \bb R$, $z \geq 0$, $m_1 \geq 1$ and $m_2 \geq 1$ and fix $\ee_1 \in (0,1)$. We consider two cases. Assume first that $z \leq \sqrt{m_1}\sigma/\ee_1$. Using the regularity of the function $\Psi$, we note that
\begin{align*}
	J_0 &:= \bb E_x \left( \Psi \left( \frac{y+S_{m_1}-z}{\sqrt{m_2}\sigma} \right) \,;\, \tau_y > m_1 \right) \\
	&= -\int_0^{+\infty} \sqrt{\frac{m_1}{m_2}}  \Psi' \left( \sqrt{\frac{m_1}{m_2}} t - \frac{z}{\sqrt{m_2}\sigma} \right) \bb P_x \left( \frac{y+S_{m_1}}{\sqrt{m_1}\sigma} \leq t \,,\, \tau_y > m_1 \right) \dd t.
\end{align*}
Denote by $J_1$ the following integral:
\begin{equation}
	\label{fourmi}
	J_1 := -\frac{2V(x,y)}{\sqrt{2\pi m_1}\sigma} \int_0^{+\infty} \sqrt{\frac{m_1}{m_2}}  \Psi' \left( \sqrt{\frac{m_1}{m_2}} t - \frac{z}{\sqrt{m_2}\sigma} \right) \left( 1 - \e^{-\frac{t^2}{2}} \right) \dd t.
\end{equation}
Using the point \ref{RESUIII002} of Proposition \ref{RESUIII}, with $t_0 = 2/\ee_1$, there exists $\ee_0 > 0$ such that for any $\ee \in (0,\ee_0)$,
\begin{align*}
	\abs{J_0 - J_1} &\leq c_{\ee,\ee_1} \frac{\left( 1+\max(y,0) \right)^2}{m_1^{1/2+\ee}} \int_0^{\frac{2}{\ee_1}} \sqrt{\frac{m_1}{m_2}}  \abs{\Psi' \left( \sqrt{\frac{m_1}{m_2}} t - \frac{z}{\sqrt{m_2}\sigma} \right)} \dd t \\
	&\qquad+ \left( \frac{2V(x,y)}{\sqrt{2\pi m_1}\sigma} + \bb P_x \left( \tau_y > m_1 \right) \right) \int_{\frac{2}{\ee_1}}^{+\infty} \sqrt{\frac{m_1}{m_2}}  \abs{\Psi' \left( \sqrt{\frac{m_1}{m_2}} t - \frac{z}{\sqrt{m_2}\sigma} \right)} \dd t.
\end{align*}
Using the point \ref{RESUI002} of Proposition \ref{RESUI} and the point \ref{RESUII002} of Proposition \ref{RESUII}, with $\norm{\Psi'}_{\infty} = \sup_{t\in \bb R} \abs{\Psi'(t)}$,
\begin{align*}
	\abs{J_0 - J_1} &\leq c_{\ee,\ee_1} \frac{\left( 1+\max(y,0) \right)^2}{m_1^{\ee} \sqrt{m_2}} \norm{\Psi'}_{\infty} + c \frac{1+\max(y,0)}{\sqrt{m_1}} \sqrt{\frac{m_1}{m_2}} \int_{\frac{2}{\ee_1}}^{+\infty} \e^{-\frac{ \left(\sqrt{\frac{m_1}{m_2}} t - \frac{z}{\sqrt{m_2}\sigma} \right)^2}{4} } \dd t \\
	&\leq c_{\ee,\ee_1} \frac{\left( 1+\max(y,0) \right)^2}{m_1^{\ee} \sqrt{m_2}} + c \frac{1+\max(y,0)}{\sqrt{m_1}}  \int_{\sqrt{\frac{m_1}{m_2}} \left( \frac{2}{\ee_1}-\frac{z}{\sqrt{m_1}\sigma} \right)}^{+\infty} \e^{-\frac{s^2}{4} } \dd s.
\end{align*}
Since $z \leq \frac{\sqrt{m_1}\sigma}{\ee_1}$, we have $\frac{2}{\ee_1} - \frac{z}{\sqrt{m_1}\sigma} \geq \frac{1}{\ee_1} \geq 1$ and so
\begin{equation}
	\abs{J_0 - J_1} \leq c_{\ee,\ee_1} \frac{\left( 1+\max(y,0) \right)^2}{m_1^{\ee} \sqrt{m_2}} + c \frac{1+\max(y,0)}{\sqrt{m_1}} \e^{-\frac{m_1}{8m_2} } \int_{\bb R} \e^{-\frac{s^2}{8} } \dd s.
	\label{bourgeon}
\end{equation}
Moreover, by the definition of $J_1$ in \eqref{fourmi}, we have
\begin{align}
	J_1 &= \frac{2V(x,y)}{\sqrt{2\pi m_1}\sigma} \left[ - \Psi \left( \sqrt{\frac{m_1}{m_2}} t - \frac{z}{\sqrt{m_2}\sigma} \right) \left( 1 - \e^{-\frac{t^2}{2}} \right) \right]_{t=0}^{t=+\infty} \nonumber\\
	&\qquad + \frac{2V(x,y)}{\sqrt{2\pi m_1}\sigma} \int_{0}^{+\infty}  \Psi \left( \sqrt{\frac{m_1}{m_2}} t - \frac{z}{\sqrt{m_2}\sigma} \right) t \e^{-\frac{t^2}{2}} \dd t \nonumber\\
	&=\frac{2V(x,y)}{\sqrt{2\pi m_1}\sigma} \int_{0}^{+\infty}  \Psi \left( \sqrt{\frac{m_1}{m_2}} t - \frac{z}{\sqrt{m_2}\sigma} \right) \varphi_+(t) \dd t.
	\label{bourgeon001}
\end{align}

Now, assume that $z > \frac{\sqrt{m_1}\sigma}{\ee_1}$. We write
\begin{align*}
	J_0 &\leq c\bb E_x \left( \e^{-\frac{\left( y+S_{m_1}-z \right)^2}{4m_2\sigma^2}} \,;\, y+S_{m_1} \leq \frac{\sqrt{m_1}\sigma}{2\ee_1} \,,\, \tau_y > m_1 \right) \\
	&\hspace{5cm} + \norm{\Psi}_{\infty} \bb P_x \left( y+S_{m_1} > \frac{\sqrt{m_1}\sigma}{2\ee_1} \,,\, \tau_y > m_1 \right) \\
	&\leq c \e^{-\frac{m_1}{16m_2\ee_1^2}} \bb P_x \left( \tau_y > m_1 \right) + \norm{\Psi}_{\infty} \frac{2\ee_1}{\sqrt{m_1}\sigma} \bb E_x \left( y+S_{m_1}  \,;\, \tau_y > m_1 \right).
\end{align*}
Using the points \ref{RESUI003} and \ref{RESUI001} of Proposition \ref{RESUI}, we can verify that
\[
\bb E_x \left( y+S_{m_1}  \,;\, \tau_y > m_1 \right) \leq \bb E_x \left( 2V \left(y+S_{m_1}, X_{m_1} \right) + c  \,;\, \tau_y > m_1 \right)  \leq 2V(x,y) + c. 
\]
So by the point \ref{RESUII002} of Proposition \ref{RESUII} and the point \ref{RESUI002} of Proposition \ref{RESUI},
\[
J_0 \leq c \frac{1+\max(y,0)}{\sqrt{m_1}} \e^{-\frac{cm_1}{m_2}} + \frac{c\ee_1}{\sqrt{m_1}} \left( 1+\max(y,0) \right).
\]
In the same way,
\begin{align*}
	J_1 &= \frac{2V(x,y)}{\sqrt{2\pi m_1}\sigma} \int_{0}^{+\infty}  \Psi \left( \sqrt{\frac{m_1}{m_2}} t - \frac{z}{\sqrt{m_2}\sigma} \right) \varphi_+(t) \dd t \\
	&\leq \frac{c\left( 1+\max(y,0) \right)}{\sqrt{m_1}} \left[ \int_0^{\frac{1}{2\ee_1}}  \e^{-\frac{m_1}{4m_2} \left( t-\frac{z}{\sqrt{m_1}\sigma} \right)^2} \varphi_+(t) \dd t + \norm{\Psi}_{\infty} \int_{\frac{1}{2\ee_1}}^{+\infty} t\e^{-\frac{t^2}{2}} \dd t \right] \\
	&\leq \frac{c\left( 1+\max(y,0) \right)}{\sqrt{m_1}} \left[  \e^{-\frac{m_1}{16m_2\ee_1^2}}  \int_0^{+\infty} \varphi_+(t) \dd t + \norm{\Psi}_{\infty} \e^{-\frac{1}{16 \ee_1^2}} \int_0^{+\infty} t\e^{-\frac{t^2}{4}} \dd t \right] \\
	&\leq \frac{c\left( 1+\max(y,0) \right)}{\sqrt{m_1}} \left(  \e^{-\frac{cm_1}{m_2}} + \e^{-\frac{c}{\ee_1^2}} \right).
\end{align*}
From the last two bounds it follows that for any $z > \frac{\sqrt{m_1}\sigma}{\ee_1}$,
\begin{equation}
	\label{bourgeon002}
	\abs{J_0 - J_1} \leq J_0 + J_1 \leq \frac{c\left( 1+\max(y,0) \right)}{\sqrt{m_1}} \left( \e^{-\frac{cm_1}{m_2}} + \ee_1 \right).
\end{equation}
Putting together \eqref{bourgeon001}, \eqref{bourgeon002} and \eqref{bourgeon} and taking $\ee_1 = \ee^4$, we obtain the desired inequality for any $z \geq 0$,
\[
\abs{J_0 - J_1} \leq c_{\ee} \frac{\left( 1+\max(y,0) \right)^2}{m_1^{\ee} \sqrt{m_2}} + \frac{c\left( 1+\max(y,0) \right)}{\sqrt{m_1}} \left( \e^{-\frac{cm_1}{m_2}} + \ee^4 \right).
\]
\end{proof}

\begin{lemma}
\label{lezardLLT}
Assume Hypotheses \ref{primitive}-\ref{nondegenere}.
There exists $\ee_0 \in (0,1/2)$ such that for any $\ee \in (0,\ee_0)$, $y \in \bb R$, $n \in \bb N$ such that $\ee^3 n \geq 1$, we have
\begin{align*}
	\sup_{x\in \bb X, \, z \geq 0} &\abs{\frac{n}{\sqrt{n_2}} \bb E_x \left( \varphi \left( \frac{y+S_{n_1}-z}{\sqrt{n_2}\sigma} \right) \,;\, \tau_y > n_1 \right) - \frac{2V(x,y)}{\sqrt{2\pi}\sigma} \varphi_+ \left( \frac{z}{\sqrt{n}\sigma} \right)} \\
	&\hspace{2cm}\leq c_{\ee} \frac{\left( 1+\max(y,0) \right)^2}{n^{\ee}} + c \left( 1+\max(y,0) \right) \ee,
\end{align*}
where $\varphi(t) = \e^{-\frac{t^2}{2}}/\sqrt{2\pi}$, $\varphi_+(t) = t\e^{-\frac{t^2}{2}} \bbm 1_{\{t\geq 0\}}$, $n_2 = \pent{\ee^3 n}$ and $n_1 = n-\pent{\ee^3 n}$.
\end{lemma}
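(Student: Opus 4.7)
The plan is to apply Lemma \ref{couleuvre} with $\Psi = \varphi$, the standard normal density, which satisfies the required hypotheses since both $\varphi(t)$ and $\varphi'(t) = -t\varphi(t)$ are dominated by $c\e^{-t^2/4}$. With $m_1 = n_1$ and $m_2 = n_2$, the lemma yields
\[
\bb E_x\!\left(\varphi\!\left(\tfrac{y+S_{n_1}-z}{\sqrt{n_2}\sigma}\right); \tau_y > n_1\right) = \frac{2V(x,y)}{\sqrt{2\pi n_1}\sigma}\,I(z) + R(x,z),
\]
where $I(z) := \int_0^{+\infty} \varphi(\sqrt{n_1/n_2}\,t - z/(\sqrt{n_2}\sigma))\,\varphi_+(t)\,\dd t$. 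Using the elementary bounds $n_2 \geq c\ee^3 n$, $n_1 \geq cn$ and $n_1/n_2 \geq c/\ee^3$ (valid for $\ee$ small and $\ee^3 n \geq 1$), a short computation shows that $(n/\sqrt{n_2})|R(x,z)|$ is bounded by $c_\ee(1+\max(y,0))^2/n^\ee + c(1+\max(y,0))\ee$, the exponential $\e^{-cn_1/n_2} \leq \e^{-c/\ee^3}$ being absorbed for small $\ee$.

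The crucial step will be to evaluate $I(z)$ in closed form and to compare $(n/\sqrt{n_1 n_2})\,I(z)$ with $\varphi_+(z/(\sqrt{n}\sigma))$. Setting $\alpha = \sqrt{n_1/n_2}$, $\beta = z/(\sqrt{n_2}\sigma)$, $\gamma^2 = \alpha^2 + 1 = n/n_2$ and $\mu_0 = \alpha\beta/\gamma^2 = z\sqrt{n_1}/(n\sigma)$, I will complete the square $(\alpha t - \beta)^2 + t^2 = \gamma^2(t-\mu_0)^2 + \beta^2/\gamma^2$ and use the elementary identity
\[
\int_0^{+\infty} t\,\e^{-\gamma^2(t-\mu_0)^2/2}\,\dd t = \gamma^{-2}\,\e^{-\gamma^2\mu_0^2/2} + \gamma^{-1}\mu_0\sqrt{2\pi}\,\Phi(\gamma\mu_0),
\]
with $\Phi$ the standard normal CDF. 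Noting $\beta^2/\gamma^2 = z^2/(n\sigma^2)$ and setting $\zeta := \gamma\mu_0 = z\sqrt{n_1/(n n_2)}/\sigma \geq 0$, this yields after simplification the identity
\[
\frac{n}{\sqrt{n_1 n_2}}\,I(z) = \frac{\sqrt{n_2/n_1}}{\sqrt{2\pi}}\,\e^{-z^2/(2n\sigma^2) - \zeta^2/2} + \frac{z}{\sqrt{n}\sigma}\,\e^{-z^2/(2n\sigma^2)}\,\Phi(\zeta).
\]

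The hard part will be bounding the difference with $\varphi_+(z/(\sqrt{n}\sigma)) = (z/(\sqrt{n}\sigma))\e^{-z^2/(2n\sigma^2)}$ \emph{uniformly in $z \geq 0$}, since the factors $\varphi_+(z/(\sqrt{n}\sigma))$ and $1-\Phi(\zeta)$ are small on different ranges of $z$. The first term is directly $\leq \sqrt{n_2/n_1}/\sqrt{2\pi} \leq c\ee^{3/2}$. For the second, I will exploit the key algebraic identity $z^2/(n\sigma^2) + \zeta^2 = z^2/(n_2\sigma^2)$ (a consequence of $\gamma^2 = n/n_2$) together with the standard Gaussian tail estimate $1 - \Phi(\zeta) \leq \e^{-\zeta^2/2}$ for $\zeta \geq 0$ to obtain
\[
\frac{z}{\sqrt{n}\sigma}\,\e^{-z^2/(2n\sigma^2)}\bigl(1 - \Phi(\zeta)\bigr) \leq \frac{z}{\sqrt{n}\sigma}\,\e^{-z^2/(2n_2\sigma^2)} \leq \sqrt{n_2/n}\,\e^{-1/2} \leq c\ee^{3/2},
\]
the middle inequality being $\sup_{u \geq 0}u\e^{-u^2/2} = \e^{-1/2}$ applied to $u = z/(\sqrt{n_2}\sigma)$. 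Combining these uniformly in $x \in \bb X$ and $z \geq 0$ gives $|(n/\sqrt{n_1 n_2})\,I(z) - \varphi_+(z/(\sqrt{n}\sigma))| \leq c\ee$. Multiplying by $\frac{2V(x,y)}{\sqrt{2\pi}\sigma} \leq c(1+\max(y,0))$ (by the point \ref{RESUI002} of Proposition \ref{RESUI}) and combining with the remainder estimate from the first step produces the claimed bound.
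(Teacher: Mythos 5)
Your proof is correct and takes a genuinely different route from the paper's. The paper proceeds by two successive approximations: from $J_1 = \frac{2V(x,y)}{\sqrt{2\pi}\sigma}\frac{\sqrt{n_2}}{n_1}\,\varphi_{\sqrt{n_2/n_1}}*\varphi_+\!\left(\frac{z}{\sqrt{n_1}\sigma}\right)$ it passes to an intermediate $J_2 = \frac{2V(x,y)}{\sqrt{2\pi}\sigma}\frac{\sqrt{n_2}}{n_1}\varphi_+\!\left(\frac{z}{\sqrt{n_1}\sigma}\right)$ using the Lipschitz property of $\varphi_+$, and then compares $\frac{n}{\sqrt{n_2}}J_2$ with $\frac{2V(x,y)}{\sqrt{2\pi}\sigma}\varphi_+\!\left(\frac{z}{\sqrt{n}\sigma}\right)$ by a case split on whether $z/\sigma \leq \sqrt{n}/\ee$ or not. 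You instead evaluate $I(z)$ exactly by completing the square — an identity the paper never writes down — and then exploit the algebraic relation $z^2/(n\sigma^2) + \zeta^2 = z^2/(n_2\sigma^2)$ together with $1-\Phi(\zeta)\leq \e^{-\zeta^2/2}$ to get a single, $z$-uniform estimate with no case analysis and no intermediate term. I checked the closed-form identity $\frac{n}{\sqrt{n_1 n_2}}I(z) = \frac{\sqrt{n_2/n_1}}{\sqrt{2\pi}}\e^{-z^2/(2n\sigma^2)-\zeta^2/2} + \frac{z}{\sqrt{n}\sigma}\e^{-z^2/(2n\sigma^2)}\Phi(\zeta)$ and the algebraic relation; both are correct, as is the Gaussian tail inequality (its proof: $g(\zeta):=\e^{-\zeta^2/2}-(1-\Phi(\zeta))$ satisfies $g(0)=\tfrac12$, $g'(\zeta)=\e^{-\zeta^2/2}(1/\sqrt{2\pi}-\zeta)$, so $g$ increases then decreases to $0$, hence $g\geq 0$). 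Your route is cleaner and actually yields a slightly tighter bound $c\ee^{3/2}$ for the deterministic piece; the paper's two-step comparison is more modular but costs a case distinction.
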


\begin{proof}
Denote
\[
J_0 := \bb E_x \left( \varphi \left( \frac{y+S_{n_1}-z}{\sqrt{n_2}\sigma} \right) \,;\, \tau_y > n_1 \right)
\]
and
\begin{align}
	J_1 &:= \frac{2V(x,y)}{\sqrt{2\pi n_1}\sigma} \int_{0}^{+\infty}  \varphi \left( \sqrt{\frac{n_1}{n_2}} t - \frac{z}{\sqrt{n_2}\sigma} \right) \varphi_+(t) \dd t \nonumber\\
	&=\frac{2V(x,y)}{\sqrt{2\pi n_1}\sigma} \int_{0}^{+\infty} \sqrt{\frac{n_2}{n_1}} \varphi_{\sqrt{\frac{n_2}{n_1}}} \left( t - \frac{z}{\sqrt{n_1}\sigma} \right) \varphi_+(t) \dd t \nonumber\\
	&= \frac{2V(x,y)}{\sqrt{2\pi}\sigma} \frac{\sqrt{n_2}}{n_1}  \varphi_{\sqrt{\frac{n_2}{n_1}}}*\varphi_+ \left( \frac{z}{\sqrt{n_1}\sigma} \right),
	\label{soleil}
\end{align}
where $\varphi_{\{\cdot\}}(\cdot)$ is defined in \eqref{normal}. By Lemma \ref{couleuvre} we have
\[
\frac{n_1}{\sqrt{n_2}} \abs{J_0 - J_1} \leq c_{\ee} n_1 \frac{\left( 1+\max(y,0) \right)^2}{n_1^{\ee} n_2} + c n_1 \frac{ 1+\max(y,0) }{\sqrt{n_1}\sqrt{n_2}} \left( \e^{-c\frac{n_1}{n_2} } + \ee^4 \right).
\]
Since $\frac{n}{2} \leq n_1 \leq n$ and $\ee^3n-1 \leq n_2 \leq \ee^3 n$,
\begin{align}
	\frac{n}{\sqrt{n_2}} \abs{J_0 - J_1} &\leq c_{\ee} \frac{\left( 1+\max(y,0) \right)^2}{n^{\ee}} + c \frac{ 1+\max(y,0) }{\ee^{3/2}} \left( 1+\frac{c_{\ee}}{n} \right) \left( \e^{-\frac{c}{\ee^3}} + \ee^4 \right) \nonumber\\
	&\leq c_{\ee} \frac{\left( 1+\max(y,0) \right)^2}{n^{\ee}} + c \left( 1+\max(y,0) \right) \ee.
	\label{tulipe}
\end{align}
Let $J_2$ be the following term:
\begin{equation}
	\label{lion}
	J_2 := \frac{2V(x,y)}{\sqrt{2\pi}\sigma} \frac{\sqrt{n_2}}{n_1} \varphi_+ \left( \frac{z}{\sqrt{n_1}\sigma} \right).
\end{equation}
Using \eqref{soleil},
\[
\abs{J_1-J_2} \leq \frac{2V(x,y)}{\sqrt{2\pi}\sigma} \frac{\sqrt{n_2}}{n_1} \int_{\bb R} \varphi_{\sqrt{\frac{n_2}{n_1}}} (t) \abs{ \varphi_+ \left( \frac{z}{\sqrt{n_1}\sigma} - t \right) - \varphi_+ \left( \frac{z}{\sqrt{n_1}\sigma} \right) } \dd t.
\]
By the point \ref{RESUI002} of Proposition \ref{RESUI}, we write
\begin{align}
	\frac{n}{\sqrt{n_2}}\abs{J_1-J_2} &\leq c \left( 1+\max(y,0) \right) \norm{ \varphi_+' }_{\infty} \int_{\bb R} \varphi_{\sqrt{\frac{n_2}{n_1}}} (t) \abs{t} \dd t \nonumber\\
	&\leq c \left( 1+\max(y,0) \right) \sqrt{\frac{n_2}{n_1}} \int_{\bb R} \varphi (s) \abs{s} \dd s \nonumber\\
	&\leq c \left( 1+\max(y,0) \right) \ee^{3/2} .
	\label{catapulte}
\end{align}
Putting together \eqref{tulipe} and \eqref{catapulte}, we obtain that
\begin{equation}
	\label{lemurien001}
	\sup_{x\in \bb X,z \geq 0} \frac{n}{\sqrt{n_2}} \abs{J_0 - J_2} \leq c_{\ee} \frac{\left( 1+\max(y,0) \right)^2}{n^{\ee}} + c \left( 1+\max(y,0) \right) \ee.
\end{equation}

It remains to link $J_2$ from \eqref{lion} to the desired equivalent. We distinguish two cases. If $\frac{z}{\sigma} \leq \frac{\sqrt{n}}{\ee}$,
\begin{align*}
	\abs{\frac{n}{\sqrt{n_2}} J_2 - \frac{2V(x,y)}{\sqrt{2\pi}\sigma} \varphi_+ \left( \frac{z}{\sqrt{n}\sigma} \right)} &\leq c V(x,y) \abs{ \frac{n}{n_1}\varphi_+ \left( \frac{z}{\sqrt{n_1}\sigma} \right) - \varphi_+ \left( \frac{z}{\sqrt{n}\sigma} \right)} \\
	&\leq c V(x,y) \left( \norm{\varphi_+}_{\infty} \abs{\frac{n}{n_1} - 1} + \abs{\frac{1}{\sqrt{n_1}} - \frac{1}{\sqrt{n}}} \abs{\frac{z}{\sigma}} \norm{\varphi_+'}_{\infty} \right) \\
	&\leq c V(x,y) \left( \frac{n_2}{n_1} + \frac{1}{\sqrt{n_1}} \abs{ 1 - \sqrt{1-\frac{n_2}{n}} } \frac{\sqrt{n}}{\ee} \right) \\
	&\leq c V(x,y) \left( \ee^3 + \frac{\ee^3}{\ee} \right).
\end{align*}
If $\frac{z}{\sigma} > \frac{\sqrt{n}}{\ee} \geq \frac{\sqrt{n_1}}{\ee}$, we have
\[
\abs{\frac{n}{\sqrt{n_2}} J_2 - \frac{2V(x,y)}{\sqrt{2\pi}\sigma} \varphi_+ \left( \frac{z}{\sqrt{n}\sigma} \right)} \leq c V(x,y) \sup_{u \geq \frac{1}{\ee}} \varphi_+ \left( u \right) \leq c V(x,y) \e^{-\frac{c}{\ee^2}}.
\]
Therefore, using the point \ref{RESUI002} of Proposition \ref{RESUI}, we obtain that in each case
\begin{equation}
	\label{lemurien002}
\abs{\frac{n}{\sqrt{n_2}} J_2 - \frac{2V(x,y)}{\sqrt{2\pi}\sigma} \varphi_+ \left( \frac{z}{\sqrt{n}\sigma} \right)} \leq c \left( 1+\max(y,0) \right) \ee^2.
\end{equation}
Putting together \eqref{lemurien001} and \eqref{lemurien002}, proves the lemma.
\end{proof}

Another consequence of Lemma \ref{couleuvre} is the following lemma which will be used in Section \ref{giraffe}.

\begin{lemma}
\label{cameleonbis}
Assume Hypotheses \ref{primitive}-\ref{nondegenere}.
There exists $\ee_0 \in (0,1/2)$ such that for any $\ee \in (0,\ee_0)$, $y \in \bb R$, $n \in \bb N$ such that $\ee^3 n \geq 2$, we have
\begin{align*}
	\sup_{x\in \bb X} &\abs{\frac{n^{3/2}}{n_2-1} \bb E_x \left( \varphi_+ \left( \frac{y+S_{n_1}}{\sqrt{n_2-1}\sigma} \right) \,;\, \tau_y > n_1 \right) - \frac{V(x,y)}{\sigma}} \\
	&\hspace{3cm} \leq c_{\ee} \frac{\left( 1+\max(y,0) \right)^2}{n^{\ee}} + c \left( 1+\max(y,0)\right) \ee,
\end{align*}
where $\varphi_+(t) = t\e^{-\frac{t^2}{2}} \bbm 1_{\{t\geq 0\}}$ is the Rayleigh density function, $n_1 = n-\pent{\ee^3n}$ and $n_2 = \pent{\ee^3n}$.
\end{lemma}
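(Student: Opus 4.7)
The plan is to apply Lemma \ref{couleuvre} with the function $\Psi=\varphi_+$, the parameters $m_1 = n_1$, $m_2 = n_2 -1$, and the choice $z = 0$, and then to compute explicitly the Gaussian-type integral that appears in the main term. A preliminary remark: $\varphi_+$ is not $C^1$ at $0$ since $\varphi_+'(0^+)=1\neq 0=\varphi_+'(0^-)$, so $\varphi_+$ does not literally fit the hypotheses of Lemma \ref{couleuvre}. However, $\varphi_+$ is absolutely continuous with $\varphi_+(0)=0$ and $|\varphi_+|,|\varphi_+'|\leq c\,\mathrm e^{-t^2/4}$, and the only step in the proof of Lemma \ref{couleuvre} that uses the regularity is an integration by parts; this step goes through for $\varphi_+$ because the boundary term at $t=0$ (which equals $\varphi_+(-z/(\sqrt{m_2}\sigma))$) vanishes when $z=0$. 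One may therefore invoke Lemma \ref{couleuvre} in this setting to obtain
\[
\sup_{x\in\bb X}\left|\bb E_x\!\left(\varphi_+\!\left(\tfrac{y+S_{n_1}}{\sqrt{n_2-1}\,\sigma}\right);\,\tau_y>n_1\right) - \frac{2V(x,y)}{\sqrt{2\pi n_1}\,\sigma}\,I_n\right| \leq c_\ee\frac{(1+\max(y,0))^2}{n_1^\ee\sqrt{n_2-1}} + c\frac{1+\max(y,0)}{\sqrt{n_1}}\!\left(\mathrm e^{-c n_1/(n_2-1)}+\ee^4\right),
\]
where $I_n := \int_0^{+\infty}\varphi_+\!\left(\sqrt{\tfrac{n_1}{n_2-1}}\,t\right)\varphi_+(t)\,\dd t$.

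Next I would compute $I_n$ explicitly. Using $\varphi_+(u)=u\mathrm e^{-u^2/2}$ for $u\geq0$ and substituting $\alpha := \frac{n_1}{n_2-1}+1 = \frac{n-1}{n_2-1}$, one finds
\[
I_n = \sqrt{\frac{n_1}{n_2-1}}\int_0^{+\infty} t^2\,\mathrm e^{-\alpha t^2/2}\,\dd t = \sqrt{\frac{n_1}{n_2-1}}\cdot\frac{\sqrt{\pi/2}}{\alpha^{3/2}} = \sqrt{\pi/2}\,\sqrt{n_1}\,\frac{n_2-1}{(n-1)^{3/2}}.
\]
Plugging this back, the main term of the estimate becomes
\[
\frac{2V(x,y)}{\sqrt{2\pi n_1}\,\sigma}\,I_n = \frac{V(x,y)}{\sigma}\cdot\frac{n_2-1}{(n-1)^{3/2}}.
\]
Multiplying by $n^{3/2}/(n_2-1)$, the main term is $\frac{V(x,y)}{\sigma}\bigl(\tfrac{n}{n-1}\bigr)^{3/2}$, and using Proposition \ref{RESUI}.\ref{RESUI002} along with $(n/(n-1))^{3/2}-1\leq c/n$ for $n\geq 2$, the discrepancy with $V(x,y)/\sigma$ is bounded by $c(1+\max(y,0))/n$, which is absorbed into the claimed error.

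Finally I would multiply the error term from Lemma \ref{couleuvre} by $n^{3/2}/(n_2-1)$ and use $n_1\geq n/2$, $\ee^3 n -1\leq n_2\leq \ee^3 n$ (which is legitimate since $\ee^3 n\geq 2$, so $n_2-1\geq \ee^3n/2$). This produces
\[
\frac{n^{3/2}}{n_2-1}\left[c_\ee\frac{(1+y^+)^2}{n_1^\ee\sqrt{n_2-1}} + c\frac{1+y^+}{\sqrt{n_1}}(\mathrm e^{-c n_1/(n_2-1)}+\ee^4)\right] \leq c_\ee\frac{(1+\max(y,0))^2}{n^\ee} + c(1+\max(y,0))\,\ee,
\]
after absorbing the constants depending on $\ee$ (here $\mathrm e^{-c/\ee^3}/\ee^3 \leq c\,\ee$ for $\ee$ small, and $\ee^4/\ee^3=\ee$), which is exactly the bound of the lemma.

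The only mildly delicate point is the justification of applying Lemma \ref{couleuvre} to $\varphi_+$ despite the corner at $t=0$; as explained above, taking $z=0$ kills the only boundary contribution that would have depended on the jump in $\varphi_+'$, so the argument of Lemma \ref{couleuvre} (split into the regime $z\leq\sqrt{m_1}\sigma/\ee_1$ and its complement, with $z=0$ falling in the first regime) runs verbatim. The remaining computations are routine Gaussian integrals and elementary size estimates.
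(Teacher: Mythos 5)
Your proof is correct and follows the same route as the paper's: apply Lemma \ref{couleuvre} with $\Psi=\varphi_+$, $m_1=n_1$, $m_2=n_2-1$, $z=0$; compute the Gaussian moment $\int_0^\infty t^2\e^{-\alpha t^2/2}\dd t$ with $\alpha=(n-1)/(n_2-1)$ to get $\frac{n^{3/2}}{(n-1)^{3/2}}\frac{V(x,y)}{\sigma}$ for the main term; and then absorb the remainders using $n_1\geq n/2$ and $n_2-1\asymp\ee^3 n$. The paper's proof is literally this, with \eqref{loutre001} and \eqref{loutre002}.

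The one genuinely extra thing you do is flag that $\varphi_+$ does not satisfy the literal hypothesis of Lemma \ref{couleuvre} ($\Psi'$ continuous on $\bb R$) because of the corner at $0$, a point the paper passes over silently. Your resolution is right in spirit; two small remarks. First, even for general $z\geq0$ the integration by parts in the proof of Lemma \ref{couleuvre} is valid for $\varphi_+$ because $\varphi_+$ is Lipschitz (hence absolutely continuous) and $|\varphi_+'|\leq c\e^{-t^2/4}$ almost everywhere — the jump in $\varphi_+'$ at $0$ is a null set and does not obstruct the formula. Second, the boundary term at $t=0$ actually vanishes for any $z\geq0$: it is $\varphi_+(-z/(\sqrt{m_2}\sigma))\,\bb P_x\bigl(\tfrac{y+S_{m_1}}{\sqrt{m_1}\sigma}\leq 0,\ \tau_y>m_1\bigr)$, and the second factor is $0$ because $\{\tau_y>m_1\}\subset\{y+S_{m_1}>0\}$ (the first factor also vanishes since $\varphi_+\equiv 0$ on $(-\infty,0]$). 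Your choice $z=0$ is still the cleanest way to phrase the argument, since then $\varphi_+'$ is never evaluated near the corner at all; the proof is fine.
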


\begin{proof}
Using Lemma \ref{couleuvre} with $\Psi = \varphi_+$, $m_1=n_1$, $m_2 = n_2-1$ and $z=0,$
\begin{align}
	\frac{n^{3/2}}{n_2-1} \abs{J_0-J_1} &\leq c_{\ee} \frac{\left( 1+\max(y,0) \right)^2 n^{3/2}}{(n_2-1)^{3/2} n_1^{\ee}} + c \frac{\left( 1+\max(y,0)\right) n^{3/2}}{(n_2-1)\sqrt{n_1}} \left( \e^{-c\frac{n_1}{(n_2-1)}} + \ee^4 \right) \nonumber\\
	&\leq c_{\ee} \frac{\left( 1+\max(y,0) \right)^2}{n^{\ee}} + c \frac{\left( 1+\max(y,0)\right)}{\ee^3} \left( 1+\frac{c_{\ee}}{n} \right) \left( \e^{-\frac{c}{\ee^3}} + \ee^4 \right) \nonumber\\
	&\leq c_{\ee} \frac{\left( 1+\max(y,0) \right)^2}{n^{\ee}} + c \left( 1+\max(y,0)\right) \ee,
	\label{loutre001}
\end{align}
where
\[
J_0 := \bb E_x \left( \varphi_+ \left( \frac{y+S_{n_1}}{\sqrt{n_2-1}\sigma} \right) \,;\, \tau_y > n_1 \right)
\]
and
\begin{align*}
	\frac{n^{3/2}}{n_2-1} J_1 &:= \frac{n^{3/2}}{n_2-1} \frac{2V(x,y)}{\sqrt{2\pi n_1}\sigma} \int_{0}^{+\infty} \varphi_+ \left( \sqrt{\frac{n_1}{n_2-1}}t \right) \varphi_+ (t) \dd t \\
	&= \frac{n^{3/2}}{n_2-1} \frac{2V(x,y)}{\sqrt{2\pi n_1}\sigma} \sqrt{\frac{n_1}{n_2-1}} \int_{0}^{+\infty}  t^2 \e^{ -\frac{\left( \frac{n_1}{n_2-1}+1 \right)t^2}{2} } \dd t \\
	&= \frac{n^{3/2}}{(n_2-1)^{3/2}} \frac{2V(x,y)}{\sqrt{2\pi}\sigma} \int_{0}^{+\infty}  t^2  \sqrt{\frac{2\pi (n_2-1)}{n-1}} \varphi_{\sqrt{\frac{n_2-1}{n-1}}} (t) \dd t
\end{align*}
where $\varphi_{\{\cdot\}}(\cdot)$ is defined in \eqref{normal}. So,
\begin{align*}
	\frac{n^{3/2}}{n_2-1} J_1 &= \frac{n^{3/2}}{\sqrt{n-1}(n_2-1)} \frac{2V(x,y)}{\sigma} \frac{n_2-1}{2(n-1)} \\
	&= \frac{n^{3/2}}{(n-1)^{3/2}} \frac{V(x,y)}{\sigma}.
\end{align*}
By the point \ref{RESUI002} of Proposition \ref{RESUI},
\begin{equation}
	\label{loutre002}
	\abs{\frac{n^{3/2}}{n_2-1} J_1 - \frac{V(x,y)}{\sigma}} \leq \frac{c}{n} \left( 1+\max(y,0) \right).
\end{equation}
The lemma follows from \eqref{loutre001} and \eqref{loutre002}.
\end{proof}

Thanks to Lemmata \ref{DOSP} and \ref{lezardLLT} we can bound $E_1$ from \eqref{chatLLT} as follows.

\begin{lemma}
\label{dragon}
Assume Hypotheses \ref{primitive}-\ref{nondegenere}.
For any $a > 0$ there exists $\ee_0 \in (0, 1/4)$ such that for any $\ee \in (0,\ee_0)$, any non-negative function $\psi \in \scr C$, any $y \in \bb R$ and $n \in \bb N$ such that $\ee^3 n \geq 1$, we have
\begin{align*}
	&\sup_{x\in \bb X,\, z \geq 0} n \abs{ E_1 - \frac{2a \bs \nu \left( \psi \right) V(x,y)}{\sqrt{2\pi}\sigma^2} \varphi_+ \left( \frac{z}{\sqrt{n}\sigma} \right)} \\
	&\qquad \leq c \left(1 + \max(y,0) \right) \norm{\psi}_{\infty} \left( \ee + \frac{c_{\ee}\left( 1+\max(y,0) \right)}{n^{\ee}} \right),
\end{align*}
where $E_1 = \bb E_x \left( \psi \left( X_{n} \right) \,;\, y+S_{n} \in [z,z+a] \,,\, \tau_y > n_1 \right)$, $n_1 = n- \pent{\ee^3 n}$ and $\varphi_+$ 
is the Rayleigh density function: $\varphi_+(t) = t\e^{-\frac{t^2}{2}} \bbm 1_{\{t\geq 0\}}$.
\end{lemma}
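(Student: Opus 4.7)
The plan is to decompose the target quantity by inserting the intermediate expression
\[
I(x,y,z,n) := \frac{a\,\bs\nu(\psi)}{\sqrt{n_2}\sigma}\,\bb E_x\!\left(\varphi\!\left(\frac{y-z+S_{n_1}}{\sqrt{n_2}\sigma}\right) \,;\, \tau_y > n_1\right),
\]
with $n_2=\pent{\ee^3 n}$ and $n_1=n-n_2$, so that $\varepsilon^3 n\geq 1$ as required by the auxiliary lemmas. By the triangle inequality, we have
\[
n\left|E_1 - \tfrac{2a\bs\nu(\psi) V(x,y)}{\sqrt{2\pi}\sigma^2}\varphi_+\!\left(\tfrac{z}{\sqrt n \sigma}\right)\right|
\leq n\bigl|E_1 - I\bigr| + \left|\,n I - \tfrac{2a\bs\nu(\psi) V(x,y)}{\sqrt{2\pi}\sigma^2}\varphi_+\!\left(\tfrac{z}{\sqrt n \sigma}\right)\right|.
\]

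The first term on the right is controlled directly by Lemma \ref{DOSP}, which gives, uniformly in $x\in\bb X$ and $z\geq 0$,
\[
n\bigl|E_1 - I\bigr| \leq c_a\,(1+\max(y,0))\,\norm{\psi}_\infty\Bigl(\ee + \tfrac{c_\ee}{\sqrt n}\Bigr).
\]
For the second term, I would factor out $a\bs\nu(\psi)/\sigma$ (whose absolute value is bounded by $c_a\norm{\psi}_\infty$, absorbing $a$ into the constant) and recognize what remains as exactly the quantity controlled by Lemma \ref{lezardLLT}:
\[
\left|\,nI - \tfrac{2a\bs\nu(\psi) V(x,y)}{\sqrt{2\pi}\sigma^2}\varphi_+\!\left(\tfrac{z}{\sqrt n \sigma}\right)\right|
= \tfrac{a\,|\bs\nu(\psi)|}{\sigma}\left|\tfrac{n}{\sqrt{n_2}}\bb E_x\!\left(\varphi\!\left(\tfrac{y+S_{n_1}-z}{\sqrt{n_2}\sigma}\right);\tau_y>n_1\right) - \tfrac{2V(x,y)}{\sqrt{2\pi}\sigma}\varphi_+\!\left(\tfrac{z}{\sqrt n \sigma}\right)\right|.
\]
Applying Lemma \ref{lezardLLT} (valid for $\ee<\ee_0$ of that lemma) bounds this by
\[
c_a\norm{\psi}_\infty\left(c_\ee\,\tfrac{(1+\max(y,0))^2}{n^\ee} + c\,(1+\max(y,0))\,\ee\right).
\]

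Summing the two bounds and choosing $\ee_0$ to be the minimum of the thresholds coming from Lemmas \ref{DOSP} and \ref{lezardLLT} yields the claim, after observing that $c_\varepsilon/\sqrt n \leq c_\varepsilon (1+\max(y,0))/n^\varepsilon$ for $\varepsilon<1/2$. There is no genuine obstacle here: the argument is purely a triangle-inequality splicing of the two previous lemmas, with the only bookkeeping point being that all constants are allowed to depend on $a$, so the factor $a\bs\nu(\psi)/\sigma\leq c_a\norm{\psi}_\infty$ appearing when passing from $\varphi$ to $\varphi_+$ is harmless.
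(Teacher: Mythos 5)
Your proposal is correct and follows exactly the same route as the paper: a triangle-inequality splice of Lemma \ref{DOSP} (to pass from $E_1$ to the intermediate quantity $I$) and Lemma \ref{lezardLLT} (to pass from $I$ to the Rayleigh leading term), with the factor $a\bs\nu(\psi)/\sigma\leq c_a\norm{\psi}_\infty$ absorbed into the constant. The bookkeeping remark that $c_\ee/\sqrt n\leq c_\ee(1+\max(y,0))/n^\ee$ for $\ee<1/2$ is precisely the observation the paper uses implicitly to merge the two error terms into the stated form.
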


\begin{proof}
From Lemmas \ref{DOSP} and \ref{lezardLLT}, it follows that
\begin{align*}
	&n \abs{ E_1 - \frac{2a \bs \nu \left( \psi \right) V(x,y)}{\sqrt{2\pi}\sigma^2} \varphi_+ \left( \frac{z}{\sqrt{n}\sigma} \right)} \\
	&\leq c \left(1 + \max(y,0) \right) \norm{\psi}_{\infty} \left( \ee + \frac{c_{\ee}}{\sqrt{n}} \right) + \abs{\frac{a \bs \nu \left( \psi \right)}{\sigma}} \left( c_{\ee} \frac{\left( 1+\max(y,0) \right)^2}{n^{\ee}} + c \left( 1+\max(y,0) \right) \ee \right) \\
	&\leq  c \left(1 + \max(y,0) \right) \norm{\psi}_{\infty} \left( \ee + \frac{c_{\ee}\left( 1+\max(y,0) \right)}{n^{\ee}} \right).
\end{align*}
\end{proof}

\subsection{Control of \texorpdfstring{$E_2$}{}}

In this section we bound the term $E_2$ defined by \eqref{chatLLT}. To this aim let us recall and introduce some notations: for any $\ee \in (0,1)$, we consider $n_2 = \pent{\ee^3 n}$, $n_1 = n-n_2 = n-\pent{\ee^3 n}$, $n_3 = \pent{\frac{n_2}{2}}$ and $n_4 = n_2-n_3$. We define also
\begin{align}
	\label{albatros001}
	E_{21} &:= \bb E_x \left( \psi \left( X_{n} \right) \,;\, y+S_{n} \in [z,z+a] \,,\, y+S_{n_1} \leq \ee \sqrt{n} \,,\, n_1 < \tau_y \leq n \right) \\
	\label{albatros002}
	E_{22} &:= \bb E_x \left( \psi \left( X_{n} \right) \,;\, y+S_{n} \in [z,z+a] \,,\, y+S_{n_1} > \ee \sqrt{n} \,,\, n_1 < \tau_y \leq n_1+n_3 \right) \\
	\label{albatros003}
	E_{23} &:= \bb E_x \left( \psi \left( X_{n} \right) \,;\, y+S_{n} \in [z,z+a] \,,\, y+S_{n_1} > \ee \sqrt{n} \,,\, n_1+n_3 < \tau_y \leq n \right)
\end{align}
and we note that
\begin{equation}
	\label{albatros}
	E_2 = E_{21}+E_{22}+E_{23}.
\end{equation}

\begin{lemma}
\label{chevre001}
Assume Hypotheses \ref{primitive}-\ref{nondegenere}.
For any $a>0$ there exists $\ee_0 \in (0,1/4)$ such that for any $\ee \in (0,\ee_0)$, any non-negative function $\psi \in \scr C$, any $y \in \bb R$ and $n \in \bb N$ such that $\ee^3 n \geq 1$, we have
\[
\sup_{x\in \bb X, z \geq 0} n E_{21} \leq c \norm{\psi}_{\infty}\left( 1+ \max(y,0) \right) \left( \sqrt{\ee} + \frac{c_{\ee} \left( 1+\max(y,0) \right)}{n^{\ee}} \right)
\]
where $E_{21}$ is given as in \eqref{albatros001} by 
\[
E_{21} = \bb E_x \left( \psi \left( X_{n} \right) \,;\, y+S_{n} \in [z,z+a] \,,\, y+S_{n_1} \leq \ee \sqrt{n} \,,\, n_1 < \tau_y \leq n \right)
\]
and $n_1 = n-\pent{\ee^3 n}$.
\end{lemma}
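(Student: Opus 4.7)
The strategy is to ignore the upper constraint $\tau_y \leq n$ (which only enlarges $E_{21}$), decouple the dynamics before and after time $n_1$ via the Markov property, bound the short piece of length $n_2$ by the non-asymptotic local limit theorem, and bound the long piece of length $n_1$ by the conditioned central limit theorem, exploiting the smallness of the event $\{y+S_{n_1} \leq \ee\sqrt{n}\}$.

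First, since $\{n_1 < \tau_y \leq n\} \subset \{\tau_y > n_1\}$, I would drop the upper bound on $\tau_y$ and apply the Markov property at time $n_1$ to write
\[
E_{21} \leq \sum_{x' \in \bb X} \int_0^{\ee\sqrt n} \bb E_{x'}\bigl( \psi(X_{n_2})\,;\, y'+S_{n_2} \in [z,z+a] \bigr) \, \bb P_x\bigl(y+S_{n_1}\in \dd y',\, X_{n_1}=x',\, \tau_y > n_1\bigr).
\]
The inner expectation is handled by the uniform bound \eqref{papillon001} of Corollary \ref{papillon}, which gives
\[
\bb E_{x'}\bigl( \psi(X_{n_2})\,;\, y'+S_{n_2} \in [z,z+a] \bigr) \leq \frac{c(1+a^2)\norm{\psi}_\infty}{\sqrt{n_2}},
\]
uniformly in $x' \in \bb X$, $y' \geq 0$ and $z \geq 0$. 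Pulling this out of the sum/integral leaves
\[
E_{21} \leq \frac{c(1+a^2)\norm{\psi}_\infty}{\sqrt{n_2}} \, \bb P_x\bigl(y+S_{n_1}\leq \ee\sqrt{n},\, \tau_y > n_1\bigr).
\]

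Next, I would evaluate the remaining probability by the conditioned central limit theorem, namely the point \ref{RESUIII002} of Proposition \ref{RESUIII} applied with $t = \ee\sqrt{n}/(\sqrt{n_1}\sigma)$. Since $n_1 \geq n/2$, this $t$ is bounded by a constant depending only on $\ee$ and $\sigma$, so the proposition applies and gives
\[
\bb P_x\bigl(y+S_{n_1}\leq \ee\sqrt{n},\, \tau_y > n_1\bigr) \leq \frac{2V(x,y)}{\sqrt{2\pi n_1}\sigma}\mathbf{\Phi}^+\!\left(\frac{\ee\sqrt{n}}{\sqrt{n_1}\sigma}\right) + \frac{c_{\ee}(1+\max(y,0)^2)}{n_1^{1/2+\ee}}.
\]
The key observation is that $\mathbf{\Phi}^+(t) = 1 - \e^{-t^2/2} \leq t^2/2$, so
\[
\mathbf{\Phi}^+\!\left(\frac{\ee\sqrt{n}}{\sqrt{n_1}\sigma}\right) \leq \frac{\ee^2 n}{2n_1\sigma^2} \leq c\,\ee^2,
\]
and together with the bound $V(x,y) \leq c(1+\max(y,0))$ from Proposition \ref{RESUI}\ref{RESUI002} this yields
\[
\bb P_x\bigl(y+S_{n_1}\leq \ee\sqrt{n},\, \tau_y > n_1\bigr) \leq \frac{c\,\ee^2(1+\max(y,0))}{\sqrt{n}} + \frac{c_{\ee}(1+\max(y,0))^2}{n^{1/2+\ee}}.
\]

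Finally I combine the two estimates using $n_2 \geq c\,\ee^3 n$ (which holds because $\ee^3 n \geq 1$ forces $n_2 = \pent{\ee^3 n} \geq \ee^3 n/2$), obtaining
\[
n\,E_{21} \leq \frac{c(1+a^2)\norm{\psi}_\infty}{\ee^{3/2}}\left( \ee^2(1+\max(y,0)) + \frac{c_{\ee}(1+\max(y,0))^2}{n^{\ee}} \right),
\]
which after absorbing the factor $\ee^{-3/2}$ into $c_{\ee}$ and noting that $\ee^2/\ee^{3/2} = \sqrt{\ee}$ gives exactly the announced bound. I do not anticipate any substantive obstacle: the only subtlety is to make sure the conditioned CLT is applied at a scale $t$ that is bounded (so that the implicit constants in \ref{RESUIII002} are uniform in $\ee$), and this is automatic from our choice $t = \ee\sqrt{n}/(\sqrt{n_1}\sigma) \leq \sqrt{2}\,\ee/\sigma$.
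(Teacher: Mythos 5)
Your proof is correct and follows essentially the same route as the paper's: discard the upper constraint on $\tau_y$, apply the Markov property at $n_1$, use the uniform local limit bound \eqref{papillon001} for the inner piece, and use Proposition \ref{RESUIII}.\ref{RESUIII002} together with $\mathbf\Phi^+(t)\leq t^2/2$ for the outer probability. The only cosmetic difference is that the paper keeps the redundant event $\{\tau_{y'}\leq n_2\}$ in the inner expectation before dropping it when bounding, whereas you discard $\{\tau_y\leq n\}$ from the outset; the estimates and bookkeeping are identical.
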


\begin{proof}
Using the Markov property and the uniform bound \eqref{papillon001} of Corollary \ref{papillon}, with $n_2 = \pent{\ee^3 n}$,
\begin{align*}
	E_{21} &= \sum_{x' \in \bb X} \int_{0}^{+\infty} \bb E_{x'} \left( \psi \left( X_{n_2} \right) \,;\, y'+S_{n_2} \in [z,z+a]  \,,\, \tau_{y'} \leq n_2 \right) \\
	&\hspace{3cm} \times \bb P_x \left( X_{n_1} = x' \,,\, y+S_{n_1} \in \dd y' \,,\, y+S_{n_1} \leq \ee \sqrt{n} \,,\, \tau_y > n_1 \right) \\
	&\leq \frac{c \norm{\psi}_{\infty}}{\sqrt{n_2}} \bb P_x \left( y+S_{n_1} \leq \ee \sqrt{n} \,,\, \tau_y > n_1 \right).
\end{align*}
We note that $\frac{\ee \sqrt{n}}{\sigma \sqrt{n_1}} \leq \frac{\ee}{\sigma \sqrt{1-\ee^3}} \leq \frac{2}{\sigma} \ee$ and so by the point \ref{RESUIII002} of Proposition \ref{RESUIII} with $t_0=2\ee/\sigma$:
\begin{align*}
	E_{21} &\leq \frac{c \norm{\psi}_{\infty}}{\sqrt{n_2}} \left( \frac{cV(x,y)}{\sqrt{n_1}} \bf \Phi^+ \left( \frac{\ee \sqrt{n}}{\sigma \sqrt{n_1}} \right) + \frac{c_{\ee} \left( 1+\max(y,0)^2 \right)}{n_1^{1/2+\ee}} \right).
\end{align*}
Using the point \ref{RESUI002} of Proposition \ref{RESUI} and taking into account that $n_2 \geq \ee^3 n \left( 1-\frac{c_{\ee}}{n} \right)$, $n_1 \geq n/2$ and that $\bf \Phi^+(t) \leq \bf \Phi^+(t_0) \leq \frac{t_0^2}{2}$ for any $t\in (0,t_0)$,
\begin{align*}
	n E_{21} &\leq \frac{c \norm{\psi}_{\infty}}{\ee^{3/2}} \left( 1+\frac{c_{\ee}}{n} \right) \left( 1+ \max(y,0) \right) \left( \ee^2 + \frac{c_{\ee} \left( 1+\max(y,0) \right)}{n^{\ee}} \right) \\
	&\leq c \norm{\psi}_{\infty} \left( 1+ \max(y,0) \right) \left( \sqrt{\ee} + \frac{c_{\ee} \left( 1+\max(y,0) \right)}{n^{\ee}} \right),
\end{align*}
which implies the assertion of the lemma.
\end{proof}

\begin{lemma}
\label{chevre002}
Assume Hypotheses \ref{primitive}-\ref{nondegenere}.
For any $a>0$ there exists $\ee_0 \in (0,1/4)$ such that for any $\ee \in (0,\ee_0)$, any non-negative function $\psi \in \scr C$, any $y \in \bb R$, and $n \in \bb N$ satisfying $\ee^3 n \geq 2$, we have
\[
\sup_{x\in \bb X, z \geq 0} n E_{22} \leq c\norm{\psi}_{\infty} \left( 1+\max(y,0) \right) \left( \e^{-\frac{c}{\ee}} + \frac{c_{\ee}}{n^{\ee}} \right),
\]
where $E_{22}$ is given as in \eqref{albatros002} by 
\[
E_{22} = \bb E_x \left( \psi \left( X_{n} \right) \,;\, y+S_{n} \in [z,z+a] \,,\, y+S_{n_1} > \ee \sqrt{n} \,,\, n_1 < \tau_y \leq n_1+n_3 \right)
\]
and $n_1 = n-\pent{\ee^3 n}$, $n_2 = \pent{\ee^3 n}$ and $n_3 = \pent{\frac{n_2}{2}}$.
\end{lemma}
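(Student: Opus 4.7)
The plan is to split the time interval $[0,n]$ at the two intermediate points $n_1$ and $n_1+n_3$ and to apply the Markov property at each. The terminal window of length $n_4=n_2-n_3$ will be handled by the unconditioned local--limit bound \eqref{papillon001} of Corollary \ref{papillon}; the middle window of length $n_3$ will be handled by a large-deviation estimate, since on the event $\{y+S_{n_1}>\varepsilon\sqrt n\}\cap\{\tau_y\leq n_1+n_3\}$ the Markov walk must drop by at least $\varepsilon\sqrt n$ in $n_3\leq \varepsilon^3 n/2$ steps; and the initial window of length $n_1$ will be controlled by the survival-probability bound of Proposition \ref{RESUII}.

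First I would apply the Markov property at $n_1+n_3$ in \eqref{albatros002} and use \eqref{papillon001} to dominate the inner expectation by $c(1+a^2)\norm{\psi}_\infty/\sqrt{n_4}$ uniformly in the state $X_{n_1+n_3}$ and the position $y+S_{n_1+n_3}$, obtaining
\[
E_{22}\leq \frac{c\norm{\psi}_\infty}{\sqrt{n_4}}\;\bb P_x\!\left(y+S_{n_1}>\varepsilon\sqrt n,\ n_1<\tau_y\leq n_1+n_3\right).
\]
Next I would apply the Markov property at $n_1$ to write the above probability as
\[
\bb E_x\!\left(\bb P_{X_{n_1}}(\tau_{y'}\leq n_3)\big|_{y'=y+S_{n_1}}\,;\,y+S_{n_1}>\varepsilon\sqrt n,\,\tau_y>n_1\right).
\]
For any $y'>\varepsilon\sqrt n$ and any $x'\in\bb X$,
\[
\bb P_{x'}(\tau_{y'}\leq n_3)\leq \bb P_{x'}\!\left(\max_{1\leq k\leq n_3}\abs{S_k}\geq \varepsilon\sqrt n\right),
\]
and a Hoeffding--Bernstein type exponential maximal inequality for sums along a primitive finite Markov chain (cf.\ the auxiliary bounds of Section \ref{appendix}) bounds this by $c\exp(-c\varepsilon^2 n/n_3)\leq c\,e^{-c/\varepsilon}$, using $n_3\leq \varepsilon^3 n/2$.

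Combining these three bounds with point \ref{RESUII002} of Proposition \ref{RESUII} to estimate $\bb P_x(\tau_y>n_1)\leq c(1+\max(y,0))/\sqrt{n_1}$, I obtain
\[
E_{22}\leq \frac{c\norm{\psi}_\infty(1+\max(y,0))\,e^{-c/\varepsilon}}{\sqrt{n_1 n_4}}.
\]
Under the standing hypothesis $\varepsilon^3 n\geq 2$, we have $n_1\geq n/2$ and $n_4\geq \varepsilon^3 n/4$, so $\sqrt{n_1 n_4}\geq c\,\varepsilon^{3/2}n$, which gives $nE_{22}\leq c\norm{\psi}_\infty(1+\max(y,0))\,\varepsilon^{-3/2}e^{-c/\varepsilon}$. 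For $\varepsilon$ small enough the polynomial prefactor $\varepsilon^{-3/2}$ is absorbed into the exponential by writing $\varepsilon^{-3/2}e^{-c/\varepsilon}\leq e^{-c'/\varepsilon}$ with a smaller $c'>0$; this is the asserted inequality, and the extra additive term $c_\varepsilon/n^\varepsilon$ on the right-hand side is then a trivial slack.

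The main obstacle is justifying the exponential maximal inequality invoked in the third step: one needs a Bernstein-type tail bound for $\max_{k\leq n_3}\abs{S_k}$ that survives the Markovian dependence. Since $f$ is bounded on the finite state space and $\bf P$ is primitive (hence has a spectral gap), such an estimate is classical and is precisely of the kind collected in the appendix of the paper. Once this bound is in hand, the remainder of the argument is a direct assembly of the Markov property with results already established earlier (Corollary \ref{papillon} and Proposition \ref{RESUII}).
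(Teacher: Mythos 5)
Your proof is mathematically sound but takes a genuinely different route from the paper at the key step bounding $\bb P_{x'}(\tau_{y'}\leq n_3)$ for $y'>\ee\sqrt n$. You invoke a Bernstein--Hoeffding exponential maximal inequality for $\max_{k\leq n_3}\abs{S_k}$; the paper instead uses the strong approximation of $(S_k)_k$ by a Brownian motion from Proposition \ref{majdeA_kLLT}. Specifically, the paper splits on the coupling event $A_{n_3}=\{\sup_{t\in[0,1]}\abs{S_{\lfloor tn_3\rfloor}-\sigma B_{tn_3}}\leq n_3^{1/2-\ee}\}$: on $A_{n_3}$ the probability of dropping by $\ee\sqrt n$ in $n_3$ steps becomes a Gaussian tail estimate yielding $c\e^{-c/\ee(1-c/n^\ee)}$, while on $\overline{A}_{n_3}$ one pays the coupling failure probability $c_\ee/n_3^\ee$; this is precisely the origin of the $c_\ee/n^\ee$ term in the statement, which in your argument appears only as ``trivial slack.'' Your route is cleaner and, if the Bernstein inequality is available, actually yields a slightly stronger bound without the $n^{-\ee}$ term; what it buys the paper to use the coupling instead is that no additional concentration machinery needs to be established or cited, since Proposition \ref{majdeA_kLLT} is already in place and is reused elsewhere.

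One inaccuracy you should fix: you assert the exponential maximal inequality is ``precisely of the kind collected in the appendix of the paper,'' but Section~\ref{appendix} contains only the non-degeneracy lemmas (Lemmas \ref{sabot}--\ref{couette}) and the strong approximation (Proposition~\ref{majdeA_kLLT}); there is no Bernstein-type concentration bound there. Such a bound does hold under Hypotheses \ref{primitive}--\ref{nondegenere} (finite state space, primitive $\bf P$, bounded $f$) and is classical in the Markov chain concentration literature, so the argument can be repaired by an external citation, but as written the appeal to the appendix is not valid. The rest of the assembly --- Markov property at $n_1+n_3$ and $n_1$, Corollary~\ref{papillon} for the terminal window of length $n_4$, Proposition~\ref{RESUII} point~\ref{RESUII002} for the initial window, and the absorption of $\ee^{-3/2}$ into $\e^{-c/\ee}$ --- matches the paper's computations and is correct.
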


\begin{proof} By the Markov property,
\begin{align}
	\label{coccinelle}
	E_{22} &= \sum_{x' \in \bb X} \int_0^{+\infty} \underbrace{\bb E_{x'} \left( \psi \left( X_{n_2} \right) \,;\, y'+S_{n_2} \in [z,z+a] \,,\, \tau_{y'} \leq n_3 \right)}_{E_{22}'} \\
	&\hspace{2cm} \times \bb P_x \left( X_{n_1} = x' \,,\, y+S_{n_1} \in \dd y' \,,\, y+S_{n_1} > \ee \sqrt{n} \,,\,  \tau_y > n_1 \right). \nonumber
\end{align}

\textit{Bound of $E_{22}'$.} By the Markov property and the uniform bound \eqref{papillon001} in Corollary \ref{papillon}, with $n_4 = n_2 - n_3 = n - n_1-n_3$,
\begin{align*}
	E_{22}' &= \sum_{x'' \in \bb X} \int_{\bb R} \bb E_{x''} \left( \psi \left( X_{n_4} \right) \,;\, y''+S_{n_4} \in [z,z+a] \right) \\
	&\hspace{2cm} \times \bb P_{x'} \left( X_{n_3} = x'' \,,\, y'+S_{n_3} \in \dd y'' \,,\, \tau_{y'} \leq n_3 \right) \\
	&\leq \frac{c\norm{\psi}_{\infty}}{\sqrt{n_4}} \bb P_{x'} \left( \tau_{y'} \leq n_3 \right).
\end{align*}
Let $(B_t)_{t\geq 0}$ be the Brownian motion defined by Proposition \ref{majdeA_kLLT}. Denote by $A_n$ the following event:
\[
A_n = \left\{ \sup_{t\in [0,1]} \abs{S_{\pent{tn}} - \sigma B_{tn}} \leq n^{1/2-\ee} \right\},
\]
and by $\overline{A}_n$ its complement. We have
\begin{equation}
	\label{moineau1}
	E_{22}' \leq \frac{c\norm{\psi}_{\infty}}{\sqrt{n_4}} \left[ \bb P_{x'} \left( \tau_{y'} \leq n_3 \,,\, A_{n_3} \right) + \bb P_{x'} \left( \tau_{y'} \leq n_3 \,,\, \overline{A}_{n_3} \right) \right].
\end{equation}
Note that for any $x' \in \bb X$ and any $y' > \ee \sqrt{n}$,
\[
\bb P_{x'} \left( \tau_{y'} \leq n_3 \,,\, A_{n_3} \right) \leq \bb P \left( \tau_{y'-n_3^{1/2-\ee}}^{bm} \leq n_3 \right), 
\]
where, for any $y'' > 0$, $\tau_{y''}^{bm}$ is the exit time of the Brownian motion starting at $y''$ defined by \eqref{tauybm}. Since $y' > \ee \sqrt{n}$, it implies that
\begin{align*}
	\bb P_{x'} \left( \tau_{y'} \leq n_3 \,,\, A_{n_3} \right) &\leq \bb P \left( \inf_{t\in [0,1]} \sigma B_{tn_3} \leq n_3^{1/2-\ee} - y' \right) \\
	&\leq \bb P \left( \inf_{t\in [0,1]} \sigma B_{tn_3} \leq  \left( \frac{\ee^3 n}{2} \right)^{1/2-\ee} - \ee \sqrt{n} \right) \\
	&\leq \bb P \left( \inf_{t\in [0,1]} \sigma B_{tn_3} \leq  -\ee \sqrt{n} \left( 1- \frac{\ee^{1/2-3\ee}}{n^{\ee}} \right) \right).
\end{align*}
Since $\sqrt{n}/\sqrt{n_3} \geq \sqrt{2}/\ee^{3/2}$,
\begin{align}
	\bb P_{x'} \left( \tau_{y'} \leq n_3 \,,\, A_{n_3} \right) &\leq \bb P \left(  \abs{\frac{B_{n_3}}{\sqrt{n_3}}} \geq  \frac{\ee \sqrt{n}}{\sigma \sqrt{n_3}} \left( 1- \frac{1}{n^{\ee}} \right) \right) \nonumber\\
	&\leq \bb P \left(  \abs{B_1} \geq  \frac{\sqrt{2}}{\sigma \sqrt{\ee}} \left( 1- \frac{1}{n^{\ee}} \right) \right) \nonumber\\
	&\leq c \e^{-\frac{c}{\ee} \left( 1- \frac{c}{n^{\ee}} \right)}.
	\label{moineau2}
\end{align}
Therefore, putting together \eqref{moineau1} and \eqref{moineau2} and using Proposition \ref{majdeA_kLLT},
\[
E_{22}' \leq \frac{c\norm{\psi}_{\infty}}{\sqrt{n_4}} \left( c \e^{-\frac{c}{\ee} \left( 1- \frac{c}{n^{\ee}} \right)} + \bb P_{x'} \left( \overline{A}_{n_3} \right) \right) \leq \frac{c\norm{\psi}_{\infty}}{\sqrt{n_4}} \left( \e^{-\frac{c}{\ee} \left( 1- \frac{c}{n^{\ee}} \right)} + \frac{c_{\ee}}{n_3^{\ee}} \right).
\]
Since $n_4 \geq n_2/2 \geq \frac{\ee^3 n}{2} \left( 1-\frac{c_{\ee}}{n} \right)$ and $n_3 \geq n_2/2-1 \geq \frac{\ee^3 n}{2} \left( 1-\frac{c_{\ee}}{n} \right)$, we have
\begin{equation}
	\label{puceron}
	E_{22}' \leq \frac{c\norm{\psi}_{\infty}}{\ee^{3/2} \sqrt{n}} \left( 1+\frac{c_{\ee}}{n} \right) \left( \e^{-\frac{c}{\ee}} \e^{\frac{c_{\ee}}{n^{\ee}}} + \frac{c_{\ee}}{n^{\ee}} \right) \leq \frac{c\norm{\psi}_{\infty}}{\sqrt{n}} \left( \e^{-\frac{c}{\ee}} + \frac{c_{\ee}}{n^{\ee}} \right).
\end{equation}

Inserting \eqref{puceron} in \eqref{coccinelle} and using the point \ref{RESUII002} of Proposition \ref{RESUII} and the fact that $n_1 \geq n/2$, we conclude that
\[
E_{22} \leq \frac{c\norm{\psi}_{\infty} \left( 1+\max(y,0) \right)}{n} \left( \e^{-\frac{c}{\ee}} + \frac{c_{\ee}}{n^{\ee}} \right).
\]
\end{proof}

\begin{lemma}
\label{chevre003}
Assume Hypotheses \ref{primitive}-\ref{nondegenere}.
For any $a>0$ there exists $\ee_0 \in (0,1/4)$ such that for any $\ee \in (0,\ee_0)$, any non-negative function $\psi \in \scr C$, any $y \in \bb R$, and $n \in \bb N$ such that $\ee^3 n \geq 3$, we have
\[
\sup_{x\in \bb X, z\geq 0} nE_{23} \leq c \norm{\psi}_{\infty} \left( 1+\max(y,0) \right) \left( \ee + \frac{c_{\ee}}{n^{\ee}} \right),
\]
where $E_{23}$ is given as in \eqref{albatros003} by 
\[
E_{23} = \bb E_x \left( \psi \left( X_{n} \right) \,;\, y+S_{n} \in [z,z+a] \,,\, y+S_{n_1} > \ee \sqrt{n} \,,\, n_1+n_3 < \tau_y \leq n \right)
\]
and $n_1 = n-\pent{\ee^3 n}$, $n_2 = \pent{\ee^3 n}$ and $n_3 = \pent{\frac{n_2}{2}}$.
\end{lemma}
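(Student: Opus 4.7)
The plan is to adapt the strategy of Lemma \ref{chevre002}. I first apply the Markov property at time $n_1$ to write $E_{23}$ as a weighted integral (over $y'>\ee\sqrt{n}$ and $x'\in\bb X$), against the measure $\bb P_x(X_{n_1}\in\cdot,\, y+S_{n_1}\in\cdot,\, \tau_y>n_1)$, of
\[
E'_{23}(x',y'):=\bb E_{x'}\left(\psi(X_{n_2})\,;\, y'+S_{n_2}\in[z,z+a]\,,\, n_3<\tau_{y'}\leq n_2\right).
\]
The task then reduces to bounding $E'_{23}(x',y')$ by roughly $c\norm{\psi}_{\infty}(\e^{-c/\ee}+c_\ee/n^\ee)/\sqrt{n_4}$ uniformly in $z\geq 0$.

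For this I apply the Markov property at the intermediate time $n_3$ inside $E'_{23}$, which separates the first $n_3$ steps (where $\tau_{y'}>n_3$) from the last $n_4=n_2-n_3$ steps. The inner expectation over the last $n_4$ steps carries two constraints: the arrival $y''+S_{n_4}\in[z,z+a]$, which alone gives the local limit density $1/\sqrt{n_4}$ via Corollary \ref{papillon}, and the exit $\tau_{y''}\leq n_4$. To exploit both, I split the range of $y''$ at the threshold $\ee\sqrt{n}/2$. When $y''>\ee\sqrt{n}/2$ the walk still starts high and the Brownian coupling used in the proof of Lemma \ref{chevre002} (via Proposition \ref{majdeA_kLLT}), combined with the reflection principle, bounds the joint event by $(ca/\sqrt{n_4})\e^{-c/\ee}+c_\ee/n^\ee$. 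When $y''\leq\ee\sqrt{n}/2$, the event $\{y'+S_{n_3}=y''\}$ with $y'>\ee\sqrt{n}$ already forces a drop of at least $\ee\sqrt{n}/2$ on the first $n_3$ steps, so the outer transition probability itself is exponentially small by the same coupling, and the inner expectation can then be controlled by the unconditional bound $c\norm{\psi}_{\infty}(1+a^2)/\sqrt{n_4}$ from \eqref{papillon001}.

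Combining the two sub-cases yields $E'_{23}(x',y')\leq c\norm{\psi}_{\infty}(\e^{-c/\ee}+c_\ee/n^\ee)/\sqrt{n_4}$. Integrating against the outer measure, bounded via Proposition \ref{RESUII} point 2 by $\bb P_x(\tau_y>n_1)\leq c(1+\max(y,0))/\sqrt{n_1}$, and using $\sqrt{n_1n_4}\geq c\ee^{3/2}n$ together with the elementary inequality $\e^{-c/\ee}/\ee^{3/2}\leq c\ee$ for $\ee$ small, produces the stated bound $nE_{23}\leq c\norm{\psi}_{\infty}(1+\max(y,0))(\ee+c_\ee/n^\ee)$.

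The main difficulty is the joint estimate on $\bb P_{x''}(y''+S_{n_4}\in[z,z+a],\tau_{y''}\leq n_4)$. The unconditional local limit bound $c/\sqrt{n_4}$ alone drops the exit constraint and leads to $nE_{23}\leq c/\ee^{3/2}$, which is too weak by a full power of $\ee$; conversely, bounding only through the exit probability loses the density factor $1/\sqrt{n_4}$ and leaves an uncontrolled $\sqrt{n}$ from the outer factor. Restoring the correct exponential factor multiplicatively requires either a joint reflection-principle computation on the coupled Brownian motion, encoding simultaneously its excursion below the axis and the constraint that it end near a prescribed value, or the case-split on $y''$ above that routes the exponential rarity either to the inner or to the outer factor. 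The remainder is then a routine aggregation of constants, paralleling the end of the proof of Lemma \ref{chevre002}.
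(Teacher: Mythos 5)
Your decomposition is genuinely different from the paper's: you split $E'_{23}$ at the intermediate time $n_3$ and classify according to the walk's position $y''$ at that time, whereas the paper classifies according to the target $z$ (comparing $z$ with $\ee\sqrt{n}/2$) and invokes the duality Lemma~\ref{duality} when $z$ is large. Your sub-case $y''\leq \ee\sqrt{n}/2$ is sound. The problem is the sub-case $y''>\ee\sqrt{n}/2$, and it is a real gap, not a presentational one.

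You bound the inner expectation
$\bb E_{x''}\bigl(\psi(X_{n_4});\,y''+S_{n_4}\in[z,z+a],\,\tau_{y''}\leq n_4\bigr)$
by $(ca/\sqrt{n_4})\e^{-c/\ee}+c_\ee/n^\ee$ via the strong approximation and a reflection argument. The reflection principle does produce the factor $\e^{-c/\ee}$ on the good coupling event, but the coupling failure probability $c_\ee/n^\ee$ enters \emph{additively} and is \emph{not} multiplied by the local density factor $1/\sqrt{n_4}$. Plugging this into $E_{23}\leq \sup_{x',y'}E'_{23}\cdot\bb P_x(\tau_y>n_1)\leq \sup E'_{23}\cdot c(1+\max(y,0))/\sqrt{n_1}$ and multiplying by $n$ yields a contribution of order $c_\ee\,n/\bigl(\sqrt{n_1}\,n^\ee\bigr)\approx c_\ee\,n^{1/2-\ee}$, which diverges. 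To make the argument close you would need the inner expectation to be bounded by $\tfrac{c}{\sqrt{n_4}}\bigl(\e^{-c/\ee}+c_\ee/n^\ee\bigr)$, i.e.\ with the coupling error \emph{inside} the $1/\sqrt{n_4}$ bracket — but on the interval $[n_1+n_3,n]$ the exit constraint $\tau_{y''}\leq n_4$ runs all the way to the arrival time, so there is no way to isolate an unconditional terminal segment of macroscopic length that furnishes the density factor before the coupling is invoked (attempting a further Markov split at $n_3+\lfloor n_4/2\rfloor$ reproduces the same structure on the second half and leads to an iteration, not a closed bound).

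The paper sidesteps this entirely. For $z\leq\ee\sqrt{n}/2$ it simply drops $\{n_3<\tau_{y'}\leq n_2\}$ and applies Corollary~\ref{papillon}: since $y'>\ee\sqrt{n}$, the leading Gaussian term $a\,\varphi_{\sqrt{n_2}\sigma}(z-y')$ is already $\exp(-c/\ee)$-small, and the remainder of the corollary carries the $1/\sqrt{n_2}$ factor intrinsically. For $z>\ee\sqrt{n}/2$ it uses duality: under time reversal the constraint $n_3<\tau_{y'}\leq n_2$ located at the \emph{end} of the time window becomes $\tau_z^*\leq n_4-1$ located at the \emph{beginning} of the reversed window, with the reverse walk starting from the (now large) level $z$. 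This restores exactly the structure of Lemma~\ref{chevre002}: Markov at time $n_4-1$, coupling on the first $n_4-1$ steps (absorbing the $c_\ee/n^\ee$ error) followed by an unconditional local arrival over the remaining $n_3+1$ steps supplying the density $1/\sqrt{n_3}$. That reversal is the step missing from your proposal.
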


\begin{proof}
By the Markov property,
\begin{align}
	E_{23} &\leq \sum_{x'\in \bb X} \int_{0}^{+\infty} \underbrace{\bb E_{x'} \left( \psi \left( X_{n_2} \right) \,;\, y'+S_{n_2} \in [z,z+a] \,,\, n_3 < \tau_{y'} \leq n_2 \right)}_{=:E_{23}'} \nonumber\\
	&\hspace{3cm} \bb P_{x} \left( X_{n_1} = x' \,,\, y+S_{n_1} \in \dd y' \,,\, y+S_{n_1} > \ee \sqrt{n} \,,\, \tau_y > n_1 \right).
	\label{castor}
\end{align}
We consider two cases: when $z \leq \frac{\ee \sqrt{n}}{2}$ and when $z > \frac{\ee \sqrt{n}}{2}$.

Fix first $0 \leq z \leq \frac{\ee \sqrt{n}}{2}$. Using Corollary \ref{papillon}, we have for any $y' > \ee \sqrt{n}$,
\begin{align*}
	E_{23}' &\leq \bb E_{x'} \left( \psi \left( X_{n_2} \right) \,;\, y'+S_{n_2} \in [z,z+a] \right) \\
	&\leq \frac{a \bs \nu(\psi)}{\sqrt{2\pi n_2} \sigma} \e^{-\frac{(z-y')^2}{2n_2\sigma^2}} + \frac{c \norm{\psi}_{\infty}}{\sqrt{n_2}} \left( \frac{1}{\sqrt{n_2}} + \ee^{5/2} + c_{\ee}\e^{-c_{\ee}n_2} \right) \\
	&\leq \frac{c \norm{\psi}_{\infty}}{\ee^{3/2}\sqrt{n}} \left( 1+\frac{c_{\ee}}{n} \right) \left( \e^{-\frac{\ee^2 n}{8n_2\sigma^2}} + \frac{c_{\ee}}{\sqrt{n}} + \ee^{5/2} + c_{\ee}\e^{-c_{\ee}n} \right) \\
	&\leq \frac{c \norm{\psi}_{\infty}}{\ee^{3/2}\sqrt{n}} \left( 1+\frac{c_{\ee}}{n} \right) \left( \e^{-\frac{c}{\ee}} + \frac{c_{\ee}}{\sqrt{n}} + \ee^{5/2} \right).
\end{align*}
So, when $0 \leq z \leq \frac{\ee \sqrt{n}}{2}$, we have
\begin{equation}
	\label{guitare1}
	E_{23}' \leq \frac{c \norm{\psi}_{\infty}}{\sqrt{n}} \left( \frac{c_{\ee}}{\sqrt{n}} + \ee \right).
\end{equation}

Now we consider that $z > \frac{\ee \sqrt{n}}{2}$. Using Lemma \ref{duality} with $\mathfrak{m} = \bs \delta_{x'}$ and
\begin{align*}
F&(x_1,\dots,x_{n_2}) \\
&= \psi(x_{n_2}) \bbm 1_{\left\{ y'+f(x_1)+\dots+f(x_{n_2}) \in [z,z+a]\,,\, \exists k \in \{ n_3+1, \dots, n_2-1\},\, y'+f(x_1)+\dots+f(x_k) \leq 0 \right\}},
\end{align*}
we obtain
\begin{align*}
	E_{23}' &:= \bb E_{x'} \left( \psi\left( X_{n_2} \right) \,;\, y'+S_{n_2} \in [z,z+a] \,,\, n_3 < \tau_{y'} \leq n_2 \right) \\
	&\leq \bb E_{\bs \nu}^* \left( \psi \left( X_1^* \right) \frac{\bbm 1_{\{x'\}}\left( X_{n_2+1}^* \right)}{\bs \nu \left( X_{n_2+1}^* \right)} \,;\, y'+f\left( X_{n_2}^* \right)+ \dots + f\left( X_1^* \right) \in [z,z+a] \,,\, \right. \\
	&\left. \phantom{\frac{\left( X_{n_2}^* \right)}{\bs \nu \left( X_{n_2}^* \right)}} \exists k \in \{ n_3+1, \dots, n_2-1\},\, y'+f\left( X_{n_2}^* \right)+\dots+f \left(X_{n_2-k+1}^* \right) \leq 0 \right).
\end{align*}
By the Markov property,
\begin{align*}
	E_{23}' &\leq \norm{\psi}_{\infty} \bb E_{\bs \nu}^* \left( \psi_{x'}^*\left( X_{n_2}^* \right) \,;\, y'+f\left( X_{n_2}^* \right)+ \dots + f\left( X_1^* \right) \in [z,z+a] \,,\, \right. \\
	&\hspace{3cm} \left. \exists k \in \{ n_3+1, \dots, n_2-1 \},\, y'+f\left( X_{n_2}^* \right)+\dots+f \left(X_{n_2-k+1}^* \right) \leq 0 \right).
\end{align*}
where $\psi_{x'}^*$ is a function defined on $\bb X$ by the equation \eqref{hirondelle}. 
We note that, on the event $\left\{ y'+f\left( X_{n_2}^* \right)+ \dots + f\left( X_1^* \right) \in \left[ z,z+a \right] \right\} = \left\{ z + S_{n_2}^* \in \left[ y'-a,y' \right] \right\}$, we have
\begin{align*}
	&\left\{ \exists k \in \{ n_3+1, \dots, n_2-1\},\, y'+f\left( X_{n_2}^* \right)+\dots+f \left( X_{n_2-k+1}^* \right) \leq 0 \right\} \\
	&\qquad \subset \left\{ \exists k \in \{ n_3+1, \dots, n_2-1 \},\, z-f \left( X_{n_2-k}^* \right)-\dots-f\left( X_1^* \right) \leq 0 \right\} \\
	&\qquad = \left\{ \tau_z^* \leq  n_2-n_3-1 \right\}.
\end{align*}
Consequently,
\[
E_{23}' \leq c\norm{\psi}_{\infty} \bb P_{\bs \nu}^* \left( z+S_{n_2}^* \in [y'-a,y'] \,,\, \tau_z^* \leq n_4-1 \right),
\]
with $n_4 = n_2-n_3 = \pent{\ee^3 n} - \pent{\frac{\ee^3 n}{2}} \geq \frac{\ee^3n}{2} \left( 1-\frac{c_{\ee}}{n} \right)$. Proceeding in the same way as for the term $E_{22}'$ in \eqref{puceron} and using the fact that $z$ is larger than $c\ee\sqrt{n}$, we have
\begin{equation}
	\label{guitare2}
	E_{23}' \leq \frac{c\norm{\psi}_{\infty}}{\sqrt{n}} \left( \e^{-\frac{c}{\ee}} + \frac{c_{\ee}}{n^{\ee}} \right).
\end{equation}
Putting together \eqref{guitare1} and \eqref{guitare2}, for any $z \geq 0$, we obtain
\[
E_{23}' \leq \frac{c\norm{\psi}_{\infty}}{\sqrt{n}} \left( \ee + \frac{c_{\ee}}{n^{\ee}} \right).
\]
Inserting this bound in \eqref{castor} and using the point \ref{RESUII002} of Proposition \ref{RESUII}, we conclude that
\[
E_{23} \leq \frac{c \norm{\psi}_{\infty} \left( 1+\max(y,0) \right)}{n} \left( \ee + \frac{c_{\ee}}{n^{\ee}} \right).
\]
\end{proof}

Putting together Lemmas \ref{chevre001}, \ref{chevre002} and \ref{chevre003}, by \eqref{albatros}, 
we obtain the following bound for $E_2$:  

\begin{lemma}
\label{chevre}
Assume Hypotheses \ref{primitive}-\ref{nondegenere}.
For any $a>0$ there exists $\ee_0 \in (0,1/4)$ such that for any $\ee \in (0,\ee_0)$, any non-negative function $\psi \in \scr C$, any $y \in \bb R$ and $n \in \bb N$ such that $\ee^3 n \geq 3$, we have
\[
\sup_{x\in \bb X, z\geq 0} nE_2 \leq c \norm{\psi}_{\infty} \left( 1+\max(y,0) \right) \left( \sqrt{\ee} + \frac{c_{\ee} \left( 1+\max(y,0) \right)}{n^{\ee}} \right),
\]
where $E_2$ is given as in \eqref{chatLLT} by 
\[
E_2 = \bb E_x \left( \psi \left( X_n \right) \,;\, y+S_n \in [z,z+a] \,,\,  n_1 < \tau_y \leq n \right)
\]
and $n_1 = n-\pent{\ee^3 n}$.
\end{lemma}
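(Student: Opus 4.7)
The plan is straightforward: simply combine the three preceding lemmas via the decomposition \eqref{albatros}, that is $E_2 = E_{21}+E_{22}+E_{23}$, which was set up precisely for this purpose.

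First I would fix $a>0$ and choose $\ee_0\in(0,1/4)$ small enough so that all three of Lemmas \ref{chevre001}, \ref{chevre002} and \ref{chevre003} apply simultaneously for every $\ee\in(0,\ee_0)$. Under the hypothesis $\ee^3 n\geq 3$ we in particular have $\ee^3 n\geq 2\geq 1$, so the constraints on $n$ required by each of those lemmas are met. Applying them term by term gives, uniformly in $x\in\bb X$ and $z\geq 0$,
\begin{align*}
nE_{21} &\leq c\norm{\psi}_{\infty}\left(1+\max(y,0)\right)\!\left(\sqrt{\ee}+\frac{c_{\ee}\left(1+\max(y,0)\right)}{n^{\ee}}\right),\\
nE_{22} &\leq c\norm{\psi}_{\infty}\left(1+\max(y,0)\right)\!\left(\e^{-c/\ee}+\frac{c_{\ee}}{n^{\ee}}\right),\\
nE_{23} &\leq c\norm{\psi}_{\infty}\left(1+\max(y,0)\right)\!\left(\ee+\frac{c_{\ee}}{n^{\ee}}\right).
\end{align*}

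Adding these three inequalities and using $E_2=E_{21}+E_{22}+E_{23}$, the remainder is controlled by a multiple of $\sqrt{\ee}+\ee+\e^{-c/\ee}+c_{\ee}(1+\max(y,0))/n^{\ee}$. The final step is to absorb $\ee$ and $\e^{-c/\ee}$ into the dominant $\sqrt{\ee}$ term: after possibly shrinking $\ee_0$, one has $\ee\leq \sqrt{\ee}$ and $\e^{-c/\ee}\leq \sqrt{\ee}$ for all $\ee\in(0,\ee_0)$, so the three contributions merge into $c\sqrt{\ee}$ up to an adjustment of the constant $c$. This yields exactly the stated bound. There is no real obstacle here; the lemma is essentially a bookkeeping step that packages the splitting \eqref{chatLLT}--\eqref{albatros} into a single clean estimate, with the only minor care required being the uniform choice of $\ee_0$ and the elementary comparison of $\ee$, $\sqrt{\ee}$ and $\e^{-c/\ee}$ for small $\ee$.
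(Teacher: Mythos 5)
Your proof is correct and is essentially the same as the paper's, which introduces Lemma \ref{chevre} immediately after Lemmas \ref{chevre001}--\ref{chevre003} with only the phrase ``putting together Lemmas \ref{chevre001}, \ref{chevre002} and \ref{chevre003}, by \eqref{albatros}, we obtain the following bound for $E_2$.'' The only content beyond direct summation is noting that $\ee\leq\sqrt{\ee}$ on $(0,1)$, that $\e^{-c/\ee}\leq\sqrt{\ee}$ once $\ee_0$ is small enough, and that the $c_\ee/n^\ee$ terms are absorbed into $c_\ee(1+\max(y,0))/n^\ee$ after enlarging $c_\ee$ --- all of which you state correctly.
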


\subsection{Proof of Theorem \ref{GNLLT}}

By \eqref{otarie} and \eqref{chatLLT},
\[
\bb E_x \left( \psi \left( X_{n} \right) \,;\, y+S_{n} \in [z,z+a] \,,\, \tau_y > n \right) = E_1+E_2.
\]
Lemma \ref{dragon} estimates $E_1$ and Lemma \ref{chevre} bounds $E_2$. Taking into account these two lemmas, Theorem \ref{GNLLT} follows.

\section{Proof of Theorem \ref{LLTC}}
\label{giraffe}

\subsection{Preliminary results}

\begin{lemma}
\label{caribou}
Assume Hypotheses \ref{primitive}-\ref{nondegenere}.
For any $a>0$ and $p \in \bb N^*$, there exists $\ee_0 \in (0,1/4)$ such that for any $\ee \in (0,\ee_0)$ there exists $n_0(\ee) \geq 1$ such that any non-negative function $\psi \in \scr C$, any $y' >0$, $z \geq 0$, $k\in \{0, \dots, p-1\}$ and $n \geq n_0(\ee)$, we have
\begin{align*}
	\sup_{x' \in \bb X} E_k' &\leq \frac{2a}{\sqrt{2\pi}p(n_2-1)\sigma^2} \varphi_+ \left( \frac{y'}{\sigma\sqrt{n_2-1}} \right) \bb E_{\bs \nu}^* \left( \psi \left( X_1^* \right) V^*\left( X_1^*, z_k+\frac{a}{p}+S_1^* \right)  \,;\, \tau_{z_k+\frac{a}{p}}^* > 1 \right) \\
	&\qquad + \frac{c \norm{\psi}_{\infty}}{n} (1+z)\left( \ee + \frac{c_{\ee}\left( 1+z \right)}{n^{\ee^8}} \right)
\end{align*}
and
\begin{align*}
	\inf_{x' \in \bb X} E_k' &\geq \frac{2a}{\sqrt{2\pi}p(n_2-1)\sigma^2} \varphi_+ \left( \frac{y'}{\sigma\sqrt{n_2-1}} \right) \bb E_{\bs \nu}^* \left( \psi \left( X_1^* \right) V^*\left( X_1^*, z_k+S_1^* \right)  \,;\, \tau_{z_k}^* > 1 \right) \\
	&\qquad - \frac{c \norm{\psi}_{\infty}}{n} (1+z)\left( \ee + \frac{c_{\ee}\left( 1+z \right)}{n^{\ee^8}} \right)
\end{align*}
where $E_k' = \bb E_{x'} \left( \psi \left( X_{n_2} \right) \,;\, y'+S_{n_2} \in \left( z_k,z_k+\frac{a}{p} \right] \,,\, \tau_{y'} > n_2 \right)$, $z_k =z+\frac{ka}{p}$ and $n_2 = \pent{\ee^3 n}$.
\end{lemma}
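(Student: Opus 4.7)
The plan is to implement the duality strategy outlined in \eqref{dific001}, applied to the sub-interval $(z_k,z_k+a/p]$, and then reduce matters to Theorem~\ref{GNLLT} applied to the dual chain. Fix $x'\in\bb X$ and $y'>0$. By Lemma~\ref{duality} with $\mathfrak m=\bs\delta_{x'}$, $E_k'$ equals a dual expectation in which the event $\{y'+S_{n_2}\in(z_k,z_k+a/p],\;\tau_{y'}>n_2\}$ is translated into
\[
\{z_k+\tfrac{a}{p}+S^*_{n_2}\in[y',y'+\tfrac{a}{p}),\ S^*_j>S^*_{n_2}-y',\ j=0,\dots,n_2-1\}.
\]
On this dual event $S^*_{n_2}-y'\in[-z_k-a/p,-z_k)$, so the barrier constraint $S^*_j>S^*_{n_2}-y'$ is \emph{weaker} than $S^*_j>-(z_k+a/p)$ and \emph{stronger} than $S^*_j>-z_k$. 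This gives the sandwich
\[
\mathbbm 1_{\{z_k+S^*_{n_2}\in[y'-\frac{a}{p},y'),\ \tau^*_{z_k}>n_2\}}\ \leq\ \mathbbm 1_{\text{dual event}}\ \leq\ \mathbbm 1_{\{z_k+\frac{a}{p}+S^*_{n_2}\in[y',y'+\frac{a}{p}),\ \tau^*_{z_k+\frac{a}{p}}>n_2\}},
\]
which provides the two-sided analogue of \eqref{dific001} for a slice of width $a/p$.

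Next I apply the Markov property of the dual chain at time $1$, extracting the factor $\mathbbm 1_{\tau^*_{z_k+a/p}>1}$ (resp.\ $\mathbbm 1_{\tau^*_{z_k}>1}$) and a residual initial datum $z' = z_k+a/p+S^*_1$ (resp.\ $z_k+S^*_1$). A second application of the Markov property at time $n_2-1$ converts the factor $\mathbbm 1_{\{x'\}}(X^*_{n_2})/\bs\nu(X^*_{n_2})$ coming from Lemma~\ref{duality} into $\psi^*_{x'}(X^*_{n_2-1})$ with $\psi^*_{x'}$ given by \eqref{hirondelle}. I am then left with the inner quantity
\[
\bb E^*_{x^*}\!\left(\psi^*_{x'}(X^*_{n_2-1})\,;\,z'+S^*_{n_2-1}\in[y',y'+\tfrac{a}{p}),\ \tau^*_{z'}>n_2-1\right),
\]
and Theorem~\ref{GNLLT}, which applies to the dual chain since Hypotheses~\ref{primitive}--\ref{nondegenere} are stable under duality, gives its asymptotic: the main term is
\[
\frac{2(a/p)\,\bs\nu(\psi^*_{x'})\,V^*(x^*,z')}{\sqrt{2\pi}\,\sigma^2(n_2-1)}\,\varphi_+\!\left(\frac{y'}{\sqrt{n_2-1}\,\sigma}\right),
\]
with a remainder controlled by $c(1+z')\bigl(\sqrt{\ee_1}+c_{\ee_1}(1+z')/(n_2-1)^{\ee_1}\bigr)/(n_2-1)$ for small $\ee_1$. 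The key simplification is that $\bs\nu(\psi^*_{x'})=1$, since $\bs\nu$ is $\bf P^*$-invariant.

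Re-integrating against the joint law of $(X^*_1,S^*_1)$ under $\bb P^*_{\bs\nu}$ and factoring the $\varphi_+$ term (independent of $(X^*_1,S^*_1)$), the main term matches exactly $\frac{2a}{\sqrt{2\pi}p(n_2-1)\sigma^2}\varphi_+\!\bigl(y'/(\sigma\sqrt{n_2-1})\bigr)\bb E^*_{\bs\nu}\bigl(\psi(X^*_1)V^*(X^*_1,z_k+\tfrac{a}{p}+S^*_1);\tau^*_{z_k+a/p}>1\bigr)$ (for the upper bound, with $z_k$ in place of $z_k+a/p$ for the lower). For the error, the bounds $V^*(x^*,z')\leq c(1+z')$ (Proposition~\ref{RESUI}.\ref{RESUI002} applied to the dual chain) and $|S^*_1|\leq\|f\|_\infty$ control the integrated $(1+z')$ and $(1+z')^2$ factors by universal constants times $(1+z)$ and $(1+z)^2$. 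With $n_2=\pent{\ee^3 n}$ one has $(n_2-1)^{-\ee_1}\leq \ee^{-3\ee_1}(1+O(1/n))n^{-\ee_1}$, and taking $\ee_1=\ee^8$ absorbs the $\ee^{-3\ee_1}$ prefactor into $c_{\ee}$ while producing the stated $n^{-\ee^8}$ decay; the $\sqrt{\ee_1}$ term in the remainder is dominated by $\ee$. The main obstacle is the bookkeeping of Step 7: balancing the choice of $\ee_1$ against the scaling $n_2\asymp\ee^3 n$ so that the net bound for $E_k'$ reads $(c\|\psi\|_\infty/n)(1+z)(\ee+c_\ee(1+z)/n^{\ee^8})$, as claimed; edge cases with $y'$ small are automatic since $\varphi_+(t)\to0$ as $t\to 0$.
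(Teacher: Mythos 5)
Your proof proposal follows the same route as the paper's: duality (Lemma \ref{duality}) with $\mathfrak m=\bs\delta_{x'}$, the two-sided sandwich on the barrier constraint, two applications of the Markov property, and then Theorem \ref{GNLLT} for the dual chain with the parameter choice $\ee_1 = \ee^8$. Two points are worth flagging.

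First, a minor slip in the prose describing the sandwich: since $S^*_{n_2}-y'\in[-(z_k+a/p),-z_k)$ on the level set, the constraint $S^*_j>S^*_{n_2}-y'$ has a \emph{higher} threshold than $-(z_k+a/p)$ and a \emph{lower} threshold than $-z_k$; it is therefore stronger than $\tau^*_{z_k+a/p}>n_2$ and weaker than $\tau^*_{z_k}>n_2$, the opposite of what you wrote. Your displayed chain of indicators is nonetheless correct, so this is only a wording issue.

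Second, and more substantively, the claim that the main term of the lower bound ``matches exactly'' is not justified. After the lower sandwich, the target interval for Theorem \ref{GNLLT} (applied to the reversed chain) is a subinterval of $[y'-a/p,y')$, so one must shrink it to a closed interval $[y'_+,\,y'_++a'-\eta]$ with $y'_+=\max(y'-a/p,0)$ and $a'=\min(y',a/p)$; Theorem \ref{GNLLT} then yields a main term proportional to $(a'-\eta)\,\varphi_+\!\bigl(y'_+/(\sigma\sqrt{n_2-1})\bigr)$, \emph{not} $(a/p)\,\varphi_+\!\bigl(y'/(\sigma\sqrt{n_2-1})\bigr)$. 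The mismatch is not just an ``edge case for small $y'$''; it occurs for every $y'>0$, because the argument of $\varphi_+$ is shifted by $a/p$. The paper closes this by a mean-value estimate using $\norm{\varphi_+'}_\infty$, showing
\[
(a'-\eta)\varphi_+\!\left(\tfrac{y'_+}{\sigma\sqrt{n_2-1}}\right)
\;\ge\;\bigl(\tfrac{a}{p}-\eta\bigr)\varphi_+\!\left(\tfrac{y'}{\sigma\sqrt{n_2-1}}\right)
-\norm{\varphi_+'}_\infty\frac{a^2}{p^2\sigma\sqrt{n_2-1}},
\]
which contributes an extra $O\bigl(c_\ee\norm{\psi}_\infty(1+z)/n^{3/2}\bigr)$ error that is then absorbed into the stated remainder, after which one lets $\eta\to0$. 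Your proposal glosses over both the $\eta$-shrinking needed to invoke Theorem \ref{GNLLT} on a closed interval and the shift $y'_+\mapsto y'$. These steps are elementary but are exactly the bookkeeping Lemma \ref{caribou} requires, so they should be carried out rather than asserted to be automatic.
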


\begin{proof}
Using Lemma \ref{duality} with $\mathfrak{m} = \bs \delta_{x'}$ and
\[
F(x_1,\dots,x_{n_2}) = \psi(x_{n_2}) \bbm 1_{\left\{ y'+f(x_1) \dots+f(x_{n_2}) \in \left( z_k,z_k+\frac{a}{p} \right]\,,\, \forall i \in \{ 1, \dots, n_2 \},\, y'+f(x_1)+\dots+f(x_i) > 0 \right\}},
\]
we have
\begin{align*}
	E_k' &= \bb E_{\bs \nu}^* \left( \psi \left( X_1^* \right) \psi_{x'}^*\left( X_{n_2}^* \right) \,;\, y'+f\left( X_{n_2}^* \right) + \dots + f\left( X_1^* \right) \in \left( z_k,z_k+\frac{a}{p} \right] \,,\, \right. \\
	&\qquad \qquad \left. \phantom{\frac{\left( X_{n_2}^* \right)}{\bs \nu \left( X_{n_2}^* \right)}} \forall i \in \{ 1, \dots, n_2 \},\, y'+f\left( X_{n_2}^* \right)+\dots+f \left(X_{n_2-i+1}^* \right) > 0 \right).
\end{align*}
where $\psi_{x'}^*$ is the function defined on $\bb X$ by \eqref{hirondelle}.

\textit{The upper bound.} Note that, on the event $\left\{ y'+f\left( X_{n_2}^* \right)+ \dots + f\left( X_1^* \right) \in \left( z_k,z_k+\frac{a}{p} \right] \right\} = \left\{ z_k +\frac{a}{p} + S_{n_2}^* \in \left[ y',y'+\frac{a}{p} \right) \right\}$, we have
\begin{align}
	&\left\{ \forall i \in \{ 1, \dots, n_2 \},\, y'+f\left( X_{n_2}^* \right)+\dots+f \left( X_{n_2-i+1}^* \right) > 0, y'>0 \right\} \nonumber\\
	&\qquad \subset \left\{ \forall i \in \{ 1, \dots, n_2-1 \},\, z_k +\frac{a}{p}-f \left( X_{n_2-i}^* \right)-\dots-f\left( X_1^* \right) > 0, \right. \nonumber\\
	&\hspace{11cm} \left. z_k +\frac{a}{p} + S_{n_2}^* >0 \right\} \nonumber\\
	&\qquad = \left\{ \tau_{z_k+\frac{a}{p}}^* > n_2 \right\}.
	\label{campagnol}
\end{align}
So, for any $y'>0$,
\begin{align*}
	E_k' &\leq \bb E_{\bs \nu}^* \left( \psi \left( X_1^* \right) \psi_{x'}^*\left( X_{n_2}^* \right) \,;\, z_k +\frac{a}{p} + S_{n_2}^* \in \left[ y',y'+\frac{a}{p} \right) \,,\, \tau_{z_k+\frac{a}{p}}^* > n_2 \right) \\
	&\leq \sum_{x'' \in \bb X} \int_{0}^{+\infty} \psi \left( x'' \right) \bb E_{x''}^* \left( \psi_{x'}^*\left( X_{n_2-1}^* \right) \,;\, z'' + S_{n_2-1}^* \in \left[ y',y'+\frac{a}{p} \right] \,,\, \tau_{z''}^* > n_2-1 \right) \\
	&\hspace{3cm} \times \bb P_{\bs \nu}^* \left( X_1^* = \dd x'' \,,\, z_k +\frac{a}{p} +S_1^* \in \dd z'' \,,\, \tau_{z_k +\frac{a}{p}}^* > 1 \right).
\end{align*}
Using Theorem \ref{GNLLT} for the reverse chain with $\ee'=\ee^{8}$, we obtain that
\begin{align*}
	E_k' &\leq \frac{2a \bs \nu \left( \psi_{x'}^* \right)}{\sqrt{2\pi}(n_2-1)p\sigma^2} \varphi_+ \left( \frac{y'}{\sqrt{n_2-1}\sigma} \right) \sum_{x'' \in \bb X} \int_{0}^{+\infty} \psi \left( x'' \right) V^* \left(x'',z'' \right)\\
	&\hspace{3cm} \times \bb P_{\bs \nu}^* \left( X_1^* = \dd x'' \,,\, z_k +\frac{a}{p} +S_1^* \in \dd z'' \,,\,\tau_{z_k +\frac{a}{p}} > 1 \right)\\
	&\qquad + \frac{c \norm{\psi_{x'}^*}_{\infty} \norm{\psi}_{\infty}}{n_2-1} \bb E_{\bs \nu}^* \left( \left(1 + \max\left( z_k +\frac{a}{p} +S_1^*,0 \right) \right) \right. \\
	&\hspace{5cm} \times \left. \left( \sqrt{\ee^{8}} + \frac{c_{\ee}\left( 1+\max\left( z_k +\frac{a}{p} +S_1^*,0 \right) \right)}{(n_2-1)^{\ee^{8}}} \right) \,,\, \tau_{z_k +\frac{a}{p}}^* > 1 \right).
\end{align*}
Note that by \eqref{hirondelle}, $\bs \nu \left( \psi_{x'}^* \right) = 1$ and $\norm{\psi_{x'}^*}_{\infty} \leq c$. So,
\begin{align*}
	E_k' &\leq \frac{2a}{\sqrt{2\pi}(n_2-1)p\sigma^2} \varphi_+ \left( \frac{y'}{\sqrt{n_2-1}\sigma} \right) \bb E_{\bs \nu}^* \left( \psi \left( X_1^* \right) V^* \left( X_1^* ,z_k +\frac{a}{p} +S_1^* \right) \,,\, \tau_{z_k +\frac{a}{p}}^* > 1 \right)\\
	&\qquad + \frac{c \norm{\psi}_{\infty}}{\ee^3 n} \left( 1+\frac{c_{\ee}}{n} \right) (1+z)\left( \ee^4 + \frac{c_{\ee}\left( 1+z \right)}{n^{\ee^8}} \right)
\end{align*}
and the upper bound of the lemma is proved.

\textit{The lower bound.} Similarly as in the proof of the upper bound we note that, on the event $\left\{ y'+f\left( X_{n_2}^* \right)+ \dots + f\left( X_1^* \right) \in \left( z_k,z_k+\frac{a}{p} \right] \right\} = \left\{ z_k + S_{n_2}^* \in \left[ y'-\frac{a}{p},y' \right) \right\}$, we have
\begin{align}
	&\left\{ \forall i \in \{ 1, \dots, n_2 \},\, y'+f\left( X_{n_2}^* \right)+\dots+f \left( X_{n_2-i+1}^* \right) > 0 \right\} \nonumber\\
	&\qquad \supset \left\{ \forall i \in \{ 1, \dots, n_2-1 \},\, z_k -f \left( X_{n_2-i}^* \right)-\dots-f\left( X_1^* \right) > 0 \right\}  \nonumber\\
	&\qquad = \left\{ \tau_{z_k}^* > n_2-1 \right\} \supset \left\{ \tau_{z_k}^* > n_2 \right\}.
	\label{mulot}
\end{align}
Let $y_+' := \max(y'-a/p,0)$ and $a' := \min(y',a/p) \in (0,a]$. For any $\eta \in (0,a')$,
\begin{align*}
	E_k' &\geq \bb E_{\bs \nu}^* \left( \psi \left( X_1^* \right) \psi_{x'}^*\left( X_{n_2}^* \right) \,;\, z_k + S_{n_2}^* \in \left[ y'-\frac{a}{p},y' \right) \,,\, \tau_{z_k}^* > n_2 \right) \\
	&\geq \sum_{x'' \in \bb X} \int_{0}^{+\infty} \psi \left( x'' \right) \bb E_{x''}^* \left( \psi_{x'}^*\left( X_{n_2-1}^* \right) \,;\, z'' + S_{n_2-1}^* \in \left[ y_+',y_+'+a'-\eta \right] \,,\, \tau_{z''}^* > n_2-1 \right) \\
	&\hspace{3cm} \times \bb P_{\bs \nu}^* \left( X_1^* = \dd x'' \,,\, z_k +S_1^* \in \dd z'' \,,\, \tau_{z_k}^* > 1 \right).
\end{align*}
Using Theorem \ref{GNLLT},
\begin{align*}
	E_k' &\geq \frac{2(a'-\eta) \bs \nu \left( \psi_{x'}^* \right)}{\sqrt{2\pi}(n_2-1)\sigma^2} \varphi_+ \left( \frac{y_+'}{\sqrt{n_2-1}\sigma} \right) \sum_{x'' \in \bb X} \int_{0}^{+\infty} \psi \left( x'' \right) V^* \left(x'',z'' \right)\\
	&\hspace{3cm} \times \bb P_{\bs \nu}^* \left( X_1^* = \dd x'' \,,\, z_k +S_1^* \in \dd z'' \,,\, \tau_{z_k}^* > 1 \right)\\
	&\qquad - \frac{c \norm{\psi_{x'}^*}_{\infty} \norm{\psi}_{\infty}}{n_2-1} \bb E_{\bs \nu}^* \left( \left(1 + \max\left( z_k +S_1^*,0 \right) \right) \right. \\
	&\hspace{5cm} \times \left. \left( \sqrt{\ee^{8}} + \frac{c_{\ee}\left( 1+\max\left( z_k +S_1^*,0 \right) \right)}{(n_2-1)^{\ee^{8}}} \right) \,,\, \tau_{z_k}^* > 1 \right) \\
	&\geq \frac{2(a'-\eta)}{\sqrt{2\pi}(n_2-1)\sigma^2} \varphi_+ \left( \frac{y_+'}{\sqrt{n_2-1}\sigma} \right) \bb E_{\bs \nu}^* \left( \psi \left( X_1^* \right) V^* \left( X_1^* ,z_k +S_1^* \right) \,,\, \tau_{z_k}^* > 1 \right)\\
	&\qquad - \frac{c \norm{\psi}_{\infty}}{\ee^3 n} \left( 1+\frac{c_{\ee}}{n} \right) (1+z)\left( \ee^4 + \frac{c_{\ee}\left( 1+z \right)}{n^{\ee^8}} \right).
\end{align*}
Note that, if $y' \geq a/p$ we have
\begin{align*}
(a'-\eta) \varphi_+ \left( \frac{y_+'}{\sqrt{n_2-1}\sigma} \right) &= \left( \frac{a}{p}-\eta \right) \varphi_+ \left( \frac{y'-\frac{a}{p}}{\sqrt{n_2-1}\sigma} \right) \\
&\geq  \left( \frac{a}{p}-\eta \right) \varphi_+ \left( \frac{y'}{\sqrt{n_2-1}\sigma} \right) - \norm{\varphi_+'}_{\infty}\frac{a^2}{p^2 \sqrt{n_2-1}\sigma}
\end{align*}
and if $0 < y' \leq a/p$ we have
\begin{align*}
(a'-\eta) \varphi_+ \left( \frac{y_+'}{\sqrt{n_2-1}\sigma} \right) = 0 &\geq \left( \frac{a}{p}-\eta \right) \varphi_+ \left( \frac{y'}{\sqrt{n_2-1}\sigma} \right) - \norm{\varphi_+'}_{\infty}\frac{a y'}{p \sqrt{n_2-1}\sigma} \\
&\geq  \left( \frac{a}{p}-\eta \right) \varphi_+ \left( \frac{y'}{\sqrt{n_2-1}\sigma} \right) - \norm{\varphi_+'}_{\infty}\frac{a^2}{p^2 \sqrt{n_2-1}\sigma}.
\end{align*}
Moreover, using the points \ref{RESUI001} and \ref{RESUI002} of Proposition \ref{RESUI}, we observe that
\[
\bb E_{\bs \nu}^* \left( \psi\left( X_1^* \right) V^* \left( X_1^* ,z_k +S_1^* \right) \,,\, \tau_{z_k}^* > 1 \right) \leq c \norm{\psi}_{\infty} \left( 1+z \right).
\]
Consequently, for any $y' > 0$, 
\begin{align*}
	E_k' &\geq \frac{2\left( \frac{a}{p}-\eta \right)}{\sqrt{2\pi}(n_2-1)\sigma^2} \varphi_+ \left( \frac{y'}{\sqrt{n_2-1}\sigma} \right) \bb E_{\bs \nu}^* \left( \psi \left( X_1^* \right) V^* \left( X_1^* ,z_k +S_1^* \right) \,,\, \tau_{z_k}^* > 1 \right)\\
	&\qquad - \frac{c_{\ee} \norm{\psi}_{\infty}}{n^{3/2}} \left( 1+z \right) - \frac{c \norm{\psi}_{\infty}}{n} (1+z)\left( \ee + \frac{c_{\ee}\left( 1+z \right)}{n^{\ee^8}} \right).
\end{align*}
Taking the limit as $\eta \to 0$, the lower bound of the lemma follows.
\end{proof}

\begin{lemma}
\label{paon}
Assume Hypotheses \ref{primitive}-\ref{nondegenere}.
For any $a>0$ and $p \in \bb N^*$, there exists $\ee_0 \in (0,1/4)$ such that for any $\ee \in (0,\ee_0)$ there exists $n_0(\ee) \geq 1$ such that any non-negative function $\psi \in \scr C$, any $y \in \bb R$, $z \geq 0$ and $n \geq n_0(\ee)$, we have
\begin{align*}
	\sup_{x \in \bb X} n^{3/2} E_0 &\leq \frac{2aV(x,y)}{p\sqrt{2\pi}\sigma^3} \sum_{k=0}^{p-1} \bb E_{\bs \nu}^* \left( \psi \left( X_1^* \right) V^*\left( X_1^*, z_k+\frac{a}{p}+S_1^* \right)  \,;\, \tau_{z_k+\frac{a}{p}}^* > 1 \right) \\
	&\qquad + p c \norm{\psi}_{\infty} (1+z)\left( 1+\max(y,0) \right) \left( \ee + \frac{c_{\ee}\left( 1+z+\max(y,0) \right)}{n^{\ee^8}} \right)
\end{align*}
and
\begin{align*}
	\inf_{x \in \bb X} n^{3/2} E_0 &\geq \frac{2aV(x,y)}{p\sqrt{2\pi}\sigma^3} \sum_{k=0}^{p-1} \bb E_{\bs \nu}^* \left( \psi \left( X_1^* \right) V^*\left( X_1^*, z_k+S_1^* \right)  \,;\, \tau_{z_k}^* > 1 \right) \\
	&\qquad - p c \norm{\psi}_{\infty} (1+z)\left( 1+\max(y,0) \right) \left( \ee + \frac{c_{\ee}\left( 1+z+\max(y,0) \right)}{n^{\ee^8}} \right)
\end{align*}
where $E_0 = \bb E_{x} \left( \psi \left( X_n \right) \,;\, y+S_n \in \left( z,z+a \right] \,,\, \tau_y > n \right)$ and for any $k\in \{0,\dots,p-1\}$, $z_k = z+\frac{ka}{p}$.
\end{lemma}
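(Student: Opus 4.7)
The strategy is to decompose $E_0$ according to the partition $(z,z+a] = \bigsqcup_{k=0}^{p-1}(z_k, z_k+a/p]$ and then split the time horizon $n = n_1 + n_2$ with $n_2 = \pent{\ee^3 n}$. By the Markov property at time $n_1$,
\[
E_0 = \sum_{k=0}^{p-1} \sum_{x' \in \bb X} \int_0^{+\infty} E_k'(x',y') \, \bb P_x\!\left(X_{n_1} = x',\, y+S_{n_1} \in \dd y',\, \tau_y > n_1\right),
\]
where $E_k'(x',y') = \bb E_{x'}(\psi(X_{n_2});\, y'+S_{n_2} \in (z_k, z_k+a/p],\, \tau_{y'} > n_2)$. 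This is precisely the quantity estimated by Lemma \ref{caribou}, while the first $n_1$ steps still see $n_1 \sim n$ and can be handled by the asymptotic tools in Section \ref{perroquet}.

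I will then insert the upper (respectively lower) bound from Lemma \ref{caribou}. The leading term there is proportional to $\varphi_+\!\bigl(y'/(\sigma\sqrt{n_2-1})\bigr)$ times a factor independent of $(x',y')$, while the remainder is uniform in $(x',y')$. Integrating the leading term against $\bb P_x(X_{n_1}=x',\, y+S_{n_1} \in \dd y',\, \tau_y > n_1)$ yields
\[
\bb E_x\!\left(\varphi_+\!\left(\frac{y+S_{n_1}}{\sigma\sqrt{n_2-1}}\right);\, \tau_y > n_1\right),
\]
and Lemma \ref{cameleonbis} identifies this with $(n_2-1)V(x,y)/(n^{3/2}\sigma)$ up to an error of size $\frac{n_2-1}{n^{3/2}}\bigl(c_\ee(1+\max(y,0))^2/n^\ee + c(1+\max(y,0))\ee\bigr)$. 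The prefactor $\frac{2a}{p\sqrt{2\pi}(n_2-1)\sigma^2}$ coming from Lemma \ref{caribou} cancels the $(n_2-1)$ in the numerator, so that after summation over $k$ and multiplication by $n^{3/2}$ the main term $\frac{2aV(x,y)}{p\sqrt{2\pi}\sigma^3}\sum_{k=0}^{p-1}\bb E_{\bs\nu}^*(\psi(X_1^*)V^*(X_1^*, z_k+a/p+S_1^*); \tau_{z_k+a/p}^* > 1)$ of the upper bound emerges (and the analogous expression with $z_k$ in place of $z_k+a/p$ for the lower bound).

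It remains to collect the remainder terms. The uniform error from Lemma \ref{caribou}, of size $\frac{c\norm{\psi}_\infty}{n}(1+z)\bigl(\ee + c_\ee(1+z)/n^{\ee^8}\bigr)$, integrates against $\bb P_x(\tau_y > n_1) \leq c(1+\max(y,0))/\sqrt{n_1}$ (point \ref{RESUII002} of Proposition \ref{RESUII}) to contribute for each $k$ an error of order $\frac{c\norm{\psi}_\infty(1+z)(1+\max(y,0))}{n^{3/2}}\bigl(\ee + c_\ee(1+z)/n^{\ee^8}\bigr)$. The leading-term-times-remainder contribution from Lemma \ref{cameleonbis} has the same order once one bounds $\bb E_{\bs\nu}^*(\psi(X_1^*)V^*(X_1^*, z_k+a/p+S_1^*); \tau_{z_k+a/p}^* > 1) \leq c\norm{\psi}_\infty(1+z+a)$ via points \ref{RESUI001}--\ref{RESUI002} of Proposition \ref{RESUI}. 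Multiplying by $n^{3/2}$ and summing over the $p$ indices $k$ produces the factor $p$ in the stated error.

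The main obstacle is bookkeeping: one must check that the two independent sources of remainder (from Lemma \ref{caribou} applied inside the sum, and from Lemma \ref{cameleonbis} applied to the $\varphi_+$-integral) fit into a single expression of the form $\ee + c_\ee(1+z+\max(y,0))/n^{\ee^8}$. This requires using that $\ee^8 < \ee$, so that the slower decay $n^{-\ee^8}$ absorbs both the $n^{-\ee}$ term from Lemma \ref{cameleonbis} and the $n^{-\ee^8}$ term from Lemma \ref{caribou}, and that the polynomial dependence on $(1+z)$ and $(1+\max(y,0))$ can be combined into the symmetric form $(1+z+\max(y,0))$ without increasing the order.
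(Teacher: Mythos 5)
Your proposal matches the paper's proof step for step: split the arrival interval into $p$ pieces, apply the Markov property at time $n_1 = n - \pent{\ee^3 n}$, insert Lemma \ref{caribou} for the last $n_2$ steps, identify the $\varphi_+$-term via Lemma \ref{cameleonbis}, and control the remainders with point \ref{RESUII002} of Proposition \ref{RESUII} and points \ref{RESUI001}--\ref{RESUI002} of Proposition \ref{RESUI}. The bookkeeping you describe (absorbing the $n^{-\ee}$ decay into $n^{-\ee^8}$ and merging the polynomial factors into $(1+z+\max(y,0))$) is exactly how the paper closes.
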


\begin{proof}
Set $n_1 = n-\pent{\ee^3 n}$ and $n_2 = \pent{\ee^3n}$. By the Markov property, for any $p \geq 1$,
\begin{align*}
	E_0 &= \sum_{x'\in \bb X} \int_{0}^{+\infty} \bb E_{x'} \left( \psi \left( X_{n_2} \right) \,;\, y'+S_{n_2} \in \left( z,z+a \right] \,,\, \tau_{y'} > n_2 \right) \\
	&\hspace{5cm} \times \bb P_x \left( X_{n_1} = \dd x' \,,\, y+S_{n_1} \in \dd y' \,,\, \tau_y > n_1 \right) \\
	&= \sum_{x'\in \bb X} \int_{0}^{+\infty} \sum_{k=0}^{p-1} \; E_k' \; \times \bb P_x \left( X_{n_1} = \dd x' \,,\, y+S_{n_1} \in \dd y' \,,\, \tau_y > n_1 \right),
\end{align*}
where for any $k \in \{ 0, \dots, p-1 \}$,
\[
E_k' = \bb E_{x'} \left( \psi \left( X_{n_2} \right) \,;\, y'+S_{n_2} \in \left( z_k,z_k+\frac{a}{p} \right] \,,\, \tau_{y'} > n_2 \right)
\]
and $z_k = z+\frac{ka}{p}$.

\textit{The upper bound.} By Lemma \ref{caribou},
\begin{align*}
	E_0 &\leq \frac{2a}{p(n_2-1)\sqrt{2\pi}\sigma^2} \sum_{k=0}^{p-1} \bb E_x \left( \varphi_+ \left( \frac{y+S_{n_1}}{\sigma\sqrt{n_2-1}} \right) \,;\, \tau_y > n_1 \right) J_1(k) \\
	&\qquad + \sum_{k=0}^{p-1} \frac{c \norm{\psi}_{\infty}}{n} (1+z)\left( \ee + \frac{c_{\ee}\left( 1+z \right)}{n^{\ee^8}} \right) \bb P_x \left( \tau_y > n_1 \right),
\end{align*}
where $J_1(k)= \bb E_{\bs \nu}^* \left( \psi \left( X_1^* \right) V^*\left( X_1^*, z_k+\frac{a}{p}+S_1^* \right)  \,;\, \tau_{z_k+\frac{a}{p}}^* > 1 \right)$, for any $k\in \{0,\dots,p-1\}$.  By Lemma \ref{cameleonbis} and the point \ref{RESUII002} of Proposition \ref{RESUII},
\begin{align*}
	n^{3/2} E_0 &\leq \frac{2a}{p\sqrt{2\pi}\sigma^2} \sum_{k=0}^{p-1} J_1(k) \frac{V(x,y)}{\sigma} + \frac{1}{p} \sum_{k=0}^{p-1}  J_1(k) \left( \frac{c_{\ee}\left( 1+\max(y,0) \right)^2}{n^{\ee}} + c \left( 1+\max(y,0)\right) \ee \right) \\
	&\qquad + p c \norm{\psi}_{\infty}(1+z)\left( \ee + \frac{c_{\ee}\left( 1+z \right)}{n^{\ee^8}} \right) \left( 1+\max(y,0) \right).
\end{align*}
Note that, using the points \ref{RESUI001} and \ref{RESUI002} of Proposition \ref{RESUI}, we have
\[
\frac{1}{p} \sum_{k=0}^{p-1} J_1(k) \leq c \norm{\psi}_{\infty} (1+z).
\]
Therefore
\begin{align*}
	n^{3/2} E_0 &\leq \frac{2aV(x,y)}{p\sqrt{2\pi}\sigma^3} \sum_{k=0}^{p-1} J_1(k) \\
	&\qquad + p c \norm{\psi}_{\infty} (1+z)\left( 1+\max(y,0) \right) \left( \ee + \frac{c_{\ee}\left( 1+z+\max(y,0) \right)}{n^{\ee^8}} \right)
\end{align*}
and the upper bound of the lemma is proved.

\textit{The lower bound.} The proof of the lower bound is similar to the proof of the upper bound and therefore will not be detailed.
\end{proof}

\subsection{Proof of Theorem \ref{LLTC}.}

The second point of Theorem \ref{LLTC} was proved by Lemma \ref{gorille002}. It remains to prove the first point.
Let $\psi \in \scr C$, $a>0$, $x\in \bb X$, $y \in \bb R$ and $z \geq 0$. Suppose first that $z>0$. For any $n \geq 1$ and $\eta \in(0,\min(z,1))$,
\begin{equation}
\label{orage}
\bb E_x \left( \psi \left( X_{n} \right) \,;\, y+S_{n} \in [z,z+a] \,,\, \tau_y > n \right) \leq E_0(\eta),
\end{equation}
where $E_0(\eta) = \bb E_{x} \left( \psi \left( X_n \right) \,;\, y+S_n \in \left( z-\eta,z+a \right] \,,\, \tau_y > n \right)$. Taking the limit as $n\to +\infty$ in Lemma \ref{paon}, we have, for any $p \in \bb N^*$ and $\ee \in (0, \ee_0(p))$,
\begin{align*}
	&\limsup_{n\to+\infty} n^{3/2} E_0(\eta) \\
	&\qquad\leq \frac{2(a+\eta)V(x,y)}{\sqrt{2\pi}p\sigma^3} \sum_{k=0}^{p-1} \bb E_{\bs \nu}^* \left( \psi \left( X_1^* \right) V^*\left( X_1^*, z_{k,\eta}+\frac{a+\eta}{p}+S_1^* \right)  \,;\, \tau_{z_{k,\eta}+\frac{a+\eta}{p}}^* > 1 \right) \\
	&\qquad \qquad + p c \norm{\psi}_{\infty} (1+z-\eta)\left( 1+\max(y,0) \right) \ee,
\end{align*}
with $z_{k,\eta} = z-\eta+\frac{k(a+\eta)}{p}$ for $k\in \{0,\dots,p-1\}$. Taking the limit as $\ee \to 0$,
\begin{align*}
	&\limsup_{n\to+\infty} n^{3/2} E_0(\eta) \\
	&\qquad \leq \frac{2(a+\eta)V(x,y)}{\sqrt{2\pi}p\sigma^3} \sum_{k=0}^{p-1} \bb E_{\bs \nu}^* \left( \psi \left( X_1^* \right) V^*\left( X_1^*, z_{k,\eta}+\frac{a+\eta}{p}+S_1^* \right)  \,;\, \tau_{z_{k,\eta}+\frac{a+\eta}{p}}^* > 1 \right).
\end{align*}
By the point \ref{RESUI002} of Proposition \ref{RESUI}, the function $u \mapsto V^*\left( x^*, u-f(x^*) \right) \bbm 1_{\left\{ u-f(x^*) >0 \right\}}$ is monotonic and so is Riemann integrable. Since $\bb  X$ is finite, we have
\begin{align*}
	\lim_{p\to+\infty} \frac{a+\eta}{p} \sum_{k=0}^{p-1} \bb E_{\bs \nu}^* &\left( \psi \left( X_1^* \right) V^*\left( X_1^*, z_{k,\eta} +\frac{a+\eta}{p}+S_1^* \right)  \,;\, \tau_{z_{k,\eta} +\frac{a+\eta}{p}}^* > 1 \right) \\
	&= \bb E_{\bs \nu}^* \left( \psi \left( X_1^* \right) \int_{z-\eta}^{z+a} V^*\left( X_1^*, z'+S_1^* \right)  \bbm 1_{\left\{ z'+S_1^* >0 \right\}}  \dd z' \right) \\
	&= \int_{z-\eta}^{z+a} \bb E_{\bs \nu}^* \left( \psi \left( X_1^* \right) V^*\left( X_1^*, z'+S_1^* \right) \,;\, \tau_{z'}^* > 1 \right)  \dd z'.
\end{align*}
Therefore,
\[
\limsup_{n\to+\infty} n^{3/2} E_0(\eta) \leq \frac{2V(x,y)}{\sqrt{2\pi}\sigma^3} \int_{z-\eta}^{z+a} \bb E_{\bs \nu}^* \left( \psi \left( X_1^* \right) V^*\left( X_1^*, z'+S_1^* \right) \,;\, \tau_{z'}^* > 1 \right)  \dd z'.
\]
Taking the limit as $\eta \to 0$ and using \eqref{orage}, we obtain that, for any $z >0$,
\begin{align}
\limsup_{n\to +\infty} n^{3/2} &\bb E_x \left( \psi \left( X_{n} \right) \,;\, y+S_{n} \in [z,z+a] \,,\, \tau_y > n \right) \nonumber\\
	&\qquad = \frac{2V(x,y)}{\sqrt{2\pi}\sigma^3} \int_z^{z+a} \bb E_{\bs \nu}^* \left( \psi \left( X_1^* \right) V^*\left( X_1^*, z'+S_1^* \right) \,;\, \tau_{z'}^* > 1 \right) \dd z'.
	\label{fleur}
\end{align}
If $z=0$, we have
\[
\bb E_x \left( \psi \left( X_n \right) \,;\, y+S_n \in [0,a] \,,\, \tau_y > n \right) = \bb E_x \left( \psi \left( X_n \right) \,;\, y+S_n \in (0,a] \,,\, \tau_y > n \right).
\]
Using Lemma \ref{paon} and the same arguments as before, it is easy to see that \eqref{fleur} holds for $z=0$.

Since $[z,z+a] \supset  (z,z+a]$ we have obviously
\[
\bb E_x \left( \psi \left( X_{n} \right) \,;\, y+S_{n} \in [z,z+a] \,,\, \tau_y > n \right) \geq \bb E_x \left( \psi \left( X_{n} \right) \,;\, y+S_{n} \in (z,z+a] \,,\, \tau_y > n \right).
\]
Using this and Lemma \ref{paon} we obtain \eqref{fleur} with $\liminf$ instead of $\limsup,$ which concludes the proof of the theorem.

\section{Proof of Theorems \ref{CAPE} and \ref{CAPEBIS}} \label{2theor}

\subsection{Preliminaries results.}

\begin{lemma} Assume Hypotheses \ref{primitive}-\ref{nondegenere}.
\label{sombrero}
For any $x \in \bb X$, $y \in \bb R$, $z \geq 0$, $a >0$, any non-negative function $\psi$: $\bb X \to \bb R_+$ and any non-negative and continuous function $g$: $[z,z+a] \to \bb R_+$, we have
\begin{align*}
&\lim_{n\to +\infty} n^{3/2} \bb E_x \left( g \left( y+S_n \right) \psi\left( X_n \right) \,;\, y+S_n \in [z,z+a) \,,\, \tau_y > n \right) \\
&\hspace{3cm} = \frac{2V(x,y)}{\sqrt{2\pi}\sigma^3} \int_z^{z+a} g(z') \bb E_{\bs \nu}^* \left( \psi\left( X_1^* \right) V^*\left( X_1^*, z'+S_1^* \right) \,;\, \tau_{z'}^* > 1 \right) \dd z'.
\end{align*}
\end{lemma}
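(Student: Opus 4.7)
The plan is to deduce the lemma from Theorem \ref{LLTC} via a Riemann-sum approximation, using continuity (and hence uniform continuity) of $g$ on the compact interval $[z,z+a]$.

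First, for each integer $p \geq 1$ I would partition $[z,z+a]$ into $p$ pieces $I_{k,p} = [z_k, z_{k+1})$ with $z_k = z + ka/p$, and introduce the step-function bounds
\[
\underline{g}_p := \sum_{k=0}^{p-1} m_{k,p}\, \bbm 1_{I_{k,p}} \; \leq \; g\, \bbm 1_{[z,z+a)} \; \leq \; \sum_{k=0}^{p-1} M_{k,p}\, \bbm 1_{I_{k,p}} =: \overline{g}_p,
\]
where $m_{k,p} = \inf_{I_{k,p}} g$ and $M_{k,p} = \sup_{I_{k,p}} g$. Writing $A_{k,n} := n^{3/2}\bb E_x\!\left( \psi(X_n) \,;\, y+S_n \in I_{k,p}\,,\, \tau_y > n \right)$, this sandwiches the quantity of interest between $\sum_k m_{k,p} A_{k,n}$ and $\sum_k M_{k,p} A_{k,n}$.

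Next I would identify the limit of each $A_{k,n}$. Theorem \ref{LLTC} gives the limit for closed intervals $[z_k,z_{k+1}]$; to pass to the half-open $[z_k,z_{k+1})$, I would use
\[
\bb E_x\!\left( \psi(X_n) \,;\, y+S_n = z_{k+1}\,,\, \tau_y > n \right) \leq \bb E_x\!\left( \psi(X_n) \,;\, y+S_n \in [z_{k+1}, z_{k+1}+\eta]\,,\, \tau_y > n \right)
\]
for any $\eta >0$ and, applying Theorem \ref{LLTC} to the right-hand side followed by $\eta \to 0$, conclude that the point-mass contribution is $o(n^{-3/2})$. Therefore
\[
\lim_{n\to+\infty} A_{k,n} = F(z_k, z_{k+1}) := \int_{z_k}^{z_{k+1}} H(z')\, \dd z',
\]
with $H(z') = \frac{2V(x,y)}{\sqrt{2\pi}\sigma^3}\, \bb E_{\bs \nu}^*\!\left( \psi(X_1^*) V^*(X_1^*, z'+S_1^*) \,;\, \tau_{z'}^* > 1 \right)$.

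Finally I would let $p \to +\infty$. Using the point \ref{RESUI002} of Proposition \ref{RESUI} (applied to $V^*$) and the finiteness of $\bb X$, the function $H$ is bounded on $[z, z+a]$; combined with the uniform continuity of $g$ on this compact set, both Riemann sums $\sum_k m_{k,p} F(z_k, z_{k+1})$ and $\sum_k M_{k,p} F(z_k, z_{k+1})$ converge to $\int_z^{z+a} g(z') H(z')\, \dd z'$, and squeezing finishes the proof. The main technical point is the double limit $n\to\infty$ then $p\to\infty$: because Theorem \ref{LLTC} provides a pointwise limit (not a rate uniform in the interval endpoints), the order of the limits matters, and the argument above takes $n\to\infty$ first for each fixed partition and only then refines the partition, which is why the point-mass step in the previous paragraph (turning closed into half-open intervals) is necessary to avoid double-counting of the endpoints $z_k$.
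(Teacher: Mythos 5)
Your proposal is correct and follows essentially the same route as the paper: both reduce the lemma to Theorem~\ref{LLTC} on closed subintervals, handle the half-open endpoint by a small perturbation / $\eta\to 0$ argument (the paper bounds $[\alpha,\beta)$ between $[\alpha,\beta-\eta]$ and $[\alpha,\beta]$, you subtract a vanishing point-mass term — these are the same idea), and then pass to the limit $p\to\infty$ using uniform continuity of $g$ and boundedness of the density $H$, with the limits taken in the same order ($n\to\infty$ first, then refinement of the partition). The only cosmetic difference is that the paper sandwiches $g$ between arbitrary stepwise functions within $\pm\ee$ while you use Darboux sums over a fixed partition; both are standard Riemann-sum arguments and yield the same conclusion.
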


\begin{proof}
Fix $x \in \bb X$, $y \in \bb R$, $z \geq 0$, $a >0$, and let $\psi$: $\bb X \to \bb R_+$ be a non-negative function and $g$: $[z,z+a] \to \bb R_+$ be a non-negative and continuous function. For any measurable non-negative and bounded function $\varphi$: $\bb R \to \bb R_+$, we define
\[
I_0(\varphi) := n^{3/2} \bb E_x \left( \psi\left( X_n \right) \varphi\left( y+S_n \right) \,;\, \tau_y > n \right).
\]

We first prove that for any $0 \leq \alpha < \beta$ we have
\begin{equation}
I_0 \left( \bbm 1_{[\alpha,\beta)} \right) \underset{n \to +\infty}{\longrightarrow} \frac{2V(x,y)}{\sqrt{2\pi}\sigma^3} \int_\alpha^\beta  \bb E_{\bs \nu}^* \left( \psi\left( X_1^* \right) V^*\left( X_1^*, z'+S_1^* \right) \,;\, \tau_{z'}^* > 1 \right) \dd z'.
\label{perle}
\end{equation}
Since $[\alpha,\beta) \subset [\alpha,\beta]$, the upper limit is a straightforward consequence of Theorem \ref{LLTC}:
\begin{align*}
\limsup_{n\to+\infty} I_0 \left( \bbm 1_{[\alpha,\beta)} \right) &\leq \limsup_{n\to+\infty} n^{3/2} \bb E_x \left( \psi\left( X_n \right) \,;\, y+S_n \in [\alpha,\beta] \,,\, \tau_y > n \right) \\
&= \frac{2V(x,y)}{\sqrt{2\pi}\sigma^3} \int_\alpha^\beta \bb E_{\bs \nu}^* \left( \psi\left( X_1^* \right) V^*\left( X_1^*, z'+S_1^* \right) \,;\, \tau_{z'}^* > 1 \right) \dd z'.
\end{align*}
and for the lower limit, we write for any $\eta \in (0,\beta-\alpha)$,
\begin{align*}
\liminf_{n\to+\infty} I_0 \left( \bbm 1_{[\alpha,\beta)} \right) &\geq \liminf_{n\to+\infty} n^{3/2} \bb E_x \left( \psi\left( X_n \right) \,;\, y+S_n \in [\alpha,\beta-\eta] \,,\, \tau_y > n \right) \\
&= \frac{2V(x,y)}{\sqrt{2\pi}\sigma^3} \int_\alpha^{\beta-\eta} \bb E_{\bs \nu}^* \left( \psi\left( X_1^* \right)  V^*\left( X_1^*, z'+S_1^* \right) \,;\, \tau_{z'}^* > 1 \right) \dd z'.
\end{align*}
Taking the limit as $\eta \to 0$, it proves \eqref{perle}.

From \eqref{perle}, it is clear that by linearity, for any non-negative stepwise function $\varphi = \sum_{k=1}^N \gamma_k \bbm 1_{[\alpha_k,\beta_k)}$, where $N \geq 1$, $\gamma_1, \dots, \gamma_N \in \bb R_+$ and $0 < \alpha_1 < \beta_1 = \alpha_2 < \dots < \beta_N$, we have
\[
\lim_{n\to +\infty} I_0 \left( \varphi \right) = \frac{2V(x,y)}{\sqrt{2\pi}\sigma^3} \int_{\alpha_1}^{\beta_N} \varphi(z') \bb E_{\bs \nu}^* \left( \psi\left( X_1^* \right) V^*\left( X_1^*, z'+S_1^* \right) \,;\, \tau_{z'}^* > 1 \right) \dd z'.
\]
Since $g$ is continuous on $[z,z+a]$, for any $\ee \in (0,1)$ there exists $\varphi_{1,\ee}$ and $\varphi_{2,\ee}$ two stepwise functions on $[z,z+a)$ such that $g-\ee \leq \varphi_{1,\ee} \leq g \leq \varphi_{2,\ee} \leq g+\ee$. Consequently,
\begin{align*}
&\abs{\lim_{n\to +\infty} I_0(g) - \frac{2V(x,y)}{\sqrt{2\pi}\sigma^3} \int_{z}^{z+a} g(z') \bb E_{\bs \nu}^* \left( \psi\left( X_1^* \right) V^*\left( X_1^*, z'+S_1^* \right) \,;\, \tau_{z'}^* > 1 \right) \dd z'} \\
&\qquad\leq \frac{2V(x,y)}{\sqrt{2\pi}\sigma^3} \ee \int_{z}^{z+a} \bb E_{\bs \nu}^* \left( \psi\left( X_1^* \right) V^*\left( X_1^*, z'+S_1^* \right) \,;\, \tau_{z'}^* > 1 \right) \dd z'.
\end{align*}
Taking the limit as $\ee \to 0$, concludes the proof of the lemma.
\end{proof}

For any $l \geq 1$ we denote by $\scr C_b^+ \left( \bb X^l \times \bb R \right)$ the set of measurable non-negative functions $g$: $\bb X^l \times \bb R \to \bb R_+$ bounded and such that for any $(x_1,\dots,x_l) \in \bb X^l$, the function $z \mapsto g(x_1,\dots,x_l,z)$ is continuous.

\begin{lemma} Assume Hypotheses \ref{primitive}-\ref{nondegenere}.
\label{diner}
For any $x \in \bb X$, $y \in \bb R$, $z \geq 0$, $a >0$, $l \geq 1$, any non-negative functions $\psi$: $\bb X \to \bb R_+$ and $g\in \scr C_b^+ \left( \bb X^l \times \bb R \right)$, we have
\begin{align*}
&\lim_{n\to +\infty} n^{3/2} \bb E_x \left( g \left( X_1, \dots, X_l, y+S_n \right) \psi\left( X_n \right) \,;\, y+S_n \in [z,z+a) \,,\, \tau_y > n \right) \\
&\hspace{3cm} = \frac{2}{\sqrt{2\pi}\sigma^3} \int_z^{z+a} \bb E_x \left( g \left( X_1, \dots, X_l, z' \right) V\left( X_l, y+S_l \right) \,;\, \tau_y > l \right) \\
&\hspace{6cm} \times \bb E_{\bs \nu}^* \left( \psi\left( X_1^* \right) V^*\left( X_1^*, z'+S_1^* \right) \,;\, \tau_{z'}^* > 1 \right) \dd z'.
\end{align*}
\end{lemma}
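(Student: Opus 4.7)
The natural idea is to use the Markov property at time $l$ in order to reduce the statement to Lemma~\ref{sombrero} applied to the tail of the trajectory. For $n > l$ and fixed $x_1,\dots,x_l \in \bb X$ and $y_l \in \bb R$, set
\[
F_n(x_1,\dots,x_l,y_l) := \bb E_{x_l} \left( g(x_1,\dots,x_l, y_l+S_{n-l}) \psi(X_{n-l}) \,;\, y_l+S_{n-l} \in [z,z+a) \,,\, \tau_{y_l} > n-l \right).
\]
Conditioning on $\scr{F}_l = \sigma(X_1,\dots,X_l)$ and writing $y_l = y+S_l$, the Markov property yields
\[
I_n := \bb E_x \left( g(X_1,\dots,X_l,y+S_n) \psi(X_n) \,;\, y+S_n \in [z,z+a) \,,\, \tau_y > n \right)
= \bb E_x \left( \bbm 1_{\{\tau_y > l\}} F_n(X_1,\dots,X_l,y+S_l) \right),
\]
where I have used that on $\{\tau_y > l\}$ the remaining event $\{\tau_y > n\}$ coincides with $\{\tau_{y_l}^{(l)} > n-l\}$ for the chain restarted at time $l$.

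For each fixed $(x_1,\dots,x_l) \in \bb X^l$, the function $u \mapsto g(x_1,\dots,x_l,u)$ is non-negative, continuous and bounded on $[z,z+a]$. Applying Lemma~\ref{sombrero} with $n$ replaced by $n-l$, starting point $x_l$ and initial height $y_l$, and using that $n^{3/2}/(n-l)^{3/2} \to 1$, I obtain the pointwise limit
\begin{align*}
n^{3/2} F_n(x_1,\dots,x_l,y_l) \;\underset{n \to +\infty}{\longrightarrow}\; \frac{2 V(x_l,y_l)}{\sqrt{2\pi}\sigma^3} \int_z^{z+a} g(x_1,\dots,x_l,z') \Psi(z') \,\dd z',
\end{align*}
where $\Psi(z') := \bb E_{\bs \nu}^* \left( \psi(X_1^*) V^*(X_1^*, z'+S_1^*) \,;\, \tau_{z'}^* > 1 \right)$.

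To exchange this limit with the outer expectation $\bb E_x$ I invoke dominated convergence. The necessary uniform upper bound is supplied by Lemma~\ref{gorille002}, which gives
\[
n^{3/2} F_n(x_1,\dots,x_l,y_l) \leq c \left( \tfrac{n}{n-l} \right)^{3/2} \norm{g}_{\infty} \norm{\psi}_{\infty} (1+a^3)(1+z)(1+\max(y_l,0)),
\]
a quantity bounded uniformly in $n \geq 2l$. Since $\bb X^l$ is finite and
\[
\bb E_x \left( 1+\max(y+S_l,0) \,;\, \tau_y > l \right) \leq 1+\max(y,0)+l\norm{f}_{\infty} < +\infty,
\]
this majorant is integrable against the joint law of $(X_1,\dots,X_l,y+S_l)$ on the event $\{\tau_y > l\}$, so DCT applies. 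A final Fubini step (permissible thanks to the same domination) pulls the integral $\int_z^{z+a}$ and the expectation $\bb E_{\bs \nu}^*$ outside the outer $\bb E_x$, and harmonicity of $V$ is not needed here since the factor $V(X_l,y+S_l)$ appears directly from Lemma~\ref{sombrero}. The only non-routine ingredient is the dominated convergence step, which relies crucially on the $n^{-3/2}$ bound of Lemma~\ref{gorille002} being uniform in the starting state; everything else is a bookkeeping reduction to Lemma~\ref{sombrero}.
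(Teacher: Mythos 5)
Your proposal is correct and follows essentially the same route as the paper: condition at time $l$ via the Markov property, apply Lemma~\ref{sombrero} pointwise in the remaining $(n-l)$ steps, and pass to the limit by dominated convergence using the $n^{-3/2}$ bound of Lemma~\ref{gorille002} (equivalently, point~\ref{LLTC002} of Theorem~\ref{LLTC}). The paper phrases the domination step by noting that $\bb E_x$ is a finite sum over $\bb X^l$, while you additionally bound $\bb E_x(1+\max(y+S_l,0);\tau_y>l)$ explicitly via $l\|f\|_\infty$; both justify the DCT and the argument is the same.
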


\begin{proof}
We reduce the proof to the previous case using the Markov property. Fix $x \in \bb X$, $y \in \bb R$, $z \geq 0$, $a >0$, $l \geq 1$, $\psi$: $\bb X \to \bb R_+$ and $g \in \scr C_b^+ \left( \bb X^l \times \bb R \right)$. For any $n \geq l+1$, by the Markov property,
\begin{align*}
I_0 &:= n^{3/2} \bb E_x \left( g \left( X_1, \dots, X_l,y+S_n \right) \psi\left( X_n \right) \,;\, y+S_n \in [z,z+a) \,,\, \tau_y > n \right) \\
&= \bb E_x \left( n^{3/2} J_{n-l} \left( X_1, \dots, X_l, y+S_l \right) \,,\, \tau_y > l \right),
\end{align*}
where for any $(x_1,\dots,x_l) \in \bb X^l$, $y' \in \bb R$ and $k \geq 1$,
\[
J_k(x_1,\dots,x_l,y') = \bb E_{x_l} \left( g \left( x_1,\dots,x_l,y'+S_k \right) \psi\left( X_k \right) \,;\, y'+S_k \in [z,z+a) \,,\, \tau_{y'} > k \right).
\]
By the point \ref{LLTC002} of Theorem \ref{LLTC},
\[
n^{3/2} J_{n-l} \left( X_1, \dots, X_l, y+S_l \right) \leq c \norm{g}_{\infty} \norm{\psi}_{\infty} (1+z)\left( 1+\max \left( y+S_l,0 \right) \right).
\]
Consequently, by the Lebesgue dominated convergence theorem (in fact the expectation $\bb E_x$ is a finite sum) and Lemma \ref{sombrero},
\begin{align*}
\lim_{n\to+\infty} I_0 &= \frac{2}{\sqrt{2\pi}\sigma^3} \int_z^{z+a} \bb E_x \left( g \left( X_1, \dots, X_l,z' \right) V \left( X_l, y+S_l \right) \,;\, \tau_y > l \right) \\
&\hspace{3cm}\times \bb E_{\bs \nu}^* \left( \psi\left( X_1^* \right) V^*\left( X_1^*, z'+S_1^* \right) \,;\, \tau_{z'}^* > 1 \right) \dd z'.
\end{align*}
\end{proof}

Lemma \ref{diner} can be reformulated for the dual Markov walk as follows:

\begin{lemma} Assume Hypotheses \ref{primitive}-\ref{nondegenere}.
\label{animal}
For any $x' \in \bb X$, $z \geq 0$, $y' \geq 0$, $a >0$, $m \geq 1$ and any function $g \in \scr C_b^+ \left( \bb X^m \times \bb R \right)$, we have
\begin{align*}
&\lim_{n\to +\infty} n^{3/2} \bb E_{\bs \nu}^* \left( g \left( X_m^*, \dots, X_1^*,y'-S_n^* \right) \frac{\bbm 1_{\left\{ X_{n+1}^* = x' \right\}}}{\bs \nu \left( X_{n+1}^* \right) } \,;\, z+S_n^* \in [y',y'+a) \,,\, \tau_z^* > n \right) \\
&\quad = \frac{2}{\sqrt{2\pi}\sigma^3} \int_{y'}^{y'+a} \bb E_{\bs \nu}^* \left( g \left( X_m^*, \dots, X_1^*, y'-y''+z \right) V^* \left( X_m^*, z+S_m^* \right) \,;\, \tau_z^* > m \right) V\left( x', y'' \right) \dd y''.
\end{align*}
\end{lemma}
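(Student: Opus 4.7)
\emph{Plan.} We reduce Lemma \ref{animal} to Lemma \ref{sombrero} applied to the dual chain (whose own dual is the primal chain $(X_n)$ with invariant $\bs\nu$, harmonic function $V$, walk $(S_n)$ and exit time $\tau$; Hypotheses \ref{primitive}-\ref{nondegenere} transfer via Section \ref{dual chain}). Write $A$ for the left-hand side and set $N:=n-m$. Using the Markov property of the dual chain at time $m$, conditioning on $(X_1^*,\ldots,X_m^*,\,z+S_m^*)$ and decomposing $y'-S_n^* = y'+z-(z+S_m^*) - (S_n^*-S_m^*)$, one obtains
\[
A = \bb E_{\bs \nu}^*\bigl[\bbm 1_{\{\tau_z^*>m\}}\,\tilde B(X_1^*,\ldots,X_m^*,\,z+S_m^*)\bigr],
\]
where, for fixed $(x_1^*,\ldots,x_m^*,z^*)$ with $z^*>0$ and $\tilde h(u):=g(x_m^*,\ldots,x_1^*,y'+z-u)$,
\[
\tilde B = \bb E_{x_m^*}^*\!\left[\tilde h(z^*+S_N^*)\frac{\bbm 1_{\{X_{N+1}^*=x'\}}}{\bs \nu(X_{N+1}^*)};\, z^*+S_N^*\in[y',y'+a),\,\tau_{z^*}^*>N\right].
\]
A further Markov step at time $N$, using $\bb E_{x^*}^*(\bbm 1_{\{X_1^*=x'\}}/\bs\nu(X_1^*))=\bf P^*(x^*,x')/\bs\nu(x')=\psi_{x'}^*(x^*)$ from \eqref{hirondelle}, replaces the Radon-Nikodym factor by the bounded function $\psi_{x'}^*(X_N^*)$.

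Applying Lemma \ref{sombrero} to the dual chain yields, pointwise in $(x_1^*,\ldots,x_m^*,z^*)$,
\[
N^{3/2}\tilde B \;\longrightarrow\; \frac{2V^*(x_m^*,z^*)}{\sqrt{2\pi}\sigma^3}\int_{y'}^{y'+a}\tilde h(u)\,\bb E_{\bs \nu}\!\left[\psi_{x'}^*(X_1) V(X_1,u+S_1);\tau_u>1\right]du.
\]
The inner expectation collapses to $V(x',u)$: unfolding $\psi_{x'}^*(x)=\bf P(x',x)/\bs\nu(x)$ and using the $\bf P$-invariance of $\bs\nu$,
\[
\bb E_{\bs\nu}\!\left[\psi_{x'}^*(X_1)V(X_1,u+S_1);\tau_u>1\right] = \bb E_{x'}\!\left[V(X_1,u+S_1);\tau_u>1\right] = V(x',u),
\]
the last equality being the harmonicity of $V$ (point \ref{RESUI001} of Proposition \ref{RESUI}). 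Setting $y'':=u$ turns $\tilde h(u)=g(x_m^*,\ldots,x_1^*,y'-y''+z)$, so the pointwise limit matches exactly the integrand in the right-hand side of Lemma \ref{animal}.

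It remains to exchange limit and outer expectation $\bb E_{\bs\nu}^*[\bbm 1_{\{\tau_z^*>m\}}\,\cdot\,]$. Lemma \ref{gorille002} applied to the dual chain, together with the bound $\|\psi_{x'}^*\|_\infty\le c$ from \eqref{hirondelle}, gives a uniform estimate of the form $N^{3/2}\tilde B \le c_{a,y',g}(1+z+|S_m^*|)$, which is integrable under $\bb E_{\bs\nu}^*$ for fixed $m$ since $\bb X$ is finite. Dominated convergence, together with $(n/N)^{3/2}\to 1$, then concludes the proof. The only delicate point, which is essentially bookkeeping rather than analysis, is aligning the change of variable $y''=z^*+S_N^*$ so that $y'+z-(z^*+S_N^*)=y'-y''+z$, which is what makes the limit take the specific form appearing in the statement.
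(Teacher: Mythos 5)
Your argument is correct, and its mechanism matches the paper's at every substantive step: the Radon--Nikodym factor becomes the bounded function $\psi_{x'}^*$ after a Markov step, a conditional local limit theorem is applied to the dual walk, and the resulting inner expectation collapses to $V(x',y'')$ via $\bf P$-invariance of $\bs\nu$ and harmonicity of $V$ (Proposition \ref{RESUI}, point \ref{RESUI001}). The one difference in route is your choice of key lemma: you go directly through Lemma \ref{sombrero}, which forces you to redo by hand the Markov decomposition at time $m$, the change of variable $u=z^*+S_N^*$, the pointwise limit at scale $N=n-m$ with the harmless correction $(n/N)^{3/2}\to 1$, and the dominated-convergence exchange --- for which your appeal to Lemma \ref{gorille002} together with the uniform bound on $\psi_{x'}^*$ from \eqref{hirondelle} is exactly the right domination. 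The paper instead invokes Lemma \ref{diner}, which was proved precisely to package all of that once and for all for functionals carrying a prefix $(X_1,\dots,X_l)$; that turns the whole proof into a single application of Lemma \ref{diner} followed by the $V(x',y'')$ collapse. Your version is longer but fully correct, and has the minor pedagogical advantage of making explicit where the factor $V(x',y'')$ and the function $\psi_{x'}^*$ enter.
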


\begin{proof}
Fix $x' \in \bb X$, $z \geq 0$, $y' \geq 0$, $a >0$, $m \geq 1$ and $g \in \scr C_b^+ \left( \bb X^m \times \bb R \right)$. Let $\psi_{x'}^*$ be the function defined on $\bb X$ by \eqref{hirondelle} and consider for any $n \geq m+1$,
\[
I_0 := n^{3/2} \bb E_{\bs \nu}^* \left( g \left( X_m^*, \dots, X_1^*,y'-S_n^* \right) \psi_{x'}^* \left( X_n^* \right) \,;\, z+S_n^* \in [y',y'+a) \,,\, \tau_z^* > n \right).
\]
By Lemma \ref{diner} applied to the dual Markov walk, we have
\begin{align*}
I_0 &\underset{n\to+\infty}{\longrightarrow} \frac{2}{\sqrt{2\pi}\sigma^3} \sum_{x^* \in \bb X} \int_{y'}^{y'+a} \bb E_{x^*}^* \left( g \left( X_m^*, \dots, X_1^*, y'+z-y'' \right) V^* \left( X_m^*, z+S_m^* \right) \,;\, \tau_z^* > m \right) \bs \nu(x^*) \\
&\hspace{4cm} \times \bb E_{\bs \nu} \left( \psi_{x'}^* \left( X_1 \right) V\left( X_1, y''+S_1 \right) \,;\, \tau_{y''} > 1 \right) \dd y''.
\end{align*}
Moreover, using \eqref{hirondelle} and the fact that $\bs \nu$ is $\bf P$-invariant, for any $x' \in \bb X$, $y'' \geq 0$,
\begin{align*}
&\bb E_{\bs \nu} \left( \psi_{x'}^* \left( X_1 \right) V\left( X_1, y''+S_1 \right) \,;\, \tau_{y''} > 1 \right)  \\
&\qquad = \sum_{x_1 \in \bb X} \frac{\bf P(x',x_1)}{\bs \nu(x_1)} V\left( x_1, y''+f(x_1) \right) \bbm 1_{\{ y''+f(x_1) > 0 \}} \bs \nu(x_1) \\
&\qquad = \bb E_{x'} \left( V\left( X_1, y''+S_1 \right) \,;\, \tau_{y''} > 1 \right).
\end{align*}
By the point \ref{RESUI001} of Proposition \ref{RESUI}, the function $V$ is harmonic and so
\begin{align*}
\lim_{n\to+\infty} I_0 &= \frac{2}{\sqrt{2\pi}\sigma^3} \int_{y'}^{y'+a} \bb E_{\bs \nu}^* \left( g \left( X_m^*, \dots, X_1^*, y'-y''+z \right) V^* \left( X_m^*, z+S_m^* \right) \,;\, \tau_z^* > m \right) \\
&\hspace{10cm} \times V\left( x', y'' \right) \dd y''.
\end{align*}
\end{proof}

\begin{lemma} Assume Hypotheses \ref{primitive}-\ref{nondegenere}.
\label{pluie}
For any $x \in \bb X$, $y \in \bb R$, $z \geq 0$, $a >0$, $m \geq 1$ and any function $g \in \scr C_b^+ \left( \bb X^m \times \bb R \right)$, we have
\begin{align*}
&\lim_{n\to +\infty} n^{3/2} \bb E_x \left( g \left( X_{n-m+1}, \dots, X_n, y+S_n \right) \,;\, y+S_n \in (z,z+a] \,,\, \tau_y > n \right) \\
&\qquad = \frac{2 V(x,y)}{\sqrt{2\pi}\sigma^3} \int_z^{z+a} \bb E_{\bs \nu}^* \left( g \left( X_m^*, \dots, X_1^*, z' \right) V^* \left( X_m^*, z'+S_m^* \right) \,;\, \tau_{z'}^* > m \right) \dd z'.
\end{align*}
\end{lemma}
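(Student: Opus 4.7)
The plan is to imitate the duality-plus-partition scheme used in Lemma~\ref{caribou} (and Lemma~\ref{paon}), letting Lemma~\ref{animal} play the role that Theorem~\ref{GNLLT} played there. Writing $I_0(n) := \bb E_x \left( g(X_{n-m+1},\dots,X_n, y+S_n) \,;\, y+S_n \in (z,z+a] \,,\, \tau_y > n \right)$, I would first apply Lemma~\ref{duality} with $\mathfrak m = \bs \delta_x$ and
\[
F(x_1,\dots,x_n) = g(x_{n-m+1},\dots,x_n, y+s_n)\,\bbm 1_{\{y+s_n \in (z,z+a]\}}\prod_{i=1}^n \bbm 1_{\{y+s_i > 0\}},
\]
where $s_i := f(x_1)+\dots+f(x_i)$. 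The substitution $x_j \leftrightarrow X_{n-j+1}^*$ turns $y+s_n$ into $y-S_n^*$ and $y+s_i$ into $(y-S_n^*)+S_{n-i}^*$, giving
\[
I_0(n) = \bb E_{\bs \nu}^* \left( g(X_m^*,\dots,X_1^*, y - S_n^*)\, \bbm 1_E\, \frac{\bbm 1_{\{x\}}(X_{n+1}^*)}{\bs \nu(X_{n+1}^*)} \right),
\]
where $E := \{y-S_n^* \in (z,z+a]\} \cap \{(y-S_n^*)+S_j^* > 0,\ j = 0,\dots,n-1\}$. The key difficulty, which forbids a direct appeal to Lemma~\ref{animal}, is that the ``starting point'' $z' := y - S_n^*$ appearing in $E$ is itself random.

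To bypass this, fix $p \in \bb N^*$, set $z_k := z+ka/p$ for $k=0,\dots,p$, and split $I_0(n) = \sum_{k=0}^{p-1} I_0^{(k)}(n)$ by the value of $y-S_n^*$ in each $(z_k,z_{k+1}]$. Assume first $y > 0$; on the slice $\{y-S_n^* \in (z_k,z_{k+1}]\}$, the identity $z'+S_n^* = y > 0$ shows that $E$ coincides with $\{\tau_{z'}^* > n\}$, and monotonicity in the starting point yields the sandwich
\[
\{\tau_{z_k}^* > n\} \cap \{y-S_n^* \in (z_k,z_{k+1}]\} \ \subseteq\ E \cap \{y-S_n^* \in (z_k,z_{k+1}]\} \ \subseteq\ \{\tau_{z_{k+1}}^* > n\} \cap \{y-S_n^* \in (z_k,z_{k+1}]\},
\]
valid as soon as $a/p < y$. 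Using the continuity of $g$ in the last variable together with the finiteness of $\bb X$, one further replaces $g(X_m^*,\dots,X_1^*, y-S_n^*)$ on this slice by the constant-in-last-argument $g(X_m^*,\dots,X_1^*, z_{k+1})$ (upper) or $g(X_m^*,\dots,X_1^*, z_k)$ (lower), the error being uniformly bounded by the modulus of continuity of $g$ at scale $a/p$, which tends to zero as $p \to \infty$. The case $y \leq 0$ is handled analogously after a one-step Markov decomposition to absorb the off-by-one between $\{\tau^* > n\}$ and $\{\tau^* > n-1\}$; the sub-case $V(x,y) = 0$ is even simpler, since then Theorem~\ref{LLTC} applied with $\psi = 1$ combined with $|I_0(n)| \leq \|g\|_\infty \bb P_x(y+S_n \in (z,z+a],\tau_y>n)$ gives $n^{3/2}I_0(n) \to 0$.

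Each sandwiched slice now matches Lemma~\ref{animal} with $x' = x$, $a_{\mathrm{anim}} = a/p$, the above constant-in-last-argument function, and parameters $(y', z) = (y, z_{k+1})$ for the upper bound (resp.\ $(y-a/p, z_k)$ for the lower bound). For the upper bound Lemma~\ref{animal} then yields
\[
\lim_{n\to\infty} n^{3/2} I_0^{(k),+}(n) = \frac{2}{\sqrt{2\pi}\sigma^3} \int_y^{y+a/p} \bb E_{\bs \nu}^* \left( g(X_m^*,\dots,X_1^*, z_{k+1})\, V^*(X_m^*, z_{k+1}+S_m^*) \,;\, \tau_{z_{k+1}}^* > m \right) V(x,y'') \dd y'',
\]
with an analogous expression for the lower bound using $z_k$. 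Summing over $k$ and letting $p \to \infty$, the Riemann sum converges to
\[
\frac{2V(x,y)}{\sqrt{2\pi}\sigma^3}\int_z^{z+a} \bb E_{\bs \nu}^* \left( g(X_m^*,\dots,X_1^*, z')\, V^*(X_m^*, z'+S_m^*) \,;\, \tau_{z'}^* > m \right) \dd z',
\]
using the monotonicity of $V^*$ from Proposition~\ref{RESUI} to ensure Riemann-integrability of the integrand, the continuity of $g$, and $\frac{p}{a}\int_y^{y+a/p} V(x,y'')\,\dd y'' \to V(x,y)$. Matching upper and lower bounds in the double limit gives the result. The main obstacle is the careful interchange of the two limits $n \to \infty$ and $p \to \infty$, ensuring that the $g$-approximation errors, the off-by-one in $\tau^*$ when $y \leq 0$, and the discrepancy between $V(x,y'')$ and $V(x,y)$ all disappear uniformly in $k$ in the iterated limit.
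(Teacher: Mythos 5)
Your overall architecture — dualize, partition $(z,z+a]$ into $p$ slices of width $a/p$, squeeze with $\tau^*_{z_k}$ and $\tau^*_{z_{k+1}}$, apply Lemma~\ref{animal} to each slice, and let $p\to\infty$ — is the same as the paper's. But you apply Lemma~\ref{duality} straight from time $0$ with $\mathfrak m=\bs\delta_x$, whereas the paper first peels off $l$ steps via the Markov property, writing $n^{3/2}I_n(x,y)=\bb E_x\bigl(n^{3/2}I_{n-l}(X_l,y+S_l);\tau_y>l\bigr)$, dualizes $I_{n-l}(x',y')$ for $y'>0$, and only later lets $l\to\infty$. This is not a cosmetic choice and it is where your argument breaks down.

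After applying Lemma~\ref{animal} your upper slice produces $\int_y^{y+a/p}V(x,y'')\,\dd y''$, and you conclude by asserting $\frac{p}{a}\int_y^{y+a/p}V(x,y'')\,\dd y'' \to V(x,y)$. Since $V(x,\cdot)$ is only known to be non-decreasing (Proposition~\ref{RESUI}, point~2), this requires right-continuity of $V(x,\cdot)$ at $y$ (and the lower-bound direction requires left-continuity), neither of which is proved in the paper or its references; a non-decreasing function can jump, and the integral average then converges to the wrong one-sided value. The paper sidesteps this precisely by the $l$-decomposition: it bounds $V(X_l,\cdot+a/p)$ by $\frac{1+\delta}{1-\delta}V(X_l,\cdot)+c_\delta(1+a/p)$ via Proposition~\ref{RESUI}.3, uses harmonicity $\bb E_x(V(X_l,y+S_l);\tau_y>l)=V(x,y)$, and kills the $c_\delta$-term by sending $l\to\infty$ (so that $\bb P_x(\tau_y>l)\to 0$) before sending $\delta\to 0$. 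A one-step decomposition ($l=1$), which is all your sketch offers for $y\le 0$, does not help: with $l$ fixed the constant $c_\delta\bb P_x(\tau_y>l)$ survives and blows up as $\delta\to 0$. So either you must prove continuity of $V(x,\cdot)$ or adopt the paper's $l\to\infty$ mechanism; as written, the proof has a genuine gap at the last Riemann-sum step.
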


\begin{proof}
Fix $x \in \bb X$, $y \in \bb R$, $z \geq 0$, $a >0$, $m \geq 1$ and $g \in \scr C_b^+ \left( \bb X^m \times \bb R \right)$. For any $n \geq m$, consider
\begin{equation}
	\label{orchestre}
	I_n(x,y) := \bb E_x \left( g \left( X_{n-m+1}, \dots, X_n, y+S_n \right) \,;\, y+S_n \in (z,z+a] \,,\, \tau_y > n \right).
\end{equation}
For any $l \geq 1$ and $n \geq l+m$, by the Markov property, we have
\begin{equation}
\label{cobra}
n^{3/2} I_n(x,y) = \bb E_x \left( n^{3/2} I_{n-l} \left( X_l, y+S_l \right) \,;\, \tau_y > l \right).
\end{equation}
For any $p \geq 1$ and $0 \leq k \leq p$ we define $z_k := z+\frac{a k}{p}$. For any $x' \in \bb X$, $y' > 0$, $n \geq l+m$ and $p \geq 1$, we write
\begin{align*}
n^{3/2} I_{n-l}(x',y') &= \sum_{k=0}^{p-1} n^{3/2} \bb E_{x'} \left( g \left( X_{n-l-m+1}, \dots, X_{n-l}, y'+S_{n-l} \right) \,;\, \right. \\
&\hspace{6cm} \left. y'+S_{n-l} \in ( z_k,z_{k+1} ] \,,\, \tau_{y'} > n-l \right).
\end{align*}
Using Lemma \ref{duality}, we get
\begin{align*}
n^{3/2} I_{n-l}(x',y') &= \sum_{k=0}^{p-1} n^{3/2} \bb E_{\bs \nu}^* \left( g \left( X_m^*, \dots, X_1^*, y'-S_{n-l}^* \right) \psi_{x'}^* \left( X_{n-l}^* \right) \,;\, y'-S_{n-l}^* \in ( z_k,z_{k+1} ] \,,\, \right. \\
&\hspace{2cm} \left. \forall i \in \{1, \dots, n-l \}, \; y'+f\left( X_{n-l}^* \right) + \cdots + f\left( X_{n-l-i+1}^* \right) > 0 \right),
\end{align*}
where $\psi_{x'}^*$ is defined by \eqref{hirondelle}.

\textit{The upper bound.} Using \eqref{campagnol}, we have
\begin{align*}
n^{3/2} I_{n-l}(x',y') &\leq \sum_{k=0}^{p-1} n^{3/2} \bb E_{\bs \nu}^* \left( g \left( X_m^*, \dots, X_1^*, y'-S_{n-l}^* \right) \psi_{x'}^* \left( X_{n-l}^* \right) \,;\, \right. \\
&\hspace{4cm} \left. z_{k+1} + S_{n-l}^* \in \left[ y',y'+a/p \right) \,,\, \tau_{z_{k+1}}^* > n-l \right).
\end{align*}
By Lemma \ref{animal},
\[
\limsup_{n\to +\infty} n^{3/2} I_{n-l}(x',y') \leq \frac{2}{\sqrt{2\pi} \sigma^3} \sum_{k=0}^{p-1} \int_{y'}^{y'+a/p} J_k(y'-y'') V\left( x', y'' \right) \dd y'',
\]
where for any $k \geq 0$ and $t \in \bb R$,
\[
J_k(t) := \bb E_{\bs \nu}^* \left( g \left( X_m^*, \dots, X_1^*, t+z_{k+1} \right) V^* \left( X_m^*, z_{k+1}+S_m^* \right) \,;\, \tau_{z_{k+1}}^* > m \right).
\]
Note that for any $t \in [-a/p,0]$
\begin{equation}
\label{caribou001}
J_k(t) \leq \underbrace{\bb E_{\bs \nu}^* \left( \sup_{t \in [-a/p,0]} g \left( X_m^*, \dots, X_1^*, t+z_{k+1} \right) V^* \left( X_m^*, z_{k+1}+S_m^* \right) \,;\, \tau_{z_{k+1}}^* > m \right)}_{=:J_k^p}.
\end{equation}
Since $y'' \mapsto V\left( x', y'' \right)$ is non-decreasing (see the point \ref{RESUI002} of Proposition \ref{RESUI}), we have
\[
\limsup_{n\to +\infty} n^{3/2} I_{n-l}(x',y') \leq \frac{a}{p} \sum_{k=0}^{p-1}  \frac{2 J_k^p}{\sqrt{2\pi} \sigma^3} V\left( x', y'+\frac{a}{p} \right).
\]
Moreover, by \eqref{orchestre} and the point \ref{LLTC002} of Theorem \ref{LLTC},
\[
n^{3/2} I_{n-l}(X_l,y+S_l) \leq \norm{g}_{\infty} c \left( 1+z \right)\left( 1+\max(y+S_l,0) \right).
\]
Consequently, by \eqref{cobra} and the Lebesgue dominated convergence theorem (or using just the fact that $\bb X$ is finite),
\[
\limsup_{n\to +\infty} n^{3/2} I_n(x,y) \leq \frac{a}{p} \sum_{k=0}^{p-1} \frac{2 J_k^p}{\sqrt{2\pi}\sigma^3} \bb E_x \left( V\left( X_l, y+S_l+\frac{a}{p} \right) \,;\, \tau_y > l \right).
\]
Using the point \ref{RESUI003} of Proposition \ref{RESUI}, for any $\delta \in (0,1)$,
\[
\limsup_{n\to +\infty} n^{3/2} I_n(x,y) \leq \frac{a}{p} \sum_{k=0}^{p-1} \frac{2 J_k^p}{\sqrt{2\pi}\sigma^3} \bb E_x \left( \left( 1+\delta \right)\left( y+S_l+\frac{a}{p} \right) + c_{\delta} \,;\, \tau_y > l \right)
\]
and again using the point \ref{RESUI003} of Proposition \ref{RESUI}, for any $\delta \in (0,1)$,
\[
\limsup_{n\to +\infty} n^{3/2} I_n(x,y) \leq \frac{a}{p} \sum_{k=0}^{p-1} \frac{2 J_k^p}{\sqrt{2\pi}\sigma^3} \bb E_x \left( \frac{1+\delta}{1-\delta} V \left( X_l, y+S_l \right) +2\frac{a}{p} + c_{\delta} \,;\, \tau_y > l \right).
\]
Using the point \ref{RESUI001} of Proposition \ref{RESUI} and the point \ref{RESUII002} of Proposition \ref{RESUII} and taking the limit as $l \to +\infty$,
\[
\limsup_{n\to +\infty} n^{3/2} I_n(x,y) \leq \frac{a}{p} \sum_{k=0}^{p-1} \frac{2 J_k^p}{\sqrt{2\pi}\sigma^3} \frac{1+\delta}{1-\delta} V(x,y).
\]
Taking the limit as $\delta \to 0$,
\begin{equation}
\label{mimosa}
\limsup_{n\to +\infty} n^{3/2} I_n(x,y) \leq \frac{a}{p} \sum_{k=0}^{p-1} \frac{2 J_k^p}{\sqrt{2\pi}\sigma^3} V(x,y).
\end{equation}
For any $(x_1^*,\dots,x_m^*) \in \bb X^m$ and $u \in \bb R$, let
\begin{align}
\label{brouette}
g_m(u) &:= g \left( x_m^*, \dots, x_1^*, u \right), \nonumber\\
V_m^*(u) &:= V^* (x_m^*, u-f(x_1^*) - \dots- f(x_m^*)) \bbm 1_{\{u-f(x_1^*) > 0, \dots, u-f(x_1^*)-\dots-f(x_m^*) > 0\}}.
\end{align}
The function $u \mapsto g_m(u)$ is uniformly continuous on $[z,z+a]$. Consequently, for any $\ee > 0$, there exists $p_0 \geq 1$ such that for any $p \geq p_0$,
\[
\frac{a}{p} \sum_{k=0}^{p-1} \sup_{t \in [-a/p,0]} g_m \left( t+z_{k+1} \right) V_m^* (z_{k+1}) \leq  \frac{a}{p} \sum_{k=0}^{p-1} \left( g_m \left( z_{k+1} \right) + \ee \right) V_m^* (z_{k+1}).
\]
Moreover, using the point \ref{RESUI002} of Proposition \ref{RESUI}, it is easy to see that the function $u \mapsto V_m^*(u)$ is non-decreasing and so is Riemann-integrable. Therefore, as $p \to +\infty$, we have
\[
\limsup_{p\to +\infty} \frac{a}{p} \sum_{k=0}^{p-1} \sup_{t \in [-a/p,0]} g_m \left( t+z_{k+1} \right) V_m^* (z_{k+1}) \leq  \int_z^{z+a} \left( g_m \left( z' \right) + \ee \right) V_m^* (z') \dd z'.
\]
Thus, when $\ee \to 0$,
\begin{equation}
\label{caribou002}
\limsup_{p\to +\infty} \frac{a}{p} \sum_{k=0}^{p-1} \sup_{t \in [-a/p,0]} g_m \left( t+z_{k+1} \right) V_m^* (z_{k+1}) \leq  \int_z^{z+a} g_m \left( z' \right) V_m^* (z') \dd z'.
\end{equation}
Moreover, since $u \mapsto V_m^*(u)$ is non-decreasing,
\[
\frac{a}{p} \sum_{k=0}^{p-1} \sup_{t \in [-a/p,0]} g_m \left( t+z_{k+1} \right) V_m^* (z_{k+1}) \leq \norm{g}_{\infty} V_m^* (z+a) a.
\]
Consequently, by the Lebesgue dominated convergence theorem, \eqref{caribou001}, \eqref{caribou002} and the Fubini theorem,
\begin{align*}
&\limsup_{p\to +\infty} \frac{a}{p} \sum_{k=0}^{p-1} \frac{2 J_k^p}{\sqrt{2\pi}\sigma^3} V(x,y) \\
&\hspace{2cm}= \frac{2 V(x,y)}{\sqrt{2\pi}\sigma^3} \bb E_{\bs \nu}^* \left( \limsup_{p\to +\infty} \frac{a}{p} \sum_{k=0}^{p-1} \sup_{t \in [-a/p,0]} g \left( X_m^*, \dots, X_1^*, t+z_{k+1} \right)  \right. \\
&\hspace{10cm} \left. \times V^* \left( X_m^*, z_{k+1}+S_m^* \right) \,;\, \tau_{z_{k+1}}^* > m \right) \\
&\hspace{2cm}\leq \frac{2 V(x,y)}{\sqrt{2\pi}\sigma^3} \int_z^{z+a} \bb E_{\bs \nu}^* \left( g \left( X_m^*, \dots, X_1^*, z' \right) V^* \left( X_m^*, z'+S_m^* \right) \,;\, \tau_{z'}^* > m \right) \dd z'.
\end{align*}
By \eqref{mimosa}, we obtain that,
\begin{align*}
&\limsup_{n\to +\infty} n^{3/2} I_n(x,y) \\
&\qquad \leq \frac{2 V(x,y)}{\sqrt{2\pi}\sigma^3} \int_z^{z+a} \bb E_{\bs \nu}^* \left( g \left( X_m^*, \dots, X_1^*, z' \right) V^* \left( X_m^*, z'+S_m^* \right) \,;\, \tau_{z'}^* > m \right) \dd z'.
\end{align*}

\textit{The lower bound.} Repeating similar arguments as in the upper bound, by \eqref{mulot}, we have for any $x' \in \bb X$, $y' > 0$, $l \geq 1$, $n \geq l+m+1$, $p \geq 1$,
\begin{align*}
n^{3/2} I_{n-l}(x',y') &\geq \sum_{k=0}^{p-1} n^{3/2} \bb E_{\bs \nu}^* \left( g \left( X_m^*, \dots, X_1^*,y'-S_{n-l}^* \right) \psi_{x'}^* \left( X_{n-l}^* \right) \,;\, \right. \\
&\hspace{4cm} \left. z_k+S_{n-l}^* \in [ y'-a/p,y' ) \,,\, \tau_{z_k}^* > n-l \right) \\
&= \sum_{k=0}^{p-1} n^{3/2} \bb E_{\bs \nu}^* \left( g \left( X_m^*, \dots, X_1^*,y_+'+a'-S_{n-l}^* \right) \psi_{x'}^* \left( X_{n-l}^* \right) \,;\, \right. \\
&\hspace{4cm} \left. z_k+S_{n-l}^* \in [ y_+',y_+'+a' ) \,,\, \tau_{z_k}^* > n-l \right),
\end{align*}
where $y_+' = \max(y'-a/p,0)$ and $a' = \min(y',a/p) \in (0,a/p)$. Using Lemma \ref{animal},
\[
\liminf_{n\to +\infty} n^{3/2} I_{n-l}(x',y') \geq \sum_{k=0}^{p-1} \frac{2}{\sqrt{2\pi}\sigma^3} \int_{y_+'}^{y_+'+a'} L_k(y_+'+a'-y'') V\left( x', y'' \right) \dd y'',
\]
where, for any $t \in \bb R$,
\[
L_k(t) := \bb E_{\bs \nu}^* \left( g \left( X_m^*, \dots, X_1^*, t+z_k \right) V^* \left( X_m^*, z_k+S_m^* \right) \,;\, \tau_{z_k}^* > m \right).
\]
Since $y'' \mapsto V\left( x', y'' \right)$ is non-decreasing (see the point \ref{RESUI002} of Proposition \ref{RESUI}), we have
\[
\liminf_{n\to +\infty} n^{3/2} I_{n-l}(x',y') \geq a' \sum_{k=0}^{p-1}  \frac{2 L_k^p}{\sqrt{2\pi}\sigma^3} V\left( x', y_+' \right),
\]
where
\begin{equation}
\label{scarabee}
L_k^p := \bb E_{\bs \nu}^* \left( \inf_{t\in [0,a/p]} g \left( X_m^*, \dots, X_1^*, t+z_k \right) V^* \left( X_m^*, z_k+S_m^* \right) \,;\, \tau_{z_k}^* > m \right).
\end{equation}
Moreover, by the point \ref{RESUI003} of Proposition \ref{RESUI}, for any $\delta \in (0,1)$,
\[
a' V(x', y_+') \geq (1-\delta) a' y_+' - c_{\delta} \geq (1-\delta) \left( y' - \frac{a}{p} \right) \frac{a}{p} - c_{\delta} \geq \frac{a}{p} \frac{1-\delta}{1+\delta} V(x',y') - \frac{a}{p} c_{\delta}  - \left( \frac{a}{p} \right)^2 -c_{\delta}.
\]
Consequently, using \eqref{cobra} and the Fatou Lemma,
\begin{align*}
\liminf_{n\to +\infty} n^{3/2} I_n(x,y) &\geq \sum_{k=0}^{p-1} \frac{2 L_k^p}{\sqrt{2\pi}\sigma^3} \bb E_x \left( \frac{a}{p}\frac{1-\delta}{1+\delta} V \left( X_l, y+S_l \right) -c_{\delta}  \left( 1+a^2\right) \,;\, \tau_y > l \right).
\end{align*}
Using the point \ref{RESUI001} of Proposition \ref{RESUI} and the point \ref{RESUII002} of Proposition \ref{RESUII} and taking the limit as $l \to +\infty$ and then as $\delta \to 0$,
\begin{equation}
\label{mille-pattes}
\liminf_{n\to +\infty} n^{3/2} I_n(x,y) \geq \frac{a}{p} \sum_{k=0}^{p-1} \frac{2 L_k^p}{\sqrt{2\pi}\sigma^3} V(x,y).
\end{equation}
Using the notation from \eqref{brouette} and the fact that $u \mapsto g_m(u)$ is uniformly continuous on $[z,z+a]$, for any $\ee > 0$,
\[
\liminf_{p\to +\infty} \frac{a}{p} \sum_{k=0}^{p-1} \inf_{t \in [0,a/p]} g_m \left( t+z_k \right) V_m^* (z_k) \geq  \int_z^{z+a} \left( g_m \left( z' \right) - \ee \right) V_m^* (z') \dd z'.
\]
Taking the limit as $\ee \to 0$,
\[
\liminf_{p\to +\infty} \frac{a}{p} \sum_{k=0}^{p-1} \inf_{t \in [0,a/p]} g_m \left( t+z_k \right) V_m^* (z_k) \geq  \int_z^{z+a} g_m \left( z' \right) V_m^* (z') \dd z'.
\]
By the Fatou lemma, \eqref{scarabee} and \eqref{mille-pattes}, we conclude that
\begin{align*}
&\liminf_{n\to +\infty} n^{3/2} I_n(x,y) \geq \frac{2V(x,y)}{\sqrt{2\pi}\sigma^3} \bb E_{\bs \nu}^* \left( \liminf_{p\to +\infty} \frac{a}{p} \sum_{k=0}^{p-1} \inf_{t\in [0,a/p]} g \left( X_m^*, \dots, X_1^*, t+z_k \right)  \right. \\
&\hspace{8cm} \left. \phantom{\sum_{k=0}^{p-1}} \times V^* \left( X_m^*, z_k+S_m^* \right) \,;\, \tau_{z_k}^* > m \right) \\
&\hspace{1cm} \geq \frac{2V(x,y)}{\sqrt{2\pi}\sigma^3} \int_z^{z+a} \bb E_{\bs \nu}^* \left( g \left( X_m^*, \dots, X_1^*, z' \right) V^* \left( X_m^*, z'+S_m^* \right) \,;\, \tau_{z'}^* > m \right) \dd z'.
\end{align*}
\end{proof}

From now on, we consider that the dual Markov chain $\left( X_n^* \right)_{n\geq 0}$ is \textit{independent} of $\left( X_n \right)_{n\geq 0}$. Recall that its transition probability $\bf P^*$ is defined by \eqref{statue}
and that, for any $z \geq 0$, the associated Markov walk $( z+S_n^* )_{n\geq 0}$ and the associated exit time $\tau_z^*$ are defined by \eqref{bataille} and \eqref{bataillebis} respectively. Recall also that for any $(x,x^*) \in \bb X^2$, we denote by $\bb P_{x,x^*}$ and $\bb E_{x,x^*}$ the probability and the expectation generated by the finite dimensional distributions of the Markov chains $( X_n )_{n\geq 0}$ and $( X_n^* )_{n\geq 0}$ starting at $X_0 = x$ and $X_0^* = x^*$ respectively.

\begin{lemma} Assume Hypotheses \ref{primitive}-\ref{nondegenere}.
\label{capella}
For any $x \in \bb X$, $y \in \bb R$, $z \geq 0$, $a >0$, $l \geq 1$, $m \geq 1$ and any function $g \in \scr C_b^+ \left( \bb X^{l+m} \times \bb R \right)$, we have
\begin{align*}
&\lim_{n\to +\infty} n^{3/2} \bb E_x \left( g \left(X_1, \dots, X_l, X_{n-m+1}, \dots, X_n, y+S_n \right) \,;\, y+S_n \in (z,z+a] \,,\, \tau_y > n \right) \\
&\qquad= \frac{2}{\sqrt{2\pi}\sigma^3} \int_{z}^{z+a} \sum_{x^* \in \bb X} \bb E_{x,x^*} \left( g \left( X_1, \dots, X_l, X_m^*, \dots, X_1^*, z' \right) \right. \\
&\hspace{5cm} \left. \times V\left( X_l, y+S_l \right) V^* \left( X_m^*, z'+S_m^* \right)  \,;\, \tau_y > l \,,\, \tau_{z'}^* > m \right) \dd z' \bs \nu(x^*).
\end{align*}
\end{lemma}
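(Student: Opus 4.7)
The plan is to deduce this joint statement from Lemma \ref{pluie} via a Markov-property splitting at time $l$. Writing
\[
I_n := \bb E_x\bigl(g(X_1, \ldots, X_l, X_{n-m+1}, \ldots, X_n, y+S_n)\,;\, y+S_n \in (z,z+a]\,,\, \tau_y > n\bigr),
\]
the Markov property at time $l$ gives, for $n \geq l+m+1$,
\[
I_n = \bb E_x\bigl(\bbm 1_{\tau_y > l}\, J_{n-l}(X_1, \ldots, X_l, y+S_l)\bigr),
\]
where for fixed $(x_1, \ldots, x_l) \in \bb X^l$ and $y' > 0$, setting $\tilde g_{x_1,\dots,x_l}(u_1, \ldots, u_m, w) := g(x_1, \ldots, x_l, u_1, \ldots, u_m, w)$,
\[
J_k(x_1, \ldots, x_l, y') := \bb E_{x_l}\bigl(\tilde g_{x_1,\dots,x_l}(X_{k-m+1}, \ldots, X_k, y'+S_k)\,;\, y'+S_k \in (z, z+a]\,,\, \tau_{y'} > k\bigr).
\]
Since $(x_1, \ldots, x_l)$ enters only as a parameter, $\tilde g_{x_1,\dots,x_l} \in \scr C_b^+(\bb X^m \times \bb R)$, so Lemma \ref{pluie} applies.

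Applying Lemma \ref{pluie} to $J_k$ with starting point $x_l$ and initial value $y'$, we obtain
\[
k^{3/2} J_k(x_1, \ldots, x_l, y') \xrightarrow[k \to +\infty]{} \frac{2V(x_l, y')}{\sqrt{2\pi}\sigma^3} \int_z^{z+a} \bb E_{\bs \nu}^*\bigl(\tilde g_{x_1,\dots,x_l}(X_m^*, \ldots, X_1^*, z')\, V^*(X_m^*, z'+S_m^*)\,;\, \tau_{z'}^* > m\bigr) \dd z'.
\]
Since $n^{3/2}/(n-l)^{3/2} \to 1$, this gives the pointwise convergence of $n^{3/2} J_{n-l}(X_1,\dots,X_l,y+S_l)$ on $\{\tau_y > l\}$. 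To interchange this limit with the outer $\bb E_x$, I would use the uniform bound of Lemma \ref{gorille002} applied with $\psi \equiv 1$ (and absorbing $\tilde g \leq \norm{g}_{\infty}$), which yields
\[
n^{3/2} J_{n-l}(X_1, \ldots, X_l, y+S_l) \leq c \norm{g}_{\infty} (1+a^3)(1+z)\bigl(1+\max(y+S_l, 0)\bigr)
\]
uniformly for $n$ large. Since $\bb X$ is finite, the right-hand side is deterministically bounded on $\{\tau_y > l\}$, so dominated convergence applies.

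Passing to the limit and swapping the integral over $z'$ with the expectation (by Fubini and non-negativity) yields
\[
\lim_{n\to+\infty} n^{3/2} I_n = \frac{2}{\sqrt{2\pi}\sigma^3} \int_z^{z+a} \bb E_x\!\left(\bbm 1_{\tau_y > l} V(X_l, y+S_l)\, H_{z'}(X_1,\dots,X_l)\right) \dd z',
\]
where $H_{z'}(x_1,\dots,x_l) := \bb E_{\bs \nu}^*\bigl(g(x_1,\dots,x_l,X_m^*,\dots,X_1^*,z') V^*(X_m^*, z'+S_m^*)\,;\, \tau_{z'}^* > m\bigr)$. Finally, expanding $\bb E_{\bs \nu}^* = \sum_{x^* \in \bb X} \bs \nu(x^*) \bb E_{x^*}^*$ and using the independence between $(X_n)_{n\geq 0}$ and $(X_n^*)_{n\geq 0}$ to merge $\bb E_x$ with $\bb E_{x^*}^*$ into $\bb E_{x, x^*}$ produces the claimed formula. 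The main bookkeeping obstacle is carefully tracking which variables of $g$ are frozen before invoking Lemma \ref{pluie}; beyond that, all the analytic work has been done upstream, and no new asymptotic argument is required.
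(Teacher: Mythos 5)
Your proof is correct and follows essentially the same route as the paper: condition at time $l$ via the Markov property, apply Lemma \ref{pluie} to the inner term $J_{n-l}$ with the first $l$ arguments of $g$ frozen, pass to the limit using the finiteness of $\bb X^l$ (your explicit appeal to Lemma \ref{gorille002} for the domination is a harmless elaboration of the paper's remark), and reassemble into $\bb E_{x,x^*}$ via the product structure $\bb P_{x,x^*}=\bb P_x\times\bb P_{x^*}^*$.
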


\begin{proof}
Fix $x \in \bb X$, $y \in \bb R$, $z \geq 0$, $a >0$, $l \geq 1$, $m \geq 1$ and $g \in \scr C_b^+ \left( \bb X^{l+m} \times \bb R \right)$. For any $n \geq l+m$, by the Markov property,
\begin{align*}
I_0 &:= n^{3/2} \bb E_x \left( g \left(X_1, \dots, X_l, X_{n-m+1}, \dots, X_n, y+S_n \right) \,;\, y+S_n \in (z,z+a] \,,\, \tau_y > n \right) \\
&= \sum_{x_1, \dots, x_l \in \bb X^l} n^{3/2} \bb E_{x_l} \left( g \left(x_1, \dots, x_l, X_{n-l-m+1}, \dots, X_{n-l}, y_l+S_{n-l} \right) \,;\,  \right. \\
&\hspace{3cm} \left. y_l+S_{n-l} \in (z,z+a] \,,\, \tau_{y_l} > n-l \right) \times  \bb P_x \left( X_1 = x_1, \dots, X_l = x_l, \tau_y > l \right),
\end{align*}
where $y_l = x_1+\dots+x_l$. Using the Lebesgue dominated convergence theorem (or simply the fact that $\bb X^l$ is finite) and Lemma \ref{pluie}, we conclude that
\begin{align*}
\lim_{n\to +\infty} I_0 &= \frac{2}{\sqrt{2\pi}\sigma^3} \sum_{x_1, \dots, x_l \in \bb X^l} V\left( x_l, y_l \right) \bb P_x \left( X_1 = x_1, \dots, X_l = x_l, \tau_y > l \right) \\
& \qquad  \times \int_{z}^{z+a} \bb E_{\bs \nu}^* \left( g \left(x_1,\dots,x_l, X_m^*, \dots, X_1^*,z' \right) V^* \left( X_m^*, z'+S_m^* \right) \,;\, \tau_{z'}^* > m \right) \dd z'.
\end{align*}
\end{proof}

\subsection{Proof of Theorem \ref{CAPE}.}

For any $l \geq 1$, denote by $\scr C^+ ( \bb X^l \times \bb R_+ )$ the set of non-negative functions $g$: $\bb X^l \times \bb R_+ \to \bb R_+$ 
satisfying the following properties:
\begin{itemize}
\item for any $(x_1,\dots,x_l) \in \bb X^l$, the function $z \mapsto g(x_1,\dots,x_l,z)$ is continuous,
\item there exists $\ee > 0$ such that $\max_{x_1,\dots x_l \in \bb X} \sup_{z \geq 0} g(x_1,\dots,x_l,z) (1+z)^{2+\ee} < +\infty$.
\end{itemize}

Fix $x \in \bb X$, $y \in \bb R$, $l \geq 1$, $m \geq 1$ and $g \in \scr C^+ \left( \bb X^{l+m} \times \bb R \right)$. For brevity, denote
\[
g_{l,m}(y+S_n) = g \left(X_1, \dots, X_l, X_{n-m+1}, \dots, X_n, y+S_n \right).
\]
Set
\begin{align*}
I_0 &:= n^{3/2} \bb E_x \left( g_{l,m}(y+S_n) \,;\, \tau_y > n \right) \\
&= \sum_{k=0}^{+\infty} \underbrace{n^{3/2} \bb E_x \left( g_{l,m}(y+S_n) \,;\, y+S_n \in (k,k+1] \,,\, \tau_y > n \right)}_{=:I_k(n)}.
\end{align*}
Since $g \in \scr C^+ \left( \bb X^{l+m} \times \bb R \right)$, we have
\[
I_k(n) \leq \frac{N(g)}{(1+k)^{2+\ee}} n^{3/2} \bb P_x \left( y+S_n \in (k,k+1] \,,\, \tau_y > n \right),
\]
where $N(g) = \max_{x_1,\dots,x_{l+m} \in \bb X} \sup_{z \geq 0}  g(x_1,\dots,x_{l+m},z)(1+z)^{2+\ee} <+\infty$. By the point \ref{LLTC002} of Theorem \ref{LLTC}, we have
\[
I_k(n) \leq \frac{c N(g) (1+\max(y,0))}{(k+1)^{1+\ee}}.
\]
Consequently, by the Lebesgue dominated convergence theorem,
\[
\lim_{n\to+\infty} I_0 = \sum_{k=0}^{+\infty} \lim_{n\to+\infty} n^{3/2} \bb E_x \left( g_{l,m}(y+S_n) \,;\, y+S_n \in (k,k+1] \,,\, \tau_y > n \right).
\]
By Lemma \ref{capella},
\begin{align*}
\lim_{n\to+\infty} I_0 &= \frac{2}{\sqrt{2\pi}\sigma^3} \sum_{k=0}^{+\infty} \int_k^{k+1} \sum_{x^* \in \bb X} \bb E_{x,x^*} \left( g \left( X_1, \dots, X_l, X_m^*, \dots, X_1^*,z' \right) V\left( X_l, y+S_l \right)  \right. \\
&\hspace{6cm} \left. \times V^* \left( X_m^*, z'+S_m^* \right)  \,;\, \tau_y > l \,,\, \tau_{z'}^* > m \right) \dd z' \bs \nu(x^*),
\end{align*}
which establishes Theorem \ref{CAPE}.

\subsection{Proof of Theorem \ref{CAPEBIS}.}

Theorem \ref{CAPEBIS} will be deduced from Theorem \ref{CAPE}.

Let $x \in \bb X$, $y \in \bb R$ and $n \geq 1$. Since $\bb X$ is finite we note that $\norm{f}_{\infty} = \sup_{x \in \bb X} \abs{f(x)}$ exists. This implies
\[
\bb P_x \left( \tau_y = n+1 \right) = \bb P_x \left( y+S_n+f(X_{n+1}) \leq 0 \,,\, y+S_n \in \left[ 0,\norm{f}_{\infty} \right] \,,\, \tau_y > n \right).
\]
By the Markov property,
\[
\bb P_x \left( \tau_y = n+1 \right) = \bb E_x \left( g(X_n,y+S_n) \,;\, \tau_y > n \right),
\]
where, for any $(x',y') \in \bb X \times \bb R$,
\[
g(x',y') = \bb P_{x'} \left( y'+f(X_1) \leq 0 \right) \bbm 1_{\left\{y' \in \left[ 0,\norm{f}_{\infty} \right] \right\}} = \bbm 1_{\left\{y' \in \left[ 0,\norm{f}_{\infty} \right] \right\}} \sum_{x_1\in \bb X} \bf P(x',x_1) \bbm 1_{\{y'+f(x_1) \leq 0 \}}.
\]
Since $g(x',\cdot)$ is a staircase function, for any $\ee > 0$ there exist two functions $\varphi_{\ee}$ and $\psi_{\ee}$ on $\bb X \times \bb R$ and $N \subset \bb X \times \bb R$ such that
\begin{itemize}
\item for any $x' \in \bb X$, the functions $\varphi_{\ee}(x',\cdot)$ and $\psi_{\ee}(x',\cdot)$ are continuous and have a compact support included in $\left[ -1,\norm{f}_{\infty}+1 \right]$,
\item for any $(x',y') \in \left( \bb X \times \bb R \right) \setminus N$, it holds $\varphi_{\ee}(x',y') = g(x',y') = \psi_{\ee}(x',y')$,
\item for any $(x',y') \in \bb X \times \bb R$, it holds $0 \leq \varphi_{\ee}(x',y') \leq g(x',y') \leq \psi_{\ee}(x',y') \leq 1$,
\item the set $N$ is sufficiently small:
\begin{equation}
	\label{cachalot}
	\int_{-1}^{\norm{f}_{\infty}+1} \bb E_{\bs \nu}^* \left( V^*\left( X_1, z+S_1^* \right) \,;\, \tau_z^* > 1 \,,\, \left( X_1, z \right) \in N \right) \dd z \leq \ee.
\end{equation}
\end{itemize}

\textit{The upper bound.} For any $\ee > 0$, using Theorem \ref{CAPE}, we have
\begin{align*}
	I^+ &:= \limsup_{n\to+\infty} n^{3/2}\bb P_x \left( \tau_y = n+1 \right) \\
	&\leq \limsup_{n\to+\infty} n^{3/2}\bb E_x \left( \psi_{\ee}(X_n,y+S_n) \,;\, \tau_y > n \right) \\
	&= \frac{2}{\sqrt{2\pi} \sigma^3} \int_0^{+\infty} \sum_{x^* \in \bb X} \bb E_{x,x^*} \left( \psi_{\ee} \left( X_1^*,z \right) V(X_1,y+S_1) \right. \\
	&\hspace{7cm} \left.V^*(X_1^*,z+S_1^*) \,;\, \tau_y > 1 \,,\, \tau_z^* > 1 \right) \bs \nu(x^*) \dd z.
\end{align*}
Using the point \ref{RESUI001} of Proposition \ref{RESUI},
\begin{align}
	I^+ &\leq \frac{2V(x,y)}{\sqrt{2\pi} \sigma^3} \int_0^{\norm{f}_{\infty}+1} \bb E_{\bs \nu}^* \left( \psi_{\ee} \left( X_1^*,z \right) V^*(X_1^*,z+S_1^*) \,;\, \tau_z^* > 1 \right) \dd z \nonumber\\
	&\leq \underbrace{\frac{2V(x,y)}{\sqrt{2\pi} \sigma^3} \int_0^{\norm{f}_{\infty}} \bb E_{\bs \nu}^* \left( g \left( X_1^*,z \right) V^*(X_1^*,z+S_1^*) \,;\, \tau_z^* > 1 \right) \dd z}_{=:I_1} \nonumber\\
	&\qquad + \underbrace{\frac{2V(x,y)}{\sqrt{2\pi} \sigma^3} \int_0^{\norm{f}_{\infty}+1} \bb E_{\bs \nu}^* \left( V^*(X_1^*,z+S_1^*) \,;\, \tau_z^* > 1 \,,\, \left( X_1^*,z \right) \in N \right) \dd z}_{=:I_2}.
	\label{cachalot00}
\end{align}
Since $\bs \nu$ is $\bf P^*$-invariant, we have
\begin{align*}
I_1 &= \frac{2V(x,y)}{\sqrt{2\pi} \sigma^3} \int_0^{\norm{f}_{\infty}} \sum_{x^* \in \bb X} g \left( x^*,z \right) V^*(x^*,z-f(x^*)) \bbm 1_{\left\{ z-f(x^*) > 0 \right\}} \bs \nu(x^*) \dd z \\
	&= \frac{2V(x,y)}{\sqrt{2\pi} \sigma^3} \int_0^{\norm{f}_{\infty}} \sum_{x^*,x_1 \in \bb X} \bbm 1_{\left\{ z+f(x_1) \leq 0 \right\}} \bf P(x^*,x_1)\bs \nu(x^*) V^*(x^*,z-f(x^*)) \bbm 1_{\left\{ z-f(x^*) > 0 \right\}} \dd z \\
	&= \frac{2V(x,y)}{\sqrt{2\pi} \sigma^3} \int_0^{\norm{f}_{\infty}} \sum_{x^*,x_1 \in \bb X} \bbm 1_{\left\{ z+f(x_1) \leq 0 \right\}} \bf P^*(x_1,x^*)\bs \nu(x_1) V^*(x^*,z-f(x^*)) \bbm 1_{\left\{ z-f(x^*) > 0 \right\}} \dd z \\
	&= \frac{2V(x,y)}{\sqrt{2\pi} \sigma^3} \int_0^{\norm{f}_{\infty}} \sum_{x_1 \in \bb X} \bbm 1_{\left\{ z+f(x_1) \leq 0 \right\}} \bs \nu(x_1) \bb E_{x_1}^* \left( V^*(X_1^*,z+S_1^*) \,;\, \tau_z^* > 1 \right) \dd z.
\end{align*}
Using the point \ref{RESUI001} of Proposition \ref{RESUI},
\begin{equation}
\label{cachalot01}
I_1 = \frac{2V(x,y)}{\sqrt{2\pi} \sigma^3} \int_0^{\norm{f}_{\infty}} \bb E_{\bs \nu}^* \left( V^*(X_1^*,z) \,;\, S_1^* \geq z \right) \dd z.
\end{equation}
Moreover, by \eqref{cachalot}, we get
\begin{equation}
\label{cachalot02}
I_2 \leq \frac{2V(x,y)}{\sqrt{2\pi} \sigma^3} \ee.
\end{equation}
Putting together \eqref{cachalot00}, \eqref{cachalot01} and \eqref{cachalot02} and taking the limit as $\ee \to 0$, we obtain that
\begin{equation}
	\label{vallon}
	I^+ \leq \frac{2V(x,y)}{\sqrt{2\pi} \sigma^3} \int_0^{\norm{f}_{\infty}} \bb E_{\bs \nu}^* \left( V^*(X_1^*,z) \,;\, S_1^* \geq z \right) \dd z.
\end{equation}

\textit{Lower bound.} In a similar way, using Theorem \ref{CAPE}, we write
\begin{align*}
	I^- &:= \liminf_{n\to+\infty} n^{3/2}\bb P_x \left( \tau_y = n+1 \right) \\
	&\geq \liminf_{n\to+\infty} n^{3/2}\bb E_x \left( \varphi_{\ee}(X_n,y+S_n) \,;\, \tau_y > n \right) \\
	&= \frac{2V(x,y)}{\sqrt{2\pi} \sigma^3} \int_0^{\norm{f}_{\infty}+1} \bb E_{\bs \nu}^* \left( \varphi_{\ee} \left( X_1^*,z \right) V^*(X_1^*,z+S_1^*) \,;\, \tau_z^* > 1 \right) \dd z \\
	&\geq I_1 - I_2.
\end{align*}
Using \eqref{cachalot01} and \eqref{cachalot02} and taking the limit as $\ee \to 0$, we obtain that
\[
I^- \geq \frac{2V(x,y)}{\sqrt{2\pi} \sigma^3} \int_0^{\norm{f}_{\infty}} \bb E_{\bs \nu}^* \left( V^*(X_1^*,z) \,;\, S_1^* \geq z \right) \dd z,
\]
which together with \eqref{vallon} concludes the proof.

\section{Appendix}
\label{appendix}

\subsection{The non degeneracy of the Markov walk}
\label{AppA}

In \cite{grama_limit_2016-1}, it is proved that the statements of Propositions \ref{RESUI}-\ref{RESUIII} hold under more general assumptions (see Hypotheses M1-M5 of \cite{grama_limit_2016-1}). We will link these assumptions to our Hypotheses \ref{primitive}-\ref{nondegenere}. The assumptions M1-M3 in \cite{grama_limit_2016-1}, with the Banach space $\scr C$, are well known consequences of Hypothesis \ref{primitive} of this paper. Hypothesis M4 in \cite{grama_limit_2016-1} is also obvious with $N=N_1 = \dots = 0$. By Hypothesis \ref{nucentre}, 
to obtain Hypothesis M5 of \cite{grama_limit_2016-1}, it remains only to prove that $\sigma$ defined by \eqref{mu-sigma001LLT} is strictly positive. First we give a necessary and sufficient condition. Recall that the words \textit{path} and \textit{orbit} are defined in Section \ref{OPPET}.

\begin{lemma}
\label{sabot}
Assume Hypothesis \ref{primitive}. The following statements are equivalent:
\begin{enumerate}[ref=\arabic*, leftmargin=*, label=\arabic*.]
\item \label{sabot001} The Ces\'aro mean of $f$ on the orbits is constant: there exists $m \in \bb R$ such that for any orbit $x_0,\dots,x_n$ we have
\[
f(x_0) + \cdots + f(x_n) = (n+1)m.
\]
\item \label{sabot002} There exist a constant $m \in \bb R$ and a function $h \in \scr C$ such that for any $(x,x') \in \bb X^2$,
\[
\bf P(x,x') f(x') = \bf P(x,x') \left( h(x)-h(x')+m \right).
\]
\item \label{sabot003} The following real $\tilde \sigma^2$ is equal to $0$
\[
\tilde \sigma^2 = \bs \nu \left( f^2 \right) -  \bs \nu \left( f \right)^2 + 2 \sum_{n=1}^{+\infty} \left[ \bs \nu \left( f \bf P^n f \right) -  \bs \nu \left( f \right)^2 \right] = 0.
\]
\end{enumerate}
\end{lemma}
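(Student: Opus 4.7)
The plan is to establish the chain $\ref{sabot001}\Leftrightarrow\ref{sabot002}$ and $\ref{sabot002}\Leftrightarrow\ref{sabot003}$, since each of these pairs admits a direct argument. Note first that in all three statements the constant $m$ is forced to equal $\bs \nu(f)$: averaging \ref{sabot002} against $\bs \nu$ and using $\bf P$-invariance gives $\bs \nu(f) = \bs \nu(h) - \bs \nu(\bf P h) + m = m$, and the ergodic theorem (available under \eqref{decexpNLLT}) applied to long orbits yields $m = \bs \nu(f)$ in the setting of \ref{sabot001}. So from the outset I take $m = \bs \nu(f)$ throughout.

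For $\ref{sabot001} \Rightarrow \ref{sabot002}$ I would mimic the construction used in the proof of $\ref{lapin001}\Rightarrow\ref{lapin002}$. Fix $x_0 \in \bb X$, set $h(x_0) = 0$, and for any other $x \in \bb X$ pick, using Hypothesis \ref{primitive}, a path $x_0, x_1, \dots, x_n, x$ and set
\[
h(x) := f(x_1) + \dots + f(x_n) + f(x) - (n+1)m.
\]
Well-definedness follows by completing two candidate paths from $x_0$ to $x$ with a common loop-closing path from $x$ back to $x_0$ (primitivity again) and applying \ref{sabot001} to both resulting orbits to see that the two candidate values of $h(x)$ coincide. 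Then for any $(x,x')$ with $\bf P(x,x') > 0$, extending a path ending at $x$ by the transition to $x'$ gives $h(x') = h(x) + f(x') - m$, which is exactly \ref{sabot002}. The converse $\ref{sabot002}\Rightarrow\ref{sabot001}$ is obtained by summing $f(x_{i+1}) = h(x_i) - h(x_{i+1}) + m$ along the $n+1$ edges of an orbit $x_0,\dots,x_n,x_0$: the $h$-contributions telescope to zero around the loop, leaving $f(x_0) + \cdots + f(x_n) = (n+1)m$.

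The implication $\ref{sabot002}\Rightarrow\ref{sabot003}$ is then easy. Combining \ref{sabot002} with the Markov property yields $f(X_k) = h(X_{k-1}) - h(X_k) + m$ almost surely for every $k \geq 1$, and telescoping gives $S_n - nm = h(X_0) - h(X_n)$, which stays uniformly bounded. In particular $\Var_{\bs \nu}(S_n)$ is bounded in $n$, and the standard identification of the Green--Kubo series with $\lim_{n\to\infty} \tfrac{1}{n}\Var_{\bs \nu}(S_n)$ (whose convergence is guaranteed by the exponential mixing in \eqref{decexpNLLT}) forces $\tilde\sigma^2 = 0$.

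The main obstacle is $\ref{sabot003}\Rightarrow\ref{sabot002}$, where the single scalar equality $\tilde\sigma^2 = 0$ must be promoted to a pointwise coboundary relation on the support of $\bf P$. The plan is to use the spectral gap in \eqref{decexpNLLT} to solve the Poisson equation: setting $g := \sum_{n \geq 0} \bf P^n(f - m) \in \scr C$, the series converges uniformly and satisfies $g - \bf P g = f - m$. Writing $f(X_k) - m = g(X_k) - \bf P g(X_k)$ and introducing the martingale increment $M_k := g(X_k) - \bf P g(X_{k-1})$, one obtains the Gordin-type decomposition
\[
S_n - nm = \sum_{k=1}^n M_k + \bf P g(X_0) - \bf P g(X_n).
\]
Since $(M_k)_{k \geq 1}$ is a stationary square-integrable martingale-difference sequence under $\bs \nu$ and the boundary term is bounded, a direct computation identifies $\tilde\sigma^2 = \bb E_{\bs \nu}(M_1^2)$. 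Hence $\tilde\sigma^2 = 0$ forces $\bb E_{\bs \nu}(M_1^2) = 0$, i.e.\ $g(X_1) = \bf P g(X_0)$ almost surely under $\bs \nu$; since $\bs \nu$ is positive on $\bb X$, this translates into the pointwise identity $g(x') = \bf P g(x)$ for every $(x,x')$ with $\bf P(x,x') > 0$. Setting $h := \bf P g$, one then checks that for any such $(x,x')$
\[
f(x') - m = g(x') - \bf P g(x') = \bf P g(x) - \bf P g(x') = h(x) - h(x'),
\]
which is exactly \ref{sabot002}. The delicate point to handle carefully will be the identification $\tilde\sigma^2 = \bb E_{\bs \nu}(M_1^2)$: it requires confirming that the Green--Kubo series in the definition of $\tilde\sigma^2$ coincides with the asymptotic variance extracted from the martingale decomposition, an argument that leans on the exponential decay \eqref{decexpNLLT} and on the stationarity and $\bs \nu$-invariance of the chain.
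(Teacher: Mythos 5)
Your argument is essentially correct and is built on the same key object as the paper's proof, namely the solution of the Poisson equation (your $g = \sum_{n\geq 0}\mathbf P^n(f-m)$ is exactly the paper's $\tilde\Theta$). There is one small slip: in the step $1\Rightarrow2$ your construction $h(x) = f(x_1)+\dots+f(x_n)+f(x) - (n+1)m$ yields $h(x') = h(x) + f(x') - m$, i.e.\ $f(x') = h(x')-h(x)+m$, which is the \emph{opposite} sign from statement~2; you need $h \mapsto -h$. This is trivial, but as written the claim ``which is exactly \ref{sabot002}'' is false.

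The two places where you deviate from the paper are in presentation rather than substance. For $3\Rightarrow2$ you invoke the Gordin decomposition and identify $\tilde\sigma^2 = \mathbb E_{\bs\nu}(M_1^2)$ with $M_1 = g(X_1) - \mathbf Pg(X_0)$; the paper does the same identification, but as a direct algebraic manipulation culminating in the display \eqref{kermesse}, $\tilde\sigma^2 = \sum_{(x,x')}(\tilde\Theta(x')-\mathbf P\tilde\Theta(x))^2\mathbf P(x,x')\bs\nu(x)$, which is precisely $\mathbb E_{\bs\nu}(M_1^2)$ written out. The martingale language is a helpful gloss but buys nothing extra here; the algebraic version has the advantage of avoiding any appeal to the CLT machinery. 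For $2\Rightarrow3$ you take a mildly different route: rather than verifying $\tilde\Theta(x')-\mathbf P\tilde\Theta(x)=0$ on the support of $\mathbf P$ and plugging into the same identity, you observe that the coboundary structure makes $S_n - nm$ uniformly bounded and then identify $\tilde\sigma^2$ with $\lim_n \tfrac1n\Var_{\bs\nu}(S_n)$. This is correct under the exponential mixing \eqref{decexpNLLT}, though it outsources the Green--Kubo identification to a standard external fact, whereas the paper keeps everything self-contained by deriving both directions from the single identity \eqref{kermesse}. Either way the proof goes through once the sign of $h$ is corrected.
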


\begin{proof}
\textit{The point \ref{sabot001} implies the point \ref{sabot002}.} Suppose that the point \ref{sabot001} holds. Fix $x_0 \in \bb X$ and set $h(x_0)= 0$. For any $x \in \bb X$, we define $h(x)$ in the following way: for any path $x_0,x_1,\dots,x_n,x$ in $\bb X$, we set
\[
h(x) = -f(x)-f(x_n)-\dots-f(x_1)+(n+1)m.
\]
We shall verify that $h$ is well defined. By Hypothesis \ref{primitive}, we can find at least a path to define $h(x)$. Now we have to check that this definition does not depend on the choice of the path. Let $x_0,x_1,\dots,x_p,x$ and $x_0,y_1,\dots,y_q,x$ be two paths. By Hypothesis \ref{primitive}, there exists a path $x,z_1, \dots, z_n,x_0$ in $\bb X$ between $x$ and $x_0$. Since $x_0,x_1,\dots,x_p,x,z_1,\dots,z_n$ and $x_0,y_1,\dots,y_p,x,z_1,\dots,z_n$ are two orbits, by the point \ref{sabot001}, we have
\begin{align*}
-f(x)-f(x_p)-\dots-f(x_1)+(p+1)m &= f(x_0)+f(z_1)+\dots+f(z_n)-(n+1)m \\
&= -f(x)-f(y_q)-\dots-f(y_1)+(q+1)m
\end{align*}
and so the function $h$ is well defined on $\bb X$. Now let $(x,x') \in \bb X^2$ such that $\bf P(x,x') > 0$. By Hypothesis \ref{primitive}, there exists $x_0,x_1, \dots, x_n,x$ a path between $x_0$ and $x$. Since $\bf P(x_0,x_1) \cdots \bf P(x_n,x)\bf P(x,x') > 0$, by the definition of $h$, we have
\begin{align*}
h(x) &= -f(x)-f(x_n)-\dots-f(x_1)+(n+1)m \\
h(x') &= -f(x')-f(x)-f(x_n)-\dots-f(x_1)+(n+2)m.
\end{align*}
In particular
\[
h(x') = -f(x') + h(x) + m.
\]

\textit{The point \ref{sabot002} implies the point \ref{sabot001}.} Suppose that the point \ref{sabot002} holds and let $x_0,\dots,x_n$ be an orbit. Using the point \ref{sabot002},
\[
h(x_0) = h(x_n)-f(x_0)+m = \dots = h(x_0)-f(x_0)-f(x_n)-\dots-f(x_1)+(n+1)m,
\]
and the point \ref{sabot001} follows.

\textit{The point \ref{sabot002} implies the point \ref{sabot003}.} Suppose that the point \ref{sabot002} holds. Denote by $\tilde f$ the $\bs \nu$-centred function:
\begin{equation}
	\label{amadou}
	\tilde f(x) = f(x) - \bs \nu(f), \qquad \forall x \in \bb X.
\end{equation}
By the point \ref{sabot002}, for any $x\in \bb X$,
\begin{equation}
	\label{fleuve}
	\bf P \tilde f(x) = h(x) - \bf P h(x) + m -\bs \nu(f).
\end{equation}
Using the fact that $\bs \nu$ is $\bf P$-invariant, we obtain that $\bs \nu \left( \tilde f \right) = 0 = m-\bs \nu(f)$ and so,
\begin{equation}
	\label{sabre}	
	m=\bs \nu(f).
\end{equation}
Consequently, by \eqref{fleuve}, $\bf P^n \tilde f = \bf P^{n-1} h - \bf P^n h$ for any $n \geq 1$ and therefore,
\begin{equation}
	\label{ruche}
	\sum_{k=1}^{n} \bf P^{k} \tilde f = h - \bf P^n h.
\end{equation}
Let
\[
\tilde \Theta := \sum_{k=0}^{+\infty} \bf P^{k} \tilde f
\]
be the solution of the Poisson equation $\tilde \Theta - \bf P \tilde \Theta = \tilde f$, which by \eqref{decexpNLLT}, is well defined. Taking the limit as $n \to +\infty$ in \eqref{ruche} and using \eqref{decexpNLLT},
\[
	\bf P \tilde \Theta = \tilde \Theta - \tilde f = h - \bs \nu (h).
\]
Therefore, for any $(x,x') \in \bb X^2$,
\[
	\tilde \Theta(x') - \bf P \tilde \Theta(x) = \tilde f(x') + \bf P \tilde \Theta(x') - \bf P \tilde \Theta(x) = \tilde f(x') + h(x') - h(x).
\]
Using the point \ref{sabot002} and \eqref{sabre}, it follows that 
\begin{equation}
	\label{biniou}
	\tilde \Theta(x') - \bf P \tilde \Theta(x) = 0,
\end{equation}
for any $(x,x') \in \bb X^2$ such that $\bf P(x,x') > 0$. Moreover,
\[
	\tilde \sigma^2 = \bs \nu \left( \tilde f^2 \right) + 2 \sum_{n=1}^{+\infty} \bs \nu \left( \tilde f \bf P^n \tilde f \right) = \bs \nu \left( \tilde f \left( \tilde f + 2 \bf P \tilde \Theta \right) \right) = \bs \nu \left( \left( \tilde \Theta - \bf P \tilde \Theta \right) \left( \tilde \Theta + \bf P \tilde \Theta \right) \right).
\]
Since $\bs \nu$ is $\bf P$-invariant,
\begin{align}
	\tilde \sigma^2 &= \bs \nu \left( \bf P \left( \tilde \Theta^2 \right) \right) - 2\bs \nu \left( \left( \bf P \tilde \Theta \right)^2 \right) + \bs \nu \left( \left( \bf P \tilde \Theta \right)^2 \right) \nonumber\\
	&=  \sum_{(x,x') \in \bb X} \left[ \tilde \Theta(x')^2 -2 \tilde \Theta(x') \bf P \tilde \Theta(x) + \left( \bf P \tilde \Theta(x) \right)^2  \right] \bf P(x,x') \bs \nu(x) \nonumber\\
	&= \sum_{(x,x') \in \bb X} \left( \tilde \Theta(x') - \bf P \tilde \Theta(x) \right)^2  \bf P(x,x') \bs \nu(x).
	\label{kermesse}
\end{align}
By \eqref{biniou}, we conclude that $\tilde \sigma^2=0$.

\textit{The point \ref{sabot003} implies the point \ref{sabot002}.} Suppose that the point \ref{sabot003} holds. By \eqref{kermesse}, for any $(x,x') \in \bb X$ such that $\bf P(x,x')>0$ we have
\[
\tilde \Theta(x') - \bf P \tilde \Theta(x) = 0.
\]
Let $h = \bf P \tilde \Theta$. Since $\tilde \Theta$ is the solution of the Poisson equation,
\[
\tilde f(x') + h(x') - h(x) = 0.
\]
By the definition of $\tilde f$ in \eqref{amadou}, for any $(x,x') \in \bb X$ such that $\bf P(x,x')>0$,
\[
f(x') = h(x) - h(x') + m,
\]
with $m = \bs \nu(f)$.
\end{proof}

Note that under Hypothesis \ref{nucentre}, Lemma \ref{sabot} can be rewritten as follows.
\begin{lemma}
\label{sabotbis}
Assume Hypotheses \ref{primitive} and \ref{nucentre}. The following statements are equivalent:
\begin{enumerate}[ref=\arabic*, leftmargin=*, label=\arabic*.]
\item The mean of $f$ on the orbits is equal to zero: for any orbit $x_0,\dots,x_n$, we have
\[
f(x_0) + \cdots + f(x_n) = 0.
\]
\item There exists a function $h \in \scr C$ such that for any $(x,x') \in \bb X^2$,
\[
\bf P(x,x') f(x') = \bf P(x,x') \left( h(x)-h(x') \right).
\]
\item The real $\sigma^2$ is equal to $0$:
\[
\sigma^2 = \bs \nu \left( f^2 \right) + 2 \sum_{n=1}^{+\infty} \bs \nu \left( f \bf P^n f \right) = 0.
\]
\end{enumerate}
\end{lemma}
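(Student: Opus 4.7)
The plan is to derive Lemma \ref{sabotbis} as a direct corollary of Lemma \ref{sabot} by specializing the constant $m$ that appears in the three statements of Lemma \ref{sabot} to the value $m=0$, which is forced by Hypothesis \ref{nucentre}.

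First I would observe that in each of the three statements of Lemma \ref{sabot}, the constant $m$ is necessarily equal to $\bs\nu(f)$. For statement \ref{sabot002}, this follows by summing the identity $\bf P(x,x')f(x') = \bf P(x,x')(h(x)-h(x')+m)$ against $\bs\nu(x)$: the left-hand side yields $\bs\nu(\bf P f) = \bs\nu(f)$ (since $\bs\nu$ is $\bf P$-invariant), while the right-hand side yields $\bs\nu(h)-\bs\nu(\bf P h)+m = m$. This is exactly the computation carried out in equations \eqref{fleuve}--\eqref{sabre} inside the proof of Lemma \ref{sabot}. For statement \ref{sabot003}, the formula for $\tilde\sigma^2$ reduces to the formula for $\sigma^2$ given in \eqref{mu-sigma001LLT} precisely when $\bs\nu(f)=0$. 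For statement \ref{sabot001}, one can either appeal to the fact that \ref{sabot001} implies \ref{sabot002} (and hence $m=\bs\nu(f)$), or observe directly that for any orbit $x_0,\dots,x_n$ repeated $k$ times, dividing the relation $k(f(x_0)+\cdots+f(x_n)) = k(n+1)m$ by $k(n+1)$ and letting $k\to\infty$ combined with the ergodic theorem again forces $m=\bs\nu(f)$.

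Once this identification is made, the proof is immediate: under Hypothesis \ref{nucentre} we have $\bs\nu(f)=0$, hence $m=0$, and the three statements of Lemma \ref{sabot} reduce verbatim to the three statements of Lemma \ref{sabotbis}. The equivalence is therefore inherited from Lemma \ref{sabot}.

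There is no real obstacle in this proof; the only point requiring attention is the identification $m=\bs\nu(f)$, which was already implicit in the proof of Lemma \ref{sabot} (see \eqref{sabre}) but was not highlighted as a separate statement. I would simply make that identification explicit at the outset of the proof and then invoke Lemma \ref{sabot} with $m=0$.
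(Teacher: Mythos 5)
Your proof is correct and takes exactly the same route as the paper, which presents Lemma \ref{sabotbis} as an immediate specialization of Lemma \ref{sabot} under Hypothesis \ref{nucentre} (the paper simply writes ``Note that under Hypothesis \ref{nucentre}, Lemma \ref{sabot} can be rewritten as follows'' and offers no separate proof). Your explicit observation that the constant $m$ in Lemma \ref{sabot} is forced to equal $\bs\nu(f)$ --- already implicit in \eqref{sabre} within the proof of Lemma \ref{sabot} --- is a useful clarification of the step the paper leaves tacit.
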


Now we prove that the Hypothesis \ref{nondegenere} (the "non-lattice" condition), implies that the Markov walk has non-zero asymptotic variance.
\begin{lemma}
\label{couette}
Under Hypotheses \ref{primitive}-\ref{nondegenere}, we have
\[
\sigma^2 = \bs \nu \left( f^2 \right) + 2 \sum_{n=1}^{+\infty} \bs \nu \left( f \bf P^n f \right) > 0
\]
\end{lemma}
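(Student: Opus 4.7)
The plan is to argue by contradiction using the equivalences already established in Lemma~\ref{sabotbis} together with the non-lattice Hypothesis~\ref{nondegenere}. Since Hypotheses~\ref{primitive} and \ref{nucentre} are in force, Lemma~\ref{sabotbis} is applicable; in particular, it guarantees that $\sigma^2 = 0$ is equivalent to the following arithmetic property of the function $f$ on orbits: every orbit $x_0, \dots, x_n$ in $\bb X$ satisfies
\[
f(x_0) + \cdots + f(x_n) = 0.
\]

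Now I would suppose, toward a contradiction, that $\sigma^2 = 0$. Then the displayed equality above holds for every orbit. On the other hand, Hypothesis~\ref{nondegenere} applied with the specific choice $\theta = 0$ and, say, $a = 1$, produces at least one orbit $x_0, \dots, x_n$ in $\bb X$ for which
\[
f(x_0) + \cdots + f(x_n) \notin \bb Z.
\]
In particular this sum is nonzero, which directly contradicts the consequence of $\sigma^2 = 0$ drawn from Lemma~\ref{sabotbis}. Hence $\sigma^2 > 0$ (note that by definition $\sigma^2 \geq 0$ follows immediately from its representation as a limit of $\bb E_x(S_n^2)/n$, so we only have to exclude the degenerate case $\sigma^2 = 0$).

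I do not expect any serious obstacle here: the entire argument is a two-line contradiction, all the work having been done in the preparatory Lemma~\ref{sabotbis} (the equivalence between vanishing asymptotic variance and the coboundary/orbit-sum property) and in the very formulation of Hypothesis~\ref{nondegenere}, which is tailored so that the coboundary degeneracy is ruled out. The only mild point to keep in mind is that Hypothesis~\ref{nondegenere} is stated for arbitrary $(\theta, a)$, so one should explicitly single out the special instance $\theta = 0$, $a = 1$ to get the cleanest contradiction.
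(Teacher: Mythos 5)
Your proof is correct and follows essentially the same route as the paper: argue by contradiction, invoke Lemma~\ref{sabotbis} to translate $\sigma^2=0$ into the vanishing of all orbit sums, and observe that this contradicts Hypothesis~\ref{nondegenere} for a suitable $(\theta,a)$. The only (cosmetic) difference is the choice of $(\theta,a)$: you take $(0,1)$ and exhibit an orbit with sum not in $\bb Z$, whereas the paper takes $(\theta,a)=(0,0)$ and reads the vanishing of all orbit sums directly as the negation of Hypothesis~\ref{nondegenere}; both are valid, and yours avoids the minor worry about interpreting $a\bb Z$ when $a=0$.
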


\begin{proof}
We proceed by \textit{reductio ad absurdum}. Suppose that $\sigma^2 = 0$. By Lemma \ref{sabotbis}, for any orbit $x_0,\dots,x_n$, we have
\[
f(x_0) + \cdots + f(x_n) = 0,
\]
which implies the negation of Hypothesis \ref{nondegenere} with $\theta = a = 0$.
\end{proof}

\subsection{Strong approximation}

Let $(B_t)_{t\geq 0}$ be the standard Brownian motion on $\bb R$  
defined on the probability space $(\Omega, \scr F, \bb P)$. Consider the exit time
\begin{equation}
	\label{tauybm}
	\tau_y^{bm} := \inf \{ t \geq 0, \; y+\sigma B_t \leq 0 \},
\end{equation}
where $\sigma$ is defined by \eqref{mu-sigma001LLT}. It is proved in Grama, Le Page and Peign\'e \cite{ion_grama_rate_2014} that there is a version of the Markov walk $(S_n)_{n\geq 0}$ and of the standard Brownian motion $(B_t)_{t\geq 0}$ living on the same 
probability space which are close enough in the following sense: 

\begin{proposition}
\label{majdeA_kLLT}
There exists $\ee_0 >0$ such that, for any $\ee \in (0,\ee_0]$, $x\in \bb X$ and $n\geq 1$, without loss of generality (on an extension of the initial probability space) one can reconstruct the sequence $(S_n)_{n\geq 0}$ with a continuous time Brownian motion $(B_t)_{t\in \bb R_{+} }$, such that  
\[
\bb P_x \left( \underset{0 \leq t \leq 1}{\sup} \abs{S_{\pent{tn}}-\sigma B_{tn}} > n^{1/2-\ee} \right) \leq \frac{c_{\ee}}{n^{\ee}}.
\]
\end{proposition}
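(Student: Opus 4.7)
The plan is to reduce the strong approximation for $S_n$ to a strong approximation for a bounded martingale, and then quote a KMT-type result for martingales. First I would produce a Gordin--Lifshitz decomposition: by Hypothesis \ref{primitive} and \eqref{decexpNLLT}, the series $h := \sum_{k \geq 0} \bf P^k f$ converges in $\scr C$ (it is here that $\bs \nu(f)=0$ from Hypothesis \ref{nucentre} is crucial), and satisfies the Poisson equation $h - \bf P h = f$. Setting $\xi_k := h(X_k) - \bf P h(X_{k-1})$, $k \geq 1$, one checks by a telescoping computation that $M_n := \sum_{k=1}^n \xi_k = S_n - \bf P h(X_0) + \bf P h(X_n)$. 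Since $\bb X$ is finite, $\|h\|_\infty$ and $\|\bf P h\|_\infty$ are finite constants, so
\[
\sup_{k \geq 0} |S_k - M_k| \leq 2\|\bf P h\|_\infty =: C_0,
\]
and $(\xi_k)$ is a bounded martingale difference sequence with $|\xi_k| \leq 2\|h\|_\infty$.

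Next I would analyse the conditional variance $V_n := \sum_{k=1}^n \bb E(\xi_k^2 \mid \scr F_{k-1}) = \sum_{k=1}^n \varphi(X_{k-1})$, where $\varphi := \bf P(h^2) - (\bf P h)^2 \in \scr C$. The key computation is that $\bs \nu(\varphi) = \sigma^2$ (the same asymptotic variance as in \eqref{mu-sigma001LLT}, by a short Parseval-type identity for $h$ and $f$). By \eqref{decexpNLLT} and standard Markov CLT estimates applied to the bounded centred function $\varphi - \sigma^2$, one has for any $\delta \in (0,1/2)$,
\[
\bb P_x\Bigl( \sup_{0 \leq t \leq 1} |V_{\lfloor tn \rfloor} - \sigma^2 \lfloor tn \rfloor| > n^{1/2 + \delta}\Bigr) \leq \frac{c_\delta}{n^{q\delta}},
\]
for any $q>0$ fixed (moments are harmless because everything is bounded).

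Now I would invoke a strong approximation theorem for martingales with bounded increments (a Skorokhod-embedding argument \`{a} la Strassen, refined by Monrad--Philipp or Sakhanenko): on a suitable extension of $(\Omega,\scr F,\bb P)$ one can jointly realize $(M_n)_{n \geq 0}$ and a standard Brownian motion $(W_t)_{t \geq 0}$ together with stopping times $T_1 \leq T_2 \leq \dots$ such that $M_n = W_{T_n}$ and $\bb E(T_n - T_{n-1} \mid \scr G_{n-1}) = \bb E(\xi_n^2 \mid \scr F_{n-1})$ with bounded higher conditional moments. A Burkholder/Rosenthal estimate then yields
\[
\bb P_x\Bigl( \sup_{0 \leq t \leq 1} |T_{\lfloor tn \rfloor} - V_{\lfloor tn \rfloor}| > n^{1/2 + \delta} \Bigr) \leq c_\delta n^{-q\delta},
\]
again for any $q$. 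Combining this with the preceding variance estimate gives $\sup_{t \leq 1}|T_{\lfloor tn \rfloor} - \sigma^2 tn| \leq 2 n^{1/2+\delta}$ outside an event of probability $\leq c_\delta n^{-q\delta}$. Finally, define $B_t := W_{\sigma^2 t}/\sigma$, so that $(B_t)$ is a standard Brownian motion and $\sigma B_{tn} = W_{\sigma^2 tn}$. Then
\[
S_{\lfloor tn \rfloor} - \sigma B_{tn} = (S_{\lfloor tn \rfloor} - M_{\lfloor tn \rfloor}) + (W_{T_{\lfloor tn \rfloor}} - W_{\sigma^2 tn}),
\]
and the first term is bounded by $C_0$ while, on the above good event, the second term is controlled by the modulus of continuity
\[
\sup_{|s-s'|\leq 2 n^{1/2+\delta},\, 0 \leq s,s' \leq 2 \sigma^2 n} |W_s - W_{s'}|,
\]
which by the L\'evy modulus (or a direct Gaussian tail bound) is at most $n^{1/2-\ee}$ except on an event of probability $\leq c_\ee n^{-\ee}$, provided $\delta < 1/2 - 2\ee$. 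Choosing $\delta$ and $q$ accordingly, and taking $\ee_0$ small enough for $2\delta < 1$, yields the stated bound $c_\ee/n^\ee$.

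The main obstacle is the martingale KMT step: although the increments are bounded, one must invoke a non-asymptotic strong approximation for martingales with the right tail probabilities (as opposed to just a.s.\ rates). This is where the citation to Grama--Le Page--Peign\'e \cite{ion_grama_rate_2014} does the heavy lifting, essentially packaging the dyadic Skorokhod embedding with careful control of the cumulative conditional variance. A secondary subtlety is that the rate must hold \emph{uniformly in the starting point} $x \in \bb X$, but this is automatic because all exponential and polynomial estimates above are uniform in $x$ thanks to $\bb X$ being finite.
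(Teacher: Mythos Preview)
The paper does not prove this proposition at all: it is stated in the Appendix with the preamble ``It is proved in Grama, Le Page and Peign\'e \cite{ion_grama_rate_2014} that\ldots'' and no argument is given. Your sketch is a reasonable outline of the Gordin--Lifshitz martingale reduction plus Skorokhod-embedding strategy that underlies such results (and you yourself note that the martingale KMT step is where the citation does the heavy lifting), but there is nothing in the present paper to compare it against; the authors simply import the result from the external reference.
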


\bibliographystyle{plain}
\bibliography{biblioT}

\begin{thebibliography}{10}

\bibitem{bertoin_conditioning_1994}
J.~Bertoin and R.~A. Doney.
\newblock On conditioning a random walk to stay nonnegative.
\newblock {\em The Annals of Probability}, 22(4):2152--2167, 1994.

\bibitem{bolthausen_functional_1976}
E.~Bolthausen.
\newblock On a functional central limit theorem for random walks conditioned to
  stay positive.
\newblock {\em The Annals of Probability}, 4(3):480--485, 1976.

\bibitem{borovkov_asymptotic_2004}
A.~A. Borovkov.
\newblock On the asymptotic behavior of distributions of first-passage times,
  {I}.
\newblock {\em Mathematical Notes}, 75(1-2):23--37, 2004.

\bibitem{borovkov_asymptotic_2004-1}
A.~A. Borovkov.
\newblock On the asymptotic behavior of distributions of first-passage times,
  {II}.
\newblock {\em Mathematical Notes}, 75(3-4):322--330, 2004.

\bibitem{caravenna_local_2005}
F.~Caravenna.
\newblock A local limit theorem for random walks conditioned to stay positive.
\newblock {\em Probability Theory and Related Fields}, 133(4):508--530, 2005.

\bibitem{denisov_conditional_2010}
D.~Denisov and V.~Wachtel.
\newblock Conditional limit theorems for ordered random walks.
\newblock {\em Electronic Journal of Probability}, 15:292--322, 2010.

\bibitem{denisov_exit_2015}
D.~Denisov and V.~Wachtel.
\newblock Exit times for integrated random walks.
\newblock {\em Annales de l'Institut Henri Poincar\'e, Probabilit\'es et
  Statistiques}, 51(1):167--193, 2015.

\bibitem{denisov_random_2015}
D.~Denisov and V.~Wachtel.
\newblock Random walks in cones.
\newblock {\em The Annals of Probability}, 43(3):992--1044, 2015.

\bibitem{doney_asymptotic_1989}
R.~A. Doney.
\newblock On the asymptotic behaviour of first passage times for transient
  random walk.
\newblock {\em Probability Theory and Related Fields}, 81(2):239--246, 1989.

\bibitem{eichelsbacher_ordered_2008}
P.~Eichelsbacher and W.~K\"onig.
\newblock Ordered random walks.
\newblock {\em Electronic Journal of Probability}, 13:1307--1336, 2008.

\bibitem{gnedenko_local_1948}
B.~V. Gnedenko.
\newblock On a local limit theorem of the theory of probability.
\newblock {\em Uspekhi Matematicheskikh Nauk}, 3(3):187--194, 1948.

\bibitem{GLLP_affine_2016}
I.~Grama, R.~Lauvergnat, and \'E. Le~Page.
\newblock Limit theorems for affine {Markov} walks conditioned to stay
  positive.
\newblock {\em arXiv:1601.02991 [math]}, 2016.
\newblock arXiv: 1601.02991.

\bibitem{grama_limit_2016-1}
I.~Grama, R.~Lauvergnat, and E.~Le Page.
\newblock Limit theorems for {Markov} walks conditioned to stay positive under
  a spectral gap assumption.
\newblock {\em arXiv:1607.07757 [math]}, July 2016.

\bibitem{grama_bounds_2017}
I.~Grama and E.~Le~Page.
\newblock Bounds in the local limit theorem for a random walk conditioned to
  stay positive.
\newblock In {\em Modern {Problems} of {Stochastic} {Analysis} and
  {Statistics}}, pages 1--21, 2017.

\bibitem{ion_grama_rate_2014}
I.~Grama, \'E. Le~Page, and M.~Peign\'e.
\newblock On the rate of convergence in the weak invariance principle for
  dependent random variables with application to {Markov} chains.
\newblock {\em Colloquium Mathematicum}, 134(1):1--55, 2014.

\bibitem{grama_conditioned_2016}
I.~Grama, \'E.~Le Page, and M.~Peign\'e.
\newblock Conditioned limit theorems for products of random matrices.
\newblock {\em Probability Theory and Related Fields}, pages 1--39, 2016.

\bibitem{guivarch_theoremes_1988}
Y.~Guivarc'h and J.~Hardy.
\newblock Th\'eor\`emes limites pour une classe de cha\^ines de {Markov} et
  applications aux diff\'eomorphismes d'{Anosov}.
\newblock {\em Annales de l'{IHP} {Probabilit\'es} et statistiques}, 24:73--98,
  1988.

\bibitem{iglehart_functional_1974}
D.~L. Iglehart.
\newblock Functional central limit theorems for random walks conditioned to
  stay positive.
\newblock {\em The Annals of Probability}, 2(4):608--619, 1974.

\bibitem{iglehart_random_1974}
D.~L. Iglehart.
\newblock Random walks with negative drift conditioned to stay positive.
\newblock {\em Journal of Applied Probability}, 11(4):742--751, 1974.

\bibitem{kato_perturbation_1976}
T.~Kato.
\newblock {\em Perturbation {Theory} for {Linear} {Operators}}.
\newblock Springer Berlin Heidelberg, Berlin, Heidelberg, 1976.

\bibitem{page_theoremes_1982}
E.~Le~Page.
\newblock Th\'eor\`emes limites pour les produits de matrices al\'eatoires.
\newblock In {\em Probability {Measures} on {Groups}}, pages 258--303.
  Springer, Berlin, Heidelberg, 1982.
\newblock DOI: 10.1007/BFb0093229.

\bibitem{le_page_local_1997}
\'E. Le~Page and M.~Peign\'e.
\newblock A local limit theorem on the semi-direct product of {$\mathbb
  R^{*+}$} and {$\mathbb R^d$}.
\newblock {\em Annales de l'{Institut} {Henri} {Poincare} ({B}) {Probability}
  and {Statistics}}, 33:223--252, 1997.

\bibitem{presman_1967}
E.~Presman.
\newblock Boundary problems for sums of lattice random variables, defined on a
  finite regular markov chain.
\newblock {\em Theory of Probability and Its Applications}, 12(2):323--328,
  1967.

\bibitem{presman_1969}
E.~Presman.
\newblock Methods of factorization and a boundary problems for sums of random
  variables defined on a markov chain.
\newblock {\em Izvestija Akademii Nauk SSSR}, 33:861--990, 1969.

\bibitem{spitzer_principles_1964}
F.~Spitzer.
\newblock {\em Principles of random walk. {The} {University} {Series} in
  {Higher} {Mathematics}}.
\newblock D. Van Nostrand Co., Inc., Princeton, NJ-Toronto-London, 1964.

\bibitem{stone_local_1965}
C.~Stone.
\newblock A {Local} {Limit} {Theorem} for {Nonlattice} {Multi}-{Dimensional}
  {Distribution} {Functions}.
\newblock {\em The Annals of Mathematical Statistics}, 36(2):546--551, April
  1965.

\bibitem{varopoulos_potential_1999}
N.~Th. Varopoulos.
\newblock Potential theory in conical domains.
\newblock {\em Mathematical Proceedings of the Cambridge Philosophical
  Society}, 125(2):335--384, 1999.

\bibitem{varopoulos_potential_2000}
N.~Th. Varopoulos.
\newblock Potential theory in conical domains. {II}.
\newblock {\em Mathematical Proceedings of the Cambridge Philosophical
  Society}, 129(2):301--320, 2000.

\bibitem{vatutin_local_2008}
V.~A. Vatutin and V.~Wachtel.
\newblock Local probabilities for random walks conditioned to stay positive.
\newblock {\em Probability Theory and Related Fields}, 143(1-2):177--217, 2008.

\end{thebibliography}
\vskip0.5cm

\end{document}